\documentclass[11pt,reqno]{amsart}

\usepackage{amsmath,amssymb,amsfonts,mathtools}
\usepackage[shortlabels]{enumitem}
\usepackage{microtype}
\usepackage{hyperref}

\hypersetup{
  colorlinks=true,
  linkcolor=blue,
  citecolor=blue,
  urlcolor=blue
}

\numberwithin{equation}{section}

\theoremstyle{plain}
\newtheorem{theorem}{Theorem}[section]
\newtheorem{proposition}[theorem]{Proposition}
\newtheorem{lemma}[theorem]{Lemma}
\newtheorem{corollary}[theorem]{Corollary}

\theoremstyle{definition}
\newtheorem{definition}[theorem]{Definition}
\newtheorem{example}[theorem]{Example}
\theoremstyle{remark}
\newtheorem{remark}[theorem]{Remark}
\newcommand{\Z}{\mathbb Z}
\newcommand{\Q}{\mathbb Q}
\newcommand{\Rr}{\mathbb R}
\newcommand{\C}{\mathbb C}
\newcommand{\F}{\mathbb F}

\DeclareMathOperator{\Disc}{Disc}
\DeclareMathOperator{\Ext}{Ext}
\DeclareMathOperator{\Th}{Th}
\newcommand{\QCoh}{\ensuremath{\mathrm{QCoh}}}
\newcommand{\uHom}{\ensuremath{\underline{\Hom}}}
\DeclareMathOperator{\Hom}{Hom}
\DeclareMathOperator{\lk}{lk}
\newcommand{\Mod}{\ensuremath{\mathrm{Mod}}}

\newcommand{\Ho}{\ensuremath{\mathrm{Ho}}}

\DeclareMathOperator{\coker}{coker}
\DeclareMathOperator{\Tor}{Tor}
\DeclareMathOperator{\rk}{rk}

\DeclareMathOperator{\diag}{diag}
\DeclareMathOperator{\GL}{GL}
\DeclareMathOperator{\Mat}{Mat}
\DeclareMathOperator{\ad}{ad}
\DeclareMathOperator{\ch}{char}

\newcommand{\cofib}{\mathrm{cofib}}

\newcommand{\doi}[1]{\href{https://doi.org/#1}{doi:#1}}

\newcommand{\TMF}{\ensuremath{\mathrm{TMF}}}
\newcommand{\CP}{\ensuremath{\mathbb{CP}}}
\newcommand{\Sq}{\ensuremath{\mathrm{Sq}}}
\newcommand{\Canon}{\ensuremath{\mathrm{Canon}}}
\newcommand{\Tok}{\ensuremath{\mathrm{Tok}}}
\newcommand{\R}{\ensuremath{\mathcal{R}}}
\newcommand{\cZ}{\ensuremath{\mathcal{Z}}}

\newcommand{\arxiv}[1]{\href{https://arxiv.org/abs/#1}{arXiv:#1}}

\newcommand{\GS}{\mathrm{GS}}

\title[Canonical torsion linking pairings and \TMF\ state spaces]{Canonical torsion linking pairings and explicit \TMF\ state spaces of closed $3$--manifolds}

\author{Ruiliang Li}
\address{Tsinghua University, Beijing, China}
\email{lirl23@mails.tsinghua.edu.cn}

\subjclass[2020]{Primary 57R56; Secondary 55N34, 57M27}
\keywords{Kirby calculus, torsion linking pairing, surgery, topological modular forms, topological quantum field theory}

\date{January 9, 2026}

\begin{document}

\begin{abstract}
We study the \TMF-valued \((3+1)\)--dimensional TQFT of Gukov--Krushkal--Meier--Pei and give an explicit description of the \TMF-module state space \(\cZ_{\TMF}(Y)\) associated to a closed oriented \(3\)--manifold \(Y\) from an integral surgery presentation.

Given a symmetric integral surgery matrix \(A\) for \(Y=\partial W(A)\), we construct a canonical invariant \(\Canon(A)\) encoding \(b_1(Y)\) together with the isometry class of the torsion linking pairing on \(\Tor H_1(Y;\Z)\).
This refines the classical classification of torsion linking pairings by selecting canonical representatives compatible with Kirby stabilization, and we prove that \(\Canon(A)\) is invariant under Kirby moves and complete.
Passing from \(\Canon(A)\) to the associated canonical token package \(T=\Tok(A)\), we assemble a canonical symmetric integral matrix \(B(T)\) and define a realization functor \(\mathcal R:\Tok\to \Ho(\Mod_{\TMF})\) by
\[
\mathcal R(T)\;:=\;L_{B(T)}\bigl[\,3b_+(B(T))-2b_-(B(T))\,\bigr].
\]
Our main theorem identifies the GKMP state space with this explicit realization,
\[
\cZ_{\TMF}(Y)\ \simeq\ \mathcal R\bigl(\Tok(A)\bigr),
\]
yielding a prime-by-prime description in terms of finitely many local building blocks.

We further compute the closed \(4\)--manifold class \(Z_{\TMF}(\CP^2)=\pm\,\nu\in\pi_3\TMF\) and, assuming the naturality statement for reversing simply connected cobordisms formulated by Gukov--Krushkal--Meier--Pei, we identify \(Z_{\TMF}(S^2\times S^2)=\eta\in\pi_1\TMF\).
Finally, we establish a rank-one time-reversal duality \(L_{(-n)}\simeq L_{(n)}^\vee[1]\) for every integer \(n\).
\end{abstract}

\maketitle

\section{Introduction}\label{sec:intro}

Topological quantum field theories (TQFTs), in the axiomatic sense of Atiyah~\cite{Atiyah88},
assign algebraic objects to manifolds and functorial maps to cobordisms.
In practice, the most basic question is computational: given a closed oriented $3$--manifold $Y$,
how does one determine the state space assigned to $Y$ by a fixed $(3+1)$--dimensional TQFT?

This paper addresses that question for the $\TMF$--valued TQFT constructed by
Gukov--Krushkal--Meier--Pei~\cite{GKMP25}.
Their theory assigns to a closed oriented $3$--manifold $Y$ a (graded) object
\[
\cZ_{\TMF}(Y)\ \in\ \Ho(\Mod_{\TMF}),
\]
and to a compact oriented cobordism $W:Y_0\to Y_1$ a morphism
$\cZ_{\TMF}(W):\cZ_{\TMF}(Y_0)\to \cZ_{\TMF}(Y_1)$.
A central feature of the construction is that when $Y=\partial X$ bounds a simply connected
$4$--manifold $X$, the state space is expressed in terms of the integral intersection form of $X$
through the GKMP $\TMF$--modules $L_b$ and a normalization shift.
For a general $Y$ presented by surgery, however, the resulting description depends on auxiliary choices;
one must understand how the $\TMF$--module changes under Kirby moves~\cite{Kirby78}.
The aim of this paper is to replace that dependence on choices by a canonical set of algebraic invariants,
and to give an explicit formula for $\cZ_{\TMF}(Y)$ in terms of those invariants.

\subsection{Surgery matrices, linking forms, and the canonicalization problem}
Fix an ordered framed link $L\subset S^3$ with integer framings, and let
$A\in M_n(\Z)$ be its symmetric linking matrix.
Let $W(A)$ be the associated $2$--handlebody and set
\[
Y_A := \partial W(A).
\]
The group $H_1(Y_A;\Z)$ is naturally identified with the cokernel
$G(A):=\coker(A:\Z^n\to\Z^n)$, and the torsion subgroup $\Tor(G(A))$
carries a nonsingular linking pairing
\[
\lambda_A:\Tor(G(A))\times\Tor(G(A))\longrightarrow \Q/\Z,
\]
canonically determined up to isometry.
(We recall the precise conventions, including the singular case and orientation sign, in \S2.)
Kirby moves on the surgery link induce two algebraic operations on the matrix $A$:
unimodular congruence $A\mapsto P^{\mathsf T}AP$ and $\pm 1$ stabilization
$A\mapsto A\oplus(\pm 1)$~\cite{Kirby78}.
Both operations preserve the isometry class of $(\Tor(G(A)),\lambda_A)$, and hence preserve the
pairing $\lambda_{Y_A}$ on $\Tor H_1(Y_A;\Z)$.

The classical classification of linking pairings (see Wall~\cite{Wall63} and
Kawauchi--Kojima~\cite{KawauchiKojima80}) gives a complete set of isometry invariants for $(\Tor(G(A)),\lambda_A)$, prime by prime.
What is \emph{not} provided by the classical literature is a \emph{canonical choice} of normal form compatible with the stabilization operations that appear in the GKMP theory.
For the purposes of an explicit $\TMF$--module formula, one needs a deterministic procedure which,
given $A$, outputs a canonical representative of the isometry class of $(\Tor(G(A)),\lambda_A)$
(and keeps track of the free rank $b_1(A)$ when $A$ is singular),
and which is stable under Kirby moves.

\subsection{Main results}
Our first result is a canonicalization of torsion linking pairings tailored to the GKMP setting.
Concretely, we associate to each symmetric integral matrix $A$ a canonical invariant
\[
\Canon(A)\ \in\ \Tok,
\]
where $\Tok$ is a commutative monoid of finite prime--indexed packages encoding, in a canonical way,
the isometry class of $(\Tor(G(A)),\lambda_A)$ together with the integer $b_1(A)$.
The point is not the encoding itself (which can be serialized for computation), but rather that
$\Canon(A)$ is a \emph{mathematical invariant} and behaves functorially under orthogonal sums and
Kirby stabilization.

\medskip\noindent
\textbf{Theorem A (Canonical invariants for linking pairings).}
\emph{The assignment $A\mapsto \Canon(A)$ depends only on the isometry class of the pair
$\bigl(\Tor(G(A)),\lambda_A\bigr)$ and on $b_1(A)$.
Moreover, $\Canon(A)=\Canon(A')$ if and only if there exists an isometry
$\Tor(G(A))\cong\Tor(G(A'))$ identifying $\lambda_A$ with $\lambda_{A'}$ and $b_1(A)=b_1(A')$.
In particular, $\Canon(A)$ is invariant under unimodular congruence and under $\pm1$ stabilization,
hence is an invariant of the Kirby stable congruence class of the surgery presentation, and therefore
an invariant of the resulting $3$--manifold $Y_A$.}

\medskip
The construction is carried out in \S3, with local classification and canonical choices for odd primes
and for the $2$--primary subtleties developed in \S\S4--5.
Throughout, we emphasize that the role of $\Canon(A)$ is to provide a \emph{canonical normal form}
rather than to reprove the known classification.

Our second result is an explicit dictionary between these canonical invariants and the GKMP state space.
To state it, recall that GKMP attach to an integral symmetric bilinear form $b$
a $\TMF$--module $L_b$, and the state space of a simply connected bounding $4$--manifold $X$
is given by the normalized shift
$L_{b(X)}[\,3b_+(X)-2b_-(X)\,]$.
We construct, for each token package $T\in\Tok$, a canonical symmetric integral matrix $B(T)$ realizing
the corresponding torsion linking form, and define the realization functor
\[
\mathcal R(T)\ :=\ L_{B(T)}\bigl[\,3b_+(B(T))-2b_-(B(T))\,\bigr]\ \in\ \Ho(\Mod_{\TMF}),
\]
as in \S\ref{sec:tqft-dictionary}.
The principal theorem identifies the GKMP state space with this explicit realization.

\medskip\noindent
\textbf{Theorem B (Explicit state space formula; token realization).}
\emph{For every symmetric integral matrix $A$, there exists an equivalence}
\[
\cZ_{\TMF}(Y_A)\ \simeq\ \mathcal R\bigl(\Canon(A)\bigr)
\qquad\text{in }\Ho(\Mod_{\TMF}).
\]
\emph{Equivalently, $\cZ_{\TMF}(Y)$ depends only on $(b_1(Y),\Tor H_1(Y;\Z),\lambda_Y)$ and is computed
by the explicit $\TMF$--module associated to the canonical invariants $\Canon(A)$.}
\emph{This is proved as Theorem~\ref{thm:token-realization-agrees}.}

\begin{remark}[Not a classification of $3$--manifolds]\label{rem:not-3mfld-classification}
A surgery matrix does not determine the diffeomorphism type of the underlying $3$--manifold,
and the torsion linking form (even together with $b_1$) is far from classifying $3$--manifolds.
Theorem~B asserts that the GKMP state space factors through this coarse algebraic datum;
consequently many non-diffeomorphic $3$--manifolds receive equivalent \TMF-state spaces.
\end{remark}

\begin{remark}[Equivalences versus canonical equivalences]\label{rem:equiv-vs-canonical}
Gukov--Krushkal--Meier--Pei show that the state space \(\cZ_{\TMF}(Y)\) is well-defined up to equivalence in \(\Ho(\Mod_{\TMF})\), but in general not up to a canonical equivalence; see \cite[\S7.2]{GKMP25}.
Our token package \(\Canon(A)\) and the assembled module \(\mathcal R(\Canon(A))\) are constructed canonically from a surgery matrix, and hence give a canonical representative of this equivalence class.
We do not attempt to upgrade the comparison equivalence in Theorem~B to a canonical one.
In \S\ref{sec:calibration-etanu}, when we compute the closed \(4\)--manifold value \(Z_{\TMF}(S^2\times S^2)\), we explicitly assume the additional naturality statement for reversing simply connected cobordisms formulated as \cite[Question~7.14]{GKMP25}.
\end{remark}

\medskip
A key algebraic input is an explicit stable classification statement:
two symmetric integral matrices with the same \(b_1\) and presenting isometric torsion linking pairings become congruent after
adding blocks $(\pm1)$; see Proposition~\ref{prop:stable-classification}.
This is the step that allows one to pass from invariants of $\lambda_A$ to invariants of the
normalized GKMP module, and it is stated in a form directly usable in the proof of Theorem~B.

The dictionary is compatible with connected sum and primewise decomposition.
In particular, $\mathcal R$ is symmetric monoidal with respect to disjoint union of token packages:
\[
\mathcal R(T\oplus T')\ \simeq\ \mathcal R(T)\otimes_{\TMF}\mathcal R(T')
\qquad\text{(Theorem~\ref{thm:assembly}).}
\]
Thus the computation of $\cZ_{\TMF}(Y)$ reduces to a finite list of local generators and their tensor
products, as made explicit in \S\ref{sec:tqft-dictionary}.

Beyond the general formula, we extract two pieces of structure that are intrinsic to $\TMF$
and therefore provide genuinely \emph{geometric information about the GKMP theory}.
First, we identify the values of the closed $4$--manifolds $S^2\times S^2$ and $\CP^2$ with the
classical Hopf elements in $\pi_*\TMF$.

\medskip\noindent
\textbf{Theorem C (Hopf elements from closed $4$--manifolds).}
\emph{In the GKMP theory, one has}
\[
Z_{\TMF}(\CP^2)\ =\ \pm\,\nu\ \in\ \pi_3(\TMF),
\]
\emph{and, assuming \cite[Question~7.14]{GKMP25},}
\[
Z_{\TMF}(S^2\times S^2)\ =\ \eta\ \in\ \pi_1(\TMF),
\]
\emph{where $\eta,\nu$ denote the images in $\pi_*\TMF$ of the classical stable Hopf elements
(cf.\ Adams~\cite{Adams60}).}
\emph{This is proved as Theorem~\ref{thm:etanu-values}.}

\medskip
Second, we prove a time--reversal duality for the rank--one building blocks, formulated as a precise
Grothendieck duality statement in $\TMF$--module theory.

\medskip\noindent
\textbf{Theorem D (Rank--one time--reversal duality).}
\emph{For each integer $n$, the rank--one GKMP module satisfies a canonical equivalence}
\[
L_{(-n)}\ \simeq\ L_{(n)}^\vee[1],
\]
\emph{compatible with the geometric time--reversal cobordism pairing.}
\emph{This is proved as Theorem~\ref{thm:rank-one-time-reversal} in \S\ref{sec:rank-one-duality}.}

\medskip
The equivalence in Theorem~D is canonical up to multiplication by the unit $-1\in\pi_0(\TMF)^\times$;
we explain in \S\ref{sec:rank-one-duality} how this sign ambiguity arises from the normalization of
rank--one generators and how it interacts with duality pairings.
(In particular, it does not affect the validity of Theorem~D, but it must be tracked carefully when
one seeks strict functoriality on the nose.)

\subsection{Proof strategy and organization}
The paper is organized to isolate the genuinely new inputs from the classical classification results.

In \S2 we fix conventions for surgery matrices, linking pairings, and Kirby moves, and we recall the
normalization shift built into the GKMP state space.
In \S3 we define the canonical invariant $\Canon(A)$ and prove its invariance properties.
Sections~4 and~5 develop the local classification needed for a canonical choice:
odd primes are treated via square classes and cyclic summands, while the $2$--primary refinements
require a separate analysis to ensure nondegeneracy of the relevant graded pairings and compatibility
with stabilization.
In \S\ref{sec:tqft-dictionary} we construct the realization functor $\mathcal R$ by choosing a canonical
representative matrix $B(T)$ for each token package $T$, and we prove the state space formula
(Theorem~\ref{thm:token-realization-agrees}) using the stable classification
(Proposition~\ref{prop:stable-classification}) together with the GKMP invariance theorem.
Section~\ref{sec:calibration-etanu} proves Theorem~\ref{thm:etanu-values} and fixes the resulting
normalization of the GKMP theory on two fundamental unimodular forms.
Finally, \S\ref{sec:rank-one-duality} establishes the rank--one time--reversal duality
(Theorem~\ref{thm:rank-one-time-reversal}) using spectral Grothendieck duality methods
(cf.\ Lurie~\cite{LurieSAG} and Mathew--Meier~\cite{MathewMeier15}).

\medskip\noindent
\textbf{Remark (implementation).}
While the constructions in \S\S3--6 are purely mathematical, we also provide a complete implementation
of the canonicalization and realization procedures, together with a reproducible test suite and a
collection of computed examples.
These computational materials are intended as supplementary verification and a tool for exploration,
and are not used in the logical proofs of the main theorems.

\section{Background and conventions}\label{sec:background}

\subsection{Surgery matrices, $2$--handlebodies, and Kirby moves}\label{subsec:surgery-kirby}

We recall the matrix model for integral surgery and fix the algebraic moves that we
treat as \emph{Kirby equivalence} throughout the paper. Standard references for Kirby
calculus include \cite{Kirby78,GompfStipsicz99}.

\subsubsection{Kirby matrices.}
Let $L=L_1\sqcup\cdots\sqcup L_n\subset S^3$ be an \emph{ordered} oriented framed link with
\emph{integer} framings. The associated \emph{Kirby matrix}
\[
A(L)=(a_{ij})\in \Mat_n(\Z)
\]
is the symmetric integral matrix defined by
\[
a_{ii}=\text{framing on }L_i,\qquad
a_{ij}=\lk(L_i,L_j)\quad (i\neq j),
\]
where $\lk$ denotes the oriented linking number in $S^3$.
We write $Y_A$ for the closed oriented $3$--manifold obtained by integral surgery on $L$
(with matrix $A=A(L)$), and $W_A$ for the $4$--manifold obtained from $B^4$ by attaching
$2$--handles along $L$ with the given framings. Then $\partial W_A=Y_A$.
We always regard elements of $\Z^n$ as \emph{column vectors}; transposes are taken accordingly.

\begin{remark}[Orientation convention]\label{rem:orientation}
Our convention is that $Y_A=\partial W_A$ carries the boundary orientation.
In particular, reversing orientation on $Y_A$ changes the sign of the torsion linking pairing
(see \S\ref{subsec:discriminant-linking} and \eqref{eq:orientation-reversal} below).
\end{remark}

\subsubsection{Kirby moves in matrix form.}
Kirby's theorem implies that two framed links yield orientation-preserving diffeomorphic surgery
manifolds if and only if they are related by a sequence of Kirby moves \cite{Kirby78}.
At the level of matrices, we encode these moves by the following operations on symmetric
integral matrices:
\begin{enumerate}
\item \emph{Unimodular congruence} (handle slides):
\[
A\ \longmapsto\ A' = P^{\mathsf T} A P,\qquad P\in \GL_n(\Z).
\]
\item \emph{$\pm 1$--stabilization} (blow-up / blow-down):
\[
A\ \longmapsto\ A\oplus[\varepsilon],\qquad \varepsilon\in\{+1,-1\},
\]
where $[\varepsilon]$ denotes the $1\times 1$ matrix $(\varepsilon)$.
\end{enumerate}
We write $A\sim_{\mathrm{Kirby}} A'$ if $A$ and $A'$ are related by a finite sequence of
these operations.

\subsubsection{Nondegenerate versus singular matrices.}
If $\det(A)\neq 0$, then $H_1(Y_A;\Z)$ is finite, hence $Y_A$ is a rational homology sphere.
This is the case in which the torsion linking pairing is defined on all of $H_1(Y_A;\Z)$.
When $\det(A)=0$, the group $H_1(Y_A;\Z)$ has a free part; in the full theory of Step~A we keep
the full Smith normal form data (hence the free rank) in addition to the torsion linking form.
For the present background section we focus on the nonsingular case, but the algebraic constructions
below are arranged so that the torsion part is isolated canonically.

\subsection{From a matrix to a discriminant group and linking pairing}\label{subsec:discriminant-linking}

Fix a symmetric matrix $A\in \Mat_n(\Z)$.

\subsubsection{The discriminant group.}
Define the \emph{discriminant group} (or \emph{cokernel group})
\[
G(A):=\coker\bigl(A:\Z^n\to \Z^n\bigr)=\Z^n/A\Z^n.
\]
If $\det(A)\neq 0$, then $G(A)$ is finite of order $|\det(A)|$ (e.g.\ by Smith normal form).
In \S\ref{subsubsec:topological-identification} we recall the standard identification
$G(A)\cong H_1(Y_A;\Z)$ coming from the handlebody $W_A$.

\subsubsection{The matrix linking form.}
Assume $\det(A)\neq 0$. The inverse $A^{-1}$ is a rational matrix, so for $x,y\in \Z^n$
the quantity $x^{\mathsf T}A^{-1}y$ lies in $\Q$. We define
\begin{equation}\label{eq:lambda-matrix}
\lambda_A\colon G(A)\times G(A)\to \Q/\Z,\qquad
\lambda_A(\bar x,\bar y):=x^{\mathsf T}A^{-1}y \bmod \Z,
\end{equation}
where $\bar x,\bar y$ denote the classes of $x,y$ in $G(A)$.

\begin{lemma}\label{lem:lambda-well-defined}
If $\det(A)\neq 0$, then $\lambda_A$ is well-defined and is a nonsingular symmetric bilinear pairing.
\end{lemma}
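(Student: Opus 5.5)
The plan is to verify the three properties directly from the formula \eqref{eq:lambda-matrix}, using only that $A$ is symmetric and integral with $\det(A)\neq 0$.

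\emph{Well-definedness.} Replacing $x$ by $x+Az$ with $z\in\Z^n$ changes $x^{\mathsf T}A^{-1}y$ by
\[
(Az)^{\mathsf T}A^{-1}y=z^{\mathsf T}A^{\mathsf T}A^{-1}y=z^{\mathsf T}y\in\Z,
\]
where we use $A^{\mathsf T}=A$. Similarly, replacing $y$ by $y+Aw$ changes the value by $x^{\mathsf T}w\in\Z$. So the class modulo $\Z$ depends only on $\bar x$ and $\bar y$.

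\emph{Bilinearity and symmetry.} Bilinearity is inherited from the bilinearity of $(x,y)\mapsto x^{\mathsf T}A^{-1}y$ on $\Z^n\times\Z^n$. For symmetry, note that $A^{-1}$ is symmetric because $(A^{-1})^{\mathsf T}=(A^{\mathsf T})^{-1}=A^{-1}$. Hence $x^{\mathsf T}A^{-1}y=y^{\mathsf T}A^{-1}x$ already in $\Q$.

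\emph{Nonsingularity.} This is the only step with content. The goal is to show that the adjoint map $G(A)\to\Hom(G(A),\Q/\Z)$ is injective. Since both groups are finite of the same order (by Pontryagin duality, $|\Hom(G,\Q/\Z)|=|G|$), injectivity implies bijectivity.

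Suppose $\lambda_A(\bar x,\bar y)=0$ for all $y$. Taking $y=e_i$ for each standard basis vector gives $x^{\mathsf T}A^{-1}e_i\in\Z$ for every $i$. Thus $v:=A^{-1}x\in\Z^n$, using symmetry of $A^{-1}$ to pass between rows and columns. Then $x=Av\in A\Z^n$, so $\bar x=0$.

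Alternatively, nonsingularity can be read off from the Smith normal form $A=UDV$ with $U,V\in\GL_n(\Z)$. This reduces the question to a diagonal pairing on $\bigoplus \Z/d_i$, but the direct argument above avoids this bookkeeping. The main care needed is to say precisely what nonsingular means (the adjoint map is an isomorphism) and to invoke the order count. I expect no real obstacle.
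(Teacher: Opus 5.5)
Your proposal is correct and follows essentially the same route as the paper: the same integer-shift computation for well-definedness, symmetry of $A^{-1}$ for symmetry, and for nonsingularity the observation that $A^{-1}x$ must be integral, so $x = A(A^{-1}x) \in A\Z^n$. Your explicit appeal to $|\Hom(G,\Q/\Z)|=|G|$ spells out the order count that the paper leaves implicit when it says ``injective implies bijective.''
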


\begin{proof}
\emph{Well-definedness.}
If $x'=x+Az$ for some $z\in \Z^n$, then
\[
(x')^{\mathsf T}A^{-1}y
=(x+Az)^{\mathsf T}A^{-1}y
=x^{\mathsf T}A^{-1}y + z^{\mathsf T}y,
\]
and $z^{\mathsf T}y\in \Z$, so the class modulo $\Z$ is unchanged.
Similarly, replacing $y$ by $y+Az'$ changes $x^{\mathsf T}A^{-1}y$ by the integer $x^{\mathsf T}z'$.
Hence \eqref{eq:lambda-matrix} depends only on $\bar x,\bar y$.

\emph{Bilinearity and symmetry.}
Bilinearity is immediate from the formula. Since $A$ is symmetric, so is $A^{-1}$,
hence $x^{\mathsf T}A^{-1}y=y^{\mathsf T}A^{-1}x$, and $\lambda_A$ is symmetric.

\emph{Nonsingularity.}
Let $\ad_{\lambda_A}\colon G(A)\to \Hom(G(A),\Q/\Z)$ be the adjoint map,
$\ad_{\lambda_A}(\bar x)(\bar y)=\lambda_A(\bar x,\bar y)$.
Suppose $\ad_{\lambda_A}(\bar x)=0$. Then $x^{\mathsf T}A^{-1}y\in\Z$ for all $y\in \Z^n$.
Put $u:=A^{-1}x\in \Q^n$. The condition says $u^{\mathsf T}y\in \Z$ for all $y\in \Z^n$,
so $u\in \Hom(\Z^n,\Z)\cong \Z^n$. Therefore $x=Au\in A\Z^n$, i.e.\ $\bar x=0$ in $G(A)$.
Thus $\ad_{\lambda_A}$ is injective. Since $G(A)$ is finite, injective implies bijective,
so $\lambda_A$ is nonsingular.
\end{proof}

\subsubsection{Topological interpretation.}\label{subsubsec:topological-identification}
We next relate $(G(A),\lambda_A)$ to the classical torsion linking form on $H_1(Y_A;\Z)$.

\begin{proposition}[Homology and the torsion linking pairing]\label{prop:topological-identification}
Let $A\in \Mat_n(\Z)$ be symmetric with $\det(A)\neq 0$. Then $H_1(Y_A;\Z)$ is finite, and there is a
canonical identification
\[
H_1(Y_A;\Z)\ \cong\ G(A),
\]
under which the torsion linking pairing $\lambda_{Y_A}$ on $H_1(Y_A;\Z)$ agrees with $\lambda_A$.
\end{proposition}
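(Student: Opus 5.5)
The plan is to compute $H_1(Y_A)$ from the handle decomposition of $W_A$, and then to compute the linking pairing by choosing explicit rational $2$--chains in $W_A$.

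\emph{Step 1 (homology).} Write $W=W_A$. Since $W$ is $B^4$ with $n$ two--handles attached, it has one $0$--handle and $n$ two--handles. Hence $H_2(W)\cong\Z^n$, with basis the classes $[\Sigma_i]$ obtained by capping off Seifert surfaces of the $L_i$ with the cores of the handles, and $H_1(W)=H_3(W)=0$. Lefschetz duality gives $H_2(W,Y)\cong H^2(W)\cong\Hom(H_2(W),\Z)\cong\Z^n$. The dual basis is represented by the cocores $D_i$ of the $2$--handles, and $\partial D_i$ is the meridian $\mu_i$ of $L_i$. In these bases the map $H_2(W)\to H_2(W,Y)$ is the intersection form, which is given by the linking matrix $A$. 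The long exact sequence
\[
H_2(W)\xrightarrow{A}H_2(W,Y)\xrightarrow{\partial}H_1(Y)\to H_1(W)=0
\]
therefore yields $H_1(Y)\cong\coker A=G(A)$, with $\bar e_i\mapsto[\mu_i]$. Equivalently, by Van Kampen or Mayer--Vietoris, $H_1(S^3\setminus L)$ is free on the meridians, and each surgery kills the framed longitude, which in homology equals $\sum_j a_{ij}\mu_j$. Either description shows that the identification is canonical, because it only depends on the meridians. When $\det A\neq 0$, $G(A)$ is finite.

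\emph{Step 2 (linking form).} Recall that $\lambda_Y(a,b)=\tfrac1m(\Sigma\cdot\beta)\bmod\Z$, where $m a=\partial\Sigma$ for a $2$--chain $\Sigma$ in $Y$ and $\beta$ represents $b$. I will use the standard equivalent four--dimensional description: if $x\in\Z^n$ represents $a$ through the relative class $\sum x_iD_i$, then, since $\det A\neq0$, there is a rational class $u=A^{-1}x$ with $\sum u_i\Sigma_i$ having the same boundary image. More concretely, the relative cycle $\sum x_iD_i - \sum u_i\Sigma_i$ becomes a rational chain whose boundary is a $1$--cycle representing $a$, and it pushes into $Y$ to give a rational $2$--chain bounding $a$. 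Intersecting this chain with $\partial(\sum y_jD_j)=\sum y_j\mu_j$ computes the pairing with $b$. The computation reduces to two facts: $\Sigma_i\cdot D_j=\delta_{ij}$, and the cocores $D_i$ are disjoint from each other. Together these give
\[
\lambda_Y(a,b)=\pm\,x^{\mathsf T}A^{-1}y \bmod \Z.
\]

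\emph{Main obstacle: the sign.} The hardest part is pinning down the overall sign, and the sign has to match Remark~\ref{rem:orientation}. To handle it, I would first check the formula directly on the lens space given by $A=(p)$. There the meridian $\mu$ bounds $\tfrac1p$ of a meridian disk of the solid torus glued in, and its pushoff meets that disk once, so the pairing is $1/p$ with the boundary orientation. The general case then follows from the same local computation near each handle, using bilinearity. Finally, well-definedness and nonsingularity on the algebraic side are already provided by Lemma~\ref{lem:lambda-well-defined}, so nothing further needs to be checked there.
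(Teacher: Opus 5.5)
Your Step~1 is the paper's argument. Your Step~2 is a chain-level derivation of the formula $\lambda_Y(\alpha,\beta)=u\cdot v \bmod \Z$, which the paper simply quotes from Wall, and that outline is sound. One small repair: take the representative of $b$ to be $\sum_j y_j\,\partial D_j'$ for parallel copies $D_j'$ of the cocores, so that it misses $\partial c$.

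The genuine gap is the step you single out as the main obstacle: your sign calibration on $A=(p)$ is wrong. The meridian $\mu$ does not bound $\tfrac1p$ of the meridian disk $\Delta$ of the glued-in solid torus, because $\partial\Delta$ is the framing curve $\ell+p\mu$. A chain with boundary $p\mu$ is $\Delta-S$ (plus a correction inside the torus $\partial N(K)$), where $S$ is the Seifert disk of the unknot in the exterior. Use your definition $\lambda_Y(a,b)=\tfrac1m\,\Sigma\cdot\beta$ and the outward-normal-first orientation on $Y_A=\partial W_A$. A pushoff of $\mu$ into the exterior meets only $-S$, so it contributes $-\lk(K,\mu)=-1$. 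A pushoff into the solid torus meets only $\Delta$, with sign $\ell\cdot\mu=-1$ on $\partial N(K)$. Either way $\lambda_Y(\mu,\mu)=-1/p$, not $+1/p$.

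The same sign comes out of your general Step~2 once orientations are tracked. So you need neither a separate calibration nor an appeal to ``local computations near each handle''. Let $c_Y$ be a rational $2$--chain in $Y$ with $\partial c_Y=\sum_i x_i\mu_i$, pushed into a collar, and let $\hat D_i$ be the cocores truncated at that level. Then:
\begin{itemize}
\item $Z=\sum_i x_i\hat D_i-c_Y$ is a closed cycle representing $\sum_i u_i\Sigma_i$, where $u=A^{-1}x$;
\item the $\hat D_i$ miss the $D_j'$;
\item in the collar, the intersection of $c_Y$ with $D_j'$ equals the intersection of $c_Y$ with $\mu_j'$ in $Y$.
\end{itemize}
Hence
\[
x^{\mathsf T}A^{-1}y \;=\; Z\cdot\sum_j y_jD_j' \;=\; -\,c_Y\cdot\beta .
\]
With the conventions you state, your argument therefore proves $\lambda_{Y_A}(\bar x,\bar y)=-x^{\mathsf T}A^{-1}y$, i.e.\ $(H_1(Y_A;\Z),\lambda_{Y_A})\cong(G(A),-\lambda_A)$. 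That is not the proposition as written.

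The paper gets the $+$ sign only because it takes the four-dimensional expression $u\cdot v \bmod \Z$, cited from Wall, as its description of $\lambda_Y$. That is effectively the opposite sign convention for the linking form. To reach the stated conclusion you must either adopt that convention explicitly in place of $\tfrac1m\,\Sigma\cdot\beta$, or carry the minus sign through.
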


\begin{proof}
We first identify $H_1(Y_A;\Z)$ with $G(A)$.
The handlebody $W_A$ has a handle decomposition with a single $0$--handle and $n$ $2$--handles,
so $H_1(W_A;\Z)=0$ and $H_2(W_A;\Z)\cong \Z^n$.
Let $e_i\in H_2(W_A;\Z)$ denote the classes of the $2$--handle cores capped by Seifert surfaces
for $L_i$ in $S^3$ (as in \cite[\S~5.2]{GompfStipsicz99}); then $\{e_i\}$ is a basis and the intersection form
in this basis is represented by the Kirby matrix $A$.

Consider the long exact sequence of the pair $(W_A,Y_A)$:
\[
H_2(W_A;\Z)\xrightarrow{\ \iota\ } H_2(W_A,Y_A;\Z)\xrightarrow{\ \partial\ } H_1(Y_A;\Z)\to H_1(W_A;\Z)=0,
\]
so $H_1(Y_A;\Z)\cong \coker(\iota)$.
By Poincar\'e--Lefschetz duality,
\[
H_2(W_A,Y_A;\Z)\ \cong\ H^2(W_A;\Z)\ \cong\ \Hom\bigl(H_2(W_A;\Z),\Z\bigr).
\]
Under this identification, the map $\iota$ sends $x\in H_2(W_A;\Z)$ to the functional
$(x\cdot -)\in \Hom(H_2(W_A;\Z),\Z)$, i.e.\ it is the adjoint of the intersection pairing.
In the basis $\{e_i\}$, this map is represented by the matrix $A$.
Therefore
\[
H_1(Y_A;\Z)\ \cong\ \coker\bigl(A:\Z^n\to \Z^n\bigr)\ =\ G(A).
\]
Since $\det(A)\neq 0$, this group is finite, hence $Y_A$ is a rational homology sphere.

We now compare linking pairings.
For an oriented rational homology sphere $Y=\partial W$ with $H_1(W;\Z)=0$, there is a standard
description of the torsion linking form on $H_1(Y;\Z)$ in terms of the (rational) inverse
intersection pairing on $H_2(W,Y;\Z)$; see for example \cite[\S~1]{Wall63}.
Concretely, the map $\iota\colon H_2(W;\Z)\to H_2(W,Y;\Z)$ becomes an isomorphism after tensoring with $\Q$.
Given classes $\alpha,\beta\in H_1(Y;\Z)\cong \coker(\iota)$, choose lifts
$\widetilde\alpha,\widetilde\beta\in H_2(W,Y;\Z)$ and let
$u,v\in H_2(W;\Q)$ be the unique rational classes with $\iota(u)=\widetilde\alpha$ and $\iota(v)=\widetilde\beta$.
Then the quantity $u\cdot v\in \Q$ depends on the choices only modulo $\Z$, and
\[
\lambda_Y(\alpha,\beta)\ =\ u\cdot v \bmod \Z\ \in\ \Q/\Z.
\]
In our handlebody $W_A$, the basis $\{e_i\}$ identifies $H_2(W_A;\Z)$ with $\Z^n$, and the dual basis
identifies $H_2(W_A,Y_A;\Z)$ with $\Z^n$ so that $\iota$ is represented by $A$.
Thus, if $\widetilde\alpha,\widetilde\beta$ correspond to $x,y\in \Z^n$ in $H_2(W_A,Y_A;\Z)$,
then $u=A^{-1}x$ and $v=A^{-1}y$ in $H_2(W_A;\Q)\cong \Q^n$, and
\[
u\cdot v\ =\ (A^{-1}x)^{\mathsf T}A(A^{-1}y)\ =\ x^{\mathsf T}A^{-1}y.
\]
Modulo $\Z$, this is exactly \eqref{eq:lambda-matrix}. Hence $\lambda_{Y_A}$ agrees with $\lambda_A$
under the identification $H_1(Y_A;\Z)\cong G(A)$.
\end{proof}

\subsubsection{Kirby invariance of the isometry class.}
Although $\lambda_A$ is defined from a chosen matrix $A$, its isometry class depends only on $Y_A$.

\begin{proposition}[Congruence and stabilization invariance]\label{prop:kirby-invariance-lambda}
Let $A\in \Mat_n(\Z)$ be symmetric with $\det(A)\neq 0$.
\begin{enumerate}
\item If $A'=P^{\mathsf T}AP$ with $P\in\GL_n(\Z)$, then the map
\[
\phi_P\colon G(A)\to G(A'),\qquad \phi_P(\bar x)=\overline{P^{\mathsf T}x},
\]
is a well-defined group isomorphism and an isometry:
$\lambda_{A'}\!\bigl(\phi_P(\bar x),\phi_P(\bar y)\bigr)=\lambda_A(\bar x,\bar y)$.
\item If $A''=A\oplus[\varepsilon]$ with $\varepsilon\in\{+1,-1\}$, then the canonical identification
$G(A'')\cong G(A)$ induced by projection $\Z^{n+1}\to \Z^n$ is an isometry:
$\lambda_{A''}\cong \lambda_A$.
\end{enumerate}
Consequently, the isometry class of $(G(A),\lambda_A)$ is invariant under $\sim_{\mathrm{Kirby}}$.
\end{proposition}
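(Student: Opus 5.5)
For part (1), I would check the claims directly from the defining formula \eqref{eq:lambda-matrix}, with Lemma~\ref{lem:lambda-well-defined} handling well-definedness on each side.

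\emph{Isomorphism.} Set $A'=P^{\mathsf T}AP$. Then
\[
P^{\mathsf T}(A\Z^n)=P^{\mathsf T}AP\,(P^{-1}\Z^n)=A'\Z^n ,
\]
because $P^{-1}\Z^n=\Z^n$ for $P\in\GL_n(\Z)$. Hence $x\mapsto P^{\mathsf T}x$ is an automorphism of $\Z^n$ carrying $A\Z^n$ exactly onto $A'\Z^n$. It therefore descends to an isomorphism $\phi_P\colon G(A)\to G(A')$, whose inverse is induced by $(P^{\mathsf T})^{-1}$.

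\emph{Isometry.} Since $\det A'=\det(A)\det(P)^2\neq0$, the form $\lambda_{A'}$ is defined. From $(A')^{-1}=P^{-1}A^{-1}(P^{\mathsf T})^{-1}$ we get
\[
(P^{\mathsf T}x)^{\mathsf T}(A')^{-1}(P^{\mathsf T}y)=x^{\mathsf T}PP^{-1}A^{-1}(P^{\mathsf T})^{-1}P^{\mathsf T}y=x^{\mathsf T}A^{-1}y .
\]
This is an identity in $\Q$, so it holds modulo $\Z$ as well, and $\phi_P$ is an isometry.

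For part (2), take $A''=A\oplus[\varepsilon]$. Its image is $A''\Z^{n+1}=A\Z^n\oplus\varepsilon\Z=A\Z^n\oplus\Z$, since $\varepsilon=\pm1$. The projection $\Z^{n+1}\to\Z^n$ therefore induces an isomorphism $G(A'')\to G(A)$. Its inverse is induced by the inclusion $x\mapsto(x,0)$, and that is the direction I would use to compare the forms.

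Next, $(A'')^{-1}=A^{-1}\oplus[\varepsilon]$. For $(x,s),(y,t)\in\Z^{n+1}$ this gives
\[
(x,s)^{\mathsf T}(A'')^{-1}(y,t)=x^{\mathsf T}A^{-1}y+\varepsilon st .
\]
The extra term $\varepsilon st$ is an integer, so modulo $\Z$ the value equals $\lambda_A(\bar x,\bar y)$, where $\bar x,\bar y$ are the projected classes. Hence the identification is an isometry.

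The consequence follows by induction on the length of a chain of Kirby operations. Each operation preserves $\det\neq0$, since $\det$ is multiplied by $\det(P)^2=1$ or by $\varepsilon=\pm1$. Each step is therefore covered by (1), by (2), or by the inverse of (2), and isometries compose.

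None of these steps is a real obstacle. The only point needing care is making sure $\phi_P$ uses $P^{\mathsf T}$, not $P$, so that it matches the column-vector convention and $A\Z^n$ is carried to $A'\Z^n$.
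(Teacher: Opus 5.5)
Your proposal is correct and follows essentially the same route as the paper. For (1), both arguments descend $x\mapsto P^{\mathsf T}x$ using $P^{\mathsf T}A\Z^n=A'\Z^n$ and then apply $(P^{\mathsf T}AP)^{-1}=P^{-1}A^{-1}P^{-\mathsf T}$; for (2), both use the block inverse $A^{-1}\oplus[\varepsilon]$, whose extra term is integral. Your explicit induction over a Kirby chain, noting that $\det\neq 0$ is preserved in both directions, fills in the ``consequently'' step that the paper leaves implicit.
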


\begin{proof}
(1) If $x'\equiv x \pmod{A\Z^n}$, say $x'=x+Az$, then
\[
P^{\mathsf T}x' = P^{\mathsf T}x + P^{\mathsf T}Az
= P^{\mathsf T}x + P^{\mathsf T}AP(P^{-1}z)
= P^{\mathsf T}x + A'(P^{-1}z),
\]
and $P^{-1}z\in \Z^n$ since $P$ is unimodular. Thus $\overline{P^{\mathsf T}x'}=\overline{P^{\mathsf T}x}$ in $G(A')$,
so $\phi_P$ is well-defined. It is bijective with inverse induced by $(P^{-1})^{\mathsf T}$.

For the pairing, note that $(P^{\mathsf T}AP)^{-1}=P^{-1}A^{-1}P^{-{\mathsf T}}$ over $\Q$.
Therefore, for lifts $x,y\in \Z^n$ we compute
\[
\lambda_{A'}\!\bigl(\overline{P^{\mathsf T}x},\overline{P^{\mathsf T}y}\bigr)
=
(P^{\mathsf T}x)^{\mathsf T}(A')^{-1}(P^{\mathsf T}y)\bmod \Z
=
x^{\mathsf T}A^{-1}y\bmod \Z
=
\lambda_A(\bar x,\bar y).
\]

(2) Write $A''=A\oplus[\varepsilon]$ and identify $\Z^{n+1}\cong \Z^n\oplus \Z$.
Then
\[
G(A'')
=
\frac{\Z^n\oplus \Z}{A\Z^n\oplus \varepsilon\Z}
\cong
\frac{\Z^n}{A\Z^n}\ \oplus\ \frac{\Z}{\varepsilon\Z}
\cong
G(A)\oplus 0
\cong
G(A).
\]
Moreover $(A\oplus[\varepsilon])^{-1}=A^{-1}\oplus[\varepsilon^{-1}]$ in $\Mat_{n+1}(\Q)$, and since
$\varepsilon^{-1}=\varepsilon\in\Z$, the last summand contributes only integral values in \eqref{eq:lambda-matrix},
hence vanishes in $\Q/\Z$. Thus the induced pairing on $G(A'')\cong G(A)$ is exactly $\lambda_A$.
\end{proof}

\begin{remark}[The torsion linking form for a singular surgery matrix]\label{rem:torsion-linking-singular}
When \(\det(A)=0\), the cokernel \(G(A)=\coker(A)\) has a free summand.
Nevertheless, the torsion subgroup \(\Tor(G(A))\) carries a canonical nonsingular linking pairing:
choose \(P\in \GL_n(\Z)\) such that
\[
P^{\mathsf T}AP \ =\ \begin{pmatrix}0 & 0\\ 0 & A_{\mathrm{red}}\end{pmatrix}
\]
with \(A_{\mathrm{red}}\) nonsingular, identify \(\Tor(G(A))\cong G(A_{\mathrm{red}})\), and transport the pairing \(\lambda_{A_{\mathrm{red}}}\) defined by \eqref{eq:lambda-matrix}.
Different choices of such a reduction are related by unimodular congruence on the nonsingular block, so Proposition~\ref{prop:kirby-invariance-lambda}(1) shows that the resulting pairing is well-defined up to isometry; it agrees with the classical torsion linking form on \(\Tor H_1(Y_A;\Z)\).
We will denote this torsion pairing again by \(\lambda_A\).
For a detailed discussion of this construction in the context of framed link surgery, see for instance
\cite[\S1]{KawauchiKojima80}.
\end{remark}

\subsubsection{Orientation reversal.}
If $A$ is symmetric with $\det(A)\neq 0$, then $-A$ is also symmetric and $(-A)^{-1}=-A^{-1}$.
Thus
\begin{equation}\label{eq:orientation-reversal}
\bigl(G(-A),\lambda_{-A}\bigr)\ \cong\ \bigl(G(A),-\lambda_A\bigr).
\end{equation}
Topologically, this agrees with the fact that reversing orientation on a rational homology sphere
negates the torsion linking pairing; see \cite[\S~1]{Wall63}.

\subsection{Linking pairings as algebraic input}\label{subsec:linking-algebra}

A \emph{(torsion) linking pairing} is a pair $(G,\lambda)$ where $G$ is a finite abelian group and
\[
\lambda\colon G\times G\to \Q/\Z
\]
is a symmetric bilinear map such that the adjoint $\ad_\lambda\colon G\to \Hom(G,\Q/\Z)$ is an isomorphism.
Two linking pairings are equivalent if they are \emph{isometric}, i.e.\ related by a group isomorphism
preserving $\lambda$.

Orthogonal direct sum defines a commutative monoid structure:
\[
(G_1,\lambda_1)\ \oplus\ (G_2,\lambda_2)
:=
(G_1\oplus G_2,\ \lambda_1\oplus\lambda_2),
\]
where cross-terms vanish. Prime decomposition gives a canonical orthogonal splitting
\[
(G,\lambda)\ \cong\ \bigoplus_{p}\ (G_{(p)},\lambda_{(p)}),
\]
where $G_{(p)}$ is the $p$--primary subgroup.
Classification of isometry classes is classical.
For odd primes, Wall~\cite{Wall63} describes generators and relations via discriminant forms of lattices.
Kawauchi--Kojima~\cite{KawauchiKojima80} gives a complete algebraic classification for linkings arising from
closed oriented $3$--manifolds, including the $2$--primary relations.
For explicit normal forms in the $2$--group case convenient for our later constructions,
we also use Miranda's refinement~\cite{Miranda84}.

\subsection{Stable \texorpdfstring{$\TMF$}{TMF}-modules, shifts, and duals}\label{subsec:tmf-modules}

We use $\TMF$ to denote the $E_\infty$ ring spectrum of topological modular forms.
Write $\Mod_{\TMF}$ for the stable symmetric monoidal category of (right) $\TMF$--module spectra,
and $\Ho(\Mod_{\TMF})$ for its homotopy category.
Our conventions are compatible with the TMF--based $(3{+}1)$--dimensional TQFT prototype of
Gukov--Krushkal--Meier--Pei~\cite{GKMP25}.

\subsubsection{Shifts and triangles.}
For a $\TMF$--module $M$, we write $M[1]$ for the suspension $\Sigma M$ and more generally
$M[k]=\Sigma^k M$ for $k\in\Z$.
Cofiber sequences in $\Mod_{\TMF}$ are written as exact triangles in $\Ho(\Mod_{\TMF})$.

\subsubsection{Internal Hom and duals.}
Let $\underline{\Hom}_{\TMF}(-,-)$ denote the internal mapping spectrum in $\Mod_{\TMF}$.
For a $\TMF$--module $M$ we define its \emph{$\TMF$--linear dual} by
\[
M^\vee := \underline{\Hom}_{\TMF}(M,\TMF).
\]
When $M$ is dualizable (for instance, if $M$ is perfect or built from a finite cell $\TMF$--module),
there are evaluation and coevaluation maps exhibiting $M^\vee$ as the dual object in the symmetric monoidal category.
We will only apply $(\cdot)^\vee$ in contexts where dualizability is guaranteed by construction, as in~\cite{GKMP25};
for general background on duality in stable homotopy theoretic settings see~\cite{HPS97}.

\subsubsection{Notation for the TQFT and the realization dictionary.}
Following~\cite{GKMP25}, we denote by $\cZ_{\TMF}$ the $(3{+}1)$--dimensional $\TMF$--valued TQFT under consideration.
(We reserve $\Z$ exclusively for the integers.)
In this paper, Step~A associates to a Kirby matrix $A$ a \emph{canonical token package} $\Canon(A)$ encoding:
\begin{enumerate}
\item the full Smith normal form data (in particular the free rank of $H_1(Y_A;\Z)$), and
\item the isometry class of the torsion linking pairing $(\Tor H_1(Y_A;\Z),\lambda_{Y_A})$
in a prime-sorted normal form.
\end{enumerate}
Step~B then defines a realization dictionary (a functor) $\mathcal R$ sending local tokens to explicit $\TMF$--modules,
so that the Step~A output can be functorially compared with the state space assignment
\[
Y\ \longmapsto\ \cZ_{\TMF}(Y)\ \in\ \Ho(\Mod_{\TMF}).
\]

\subsection{Running examples and a notation table}\label{subsec:running-examples}

\subsubsection{Benchmark surgery matrices}\label{subsubsec:benchmark-matrices}
We fix the following concrete matrices as running examples. They will be used repeatedly to illustrate
the constructions in Steps~A and~B.

\begin{enumerate}
\item \emph{The trivial case.}
For $A=[1]$, surgery yields $Y_A\cong S^3$, hence $H_1(Y_A;\Z)=0$ and $\lambda_A=0$.

\item \emph{Rank-one (lens space) family.}
For $A=[n]$ with $n\neq 0$, one has $G(A)\cong \Z/|n|$, and
\[
\lambda_A(\bar 1,\bar 1)=\frac{1}{n}\bmod \Z.
\]
Up to our orientation conventions, $Y_{[n]}$ is the lens space $L(n,1)$.
By \eqref{eq:orientation-reversal}, replacing $n$ by $-n$ negates the pairing.

\item \emph{A $2$--primary hyperbolic block.}
Let
\[
A_{\mathrm{hyp}}=\begin{pmatrix}0&2\\2&0\end{pmatrix}.
\]
Then $G(A_{\mathrm{hyp}})\cong (\Z/2)^2$, and $\lambda_{A_{\mathrm{hyp}}}$ is the standard hyperbolic
nonsingular symmetric form on $(\Z/2)^2$ (distinguished from the diagonal form coming from $\diag(2,2)$).

\item \emph{A mixed-prime example of small order.}
Let $A=\diag(2,6)$. Then $G(A)\cong \Z/2\oplus \Z/6$ has order $12$, and its $2$--primary and
$3$--primary layers provide a minimal testbed for prime splitting in Step~A.
\end{enumerate}

\subsubsection{Notation table.}
For quick reference, we summarize a few notations fixed above:
\[
\begin{array}{ll}
A & \text{symmetric integer surgery matrix (Kirby matrix)}\\
W_A & \text{$2$--handlebody obtained from $B^4$ by attaching $2$--handles with matrix $A$}\\
Y_A & \partial W_A,\ \text{the resulting closed oriented $3$--manifold}\\
G(A) & \coker(A:\Z^n\to\Z^n)\cong H_1(Y_A;\Z)\\
\lambda_A & \text{matrix linking form on $G(A)$ given by \eqref{eq:lambda-matrix} (if $\det A\neq 0$)}\\
\Canon(A) & \text{Step~A canonical token package encoding SNF + linking isometry class}\\
\mathcal R & \text{Step~B realization dictionary from tokens to $\TMF$--modules}\\
\cZ_{\TMF} & \text{the $\TMF$--based $(3{+}1)$--TQFT functor of \cite{GKMP25}}\\
\end{array}
\]

\section{Step A: Canonical invariants for torsion linking pairings}
\label{sec:stepA-canon}

This section isolates the purely algebraic datum that controls the torsion-sensitive
part of the $(3{+}1)$--dimensional $\TMF$--TQFT state space in the framework of
Gukov--Krushkal--Meier--Pei: the nonsingular torsion linking pairing of a closed
oriented $3$--manifold.
Concretely, we begin by developing the torsion classification in the nonsingular case:
$A\in M_n(\Z)$ with $\det(A)\neq 0$ presents a rational homology sphere $Y_A$.
The induced pairing $(G(A),\lambda_A)$ is a classical invariant of $Y_A$, well-defined
up to isometry and unchanged by Kirby moves (integral congruence and $\pm1$ stabilization).
When $\det(A)=0$, the cokernel has a free part; we apply the same constructions to the
canonical torsion pairing on $\Tor(G(A))$ (Remark~\ref{rem:torsion-linking-singular}) and record
$b_1(A)$ as part of the Step~A output.

The goal of Step~A is not to introduce a new invariant, but to choose a \emph{canonical}
description of this isometry class that can be used unambiguously in later constructions.
Following the prime-by-prime classification of linking pairings (Wall, Kawauchi--Kojima,
and Miranda), we extract from $(G(A),\lambda_A)$ an explicit tuple of invariants,
denoted $\Canon(A)$, which is \emph{complete} for the isometry class.
For notational economy, we also use an equivalent “token list” $\Tok(A)$, obtained by
flattening $\Canon(A)$ into a prime-ordered list of local symbols.
The tokens carry no additional structure; they are simply an encoding of the same
canonical invariants.

\subsection{From an integral surgery matrix to a linking pairing}
\label{subsec:matrix-to-linking}

Let $A\in M_n(\Z)$ be a symmetric matrix with $\det(A)\neq 0$.
Write
\[
G(A)\;=\;\coker(A)\;=\;\Z^n/A\Z^n,
\]
a finite abelian group of order $|G(A)|=|\det(A)|$.
We define a pairing
\begin{equation}\label{eq:def-linking-from-A}
\lambda_A \colon G(A)\times G(A)\longrightarrow \Q/\Z,
\qquad
\lambda_A\big([x],[y]\big)\;=\;x^{\mathsf T}A^{-1}y\ \bmod\ \Z,
\end{equation}
where $x,y\in \Z^n$ are representatives.

\begin{lemma}\label{lem:linking-well-defined}
The map $\lambda_A$ is a well-defined, nonsingular, symmetric bilinear pairing.
In particular $(G(A),\lambda_A)$ is a linking pairing in the sense of
Wall and Kawauchi--Kojima.
\end{lemma}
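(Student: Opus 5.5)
This is the same statement as Lemma~\ref{lem:lambda-well-defined}, since \eqref{eq:def-linking-from-A} is literally the formula \eqref{eq:lambda-matrix}. The plan is therefore to reduce to that lemma and, for completeness, recall the three steps of its argument in order: well-definedness, bilinearity and symmetry, and nonsingularity.

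\emph{Well-definedness.} Replacing a representative $x$ by $x+Az$ with $z\in\Z^n$ changes $x^{\mathsf T}A^{-1}y$ by
\[
z^{\mathsf T}A^{\mathsf T}A^{-1}y=z^{\mathsf T}y\in\Z,
\]
using $A^{\mathsf T}=A$. The same computation in the second variable gives invariance under $y\mapsto y+Az'$. Hence the class modulo $\Z$ depends only on $[x]$ and $[y]$.

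\emph{Bilinearity and symmetry.} Bilinearity is inherited from bilinearity of $(x,y)\mapsto x^{\mathsf T}A^{-1}y$ on $\Z^n\times\Z^n$. Symmetry follows because $A^{-1}=(A^{\mathsf T})^{-1}=(A^{-1})^{\mathsf T}$, so $x^{\mathsf T}A^{-1}y=y^{\mathsf T}A^{-1}x$.

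\emph{Nonsingularity.} This is the only step with any content. I would show that the adjoint
\[
\ad_{\lambda_A}\colon G(A)\to\Hom(G(A),\Q/\Z)
\]
is injective. Suppose $[x]$ pairs trivially with everything, and set $u=A^{-1}x\in\Q^n$. Then $u^{\mathsf T}y\in\Z$ for all $y\in\Z^n$. Testing on the standard basis vectors gives $u\in\Z^n$, so $x=Au\in A\Z^n$ and $[x]=0$.

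Bijectivity then follows by counting. $G(A)$ is finite of order $|\det A|$, and $\Hom(G(A),\Q/\Z)$ is noncanonically isomorphic to $G(A)$, so it has the same order. An injective map between finite sets of the same size is a bijection.

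With these three properties, $(G(A),\lambda_A)$ satisfies the definition of a linking pairing in \S\ref{subsec:linking-algebra}. That definition is the one used by Wall and Kawauchi--Kojima, which gives the final assertion.
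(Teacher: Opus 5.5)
Your proposal is correct and follows the paper's proof essentially step for step: the same $x\mapsto x+Az$ computation for well-definedness, symmetry from $A^{-1}$ being symmetric, and injectivity of the adjoint via $A^{-1}x\in\Z^n$, followed by a finiteness argument for bijectivity. The only difference is that you state the count $|\Hom(G(A),\Q/\Z)|=|G(A)|$ explicitly, whereas the paper uses it tacitly when it passes from injective to bijective.
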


\begin{proof}
\emph{Well-definedness.}
If $x'=x+Az$ for some $z\in \Z^n$, then
\[
{x'}^{\mathsf T}A^{-1}y
=(x+Az)^{\mathsf T}A^{-1}y
=x^{\mathsf T}A^{-1}y+z^{\mathsf T}y,
\]
and $z^{\mathsf T}y\in\Z$, so the class in $\Q/\Z$ is unchanged.
The same computation in the second variable shows independence of the choice of $y$.

\emph{Bilinearity and symmetry.}
Bilinearity is immediate from the formula, and symmetry holds because $A^{-1}$ is symmetric.

\emph{Nonsingularity.}
Consider the adjoint homomorphism
\[
\ad_{\lambda_A}\colon G(A)\longrightarrow \Hom(G(A),\Q/\Z),
\qquad
[x]\longmapsto\big([y]\mapsto \lambda_A([x],[y])\big).
\]
Let $x\in \Z^n$ represent an element in the kernel.  Then
$x^{\mathsf T}A^{-1}y\in \Z$ for all $y\in \Z^n$.
Equivalently, $A^{-{\mathsf T}}x=A^{-1}x\in \Z^n$.
Write $A^{-1}x=z\in\Z^n$. Then $x=Az$, so $[x]=0$ in $G(A)$.
Thus $\ad_{\lambda_A}$ is injective. Since $G(A)$ is finite, it is bijective.
\end{proof}

In the topological situation, $A$ is the linking matrix of a framed link describing
a $2$--handlebody whose boundary is a rational homology sphere $M$; then
$G(A)\cong H_1(M;\Z)$ and $\lambda_A$ agrees with the classical torsion
linking form $\lambda_M$ defined by Poincar\'e duality (see, e.g.,
\cite[\S~5.2]{GompfStipsicz99} and \cite{Kirby78}).

\subsection{Kirby invariance: congruence and stabilization}
\label{subsec:kirby-invariance}

Two features of Kirby calculus are relevant for Step~A.
First, handle-slides correspond to \emph{integral congruence}
$A\mapsto P^{\mathsf T}AP$ with $P\in GL_n(\Z)$.
Second, $\pm 1$--stabilization corresponds to $A\mapsto A\oplus (\pm 1)$.
Both operations leave the boundary $3$--manifold unchanged \cite{Kirby78}.

\begin{lemma}\label{lem:congruence-invariance}
If $A' = P^{\mathsf T}AP$ with $P\in GL_n(\Z)$, then there is an isomorphism
$f\colon G(A)\to G(A')$ such that $\lambda_{A'}(f(x),f(y))=\lambda_A(x,y)$ for all
$x,y\in G(A)$.  Hence the isometry class of $(G(A),\lambda_A)$ depends only on
the Kirby-equivalence class of $A$.
\end{lemma}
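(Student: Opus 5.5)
The plan is to write down the isometry explicitly, following Proposition~\ref{prop:kirby-invariance-lambda}(1). Let $A'=P^{\mathsf T}AP$ with $P\in GL_n(\Z)$, and define
\[
f\colon G(A)\to G(A'),\qquad f([x])=[P^{\mathsf T}x].
\]

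First, $f$ is well defined. If $x'=x+Az$, then
\[
P^{\mathsf T}x'=P^{\mathsf T}x+(P^{\mathsf T}AP)(P^{-1}z),
\]
and $P^{-1}z\in\Z^n$ because $P$ is unimodular. Second, $f$ is an isomorphism. The matrix $(P^{-1})^{\mathsf T}$ defines a map $G(A')\to G(A)$ by the same argument, since
\[
A=(P^{-1})^{\mathsf T}A'P^{-1},
\]
and the two maps are mutually inverse on representatives.

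Third, $f$ preserves the pairings. Over $\Q$ we have $(A')^{-1}=P^{-1}A^{-1}P^{-\mathsf T}$, so
\[
(P^{\mathsf T}x)^{\mathsf T}(A')^{-1}(P^{\mathsf T}y)=x^{\mathsf T}PP^{-1}A^{-1}P^{-\mathsf T}P^{\mathsf T}y=x^{\mathsf T}A^{-1}y .
\]
Reducing modulo $\Z$ gives $\lambda_{A'}(f(x),f(y))=\lambda_A(x,y)$. This uses only that the pairing is well defined on classes, which is Lemma~\ref{lem:linking-well-defined}.

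For the ``hence'' clause, Kirby equivalence is generated by congruence and by $\pm1$ stabilization. Congruence is handled above. For stabilization I would use Proposition~\ref{prop:kirby-invariance-lambda}(2): the summand $\Z/\varepsilon\Z$ of $G(A\oplus[\varepsilon])$ vanishes, and the block $\varepsilon^{-1}\in\Z$ contributes nothing modulo $\Z$. A finite chain of moves then composes these isometries into a single isometry.

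The main subtlety is the singular case. There I would apply the above to $A_{\mathrm{red}}$ as in Remark~\ref{rem:torsion-linking-singular}. The point to check is that a congruence of $A$ induces a congruence of the reduced blocks, and I would get this from the fact that $P$ preserves the radical $\ker A$. This last step is the one I expect to need the most care; the rest is routine.
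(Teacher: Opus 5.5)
Your proposal is correct and follows the paper's proof exactly: it uses the same map $f([x])=[P^{\mathsf T}x]$, the same well-definedness computation via $A'(P^{-1}z)$, and the same identity $(A')^{-1}=P^{-1}A^{-1}P^{-\mathsf T}$ for the pairing. Your remarks on stabilization and on the singular case go beyond what the paper's proof covers, since the paper assumes $\det A\neq 0$ here. For the singular case there is a simpler route: if $Q^{\mathsf T}AQ=0\oplus A_{\mathrm{red}}$, then $(P^{-1}Q)^{\mathsf T}A'(P^{-1}Q)=0\oplus A_{\mathrm{red}}$, so $A$ and $A'$ share the same reduced block. Also, strictly speaking it is $P^{-1}$, not $P$, that carries $\ker A$ onto $\ker A'$.
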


\begin{proof}
Define \(f([x])=[P^{\mathsf T}x]\).
If \(x'=x+Az\), then
\[
P^{\mathsf T}x' \;=\; P^{\mathsf T}x + P^{\mathsf T}Az
\;=\; P^{\mathsf T}x + (P^{\mathsf T}AP)(P^{-1}z),
\]
so \(f([x'])=f([x])\) in \(G(A')=\coker(P^{\mathsf T}AP)\); thus \(f\) is well-defined.
Since \(P^{\mathsf T}\in GL_n(\Z)\), the map \(f\) is an isomorphism.
Using \((P^{\mathsf T}AP)^{-1}=P^{-1}A^{-1}P^{-{\mathsf T}}\), we compute
\[
\lambda_{A'}\bigl(f([x]),f([y])\bigr)
=(P^{\mathsf T}x)^{\mathsf T}(P^{\mathsf T}AP)^{-1}(P^{\mathsf T}y)
=x^{\mathsf T}A^{-1}y
\quad\text{in }\Q/\Z,
\]
so \(f\) is an isometry.
\end{proof}

\begin{lemma}\label{lem:stabilization-invariance}
Let $A_\pm = A\oplus(\pm 1)$.  Then $G(A_\pm)\cong G(A)$ and under this identification
$\lambda_{A_\pm}$ agrees with $\lambda_A$.
\end{lemma}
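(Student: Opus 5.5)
The plan is to make the identification explicit at the level of representatives and then compute the pairing directly; this is essentially part (2) of Proposition~\ref{prop:kirby-invariance-lambda}, repeated here in the numbering of Step~A. Write $\Z^{n+1}=\Z^n\oplus\Z$ and $A_\pm=A\oplus(\pm1)$. Since $A_\pm$ is block diagonal, its image is $A_\pm\Z^{n+1}=A\Z^n\oplus(\pm1)\Z=A\Z^n\oplus\Z$. Hence the projection $\pi\colon\Z^n\oplus\Z\to\Z^n$ induces a well-defined surjection $\bar\pi\colon G(A_\pm)\to G(A)$. Its kernel is the image of $0\oplus\Z$, and this image is already zero in $G(A_\pm)$. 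So $\bar\pi$ is an isomorphism. Its inverse is induced by the inclusion $x\mapsto(x,0)$, which is the identification I will use.

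Next I compare the pairings. Over $\Q$ we have $A_\pm^{-1}=A^{-1}\oplus(\pm1)$. For representatives $(x,s),(y,t)\in\Z^n\oplus\Z$ this gives
\[
(x,s)^{\mathsf T}A_\pm^{-1}(y,t)=x^{\mathsf T}A^{-1}y\pm st .
\]
The second term $\pm st$ is an integer, so modulo $\Z$ this equals $\lambda_A([x],[y])$. Therefore $\lambda_{A_\pm}([(x,s)],[(y,t)])=\lambda_A(\bar\pi[(x,s)],\bar\pi[(y,t)])$, which says that $\bar\pi$ is an isometry. Well-definedness of both sides is already covered by Lemma~\ref{lem:linking-well-defined}.

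I should also say something about the singular case, since the paper applies these lemmas to $\Tor(G(A))$ when $\det A=0$. There I would choose a reduction $P^{\mathsf T}AP=0\oplus A_{\mathrm{red}}$ as in Remark~\ref{rem:torsion-linking-singular}. The matrix $(P\oplus1)^{\mathsf T}A_\pm(P\oplus1)$ then equals $0\oplus(A_{\mathrm{red}}\oplus(\pm1))$. Applying the nonsingular argument to $A_{\mathrm{red}}$ gives the claim, and the free rank is unchanged. Nothing here is a real obstacle. The only point needing care is that the identification is the canonical one induced by projection, so that it is compatible with the later Kirby-invariance statements. The lemma is a short direct computation.
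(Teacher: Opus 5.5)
Your proof is correct and follows the same route as the paper. The paper identifies $G(A_\pm)\cong G(A)\oplus\coker(\pm1)\cong G(A)$ and uses $A_\pm^{-1}=A^{-1}\oplus(\pm1)$, so that the extra summand contributes nothing modulo $\Z$. Your projection $\bar\pi$ and the integer cross term $\pm st$ simply make those two steps explicit.

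Your additional reduction for $\det A=0$, via $(P\oplus 1)^{\mathsf T}A_\pm(P\oplus 1)=0\oplus\bigl(A_{\mathrm{red}}\oplus(\pm1)\bigr)$, is also sound. It covers the singular case, which the paper's proof leaves implicit.
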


\begin{proof}
Since $\coker(\pm 1)=0$, we have $G(A_\pm)\cong G(A)\oplus 0\cong G(A)$.
Moreover, $(A\oplus(\pm 1))^{-1}=A^{-1}\oplus(\pm 1)$, and the extra summand pairs
trivially with $G(A)$ because it corresponds to the zero group.  Hence the linking
pairing is unchanged.
\end{proof}

Thus any invariant extracted solely from the isometry class of $(G(A),\lambda_A)$
is automatically a Kirby invariant.  Step~A is the extraction of a \emph{canonical}
complete invariant of this isometry class.

\subsection{Smith normal form and a canonical basis for the group}
\label{subsec:snf}

Let $A$ be as above.  Choose unimodular matrices $U,V\in GL_n(\Z)$ such that
\begin{equation}\label{eq:snf}
UAV \;=\; D \;=\; \diag(d_1,\dots,d_n),
\qquad
d_1\mid d_2\mid \cdots \mid d_n,
\end{equation}
the Smith normal form.
Then
\[
G(A)\cong G(D)\cong \bigoplus_{i=1}^n \Z/d_i\Z,
\]
canonically up to the unique invariant factors $(d_i)$.

The left transformation $U$ provides a convenient way to transport the pairing.
Let $\bar e_i$ denote the class of the standard basis vector $e_i\in \Z^n$
in $G(D)$, so that $\bar e_i$ has order $d_i$.

\begin{lemma}\label{lem:pairing-matrix-snf}
Let $\phi\colon G(A)\to G(D)$ be the isomorphism induced by multiplication by $U$:
$\phi([x])=[Ux]$.  In the basis $(\bar e_i)$ of $G(D)$, the pairing transported from
$\lambda_A$ has Gram matrix
\begin{equation}\label{eq:Pfull}
P \;=\; U^{-{\mathsf T}}A^{-1}U^{-1}\ \in\ M_n(\Q),
\end{equation}
interpreted entrywise in $\Q/\Z$.
\end{lemma}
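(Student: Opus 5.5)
The plan is to reduce to a one-line matrix computation, as in Lemma~\ref{lem:congruence-invariance}. First, check that $\phi$ is a well-defined isomorphism. Since $UAV=D$ and $V$ is unimodular, we have $UA\Z^n = DV^{-1}\Z^n = D\Z^n$. Hence multiplication by $U$ maps $A\Z^n$ onto $D\Z^n$. Because $U\in GL_n(\Z)$ is a bijection of $\Z^n$, it descends to an isomorphism $\phi\colon \Z^n/A\Z^n\to \Z^n/D\Z^n$ with inverse $[w]\mapsto[U^{-1}w]$.

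Next, define the transported pairing on $G(D)$ by
\[
\mu(\bar w_1,\bar w_2):=\lambda_A\bigl(\phi^{-1}\bar w_1,\phi^{-1}\bar w_2\bigr).
\]
Taking representatives $U^{-1}w_1,U^{-1}w_2\in\Z^n$ and using \eqref{eq:def-linking-from-A}, we get
\[
\mu(\bar w_1,\bar w_2)=(U^{-1}w_1)^{\mathsf T}A^{-1}(U^{-1}w_2)=w_1^{\mathsf T}\,U^{-\mathsf T}A^{-1}U^{-1}\,w_2 \bmod \Z.
\]
Well-definedness is inherited from Lemma~\ref{lem:linking-well-defined}. Specializing to $w_1=e_i$ and $w_2=e_j$ shows that $\mu(\bar e_i,\bar e_j)=P_{ij}\bmod\Z$ with $P$ as in \eqref{eq:Pfull}. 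By bilinearity, these entries determine $\mu$ on all of $G(D)$, since the $\bar e_i$ generate it.

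As a sanity check I would note an equivalent form. Since $A^{-1}=VD^{-1}U$, we have $P=U^{-\mathsf T}VD^{-1}$. This makes it visible that $d_jP_{ij}\in\Z$, consistent with $\bar e_j$ having order $d_j$. It also confirms that $P$ is symmetric, because $A^{-1}$ is.

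There is no real obstacle here. The only point needing care is the direction of the transport: the identity $UA\Z^n=D\Z^n$ is what makes $\phi$ well defined, and the representatives of $\bar e_i$ pulled back to $G(A)$ are $U^{-1}e_i$, not $Ue_i$.
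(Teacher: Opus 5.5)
Your proof is correct and is essentially the paper's argument: you pull representatives of $G(D)$ back via $U^{-1}$, substitute them into $x^{\mathsf T}A^{-1}y$, and read off $U^{-\mathsf T}A^{-1}U^{-1}$. You also check that $\phi$ is well defined using $UA\Z^n=DV^{-1}\Z^n=D\Z^n$, which the paper takes for granted, and your identity $P=U^{-\mathsf T}VD^{-1}$ is a correct and useful consistency check.
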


\begin{proof}
For $x',y'\in \Z^n$ representing elements of $G(D)$, write $x'=Ux$ and $y'=Uy$.
Then
\[
\lambda_A([x],[y]) = x^{\mathsf T}A^{-1}y
= (U^{-1}x')^{\mathsf T}A^{-1}(U^{-1}y')
= {x'}^{\mathsf T}(U^{-{\mathsf T}}A^{-1}U^{-1})y',
\]
modulo $\Z$.  This identifies the Gram matrix in the $G(D)$ basis.
\end{proof}

A key point is that while the matrices $U,V$ are not unique, the invariants we
extract from $P$ below are \emph{intrinsic}: they depend only on the isometry class
of $(G(A),\lambda_A)$, hence are canonical.

\subsection{Primary splitting and $p^k$--layers}
\label{subsec:primary-layers}

Write $G=\bigoplus_p G_{(p)}$ for the primary decomposition of a finite abelian group,
where $G_{(p)}$ is the subgroup of elements of $p$--power order.

\begin{lemma}\label{lem:primary-orthogonal}
Let $(G,\lambda)$ be a linking pairing. Then
\[
\lambda\big(G_{(p)},G_{(q)}\big)=0\qquad (p\neq q),
\]
so $(G,\lambda)$ is the orthogonal direct sum of its $p$--primary components.
\end{lemma}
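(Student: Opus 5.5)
The plan is to use bilinearity together with an order argument. Take $x\in G_{(p)}$ and $y\in G_{(q)}$ with $p\neq q$. Since $G$ is finite, $x$ has order $p^a$ and $y$ has order $q^b$ for some $a,b\ge 0$. Write $c:=\lambda(x,y)\in\Q/\Z$. Bilinearity of $\lambda$ gives
\[
p^a c=\lambda(p^a x,y)=\lambda(0,y)=0,\qquad q^b c=\lambda(x,q^b y)=0 .
\]
So the order of $c$ in $\Q/\Z$ divides both $p^a$ and $q^b$. Because $\gcd(p^a,q^b)=1$, there are integers $s,t$ with $sp^a+tq^b=1$. Then $c=s\,p^a c+t\,q^b c=0$. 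This proves the first assertion, $\lambda(G_{(p)},G_{(q)})=0$.

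For the orthogonal splitting, I would recall that the primary decomposition $G=\bigoplus_p G_{(p)}$ is an internal direct sum of finitely many nonzero summands. This holds for any finite abelian group, by the structure theorem or by the Chinese remainder theorem applied to $|G|$. Expanding $\lambda(\sum_p x_p,\sum_q y_q)$ by bilinearity, the vanishing of the cross terms gives
\[
\lambda\Bigl(\sum_p x_p,\sum_q y_q\Bigr)=\sum_p \lambda_{(p)}(x_p,y_p),
\]
where $\lambda_{(p)}$ denotes the restriction of $\lambda$ to $G_{(p)}$. Hence $(G,\lambda)=\bigoplus_p(G_{(p)},\lambda_{(p)})$ as pairings.

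It remains to check that each $\lambda_{(p)}$ is again nonsingular, so that the summands are linking pairings in the sense of \S\ref{subsec:linking-algebra}. Take $x\in G_{(p)}$ with $\lambda(x,G_{(p)})=0$. By the first part, $\lambda(x,G_{(q)})=0$ for every $q\neq p$ as well. Therefore $\lambda(x,G)=0$, and nonsingularity of $\lambda$ forces $x=0$. So $\ad_{\lambda_{(p)}}$ is injective. Since $G_{(p)}$ is finite and $|\Hom(G_{(p)},\Q/\Z)|=|G_{(p)}|$, the adjoint is bijective.

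There is no real obstacle in this argument. The only point needing care is this last nonsingularity check, which uses the first part again.
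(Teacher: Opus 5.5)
Your proof is correct and follows essentially the same route as the paper: bilinearity shows $\lambda(x,y)$ is killed by both $p^a$ and $q^b$, and coprimality then forces $\lambda(x,y)=0$. The paper stops after this step. Your extra checks that the cross terms vanish in the bilinear expansion and that each restriction $\lambda_{(p)}$ is again nonsingular are valid, and they make explicit what the paper's ``so $(G,\lambda)$ is the orthogonal direct sum'' leaves implicit.
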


\begin{proof}
Let $x\in G_{(p)}$ and $y\in G_{(q)}$, so $p^a x=0$ and $q^b y=0$ for some $a,b$.
Then $0=\lambda(p^a x,y)=p^a\lambda(x,y)$ and $0=\lambda(x,q^b y)=q^b\lambda(x,y)$
in $\Q/\Z$.  Hence $\lambda(x,y)$ is simultaneously $p^a$--torsion
and $q^b$--torsion in $\Q/\Z$, so it is killed by $\gcd(p^a,q^b)=1$,
and therefore $\lambda(x,y)=0$.
\end{proof}

In the Smith normal form basis, each cyclic factor $\Z/d_i$ has a canonical
$p$--primary generator obtained by removing the prime-to-$p$ part of $d_i$.
Let $v_p(\cdot)$ be the $p$--adic valuation, and write
\[
d_i = p^{e_i}\cdot s_i,\qquad e_i=v_p(d_i),\ \ \gcd(s_i,p)=1.
\]
Define $\bar e_{i,p}=s_i\,\bar e_i\in \Z/d_i\Z$, an element of order
$p^{e_i}$ (possibly $1$).

\begin{lemma}\label{lem:ppart-matrix}
Let $P$ be as in \eqref{eq:Pfull}.  For a fixed prime $p$, the $p$--primary pairing
$\lambda_{(p)}$ in the basis $\{\bar e_{i,p}\}_{e_i>0}$ has Gram matrix
\begin{equation}\label{eq:Pp}
P^{(p)}_{ij} \equiv s_is_j P_{ij}\ \ \ (\mathrm{mod}\ 1),
\end{equation}
for indices $i,j$ with $e_i,e_j>0$.
\end{lemma}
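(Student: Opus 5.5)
The plan is to reduce everything to bilinearity of $\lambda$, transported to $G(D)$ by Lemma~\ref{lem:pairing-matrix-snf}. In the basis $(\bar e_i)$ of $G(D)$, the transported pairing satisfies
\[
\lambda(\bar e_i,\bar e_j)=P_{ij}\bmod \Z,
\]
and it is a well-defined bilinear form on $G(D)$ with values in $\Q/\Z$. Since $\bar e_{i,p}=s_i\bar e_i$ by definition, bilinearity immediately gives
\[
\lambda(\bar e_{i,p},\bar e_{j,p})=\lambda(s_i\bar e_i,s_j\bar e_j)=s_is_j\,\lambda(\bar e_i,\bar e_j)=s_is_jP_{ij}\bmod\Z .
\]
This is exactly \eqref{eq:Pp}, once we check that these entries really are the Gram matrix of $\lambda_{(p)}$ and not merely values of $\lambda$ on some elements.

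For that, the steps are as follows.

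First, I will verify that $\{\bar e_{i,p}\}_{e_i>0}$ generates $G(D)_{(p)}$, and in fact gives a direct-sum decomposition $G(D)_{(p)}\cong\bigoplus_{e_i>0}\Z/p^{e_i}$. Since $G(D)=\bigoplus_i\Z/d_i$, the $p$--primary part is $\bigoplus_i(\Z/d_i)_{(p)}$. In $\Z/d_i$ with $d_i=p^{e_i}s_i$, the subgroup of $p$--power order elements is $s_i\Z/d_i\Z$, which is cyclic of order $p^{e_i}$ and generated by $s_i\bar e_i$ (by CRT). Factors with $e_i=0$ contribute nothing and are discarded.

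Second, by Lemma~\ref{lem:primary-orthogonal}, $\lambda_{(p)}$ is simply the restriction of $\lambda$ to $G_{(p)}$. Hence its Gram matrix in this basis is the matrix of values $\lambda(\bar e_{i,p},\bar e_{j,p})$, which the bilinearity computation above identifies with $s_is_jP_{ij}$.

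Third, I will note that the result is independent of the integer representatives: $P_{ij}$ is only meaningful mod $\Z$, and multiplying by the integers $s_is_j$ preserves congruence mod $\Z$.

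I do not expect a serious obstacle. The only point needing care is the first step: that $s_i\bar e_i$ generates exactly the $p$--primary part of $\Z/d_i$, rather than a proper subgroup of it. This follows from $\gcd(s_i,p)=1$ via the Chinese remainder decomposition $\Z/d_i\cong\Z/p^{e_i}\oplus\Z/s_i$, under which multiplication by $s_i$ kills the $\Z/s_i$ summand and acts invertibly on $\Z/p^{e_i}$.

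A side remark I may add is that the entries $s_is_jP_{ij}$ lie in $p^{-\max(e_i,e_j)}\Z/\Z$. This is consistent with Lemma~\ref{lem:primary-orthogonal}, since each value is killed by both $p^{e_i}$ and $p^{e_j}$.
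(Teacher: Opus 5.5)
Your proposal is correct and matches the paper's proof, which consists of the same bilinearity computation $\lambda(s_i\bar e_i,s_j\bar e_j)=s_is_j\lambda(\bar e_i,\bar e_j)\equiv s_is_jP_{ij}$. Your extra checks, that $\{s_i\bar e_i\}_{e_i>0}$ is a basis of $G(D)_{(p)}$ and that $\lambda_{(p)}$ is the restriction of $\lambda$, fill in points the paper leaves implicit. One small correction to your side remark: a value killed by both $p^{e_i}$ and $p^{e_j}$ lies in the smaller group $p^{-\min(e_i,e_j)}\Z/\Z$, and this follows from bilinearity alone, not from Lemma~\ref{lem:primary-orthogonal}.
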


\begin{proof}
By construction, $\bar e_{i,p}=s_i\bar e_i$ and $\bar e_{j,p}=s_j\bar e_j$.
Since $\lambda$ is bilinear, $\lambda(\bar e_{i,p},\bar e_{j,p})=s_is_j\lambda(\bar e_i,\bar e_j)$.
In the $G(D)$ basis, $\lambda(\bar e_i,\bar e_j)\equiv P_{ij}\ (\mathrm{mod}\ 1)$, so the claim follows.
\end{proof}

We now isolate the homogeneous ``$p^k$--layers''.
For each $k\ge 1$ define the index set
\[
I_{p,k}\;=\;\{\, i \mid v_p(d_i)=k\,\},
\qquad
n_{p,k}=|I_{p,k}|.
\]
The subgroup generated by $\{\bar e_{i,p}\}_{i\in I_{p,k}}$ is canonically isomorphic to
$(\Z/p^k\Z)^{n_{p,k}}$.

\begin{lemma}\label{lem:layer-reduction}
For $i,j\in I_{p,k}$, the quantity $p^k\,P^{(p)}_{ij}$ is an integer.
Its reduction modulo $p$ defines a symmetric matrix
\[
B_{p,k}\in M_{n_{p,k}}(\F_p),
\qquad
(B_{p,k})_{ij} \equiv p^k P^{(p)}_{ij}\pmod p,
\]
which is nonsingular.
\end{lemma}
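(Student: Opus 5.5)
The plan is to prove integrality by a direct matrix computation, and nonsingularity by induction downward on the layers, splitting off the top homogeneous layer orthogonally at each stage. The naive kernel argument fails in the presence of higher layers, which is why the induction is needed.

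\emph{Integrality.} From \eqref{eq:snf} we have $A^{-1}=VD^{-1}U$, hence
\[
P=U^{-\mathsf T}A^{-1}U^{-1}=U^{-\mathsf T}VD^{-1}.
\]
So the $j$-th column of $P$ lies in $d_j^{-1}\Z^n$, i.e.\ $d_jP_{ij}\in\Z$. For $i,j\in I_{p,k}$ we have $d_j=p^ks_j$, so
\[
p^kP^{(p)}_{ij}=p^ks_is_jP_{ij}=s_i\,(d_jP_{ij})\in\Z .
\]
Since $P^{(p)}_{ij}$ is only defined modulo $1$, the integer $p^kP^{(p)}_{ij}$ is well defined modulo $p^k$, and in particular modulo $p$. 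Symmetry of $B_{p,k}$ follows from symmetry of $\lambda$. Equivalently, all of this follows from $p^k\bar e_{i,p}=0$ and bilinearity.

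\emph{Nonsingularity: the naive argument.} Given $v\in\ker B_{p,k}$, set $x=\sum_{i\in I_{p,k}}v_i\,p^{k-1}\bar e_{i,p}$, which is nonzero if $v\neq0$.
\begin{itemize}
\item Against a generator $\bar e_{j,p}$ of the same layer, $\lambda(x,\bar e_{j,p})=(B_{p,k}v)_j/p=0$.
\item Against layers $l<k$, the pairing vanishes because $p^{k-1}\bar e_{j,p}=0$.
\end{itemize}
However, $\lambda(x,\bar e_{j,p})$ need not vanish for $j$ in a layer $l>k$, since the Smith basis is not orthogonal. So this argument proves the lemma directly only for the top layer $m=\max\{k: n_{p,k}>0\}$, where nondegeneracy of $\lambda_{(p)}$ (Lemma~\ref{lem:linking-well-defined} together with Lemma~\ref{lem:primary-orthogonal}) gives the contradiction.

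\emph{Splitting off the top layer.} For the top layer, nonsingularity of $B_{p,m}$ implies that the subgroup $H_m\cong(\Z/p^m)^{n_{p,m}}$ spanned by the $\bar e_{i,p}$ with $i\in I_{p,m}$ is itself nondegenerate: its Gram matrix times $p^m$ is invertible modulo $p$, hence invertible over $\Z/p^m$. Therefore $G_{(p)}=H_m\oplus H_m^\perp$. Let $\pi$ denote the orthogonal projection onto $H_m$, and for $j$ in a lower layer $l<m$ replace $\bar e_{j,p}$ by $\bar e'_{j}=\bar e_{j,p}-\pi(\bar e_{j,p})\in H_m^\perp$. These new elements still have order $p^l$ and form a Smith-type basis of $H_m^\perp$.

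\emph{The key estimate.} The step I expect to be the main obstacle is showing that this change of basis does not alter $B_{p,l}$. Since $\lambda(\bar e_{j,p},\,\cdot\,)$ has order dividing $p^l$ on $H_m$, we get $\pi(\bar e_{j,p})\in p^{m-l}H_m$. Hence for $i,j\in I_{p,l}$ the correction to $\lambda(\bar e_{i,p},\bar e_{j,p})$ is of the form $p^{m-l}\lambda(\bar e_{i,p},w)$, whose order divides $p^{\,l-(m-l)}\le p^{\,l-1}$ because $l\le m-1$. So $p^l$ times the correction is divisible by $p$, and the reductions $B_{p,l}$ are unchanged for all $l<m$.

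\emph{Induction.} The lower layers of $(G_{(p)},\lambda_{(p)})$ are thus computed equally well on the nondegenerate linking pairing $H_m^\perp$, which has strictly fewer layers. Descending induction on the layers then shows that every $B_{p,k}$ is nonsingular.
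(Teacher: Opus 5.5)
Your proof is correct, but it takes a different route from the paper.

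\emph{The paper's route.} The paper never does a kernel computation in Smith coordinates. It notes that the classes $\bar e_{i,p}$, $i\in I_{p,k}$, form a basis of the intrinsic layer quotient $P_k(G_{(p)})=G_k/(G_{k-1}+pG_{k+1})$, and that $B_{p,k}$ is the Gram matrix of the canonical form $b_k(\bar x,\bar y)=p^k\lambda(x,y)\bmod p$ in this basis. It then imports nonsingularity of $b_k$ from Lemma~\ref{lem:odd-bk-well-defined} and Proposition~\ref{prop:bk-welldefined}, whose key input is the annihilator identity $(G_k)^\perp=p^kG$.

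\emph{That identity also repairs your ``naive'' argument, so the induction is not actually needed.} Suppose $B_{p,k}v\equiv 0$ and set $x_0=\sum_{i\in I_{p,k}}v_i\bar e_{i,p}$ and $x=p^{k-1}x_0$. Then $x$ need not lie in the radical of $\lambda$, but it does annihilate $G_{(p)}[p^k]$. The only extra generators to check are $p^{e_j-k}\bar e_{j,p}$ with $e_j>k$, and these pair to $p^{e_j-1}\lambda(x_0,\bar e_{j,p})=0$ because $p^kx_0=0$. Hence $x\in p^kG_{(p)}$. The $I_{p,k}$--coordinates of $p^kG_{(p)}$ vanish, which forces $v\equiv 0\pmod p$.

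\emph{Your route.} Your induction instead builds Wall's orthogonal homogeneous splitting by hand. The top-layer step, the projection $\pi$, the bound $\pi(\bar e_{j,p})\in p^{m-l}H_m$, and the resulting mod-$p$ invariance of the lower $B_{p,l}$ are all sound, and none of them uses that $p$ is odd. One point of phrasing: the inductive hypothesis must be stated for an arbitrary nonsingular $p$--primary linking with a chosen decomposition into cyclic summands, not just for the Smith bases of \eqref{eq:snf}. This is because $(H_m^\perp,\{\bar e'_j\})$ is only of the former kind.

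\emph{What each approach buys.} Your approach gives an explicit coordinate proof that the $B_{p,k}$ are the mod-$p$ reductions of the Gram matrices of the homogeneous orthogonal summands. That is exactly the identification invoked in Theorem~\ref{thm:odd-primary-completeness}. The paper's approach buys intrinsicness. Realizing $B_{p,k}$ as a Gram matrix of $b_k$ on $P_k$ is what makes $x_{p,k}$ and the $2$--primary type data independent of the choice of $U,V$. Your argument does not address that independence, though the lemma itself does not require it.
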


\begin{proof}
When $i,j\in I_{p,k}$, the elements $\bar e_{i,p},\bar e_{j,p}$ have order $p^k$.
Hence $p^k\lambda(\bar e_{i,p},\bar e_{j,p})=0$ in $\Q/\Z$, which means
$p^k P^{(p)}_{ij}\in \Z$.  Symmetry is inherited from $P^{(p)}$.

To see nonsingularity, consider the $p$--primary subgroup $G(A)_{(p)}$ and its $k$th layer quotient
\[
P_k\bigl(G(A)_{(p)}\bigr)
:= G(A)_{(p)}[p^k]\Big/\bigl(G(A)_{(p)}[p^{k-1}] + p\,G(A)_{(p)}[p^{k+1}]\bigr),
\]
as in Appendix~\ref{app:odd-primes} for odd $p$ (Definition~\ref{def:odd-layer-form}) and
Appendix~\ref{app:two-primary} for $p=2$ \eqref{eq:Pk-def-appB}.
In the Smith basis, the classes $\{\bar e_{i,p}\}_{i\in I_{p,k}}$ map to a basis of this $\F_p$--vector space,
and the induced layer form is
\[
b_k(\bar x,\bar y)=p^k\,\lambda_{(p)}(x,y)\bmod p.
\]
By definition, $B_{p,k}$ is the Gram matrix of $b_k$ in that basis.
The layer form is nonsingular (Lemma~\ref{lem:odd-bk-well-defined} for odd $p$ and
Proposition~\ref{prop:bk-welldefined} for $p=2$), hence $B_{p,k}$ is nonsingular.
\end{proof}

\subsection{Odd primes: the determinant symbol}
\label{subsec:odd-primes}

Assume $p$ is odd.  Over $\F_p$, the isometry class of a nonsingular symmetric
bilinear form is determined by its dimension and the square-class of its determinant.
We record the latter via the Legendre symbol.

\begin{definition}\label{def:odd-layer-invariant}
For $p$ odd and $k\ge 1$, define
\[
x_{p,k}\;=\;\left(\frac{\det(B_{p,k})}{p}\right)\in \{\pm 1\},
\]
where $B_{p,k}$ is the matrix of Lemma~\ref{lem:layer-reduction}.
When $n_{p,k}=0$ we omit the layer.
\end{definition}

\begin{lemma}\label{lem:finite-field-classification}
Let $p$ be an odd prime.  Two nonsingular symmetric bilinear forms over $\F_p$
are isometric if and only if they have the same dimension and the same determinant
in $\F_p^\times/(\F_p^\times)^2$.
Equivalently, they are isometric if and only if they have the same dimension and
the same Legendre symbol of the determinant.
\end{lemma}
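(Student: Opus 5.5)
The plan is to prove the classical diagonalization statement directly and then reduce everything to a rank-two computation. One direction is immediate: isometric forms have Gram matrices related by $B' = Q^{\mathsf T} B Q$ with $Q\in\GL_n(\F_p)$. Such forms therefore have the same dimension, and their determinants differ by $\det(Q)^2$, so they lie in the same square class. The Legendre symbol reformulation then follows because $(\cdot/p)$ induces an isomorphism $\F_p^\times/(\F_p^\times)^2\cong\{\pm1\}$. This uses that $\F_p^\times$ is cyclic of even order $p-1$, so the squares have index exactly $2$.

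For the converse, the first step is orthogonal diagonalization. Since $p$ is odd, $2$ is invertible, so a nonzero symmetric bilinear form $b$ has some vector $v$ with $b(v,v)\neq 0$. Indeed, if $b(x,x)=0$ for all $x$, polarization $2b(x,y)=b(x+y,x+y)-b(x,x)-b(y,y)$ forces $b=0$. Nonsingularity then gives $V=\langle v\rangle\oplus v^\perp$, with the restriction to $v^\perp$ still nonsingular. Induction yields a diagonal form $\langle a_1,\dots,a_n\rangle$ with all $a_i\in\F_p^\times$.

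The second step is the key rank-two lemma: every element of $\F_p$ is a sum of two squares. To see this, count the sets $\{x^2\}$ and $\{c-y^2\}$; each has $(p+1)/2$ elements, so they intersect by pigeonhole. Consequently, for any $a,b\in\F_p^\times$, the form $\langle a,b\rangle$ represents $1$. Completing that vector to an orthogonal basis shows $\langle a,b\rangle\cong\langle 1,ab\rangle$, where the second entry is forced up to squares by comparing determinants. Applying this repeatedly to the diagonal form gives $\langle a_1,\dots,a_n\rangle\cong\langle 1,\dots,1,d\rangle$ with $d=a_1\cdots a_n$. Rescaling a basis vector multiplies its entry by a square, so $d$ may be replaced by any representative of its square class.

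It follows that any two forms with the same dimension and the same determinant square class are both isometric to $\langle 1,\dots,1,d\rangle$, hence to each other. The main obstacle is the rank-two step, in particular the fact that $\langle a,b\rangle$ represents $1$. The pigeonhole count is what makes it work, and it is exactly where the oddness of $p$ and the finiteness of the field enter. The remaining steps are routine linear algebra.
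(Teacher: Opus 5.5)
Your proof is correct and takes the same route as the paper: orthogonal diagonalization in odd characteristic, then normalization of the diagonal form to $\langle 1,\dots,1,d\rangle$. It is in fact more complete than the paper's version. The paper goes straight from ``each $a_i$ is defined only up to squares'' to $\langle 1,\dots,1,\delta\rangle$, which silently requires $\langle\epsilon,\epsilon\rangle\cong\langle 1,1\rangle$ for a nonsquare $\epsilon$; your sum-of-two-squares lemma, giving $\langle a,b\rangle\cong\langle 1,ab\rangle$, supplies exactly that missing step.
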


\begin{proof}
Let $b$ be a nonsingular symmetric bilinear form on an $\F_p$--vector space $V$
of dimension $n$.  Choose a basis and let $M$ be the Gram matrix, so $M$ is symmetric
with $\det(M)\neq 0$.
A standard Gram--Schmidt argument over a field of characteristic $\neq 2$ diagonalizes
$b$: there exists $S\in GL_n(\F_p)$ such that
$S^{\mathsf T}MS=\diag(a_1,\dots,a_n)$ with $a_i\in\F_p^\times$.
Multiplying the $i$th basis vector by $u_i\in\F_p^\times$ scales $a_i$ by $u_i^2$,
so each $a_i$ is well-defined only up to squares.  Hence any diagonal form is isometric to
\[
\langle 1,\dots,1,\delta\rangle,
\qquad \delta\in \F_p^\times/(\F_p^\times)^2,
\]
where $\delta\equiv a_1\cdots a_n \bmod (\F_p^\times)^2$ is the determinant class.
This shows existence of a normal form depending only on $(n,\delta)$, and also that
$(n,\delta)$ is a complete invariant.  The Legendre symbol detects $\delta$.
\end{proof}

The passage from the $\F_p$--forms $B_{p,k}$ to the full $p$--primary linking
is part of the classical classification of linking pairings.  For odd primes this goes
back to Seifert and Wall, and is subsumed in Kawauchi--Kojima's complete system
\cite{Wall63,KawauchiKojima80}.

\begin{theorem}[Odd-primary completeness]\label{thm:odd-primary-completeness}
Let $(G,\lambda)$ be a linking pairing and fix an odd prime $p$.
Then the isometry class of the $p$--primary component $(G_{(p)},\lambda_{(p)})$
is determined by the collection of invariants $\{(n_{p,k},x_{p,k})\}_{k\ge 1}$.
\end{theorem}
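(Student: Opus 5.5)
The plan is to prove that any $p$--primary linking pairing with $p$ odd is an orthogonal sum of rank-one cyclic pieces. Once that holds, the invariants $(n_{p,k},x_{p,k})$ will pin down the multiset of such pieces. Write $H=G_{(p)}$ and $\mu=\lambda_{(p)}$.

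\textbf{Step 1: orthogonal splitting.} Let $p^k$ be the exponent of $H$. The adjoint map $H\to\Hom(H,\Q/\Z)$ is an isomorphism, so some pair $x,y$ of elements has $\mu(x,y)$ of exact order $p^k$. Polarization gives
\[
2\mu(x,y)=\mu(x+y,x+y)-\mu(x,x)-\mu(y,y).
\]
Since $2$ is a unit mod $p^k$, one of $x$, $y$, $x+y$ — call it $z$ — has $\mu(z,z)=u/p^k$ with $u$ prime to $p$.

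Then $\langle z\rangle\cong\Z/p^k$, and $\mu$ restricted to it is nonsingular. Hence $H=\langle z\rangle\perp\langle z\rangle^\perp$, and $\mu$ stays nonsingular on the complement. Inducting on $|H|$ gives
\[
(H,\mu)\cong\perp_{k\ge1}\ \perp_{i=1}^{n_{p,k}}\bigl(\Z/p^k,\ u_{k,i}/p^k\bigr).
\]
The multiplicities are the $n_{p,k}$, because they are read off from the isomorphism type of $H$.

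\textbf{Step 2: the homogeneous blocks.} I will show that $\perp_i(\Z/p^k,u_i/p^k)$ depends only on its rank and on the Legendre symbol of $\prod_i u_i$.

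First, scaling a generator by $t$ prime to $p$ replaces $u$ by $t^2u$. So each $u$ matters only up to squares mod $p^k$. Since $p$ is odd, Hensel's lemma identifies $(\Z/p^k)^\times/\text{squares}$ with $\F_p^\times/\text{squares}$.

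Second, I need that $\langle u,v\rangle\cong\langle1,uv\rangle$ in rank two. For this, pick $a,b$ with $ua^2+vb^2\equiv1\pmod{p^k}$: solve it mod $p$, since every element of $\F_p$ is a sum of two values of this form, then lift with Hensel. The vector $(a,b)$ then has norm a unit, and its orthogonal complement has a determinant that forces the second entry to be $uv$ up to squares.

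Together these bring each block to the normal form $\langle1,\dots,1,\delta\rangle$. In this normal form the reduction $B_{p,k}$ is exactly the mod-$p$ Gram matrix, so the square class of $\delta$ equals $x_{p,k}$.

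\textbf{Step 3: intrinsic invariants.} The paper has already set this up via the layer forms of Lemma~\ref{lem:layer-reduction} and Lemma~\ref{lem:odd-bk-well-defined}. For a decomposition as in Step 1, the layer quotient $P_k(H)$ is spanned by the images of the $p^k$ summands, and $b_k$ is their reduced Gram form. Therefore $x_{p,k}$ computed from the decomposition agrees with the one in Definition~\ref{def:odd-layer-invariant}, and it depends only on the isometry class, by Lemma~\ref{lem:finite-field-classification}.

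\textbf{Conclusion and main obstacle.} Two pairings with the same data $\{(n_{p,k},x_{p,k})\}$ have isometric blocks at every $k$. Orthogonal sums of isometric blocks are isometric, so the pairings are isometric. I expect the main obstacle to be the rank-two Hensel lifting in Step 2, where solvability of the diagonal quadratic congruence mod $p^k$ uses $p$ odd in an essential way. I also expect care to be needed in checking that the splitting of Step 1 really induces a basis of each layer quotient.
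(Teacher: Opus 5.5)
Your proof is correct and has the same skeleton as the paper's. You split $(G_{(p)},\lambda_{(p)})$ orthogonally into homogeneous pieces, then identify each piece's mod-$p$ reduction with the layer form $b_k$. Your Step~1 is the cyclic splitting of Lemma~\ref{lem:odd-snf-diagonal}, with the polarization identity replacing the paper's choice of $t$ in $y+tx$.

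The substantive difference is your Step~2, and it supplies a step the paper does not prove. The paper's proof of Theorem~\ref{thm:odd-primary-completeness} goes from Lemma~\ref{lem:finite-field-classification} straight to the isometry class of each homogeneous summand over $\Z/p^k$. That lemma only classifies the reduction $b_k$ over $\F_p$, and the paper never shows that a homogeneous summand is determined by its reduction. The converse direction of Theorem~\ref{thm:odd-completeness} has the same problem. It treats the numbers of cyclic blocks with $\chi(u)=+1$ and $\chi(u)=-1$ as fixed by $(n_{p,k},x_{p,k})$. They are not: $\langle 1/p^k\rangle\oplus\langle 1/p^k\rangle$ and $\langle\varepsilon_p/p^k\rangle\oplus\langle\varepsilon_p/p^k\rangle$ have the same invariants.

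Your move $\langle u,v\rangle\cong\langle 1,uv\rangle$ over $\Z/p^k$ closes exactly this gap. You solve $ua^2+vb^2\equiv 1$ mod $p$ and lift by Hensel; the lift works because $(a,b)\not\equiv(0,0)\pmod p$ automatically. You then compare determinants of the $\Z/p^k$ Gram matrices, which change by squares under $\GL_2(\Z/p^k)$, as in the odd-$p$ analogue of Lemma~\ref{lem:det-square-class}. So your argument is self-contained, whereas the paper's relies on an unstated lifting step or on its citation of Wall.
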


\begin{proof}
By Wall's homogeneous splitting theorem for $p$--primary linkings \cite{Wall63},
$(G_{(p)},\lambda_{(p)})$ decomposes as an orthogonal direct sum of homogeneous
summands of exponent $p^k$.  Each homogeneous summand is represented by a matrix
whose reduction modulo $p$ is a nonsingular symmetric form on an $\F_p$--vector
space of dimension $n_{p,k}$; this is precisely the form encoded by $B_{p,k}$.
By Lemma~\ref{lem:finite-field-classification}, its isometry class is determined by
$(n_{p,k},x_{p,k})$.  Taking the orthogonal sum over $k$ reconstructs the full
$p$--primary isometry class.
\end{proof}

\subsection{The prime $2$: types and Gauss sum invariants}
\label{subsec:two-primary}

The $2$--primary theory is subtler because symmetric bilinear forms in characteristic $2$
are not classified by determinant alone.  The complete algebraic classification of
$2$--primary linkings is due to Kawauchi--Kojima \cite{KawauchiKojima80}; a particularly
explicit normal form, expressed in Wall's generators, is developed by Miranda
\cite{Miranda84}.

Fix $p=2$ and a layer $k\ge 1$ with index set $I_{2,k}$.
Let $C_k$ be the integral matrix
\[
C_k \;=\; 2^k \cdot P^{(2)}[I_{2,k},I_{2,k}] \in M_{n_{2,k}}(\Z),
\]
where $P^{(2)}$ is as in Lemma~\ref{lem:ppart-matrix}.
Reducing $C_k$ modulo $2$ yields a symmetric matrix over $\F_2$.

\begin{definition}\label{def:type-AE}
The $2^k$--layer is of \emph{type A} if $C_k$ has at least one odd diagonal entry,
equivalently if $C_k \bmod 2$ is \emph{non-alternating}.
It is of \emph{type E} if all diagonal entries of $C_k$ are even,
equivalently if $C_k\bmod 2$ is \emph{alternating} (i.e.\ has zero diagonal).
\end{definition}

For type A layers, the determinant is meaningful only modulo squares.
For $k\ge 3$, every square in $(\Z/2^k)^\times$ is congruent to $1$ modulo $8$,
so the determinant square-class is detected by $\det(C_k)\bmod 8$.
For $k=2$ it is detected by $\bmod 4$.
For $k=1$ there is only one non-alternating class, so we record no determinant.

\begin{definition}\label{def:typeA-detclass}
Assume the $2^k$--layer is of type A.
Define
\[
\delta_{2,k}\;=\;
\begin{cases}
\det(C_k)\bmod 4 \in \{1,3\}, & k=2,\\
\det(C_k)\bmod 8 \in \{1,3,5,7\}, & k\ge 3,
\end{cases}
\]
and for $k=1$ we set $\delta_{2,1}=\ast$ (no parameter).
\end{definition}

Type E layers admit an additional invariant extracted from a normalized Gaussian sum.
This is the same circle of ideas that appears in Turaev's and Deloup's study of
quadratic refinements and Gauss sums in low-dimensional topology and TQFT
\cite{Turaev84,Deloup99,BrumfielMorgan73}.

\begin{definition}\label{def:typeE-gauss}
Assume the $2^k$--layer is of type E.  Let $H_k$ be the quotient
\[
H_k \;=\; G_{(2)}/G_{(2)}[2^k],
\]
where $G_{(2)}[2^k]=\{x\in G_{(2)} \mid 2^k x=0\}$.
Define the function
\[
q_k\colon H_k \longrightarrow \Q/\Z,
\qquad
q_k([x]) \;=\; 2^{k-1}\,\lambda(x,x).
\]
Then the \emph{normalized Gauss sum} is
\begin{equation}\label{eq:gauss-sum}
\Gamma_k(\lambda)\;=\;|H_k|^{-1/2}\sum_{z\in H_k} \exp\big(2\pi i\, q_k(z)\big)\ \in\ \C.
\end{equation}
Since the layer is of type E, $\Gamma_k(\lambda)$ is an eighth root of unity.
We write
\[
\Gamma_k(\lambda)\;=\;\exp\!\left(\frac{\pi i}{4}u_{2,k}\right)
\qquad\text{for a uniquely determined }u_{2,k}\in \Z/8\Z.
\]
\end{definition}

\begin{lemma}\label{lem:gauss-invariant}
For a type E layer, the element $u_{2,k}\in\Z/8\Z$ is an isometry
invariant of the linking pairing.
\end{lemma}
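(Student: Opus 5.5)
The plan is a transport-of-structure argument: an isometry induces a bijection of the finite sets in \eqref{eq:gauss-sum} that matches the summands term by term. Throughout I take as given the statement in Definition~\ref{def:typeE-gauss} that $q_k$ is well defined on $H_k$ and that $\Gamma_k(\lambda)$ is an eighth root of unity. I do not reprove either fact here; both are part of the setup of the type E layer.

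Let $f\colon (G,\lambda)\to(G',\lambda')$ be an isometry of linking pairings. First I check that both layers are of the same type, so that $u_{2,k}$ is defined on both sides. By Lemma~\ref{lem:layer-reduction} and the appendix results it cites, the matrix $C_k \bmod 2$ is the Gram matrix of the intrinsic layer form $b_k$ on $P_k(G_{(2)})$. The isometry $f$ preserves orders of elements, hence the subgroups $G_{(2)}[2^j]$ and $2G_{(2)}[2^{k+1}]$. It therefore induces an isometry $P_k(G_{(2)})\cong P_k(G'_{(2)})$ of layer forms. Being alternating, i.e.\ $b_k(x,x)=0$ for all $x$, is a basis-free property, so type E is preserved.

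Next I transport the Gauss sum. Since $f$ is a group isomorphism, it maps $G_{(2)}$ onto $G'_{(2)}$ and $G_{(2)}[2^k]$ onto $G'_{(2)}[2^k]$. It therefore descends to a bijection $\bar f\colon H_k\to H'_k$, and in particular $|H_k|=|H'_k|$. For any representative $x$ we have
\[
q'_k(\bar f[x]) = 2^{k-1}\lambda'(f x,f x) = 2^{k-1}\lambda(x,x) = q_k([x]).
\]
Reindexing the sum in \eqref{eq:gauss-sum} along $\bar f$ then gives $\Gamma_k(\lambda')=\Gamma_k(\lambda)$.

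Finally, $\theta\mapsto\exp(\pi i\theta/4)$ is injective on $\Z/8\Z$, so the equality of Gauss sums forces $u_{2,k}(\lambda)=u_{2,k}(\lambda')$. Nothing in the argument depends on the Smith basis or on the choice of $U,V$ in \eqref{eq:snf}; the only inputs are the intrinsic subgroups $G_{(2)}[2^j]$.

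The main obstacle is the first step, identifying the type A/E dichotomy with an intrinsic property of the layer. Definition~\ref{def:type-AE} is stated in terms of the basis-dependent matrix $C_k$, and I rely on the layer-form identification of Lemma~\ref{lem:layer-reduction} to remove that dependence. If that identification needed extra care at $p=2$, I would argue directly instead. A change of Smith basis acts on $C_k \bmod 2$ by congruence $S^{\mathsf T}C_kS$ over $\F_2$. Congruence preserves alternation, because over $\F_2$ one has $x^{\mathsf T}Mx=\sum_i M_{ii}x_i^2$.
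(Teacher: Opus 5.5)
Your proof is correct and follows essentially the same argument as the paper: an isometry preserves the canonical subgroups $G_{(2)}[2^k]$, descends to a bijection of $H_k$ carrying $q_k$ to $q'_k$, so the Gauss sums agree, and $u_{2,k}$ is uniquely determined by the eighth root of unity. You also check explicitly that the type E condition is preserved, via the intrinsic layer form $b_k$. The paper's proof of this lemma leaves that step implicit, but carries it out in the analogous Lemma~\ref{lem:uk-invariant}.
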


\begin{proof}
The construction of $q_k$ is intrinsic: it depends only on $\lambda$ and the canonical
subgroup $G_{(2)}[2^k]$.
If $f\colon (G,\lambda)\to (G',\lambda')$ is an isometry, then $f$ identifies
$G_{(2)}[2^k]$ with $G'_{(2)}[2^k]$ and satisfies
$q'_k(f(z))=q_k(z)$ for all $z\in H_k$.  Therefore the Gauss sums \eqref{eq:gauss-sum}
agree.  The extraction of $u_{2,k}$ from the eighth root of unity is unique.
\end{proof}

Kawauchi--Kojima prove that the invariants above (organized layerwise) form a complete
system for $2$--primary linkings \cite{KawauchiKojima80}; Miranda provides a concrete
normal form compatible with these invariants \cite{Miranda84}.

\subsection{Canonical invariants and tokenization \texorpdfstring{$\Canon(A)$}{Canon(A)}}
\label{subsec:canon-schema}

We now assemble the prime-by-prime, layer-by-layer invariants into a single canonical
description of the pair $(b_1(A),\Tor(G(A)),\lambda_A)$.

\begin{definition}\label{def:CanonA}
Let $A$ be a symmetric integer matrix.  Write
\[
G(A)=\coker(A:\Z^n\to \Z^n),
\qquad
b_1(A):=\rk_{\Z}\ker(A)=\rk_{\Z}\bigl(G(A)/\Tor(G(A))\bigr).
\]
Let $\lambda_A$ denote the canonical nonsingular torsion linking pairing on $\Tor(G(A))$:
if $\det(A)\neq 0$ this is the pairing \eqref{eq:def-linking-from-A}, and if $\det(A)=0$ it is
the pairing of Remark~\ref{rem:torsion-linking-singular}.

Define $\Canon(A)$ to be the following ordered tuple of invariants:
\begin{enumerate}
\item the free rank $b_1(A)$;
\item the torsion order $|\Tor(G(A))|$;
\item the invariant factors $(d_1,\dots,d_r)$ of $\Tor(G(A))$ from the Smith normal form
\eqref{eq:snf}, with $d_i>1$ listed in nondecreasing order (so $|\Tor(G(A))|=\prod_i d_i$);
\item for each prime $p\mid |\Tor(G(A))|$, listed in increasing order, the list of nontrivial
layers $k\ge 1$ in increasing order, where each layer is recorded by:
\begin{itemize}
\item if $p$ is odd: the triple $(k,n_{p,k},x_{p,k})$ of Definition~\ref{def:odd-layer-invariant};
\item if $p=2$: the triple $(k,n_{2,k},\mathrm{type})$ with $\mathrm{type}\in\{\mathrm{A},\mathrm{E}\}$,
together with $\delta_{2,k}$ if $\mathrm{type}=\mathrm{A}$ (Definition~\ref{def:typeA-detclass})
or $u_{2,k}$ if $\mathrm{type}=\mathrm{E}$ (Definition~\ref{def:typeE-gauss}).
\end{itemize}
\end{enumerate}
\end{definition}

\begin{proposition}\label{prop:Canon-invariant}
The tuple $\Canon(A)$ depends only on $b_1(A)$ and on the isometry class of the torsion linking
pairing $(\Tor(G(A)),\lambda_A)$.  In particular, it is invariant under integral congruence
and $\pm 1$ stabilization of $A$.
\end{proposition}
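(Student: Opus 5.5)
We go through the entries of $\Canon(A)$ in Definition~\ref{def:CanonA} one at a time. For each entry we show it is an intrinsic function of the triple $(b_1(A),\Tor(G(A)),\lambda_A)$ up to isometry. Here ``intrinsic'' means it can be computed from any abstract linking pairing isometric to $(\Tor(G(A)),\lambda_A)$, without reference to $A$, $U$, $V$, or the chosen Smith basis. Invariance under integral congruence and $\pm1$ stabilization then follows formally. Lemma~\ref{lem:congruence-invariance}, Lemma~\ref{lem:stabilization-invariance} and Remark~\ref{rem:torsion-linking-singular} show these moves preserve the isometry class of the torsion pairing. For $b_1$ we use two facts: $\rk\ker(P^{\mathsf T}AP)=\rk\ker A$, and $\ker(A\oplus(\pm1))=\ker A\oplus 0$.

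Entries (1)--(3) are straightforward. $b_1(A)$ is part of the data by definition. The order $|\Tor(G(A))|$ and the invariant factors $d_i>1$ are isomorphism invariants of the finite abelian group $\Tor(G(A))$, by uniqueness of invariant factors. In the singular case we pass to $A_{\mathrm{red}}$ as in Remark~\ref{rem:torsion-linking-singular}. The set of primes and the layer multiplicities $n_{p,k}$ are likewise group invariants. Concretely, $n_{p,k}$ is the number of cyclic summands $\Z/p^k$ in $G_{(p)}$, which equals $\dim_{\F_p}$ of the layer quotient $P_k(G_{(p)})$.

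For entry (4) the key step is to show that the layer form $B_{p,k}$ is, up to $\F_p$-isometry, the Gram matrix of an intrinsic form. By the proof of Lemma~\ref{lem:layer-reduction}, $B_{p,k}$ is the Gram matrix, in a basis, of the form $b_k(\bar x,\bar y)=p^k\lambda_{(p)}(x,y)\bmod p$ on the quotient $P_k(G_{(p)})$. This form is defined using only $\lambda$ and canonical subgroups, and it is well defined by Lemma~\ref{lem:odd-bk-well-defined} and Proposition~\ref{prop:bk-welldefined}. An isometry $f$ preserves $G[p^j]$ and $pG[p^{k+1}]$, so it induces an isometry of layer forms. Changing basis replaces $B_{p,k}$ by $S^{\mathsf T}B_{p,k}S$ with $S\in\GL(\F_p)$.

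With that in hand, the per-prime invariants follow. For odd $p$, congruence multiplies $\det B_{p,k}$ by a square, so the Legendre symbol $x_{p,k}$ is invariant. For $p=2$, the type A/E is intrinsic, because a form $S^{\mathsf T}BS$ is alternating iff $B$ is: $b(v,v)$ is invariant. The type E invariant $u_{2,k}$ is handled by Lemma~\ref{lem:gauss-invariant}.

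The main obstacle is the type A determinant class $\delta_{2,k}$. It is read modulo $4$ or $8$ from the integral lift $C_k$, not from $B_{2,k}$ over $\F_2$. So the argument has to work with the homogeneous layer over $\Z/2^k$. First we show that a change of Smith basis acts on $C_k$ modulo $2^k$ by $C_k\mapsto S^{\mathsf T}C_kS$ with $S\in\GL(\Z/2^k)$. Then $\det C_k$ changes by the unit square $\det(S)^2\equiv1\bmod 8$ when $k\ge3$, and by a unit square $\bmod 4$ when $k=2$.

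The subtle point is that $C_k$ is taken from the Smith basis restricted to the $k$-layer. A general automorphism of $G_{(2)}$ mixes layers, so the restricted block transforms only up to contributions from other layers. I would first try to control these contributions using a Wall-type orthogonal splitting of the homogeneous summand, as in the proof of Theorem~\ref{thm:odd-primary-completeness}. The aim is to show that the Smith-basis block agrees modulo $2^k$, up to unit congruence, with the Gram matrix of a split-off homogeneous summand. If that comparison fails in general, the fallback is to cite Kawauchi--Kojima's statement that these layerwise invariants are isometry invariants.
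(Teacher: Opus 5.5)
\textbf{Verdict: there is a genuine gap, at exactly the step you flagged, and it cannot be closed. As defined, $\delta_{2,k}$ is not an isometry invariant, so the proposition is false for those entries.}

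Your arguments for $b_1$, the torsion order, the invariant factors, $n_{p,k}$, $x_{p,k}$, the A/E type and $u_{2,k}$ are correct. Each is read off from the intrinsic layer form $b_k$ on $P_k$ or from the intrinsic function $q_k$. That is all the paper's own proof offers (``isometry invariants by construction'').

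The problem with $\delta_{2,k}$ is worse than non-invariance under isometry: it does not even depend only on $A$. Take $A=\diag(8,16)$. Both $U=V=I$ and
\[
U=\begin{pmatrix}1&0\\-2&1\end{pmatrix},\qquad V=\begin{pmatrix}1&0\\1&1\end{pmatrix}
\]
satisfy $UAV=\diag(8,16)$. The first choice gives $C_3=(1)$. The second gives
\[
P=U^{-\mathsf T}A^{-1}U^{-1}=\begin{pmatrix}3/8&1/8\\ 1/8&1/16\end{pmatrix},
\]
hence $C_3=(3)$. Layer $3$ is of type A in both cases, so $\delta_{2,3}$ comes out as $1$ and as $3$.

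Concretely, the new order-$8$ Smith generator is $x+2y$, where $x$ and $y$ are the old generators of orders $8$ and $16$. The cross-layer term $4\lambda(y,y)=2/8$ vanishes modulo $2$, which is why $b_k$ and the type survive. It does not vanish modulo $8$, so $\delta_{2,3}$ changes. The same computation with $\diag(4,8)$ breaks $\delta_{2,2}$. Your $S^{\mathsf T}C_kS$ step, like Lemma~\ref{lem:det-square-class}, is valid only when $G_{(2)}$ is homogeneous.

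Neither of your fallback routes rescues this.
\begin{itemize}
\item \emph{Route (a), Wall-type splitting.} At $p=2$ the homogeneous summands of an orthogonal splitting are not determined up to isometry. The substitution $x'=x-2y$, $y'=x+y$ gives an orthogonal decomposition showing $\langle 1/8\rangle\oplus\langle 1/16\rangle\cong\langle 3/8\rangle\oplus\langle 3/16\rangle$. Yet $\langle 1/8\rangle\not\cong\langle 3/8\rangle$. So matching the Smith block to a split-off homogeneous summand produces nothing invariant. The odd-$p$ argument works only because there $b_k$ over $\F_p$ pins the summand down.
\item \emph{Route (b), citing Kawauchi--Kojima.} There is no statement there to cite, and by the example above none could be true. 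Their complete system uses ranks and Gauss-sum invariants, the latter set to $\infty$ when $c_k\neq 0$. It does not use per-layer determinant classes.
\end{itemize}

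The paper's proof has the same hole. A correct version must either drop $\delta_{2,k}$ or replace it by intrinsic data. One option is the Gauss sums of $2^{j-1}\lambda(x,x)$ on $G_{(2)}/G_{(2)}[2^j]$ for every $j$ with $b_j$ alternating, including those $j$ with $P_j=0$.
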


\begin{proof}
By Lemmas~\ref{lem:congruence-invariance} and \ref{lem:stabilization-invariance},
integral congruence and stabilization do not change $b_1(A)$ nor the isometry class of
$(\Tor(G(A)),\lambda_A)$ (Remark~\ref{rem:torsion-linking-singular} handles the singular case).
The components of $\Canon(A)$ are extracted from $b_1(A)$ and this isometry class:
the torsion order and invariant factors are invariants of the underlying torsion group, while
the layer invariants are isometry invariants by construction
(Lemmas~\ref{lem:finite-field-classification} and \ref{lem:gauss-invariant},
together with Theorem~\ref{thm:odd-primary-completeness} and
\cite{KawauchiKojima80} for completeness at $p=2$).
\end{proof}

\begin{theorem}[Completeness of the canonical invariants]\label{thm:Canon-complete}
Let $A,A'$ be symmetric integer matrices.
Then
\[
\Canon(A)=\Canon(A')
\quad\Longleftrightarrow\quad
\bigl(b_1(A)=b_1(A')\bigr)\ \text{and}\ \bigl(\Tor(G(A)),\lambda_A\bigr)\cong \bigl(\Tor(G(A')),\lambda_{A'}\bigr).
\]
\end{theorem}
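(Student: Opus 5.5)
The argument splits into the two implications. The forward direction ($\Leftarrow$) is essentially Proposition~\ref{prop:Canon-invariant}. If $b_1(A)=b_1(A')$ and there is an isometry $f\colon(\Tor(G(A)),\lambda_A)\to(\Tor(G(A')),\lambda_{A'})$, then the two torsion groups are isomorphic. Hence items (2) and (3) of Definition~\ref{def:CanonA}, the order and the invariant factors, agree, as do the index counts $n_{p,k}$. I will check that each layer invariant is intrinsic. For this I use the layer quotient $P_k(G_{(p)})$ from the proof of Lemma~\ref{lem:layer-reduction}. The form $b_k$ on it is defined from $\lambda$ alone, and $B_{p,k}$ is its Gram matrix in \emph{some} basis. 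So $\det B_{p,k}$ is well-defined modulo $(\F_p^\times)^2$, and $f$ induces an isometry of the $b_k$'s. This shows $x_{p,k}$, and for $p=2$ the type A/E dichotomy (alternating or not), are isometry invariants. The Gauss invariant $u_{2,k}$ is handled by Lemma~\ref{lem:gauss-invariant}.

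The delicate point in this direction is $\delta_{2,k}$. The paper defines it as $\det(C_k)$ modulo $4$ or $8$, where $C_k$ is an integral lift. This quantity is independent of the Smith basis only if changing the basis of the homogeneous $2^k$ summand multiplies $\det C_k$ by a square unit modulo $2^k$. I would argue this through Wall's homogeneous splitting. The $2^k$-homogeneous part can be split off orthogonally as a summand $(\Z/2^k)^{n}$, its form is given by $C_k/2^k$ with $C_k$ a unimodular matrix over $\Z/2^k$, and a basis change $S\in\GL_n(\Z/2^k)$ changes the determinant by $\det(S)^2$. The residual problem is that the splitting itself is not unique. For this I would invoke the Kawauchi--Kojima / Miranda statement that the type A determinant class of a homogeneous block is an invariant, modulo the standard relations with adjacent layers. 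If that fails, I would fall back to taking the invariant in the normalized form used by Miranda. I expect this to be the main obstacle: $\delta_{2,k}$ is in general \emph{not} independent of the splitting when neighbouring layers are of type A, so the proof must either cite the precise invariant system of \cite{KawauchiKojima80} or reinterpret $\delta_{2,k}$ as a Gauss-sum-type quantity, analogous to $u_{2,k}$, which is manifestly intrinsic.

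The converse ($\Rightarrow$) goes as follows. Equality of $\Canon$ gives $b_1(A)=b_1(A')$ and isomorphic torsion groups. By Lemma~\ref{lem:primary-orthogonal} both pairings split orthogonally into $p$-primary parts, so it suffices to produce an isometry prime by prime. For odd $p$, the data $\{(n_{p,k},x_{p,k})\}$ coincide, so Theorem~\ref{thm:odd-primary-completeness} gives an isometry $(G_{(p)},\lambda_{(p)})\cong(G'_{(p)},\lambda'_{(p)})$. For $p=2$, the recorded data (layer sizes, types, $\delta_{2,k}$, $u_{2,k}$) are by design the complete invariant system of Kawauchi--Kojima \cite{KawauchiKojima80}, with Miranda's normal form \cite{Miranda84}. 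I will therefore cite their completeness theorem, after checking that our normalizations match theirs: the Gauss sum over $H_k$ with $q_k=2^{k-1}\lambda(x,x)$, and determinants modulo $8$. Taking the orthogonal sum of the local isometries gives the global isometry of torsion linking pairings. Together with $b_1(A)=b_1(A')$, this concludes the argument.
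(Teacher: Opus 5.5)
Your outline is the paper's own argument. For ($\Leftarrow$) the paper simply asserts that every entry of Definition~\ref{def:CanonA} is an isometry invariant. For ($\Rightarrow$) it cites Theorem~\ref{thm:odd-primary-completeness} at odd $p$ and Kawauchi--Kojima at $p=2$, then glues the primes with Lemma~\ref{lem:primary-orthogonal}. Your handling of $x_{p,k}$, of the A/E type and of $u_{2,k}$ is fine.

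\textbf{The $\delta_{2,k}$ problem is fatal, not a missing citation.} With $\delta_{2,k}$ as defined, ($\Leftarrow$) is false, and $\Canon(A)$ is not even well defined. Take $A=\diag(4,8)$.
\begin{itemize}
\item Both $U=V=I$ and $U=\begin{pmatrix}1&0\\-2&1\end{pmatrix}$, $V=\begin{pmatrix}1&0\\1&1\end{pmatrix}$ satisfy $UAV=\diag(4,8)$.
\item The second choice gives $P=U^{-\mathsf T}A^{-1}U^{-1}=\begin{pmatrix}3/4&1/4\\1/4&1/8\end{pmatrix}$, hence $\delta_{2,2}=3$ instead of $1$.
\end{itemize}
Restricting to orthogonal homogeneous splittings, as you propose via Wall, does not help. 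For $\langle1/4\rangle\oplus\langle1/8\rangle$ on $\langle x\rangle\oplus\langle y\rangle$, the elements $x'=x+2y$, $y'=y-x$ form an orthogonal basis with $\lambda(x',x')=3/4$ and $\lambda(y',y')=3/8$. So $\langle1/4\rangle\oplus\langle1/8\rangle\cong\langle3/4\rangle\oplus\langle3/8\rangle$, and both $\delta_{2,2}$ and $\delta_{2,3}$ change. ``Invariant modulo relations with adjacent layers'' is exactly the statement that $\delta_{2,k}$ is not an invariant. Your fallbacks (Miranda's normal form, a Gauss-sum replacement) change $\Canon$ itself, so they prove a different theorem.

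\textbf{Your ($\Rightarrow$) step fails as well.} The recorded $2$-primary data are not the Kawauchi--Kojima system, so ``checking that normalizations match'' cannot succeed. Their complete system is the group together with $\sigma_k\in\Z/8\cup\{\infty\}$ for \emph{every} $k\ge1$, where $\sigma_k$ is the phase of $\sum_{x\in G_{(2)}}e^{2\pi i\,2^{k-1}\lambda(x,x)}$.
\begin{itemize}
\item $\sigma_k=\infty$ exactly at type A layers, and $\sigma_k=u_{2,k}$ at type E layers.
\item $\sigma_k$ also carries information at indices with $n_{2,k}=0$, which $\Canon$ skips.
\item At type A layers, $\Canon$ records the non-invariant $\delta_{2,k}$ instead.
\end{itemize}
Two concrete failures:
\begin{itemize}
\item Take $\frac14\begin{pmatrix}0&1\\1&0\end{pmatrix}$ and $\frac14\begin{pmatrix}2&1\\1&2\end{pmatrix}$ on $(\Z/4)^2$. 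They are presented, with the same $3$-primary summand $\langle 2/3\rangle$, by $\begin{pmatrix}0&4\\4&0\end{pmatrix}\oplus(-3)$ and $\begin{pmatrix}8&-4\\-4&8\end{pmatrix}$. Both have identical $\Canon$: one type E layer at $k=2$ with $H_2=0$, so $u_{2,2}=0$. Yet $\sum_x e^{2\pi i\lambda(x,x)}$ is $8$ for the first and $-8$ for the second.
\item $\diag(4,8)$ with the $U$ above and $\diag(-4,8)$ with $U=\diag(-1,1)$ give the same data: both layers type A, $\delta_{2,2}=3$, $\delta_{2,3}=1$. But $\langle1/4\rangle\oplus\langle1/8\rangle\not\cong\langle3/4\rangle\oplus\langle1/8\rangle$, since the same Gauss sum is $8\sqrt2\,i$ versus $8\sqrt2$.
\end{itemize}

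The paper's proof has the same gaps: it only asserts the invariance and the match with Kawauchi--Kojima. A correct version replaces the $2$-primary entries of $\Canon$ by $\bigl(G_{(2)},(\sigma_k)_{k\ge1}\bigr)$. Then ($\Leftarrow$) is immediate because each $\sigma_k$ is intrinsic, and ($\Rightarrow$) is the Kawauchi--Kojima theorem.
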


\begin{proof}
If $\bigl(b_1(A),\Tor(G(A)),\lambda_A\bigr)\cong \bigl(b_1(A'),\Tor(G(A')),\\lambda_{A'}\bigr)$, then all invariants
appearing in Definition~\ref{def:CanonA} agree, hence $\Canon(A)=\Canon(A')$.

Conversely, assume $\Canon(A)=\Canon(A')$.
Then in particular $b_1(A)=b_1(A')$ and the invariant factors coincide, so
$\Tor(G(A))\cong \Tor(G(A'))$ as finite abelian groups.
For each prime $p$, the equality of the $p$--layer data implies that the corresponding
$p$--primary torsion linkings have the same complete system of invariants:
for odd $p$ this is Theorem~\ref{thm:odd-primary-completeness}, and for $p=2$ it is the
completeness theorem of Kawauchi--Kojima expressed in the invariants of
Definitions~\ref{def:type-AE}--\ref{def:typeE-gauss} (see \cite{KawauchiKojima80} and
Miranda's normal forms \cite{Miranda84}).
Hence
\[
\bigl(\Tor(G(A))_{(p)},\lambda_A|_{\Tor(G(A))_{(p)}}\bigr)\cong
\bigl(\Tor(G(A'))_{(p)},\lambda_{A'}|_{\Tor(G(A'))_{(p)}}\bigr)
\]
for every $p$.
By Lemma~\ref{lem:primary-orthogonal}, the global torsion linkings are orthogonal sums of their
primary components, so the isometries on each prime piece assemble to an isometry
$\bigl(\Tor(G(A)),\lambda_A\bigr)\cong \bigl(\Tor(G(A')),\lambda_{A'}\bigr)$.
\end{proof}

\begin{remark}[Token encoding]\label{rem:tokens}
It is often convenient to record the same invariant $\Canon(A)$ as a compact ordered list,
which we denote by $\Tok(A)$.
Formally, $\Tok(A)$ consists of the integer $b_1(A)$ together with a prime-ordered list of
symbols recording, for each prime $p$ and each nontrivial layer $k$, exactly the layer data
appearing in Definition~\ref{def:CanonA}$\,$(ordered by increasing $p$ and then increasing $k$).
Thus $\Tok(A)$ contains no additional information beyond $\Canon(A)$, and in particular
\[
\Tok(A)=\Tok(A') \quad\Longleftrightarrow\quad \Canon(A)=\Canon(A').
\]
When we later speak of “tokens”, they should be read as a notational device for this
prime-by-prime bookkeeping, not as an additional mathematical structure.
\end{remark}

\begin{remark}[Free rank]\label{rem:free-rank}
If $\det(A)=0$, then $G(A)=\coker(A)$ has a free summand of rank $b_1(A)$.
The torsion subgroup $\Tor(G(A))$ carries the canonical torsion linking pairing $\lambda_A$
of Remark~\ref{rem:torsion-linking-singular}, so the layer invariants of Step~A apply verbatim to
$\bigl(\Tor(G(A)),\lambda_A\bigr)$.
In particular, Definition~\ref{def:CanonA} makes $\Canon(A)$ a uniform token package for arbitrary
symmetric $A$, by recording $b_1(A)$ together with the torsion invariants.
\end{remark}

\section{Local classification I: Type A (cyclic) tokens}\label{sec:typeA}

In this section we isolate the simplest local building blocks used in Step~A:
nonsingular symmetric linking forms on cyclic $p$--groups.
The classification itself is classical and follows from the general theory of linkings on finite
abelian groups (see Wall~\cite{Wall63} and Kawauchi--Kojima~\cite{KawauchiKojima80}).
Our purpose here is \emph{not} to reprove that general classification in full generality, but rather

\smallskip
\noindent
(i) to fix conventions once and for all in the cyclic case (so later identifications are unambiguous), and

\smallskip
\noindent
(ii) to extract a \emph{canonical prime--local label} for each cyclic isometry class, together with a
canonical choice of representative in each square class.
This rigidification is what we will later feed into the Step~B dictionary of canonical matrices.

\subsection{Cyclic linkings as fractions}\label{subsec:cyclic-fractions}

Throughout this section $n\ge 1$ is an integer and $C_n:=\Z/n\Z$.
We view $\Q/\Z$ additively.

\begin{definition}[Linking form and nonsingularity]\label{def:linking-form-cyclic}
A \emph{(torsion) linking form} on a finite abelian group $G$ is a symmetric bilinear pairing
\[
\lambda: G\times G \longrightarrow \Q/\Z .
\]
It is \emph{nonsingular} if the adjoint homomorphism
\[
\ad_\lambda: G \longrightarrow \Hom(G,\Q/\Z),
\qquad
x\longmapsto (y\mapsto \lambda(x,y))
\]
is an isomorphism.
Two linkings $(G,\lambda)$ and $(G',\lambda')$ are \emph{isometric} if there exists a group
isomorphism $\phi:G\to G'$ with $\lambda'(\phi(x),\phi(y))=\lambda(x,y)$ for all $x,y\in G$.
\end{definition}

\begin{definition}[The model pairing $\langle a/n\rangle$]\label{def:cyclic-model}
For $a\in\Z$, define a symmetric bilinear pairing
\[
\langle a/n\rangle: C_n\times C_n \longrightarrow \Q/\Z
\]
by
\[
\langle a/n\rangle(\overline{x},\overline{y}) \equiv \frac{a\,xy}{n}\pmod{\Z},
\qquad x,y\in\Z,
\]
where $\overline{x}$ denotes the class of $x$ in $C_n$.
\end{definition}

\begin{lemma}[Every cyclic linking is $\langle a/n\rangle$]\label{lem:every-cyclic-is-a}
Let $\lambda:C_n\times C_n\to \Q/\Z$ be any symmetric bilinear pairing and let
$g:=\overline{1}\in C_n$.
Then there exists an integer $a$ such that
\[
\lambda(g,g)\equiv \frac{a}{n}\pmod{\Z},
\]
and for all $x,y\in\Z$ one has
\[
\lambda(\overline{x},\overline{y})=\lambda(xg,yg)\equiv \frac{a\,xy}{n}\pmod{\Z}.
\]
In particular, $\lambda=\langle a/n\rangle$.
\end{lemma}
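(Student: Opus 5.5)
The proof uses only the order of the generator and bilinearity; no earlier classification result is needed. First I will show that $\lambda(g,g)$ has the form $a/n$. Since $ng=0$ in $C_n$, bilinearity gives
\[
n\,\lambda(g,g)=\lambda(ng,g)=\lambda(0,g)=0
\]
in $\Q/\Z$. So $\lambda(g,g)$ is an $n$--torsion element of $\Q/\Z$. The $n$--torsion subgroup of $\Q/\Z$ is exactly $\{\,\overline{a/n} : a\in\Z\,\}$: if $q\in\Q$ satisfies $nq\in\Z$, say $nq=a$, then $q=a/n$. Choosing any lift $q$ of $\lambda(g,g)$ and setting $a:=nq\in\Z$ gives the integer $a$.

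Second, I will expand $\lambda(\overline{x},\overline{y})$ bilinearly. For $x,y\in\Z$ we have $\overline{x}=xg$ and $\overline{y}=yg$. Here an integer multiple of a group element means the $\Z$--module action, including negative $x,y$. Bilinearity over $\Z$, which is automatic for biadditive maps between abelian groups, gives
\[
\lambda(xg,yg)=xy\,\lambda(g,g)\equiv \frac{a\,xy}{n}\pmod{\Z}.
\]
Comparing with Definition~\ref{def:cyclic-model}, this is exactly $\langle a/n\rangle(\overline{x},\overline{y})$, so $\lambda=\langle a/n\rangle$ as functions on $C_n\times C_n$.

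The argument is routine and has no real obstacle. The two points to state cleanly are, first, the description of the $n$--torsion of $\Q/\Z$, and second, that $\langle a/n\rangle$ is itself well defined on classes. The latter holds because changing $x$ by $n$ changes $axy/n$ by the integer $ay$. I will also note that the integer $a$ is determined only modulo $n$, which is harmless for the statement. Nonsingularity of $\lambda$ is not needed here; under nonsingularity one gets $\gcd(a,n)=1$ as a separate remark.
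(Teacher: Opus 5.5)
Your proof is correct and follows the paper's argument exactly: $n\,\lambda(g,g)=\lambda(ng,g)=0$ shows that $\lambda(g,g)$ is $n$--torsion in $\Q/\Z$, hence equal to $a/n$ for some integer $a$, and bilinearity then gives $\lambda(xg,yg)\equiv a\,xy/n \pmod{\Z}$. Your choice of a rational lift $q$ of $\lambda(g,g)$ with $a:=nq$ is a slightly cleaner way to state how $a$ is chosen than the paper's ``choose $a$ with $n\lambda(g,g)=a$ in $\Q/\Z$,'' which is literally vacuous as written.
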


\begin{proof}
Since $ng=0$ in $C_n$ and $\lambda$ is bilinear, we have
\[
0=\lambda(ng,g)=n\,\lambda(g,g)\quad\text{in }\Q/\Z.
\]
Equivalently, $n\,\lambda(g,g)\in \Z\subset \Q$.
Choose $a\in\Z$ with $n\,\lambda(g,g)=a$ in $\Q/\Z$; this is the same as
$\lambda(g,g)\equiv a/n\ (\mathrm{mod}\ \Z)$.
Then bilinearity gives
\[
\lambda(xg,yg)=xy\,\lambda(g,g)\equiv xy\,\frac{a}{n}\pmod{\Z},
\]
as claimed.
\end{proof}

\begin{lemma}[Nonsingularity is $\gcd(a,n)=1$]\label{lem:nonsingular-gcd}
The pairing $\langle a/n\rangle$ on $C_n$ is nonsingular if and only if $\gcd(a,n)=1$.
\end{lemma}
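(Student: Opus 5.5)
The plan is to compute the adjoint $\ad_{\langle a/n\rangle}\colon C_n\to\Hom(C_n,\Q/\Z)$ explicitly and decide when it is injective. Since $C_n$ is finite and $\Hom(C_n,\Q/\Z)$ has the same order $n$ (a homomorphism is determined by the image of $\overline{1}$, which must lie in the cyclic subgroup $\tfrac1n\Z/\Z\cong C_n$), injectivity is equivalent to bijectivity. So nonsingularity reduces to showing that the kernel of the adjoint is trivial exactly when $\gcd(a,n)=1$.

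First I will describe the kernel. By Lemma~\ref{lem:every-cyclic-is-a}, $\ad(\overline{x})$ is the homomorphism $\overline{y}\mapsto axy/n \bmod \Z$. It vanishes identically if and only if it vanishes on the generator $\overline{y}=\overline{1}$, that is, if and only if $n\mid ax$. Hence
\[
\ker\ad_{\langle a/n\rangle}=\{\overline{x}\in C_n : n\mid ax\}.
\]

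Next I treat the two directions. Suppose $\gcd(a,n)=1$. Then $n\mid ax$ forces $n\mid x$ by Euclid's lemma, so $\overline{x}=0$ and the kernel is trivial. Conversely, suppose $d:=\gcd(a,n)>1$. Take $x=n/d$. Then $ax=(a/d)\,n$ is divisible by $n$, while $\overline{x}\neq 0$ because $0<n/d<n$. This gives a nonzero kernel element, so the pairing is singular.

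No step here is a real obstacle. The only points to check are the degenerate case $n=1$, where $C_1=0$ is trivially nonsingular and $\gcd(a,1)=1$ always holds, and that the order count $|\Hom(C_n,\Q/\Z)|=n$ is stated explicitly so that injectivity can legitimately be upgraded to bijectivity.
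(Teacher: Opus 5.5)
Your proof is correct and follows essentially the same route as the paper: both use the order count $|\Hom(C_n,\Q/\Z)|=n$ to reduce nonsingularity to injectivity of the adjoint, and then finish with gcd arithmetic. The only difference is cosmetic. The paper phrases injectivity as $\ad(\overline{1})$ having order $n/\gcd(a,n)=n$, whereas you compute the kernel $\{\overline{x} : n\mid ax\}$ directly and exhibit the nonzero element $x=n/\gcd(a,n)$ when $\gcd(a,n)>1$.
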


\begin{proof}
Consider the adjoint $\ad=\ad_{\langle a/n\rangle}$.
Since $C_n$ is cyclic generated by $g=\overline{1}$, the homomorphism $\ad$ is injective (hence an
isomorphism, because $|C_n|=|\Hom(C_n,\Q/\Z)|=n$) if and only if $\ad(g)$ has order $n$.
Now $\ad(g)$ is the homomorphism $C_n\to \Q/\Z$ determined by
\[
\ad(g)(g)=\langle a/n\rangle(g,g)=a/n \in \Q/\Z.
\]
The order of $a/n$ in $\Q/\Z$ equals $n/\gcd(a,n)$: indeed,
$m\cdot (a/n)\in \Z$ if and only if $n\mid ma$, which is equivalent to $m$ being a multiple
of $n/\gcd(a,n)$.
Thus $\ad(g)$ has order $n$ if and only if $n/\gcd(a,n)=n$, i.e.\ $\gcd(a,n)=1$.
\end{proof}

\subsection{Isometry classification and square classes}\label{subsec:squareclasses}

\begin{proposition}[Isometry classes are units modulo squares]\label{prop:cyclic-isometry}
Let $n\ge 2$ and let $a,b\in\Z$ satisfy $\gcd(a,n)=\gcd(b,n)=1$.
Then the nonsingular linkings $\langle a/n\rangle$ and $\langle b/n\rangle$ on $C_n$
are isometric if and only if
\[
b\equiv u^2 a \pmod{n}
\quad\text{for some }u\in(\Z/n\Z)^\times.
\]
Equivalently, $a/b$ is a square in $(\Z/n\Z)^\times$.
\end{proposition}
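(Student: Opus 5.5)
The plan is to reduce everything to the fact that a group automorphism of the cyclic group $C_n$ is multiplication by a unit. I will prove the two implications separately, using Lemma~\ref{lem:every-cyclic-is-a} to compute how a pairing transforms under an automorphism.

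\emph{Step 1 (automorphisms of $C_n$).} Any homomorphism $\phi\colon C_n\to C_n$ is determined by $\phi(g)=\overline{u}$ for some $u\in\Z$, where $g=\overline 1$, so $\phi(\overline x)=\overline{ux}$. The map $\phi$ is an isomorphism if and only if $\overline u$ generates $C_n$, i.e.\ $\gcd(u,n)=1$, i.e.\ $u\in(\Z/n\Z)^\times$. This is standard and I would state it briefly.

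\emph{Step 2 (transport of the model pairing).} For such $\phi$, I compute
\[
\langle b/n\rangle(\phi(\overline x),\phi(\overline y))\equiv \frac{b\,u^2xy}{n}\pmod{\Z}.
\]
Hence $\phi$ is an isometry $\langle a/n\rangle\to\langle b/n\rangle$ exactly when $\frac{(bu^2-a)xy}{n}\in\Z$ for all $x,y$. Taking $x=y=1$, this is equivalent to $bu^2\equiv a\pmod n$. So an isometry exists if and only if $a\equiv u^2b$ for some unit $u$. Replacing $u$ by $u^{-1}$ (inverses of units exist mod $n$), this becomes $b\equiv u^2a\pmod n$, which gives both directions at once.

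\emph{Step 3 (equivalent phrasing).} Since $\gcd(a,n)=\gcd(b,n)=1$, both $a$ and $b$ are units mod $n$. Then $b\equiv u^2a$ is equivalent to $a b^{-1}\equiv (u^{-1})^2$, i.e.\ $a/b$ is a square in $(\Z/n\Z)^\times$.

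I expect no serious obstacle here. The only point needing care is that $a$ and $b$ are well defined only modulo $n$: if $a\equiv a'\pmod n$ then $\langle a/n\rangle=\langle a'/n\rangle$. This means the condition is a statement in $\Z/n\Z$, and the direction of the unit ($u$ versus $u^{-1}$) has to be tracked correctly when comparing with the statement. The hypothesis $n\ge2$ is only there to avoid the trivial group.
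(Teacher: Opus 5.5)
Your proposal is correct and follows essentially the same route as the paper: automorphisms of $C_n$ are multiplication by units $u$, and since a pairing on a cyclic group is determined by its value on $(g,g)$, transporting along such an automorphism multiplies the numerator by $u^2$. The only cosmetic difference is that you test $\phi$ as an isometry $\langle a/n\rangle\to\langle b/n\rangle$ and then replace $u$ by $u^{-1}$, whereas the paper pulls back $\langle a/n\rangle$ directly to $\langle u^2a/n\rangle$ and reads off $b\equiv u^2a$ without the inversion step.
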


\begin{proof}
Every automorphism $\phi:C_n\to C_n$ is multiplication by a unit: $\phi(g)=ug$ for a unique
$u\in(\Z/n\Z)^\times$.
Let $\phi_u$ denote multiplication by $u$.
Then
\[
(\phi_u^\ast\langle a/n\rangle)(g,g)
=
\langle a/n\rangle(\phi_u(g),\phi_u(g))
=
\langle a/n\rangle(ug,ug)
=
u^2\,\langle a/n\rangle(g,g)
\equiv \frac{u^2a}{n}\pmod{\Z}.
\]
Since a bilinear form on a cyclic group is determined by its value on $(g,g)$
(Lemma~\ref{lem:every-cyclic-is-a}), we conclude
$\phi_u^\ast\langle a/n\rangle=\langle (u^2a)/n\rangle$.
Therefore $\langle a/n\rangle$ and $\langle b/n\rangle$ are isometric if and only if
$b\equiv u^2a\ (\mathrm{mod}\ n)$ for some unit $u$, as claimed.
\end{proof}

\begin{definition}[Square class set]\label{def:squareclass}
Define the \emph{square class set}
\[
\mathcal C(n):=(\Z/n\Z)^\times \big/ \left((\Z/n\Z)^\times\right)^2.
\]
By Proposition~\ref{prop:cyclic-isometry}, isometry classes of nonsingular symmetric cyclic linkings
on $C_n$ are in bijection with $\mathcal C(n)$ via $[a]\mapsto \langle a/n\rangle$.
\end{definition}

\begin{remark}[Relation to the general classification]\label{rem:general-classification}
The square class description above is the cyclic specialization of the algebraic classification of
nonsingular symmetric linkings on finite abelian groups, which goes back at least to
Wall~\cite{Wall63} and is treated in the $3$--manifold context by Kawauchi--Kojima~\cite{KawauchiKojima80}.
In our framework, cyclic $p$--primary classes will serve as the \emph{Type~A} local constituents of the
canonical invariant (see Definition~\ref{def:typeA-token} below).
\end{remark}

\subsection{Prime localization}\label{subsec:prime-local}

Our canonical invariant is assembled prime--locally.
For completeness we record the elementary orthogonality that underlies the $p$--primary splitting.

\begin{lemma}[Orthogonality across distinct primes]\label{lem:prime-orthogonal}
Let $\lambda$ be a bilinear pairing on a finite abelian group $G$.
If $x\in G$ has $p$--power order and $y\in G$ has $q$--power order with $p\neq q$, then
$\lambda(x,y)=0$ in $\Q/\Z$.
\end{lemma}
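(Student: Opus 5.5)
The plan is to argue exactly as in Lemma~\ref{lem:primary-orthogonal}, using only bilinearity and elementary divisibility in $\Q/\Z$; nonsingularity or symmetry of $\lambda$ plays no role.

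Choose exponents $a,b\ge 0$ with $p^a x=0$ and $q^b y=0$, which exist by hypothesis. Bilinearity in the first variable gives
\[
p^a\,\lambda(x,y)=\lambda(p^a x,y)=\lambda(0,y)=0 \quad\text{in }\Q/\Z .
\]
Bilinearity in the second variable gives
\[
q^b\,\lambda(x,y)=\lambda(x,q^b y)=\lambda(x,0)=0 .
\]
So the element $t:=\lambda(x,y)\in\Q/\Z$ is annihilated by both $p^a$ and $q^b$.

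Since $p\neq q$ are primes, $\gcd(p^a,q^b)=1$. Bézout then supplies integers $r,s$ with $rp^a+sq^b=1$, and
\[
t=(rp^a+sq^b)\,t=r(p^a t)+s(q^b t)=0 .
\]
Equivalently, the annihilator of $t$ in $\Z$ is an ideal containing $p^a$ and $q^b$, hence all of $\Z$.

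There is no genuine obstacle. The only points to state carefully are that the argument uses bilinearity in each variable separately (so symmetry is not needed), and that the case $a=0$ or $b=0$, where $x=0$ or $y=0$, is covered trivially by the same computation.
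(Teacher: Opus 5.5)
Your proof is correct and follows essentially the same route as the paper: you use bilinearity in each variable to show that $\lambda(x,y)$ is annihilated by both $p^a$ and $q^b$, and then use $\gcd(p^a,q^b)=1$. The only difference is that you make the B\'ezout step explicit, where the paper phrases it as the additive order dividing $1$.
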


\begin{proof}
Let $p^k x=0$ and $q^\ell y=0$.
By bilinearity,
\[
p^k\,\lambda(x,y)=\lambda(p^k x,y)=0,
\qquad
q^\ell\,\lambda(x,y)=\lambda(x,q^\ell y)=0.
\]
Thus $\lambda(x,y)$ has additive order dividing both $p^k$ and $q^\ell$.
Since $\gcd(p^k,q^\ell)=1$, the only element of $\Q/\Z$ with order dividing $1$ is $0$.
Hence $\lambda(x,y)=0$.
\end{proof}

\begin{corollary}[Square classes factor across primes]\label{cor:squareclass-factor}
Let $n=\prod_p p^{k_p}$ be the prime factorization.
Then the Chinese remainder theorem induces a canonical bijection
\[
\mathcal C(n)\;\cong\;\prod_{p\mid n}\mathcal C(p^{k_p}).
\]
\end{corollary}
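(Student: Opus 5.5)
The plan is to combine the Chinese remainder theorem for unit groups with the observation that squaring is compatible with the product decomposition. Write $n=\prod_{p\mid n}p^{k_p}$. The ring isomorphism $\Z/n\Z\xrightarrow{\ \sim\ }\prod_{p\mid n}\Z/p^{k_p}\Z$, $\bar a\mapsto(\bar a \bmod p^{k_p})_p$, is canonical: it is given by reduction maps, with no choices. A ring isomorphism restricts to an isomorphism of unit groups, so we get a canonical group isomorphism
\[
\rho\colon (\Z/n\Z)^\times\ \xrightarrow{\ \sim\ }\ \prod_{p\mid n}(\Z/p^{k_p}\Z)^\times .
\]

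Next I will check that $\rho$ carries the subgroup of squares onto the product of the subgroups of squares. Since $\rho$ is a group homomorphism, $\rho(u^2)=\rho(u)^2$, so squares map into $\prod_p\bigl((\Z/p^{k_p})^\times\bigr)^2$. Conversely, given a tuple $(u_p^2)_p$, surjectivity of $\rho$ yields $u$ with $\rho(u)=(u_p)_p$, and then $\rho(u^2)=(u_p^2)_p$. In a product of groups, the squares are exactly the tuples of squares. Hence
\[
\rho\bigl(((\Z/n)^\times)^2\bigr)=\prod_p\bigl((\Z/p^{k_p})^\times\bigr)^2 .
\]

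Finally, passing to quotients uses the elementary fact that for groups $G_p\supseteq H_p$ one has $\bigl(\prod G_p\bigr)/\bigl(\prod H_p\bigr)\cong\prod (G_p/H_p)$ via the componentwise projection. The induced map $\mathcal C(n)\to\prod_{p\mid n}\mathcal C(p^{k_p})$, $[a]\mapsto([a\bmod p^{k_p}])_p$, is therefore a well-defined bijection, and in fact a group isomorphism. It is canonical because it is induced by reduction. I would also remark that it is compatible with Lemma~\ref{lem:prime-orthogonal}: under the primary splitting $C_n\cong\bigoplus_p C_{p^{k_p}}$, the form $\langle a/n\rangle$ restricts on the $p$-part, generated by $s_p\bar 1$ with $s_p=n/p^{k_p}$, to $\langle a s_p/p^{k_p}\rangle$. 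This twist by the unit $s_p$ is a fixed, canonical rescaling, though it is not needed for the bijection itself.

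There is no real obstacle here. The only point requiring care is the surjectivity argument showing that every tuple of local squares comes from a global square, and that is immediate from the bijectivity of $\rho$.
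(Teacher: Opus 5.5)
Your proof is correct and follows the same route as the paper: the Chinese remainder theorem gives an isomorphism of unit groups, the square subgroups correspond under it, and you pass to quotients, spelling out the details the paper leaves implicit. Your side remark is also accurate but not needed: the $p$-part of $\langle a/n\rangle$ is $\langle a s_p/p^{k_p}\rangle$, twisted by the unit $s_p$.
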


\begin{proof}
The Chinese remainder theorem gives
$(\Z/n\Z)^\times\cong \prod_{p\mid n}(\Z/p^{k_p}\Z)^\times$.
Taking quotients by the square subgroups (which correspond under this product decomposition)
yields the claimed bijection.
\end{proof}

\subsection{Odd primes: two square classes}\label{subsec:odd-primes-appA}

We now make $\mathcal C(p^k)$ explicit for odd primes.

\begin{lemma}[Squares modulo $p^k$ lift from modulo $p$]\label{lem:hensel-squares}
Let $p$ be an odd prime and $k\ge 1$.
A unit $u\in(\Z/p^k\Z)^\times$ is a square modulo $p^k$ if and only if its reduction
$\overline{u}\in(\Z/p\Z)^\times$ is a square modulo $p$.
\end{lemma}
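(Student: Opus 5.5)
One direction is immediate: if $u\equiv v^2\pmod{p^k}$, then reducing modulo $p$ gives $\overline{u}=\overline{v}^2$ in $\F_p^\times$. For the converse I will use Hensel lifting applied to $f(X)=X^2-u$. Suppose $\overline{u}\equiv w_1^2\pmod p$ for some $w_1\in\Z$. Since $p\nmid u$, we get $p\nmid w_1$, and therefore $f'(w_1)=2w_1$ is a unit modulo $p$, because $p$ is odd. I will then construct by induction on $j$ integers $w_j$ satisfying
\[
w_j^2\equiv u\pmod{p^j},\qquad w_{j+1}\equiv w_j\pmod{p^j}.
\]

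For the inductive step, assume $w_j^2=u+tp^j$ with $t\in\Z$. Look for $w_{j+1}=w_j+sp^j$. Expanding the square gives
\[
w_{j+1}^2 \equiv u + (t+2w_js)p^j \pmod{p^{j+1}},
\]
where we use $2j\ge j+1$ for $j\ge1$. It therefore suffices to choose $s$ with $2w_js\equiv -t\pmod p$. This congruence is solvable because $2w_j\equiv 2w_1\not\equiv0\pmod p$. After reaching $j=k$, the class of $w_k$ in $(\Z/p^k\Z)^\times$ is a square root of $u$.

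As an alternative, or as a cross-check, one can count. The group $(\Z/p^k\Z)^\times$ is cyclic of even order $p^{k-1}(p-1)$, so its squares form a subgroup of index $2$. The reduction map onto $(\Z/p\Z)^\times$ is surjective, and it sends squares to squares. The preimage of the squares in $(\Z/p\Z)^\times$ is also a subgroup of index $2$, and it contains the squares of $(\Z/p^k\Z)^\times$, so the two subgroups coincide. The only step requiring care in either argument is where $p$ odd enters. In the Hensel argument it is the invertibility of $2w_j$; in the counting argument it is the cyclicity of the unit group. Neither step is a real obstacle, and I would present the Hensel argument as the main proof.
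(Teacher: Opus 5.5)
Your proposal is correct and matches the paper's proof: both use Hensel lifting via $w_{j+1}=w_j+sp^j$, with $s$ solving $2w_js\equiv -t\pmod p$, and both need $p$ odd only to invert $2w_j$ modulo $p$. Your counting argument through the cyclicity of $(\Z/p^k\Z)^\times$ is also valid, but it is only a cross-check and the Hensel argument is enough.
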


\begin{proof}
If $u\equiv v^2\ (\mathrm{mod}\ p^k)$ then $\overline{u}\equiv \overline{v}^{\,2}\ (\mathrm{mod}\ p)$, so the ``only if'' direction is immediate.

For the converse, suppose $\overline{u}\equiv \overline{v}^{\,2}\ (\mathrm{mod}\ p)$ with $v$ not divisible by $p$.
We inductively construct $v_m$ such that
\[
v_m^2\equiv u \pmod{p^m}
\qquad (m=1,2,\dots,k).
\]
For $m=1$ we can take $v_1\equiv v\ (\mathrm{mod}\ p)$.
Assume $v_m^2\equiv u\ (\mathrm{mod}\ p^m)$ for some $1\le m<k$.
Write
\[
v_m^2 = u + c\,p^m
\]
for an integer $c$.
We seek $t\in\Z$ such that $v_{m+1}:=v_m+t\,p^m$ satisfies $v_{m+1}^2\equiv u\ (\mathrm{mod}\ p^{m+1})$.
Compute
\[
v_{m+1}^2
=
(v_m+t p^m)^2
=
v_m^2 + 2v_m t p^m + t^2 p^{2m}.
\]
Modulo $p^{m+1}$ the term $t^2 p^{2m}$ vanishes because $2m\ge m+1$ for $m\ge 1$.
Hence
\[
v_{m+1}^2 \equiv u + (c+2v_m t)p^m \pmod{p^{m+1}}.
\]
Thus we need $c+2v_m t\equiv 0\ (\mathrm{mod}\ p)$.
Since $p$ is odd and $v_m$ is a unit modulo $p$, the element $2v_m$ is invertible modulo $p$,
so we can choose
\[
t \equiv -c\,(2v_m)^{-1}\pmod{p}.
\]
With this choice, $v_{m+1}^2\equiv u\ (\mathrm{mod}\ p^{m+1})$.
Iterating up to $m=k$ produces a square root of $u$ modulo $p^k$.
\end{proof}

\begin{proposition}[Odd--prime cyclic Type A classes]\label{prop:Cpk-odd}
Let $p$ be an odd prime and $k\ge 1$.
Then $\mathcal C(p^k)$ has exactly two elements.
Equivalently, there are exactly two isometry classes of nonsingular symmetric linkings on $C_{p^k}$.
\end{proposition}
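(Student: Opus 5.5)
The plan is to reduce the count to the case $k=1$ using Lemma~\ref{lem:hensel-squares}, and then count squares in $(\Z/p\Z)^\times$ directly. The second sentence of the proposition then follows from the first by Definition~\ref{def:squareclass}, i.e.\ by Proposition~\ref{prop:cyclic-isometry} with $n=p^k\ge 3$.

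First, consider the reduction map $\rho\colon(\Z/p^k\Z)^\times\to(\Z/p\Z)^\times$.
\begin{itemize}
\item $\rho$ is a surjective group homomorphism, since any integer prime to $p$ is a unit modulo $p^k$.
\item By Lemma~\ref{lem:hensel-squares}, a unit $u$ is a square modulo $p^k$ if and only if $\rho(u)$ is a square modulo $p$. Thus $((\Z/p^k\Z)^\times)^2=\rho^{-1}\bigl(((\Z/p\Z)^\times)^2\bigr)$.
\item Consequently $\rho$ induces an isomorphism $\mathcal C(p^k)\cong\mathcal C(p)$. The induced map is surjective because $\rho$ is, and it is injective precisely because of the preimage identity in the previous item.
\end{itemize}

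Second, compute $|\mathcal C(p)|$. The squaring map $s\colon x\mapsto x^2$ is a homomorphism on the abelian group $\F_p^\times$, which has order $p-1$. Its kernel consists of the roots of $x^2-1$ in the field $\F_p$, namely $\{\pm1\}$. Since $p$ is odd, $1\neq -1$, so the kernel has exactly two elements. Hence the image $(\F_p^\times)^2$ has index $2$, and $|\mathcal C(p)|=2$.

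If desired, one can make the two classes explicit: they are represented by $1$ and by any quadratic nonresidue $\varepsilon$, and are separated by the Legendre symbol. Nothing in this argument is a real obstacle. The only point needing care is the preimage identity, because it is what makes $\mathcal C(p^k)\to\mathcal C(p)$ injective rather than merely surjective, and it is exactly the content of the Hensel lifting lemma.
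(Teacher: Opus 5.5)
Your proposal is correct and follows the same route as the paper: Lemma~\ref{lem:hensel-squares} identifies the squares modulo $p^k$ with the preimage of the squares modulo $p$, which reduces everything to the two square classes in $(\Z/p\Z)^\times$. You are more careful than the paper, which simply asserts the mod-$p$ count and its transfer to $p^k$, whereas you make the induced isomorphism $\mathcal C(p^k)\cong\mathcal C(p)$ explicit and justify the index $2$ via the kernel $\{\pm1\}$ of squaring.
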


\begin{proof}
By Lemma~\ref{lem:hensel-squares}, an element of $(\Z/p^k\Z)^\times$ is a square modulo $p^k$
if and only if it is a square modulo $p$.
The unit group $(\Z/p\Z)^\times$ has exactly two square classes (quadratic residues and nonresidues),
so the same holds for $(\Z/p^k\Z)^\times$.
Therefore the quotient $\mathcal C(p^k)$ has order $2$.
\end{proof}

\subsection{The $2$--primary cyclic case}\label{subsec:two-primary-cyclic}

The cyclic $2$--power case exhibits the familiar ``mod $8$'' refinement.

\begin{lemma}[Odd squares are $1$ modulo $8$]\label{lem:odd-squares-mod8}
If $x$ is odd, then $x^2\equiv 1\ (\mathrm{mod}\ 8)$.
\end{lemma}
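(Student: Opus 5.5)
The statement is the elementary fact that every odd integer squares to $1$ modulo $8$. The plan is to write the odd integer in a convenient form and expand. Since $x$ is odd, I will write $x=2m+1$ with $m\in\Z$. Then
\[
x^2=(2m+1)^2=4m^2+4m+1=4m(m+1)+1 .
\]

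The only point needing a moment's thought is that $m(m+1)$ is even. This holds because it is a product of two consecutive integers, one of which is even. Hence $4m(m+1)$ is divisible by $8$, and $x^2\equiv 1\pmod 8$ follows at once.

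As an alternative check, I could reduce to the four residues $x\equiv 1,3,5,7\pmod 8$. Their squares are $1,9,25,49$, and each is $\equiv 1\pmod 8$. This is legitimate because $x^2\bmod 8$ depends only on $x\bmod 8$.

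There is no real obstacle here. The lemma will be used afterwards, together with a Hensel-type lifting at $p=2$ that needs the refined congruence, to show that $\mathcal C(2^k)$ has at most the four classes $\{1,3,5,7\}$ for $k\ge 3$. That justifies recording $\delta_{2,k}$ modulo $8$ in Definition~\ref{def:typeA-detclass}.
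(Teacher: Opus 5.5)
Your proof is correct and is the same argument the paper gives: write $x=2m+1$, expand to $x^2=4m(m+1)+1$, and use that $m(m+1)$ is even to get $8\mid 4m(m+1)$. The residue check on $x\equiv 1,3,5,7\pmod 8$ is a valid independent confirmation.
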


\begin{proof}
Write $x=2m+1$.
Then $x^2=4m(m+1)+1$.
Since $m(m+1)$ is even, $4m(m+1)$ is divisible by $8$, hence $x^2\equiv 1\ (\mathrm{mod}\ 8)$.
\end{proof}

\begin{lemma}[Units congruent to $1$ mod $8$ are squares]\label{lem:1mod8-squares}
Let $k\ge 3$.
If $u\in(\Z/2^k\Z)^\times$ satisfies $u\equiv 1\ (\mathrm{mod}\ 8)$, then $u$ is a square modulo $2^k$.
\end{lemma}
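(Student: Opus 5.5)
We prove Lemma~\ref{lem:1mod8-squares} by a $2$--adic Hensel lifting argument, parallel to the proof of Lemma~\ref{lem:hensel-squares}, but gaining precision one step at a time starting from modulus $8$. The statement is to be read with $u$ represented by an odd integer satisfying $u\equiv 1\pmod 8$. We show by induction on $m\ge 3$ that there exists an odd integer $v_m$ with $v_m^2\equiv u \pmod{2^m}$. The base case $m=3$ is immediate: $v_3=1$ works, since $u\equiv 1\pmod 8$. Iterating the inductive step up to $m=k$ then gives the required square root modulo $2^k$.

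For the inductive step, assume $m\ge 3$ and $v_m^2=u+c\,2^m$ for some integer $c$, with $v_m$ odd. If $c$ is even, we put $v_{m+1}=v_m$. If $c$ is odd, we put $v_{m+1}=v_m+t\,2^{m-1}$ and choose the integer $t$ below. Expanding,
\[
v_{m+1}^2 = v_m^2 + t\,v_m\,2^{m} + t^2\,2^{2m-2}.
\]
Because $m\ge 3$, we have $2m-2\ge m+1$, so the last term vanishes modulo $2^{m+1}$. Hence
\[
v_{m+1}^2\equiv u+(c+t\,v_m)\,2^m \pmod{2^{m+1}}.
\]
We take $t=1$. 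Since $v_m$ is odd and $c$ is odd, $c+v_m$ is even, and therefore $v_{m+1}^2\equiv u\pmod{2^{m+1}}$. Moreover $v_{m+1}$ is still odd, because $m-1\ge 1$.

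The only delicate point, and the reason the hypothesis $u\equiv 1\pmod 8$ cannot be weakened, is the inequality $2m-2\ge m+1$. It requires exactly $m\ge 3$, so the induction cannot start below modulus $8$. The base case at $m=3$ is precisely where the hypothesis is used.

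An alternative route would use the structure $(\Z/2^k)^\times\cong\{\pm1\}\times\langle 5\rangle$. In that description, the units $\equiv 1\pmod 8$ are exactly the powers $5^{2j}$, which are visibly squares. However, that structure theorem would need its own proof, so we would use the direct lifting argument instead.

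Together with Lemma~\ref{lem:odd-squares-mod8}, this lemma shows that, for $k\ge3$, a unit modulo $2^k$ is a square exactly when it is $\equiv1\pmod 8$. Consequently $\mathcal C(2^k)\cong(\Z/8)^\times$ has four elements, which is the mod-$8$ refinement used in Definition~\ref{def:typeA-detclass}.
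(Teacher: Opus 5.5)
Your proof is correct and is essentially the paper's own argument. Both start the induction at $x_3=1$. Both correct each stage by either keeping $x_m$ or passing to $x_m+2^{m-1}$, using that $2^m x_m\equiv 2^m \pmod{2^{m+1}}$ and that $2^{2m-2}$ vanishes modulo $2^{m+1}$ once $m\ge 3$.
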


\begin{proof}
We induct on $m=3,4,\dots,k$ to build an odd integer $x_m$ such that
\[
x_m^2\equiv u \pmod{2^m}.
\]
For $m=3$, the assumption $u\equiv 1\ (\mathrm{mod}\ 8)$ allows us to take $x_3=1$.

Assume $x_m^2\equiv u\ (\mathrm{mod}\ 2^m)$ for some $3\le m<k$.
Write $x_m^2=u + c\,2^m$ for some integer $c$.
Consider $x'_m:=x_m+2^{m-1}$.
Then
\[
(x'_m)^2=x_m^2 + 2^m x_m + 2^{2m-2}.
\]
Since $x_m$ is odd, we have $2^m x_m\equiv 2^m\ (\mathrm{mod}\ 2^{m+1})$.
Also $2^{2m-2}$ is divisible by $2^{m+1}$ when $m\ge 3$.
Hence
\[
(x_m+2^{m-1})^2 \equiv x_m^2 + 2^m \pmod{2^{m+1}}.
\]
Therefore, by choosing either $x_{m+1}=x_m$ or $x_{m+1}=x_m+2^{m-1}$,
we can arrange $x_{m+1}^2\equiv u\ (\mathrm{mod}\ 2^{m+1})$.
This completes the induction.
\end{proof}

\begin{proposition}[$2$--power square classes]\label{prop:C2k}
The square class sets $\mathcal C(2^k)$ are as follows:
\begin{enumerate}
\item If $k=1$, then $\mathcal C(2)$ is trivial.
\item If $k=2$, then $\mathcal C(4)$ has two elements, represented by $1$ and $3$.
\item If $k\ge 3$, then $\mathcal C(2^k)$ has four elements, represented by the residues
\[
1,\ 3,\ 5,\ 7 \pmod{8}.
\]
Equivalently, two odd units $a,b$ are in the same square class modulo $2^k$ if and only if
$a\equiv b\ (\mathrm{mod}\ 8)$.
\end{enumerate}
\end{proposition}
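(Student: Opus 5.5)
The plan is to treat the three cases separately, using Lemmas~\ref{lem:odd-squares-mod8} and~\ref{lem:1mod8-squares} as the main inputs, and in each case compute the image of the squaring map on $(\Z/2^k\Z)^\times$.

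For $k=1$, the group $(\Z/2\Z)^\times=\{1\}$ is trivial, so $\mathcal C(2)$ is trivial. For $k=2$, the group $(\Z/4\Z)^\times=\{1,3\}$ has $1^2\equiv 3^2\equiv 1\pmod 4$. The subgroup of squares is therefore $\{1\}$, and $\mathcal C(4)$ has the two elements represented by $1$ and $3$.

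For $k\ge 3$, I will identify the subgroup of squares $S\subset(\Z/2^k\Z)^\times$ with the kernel of reduction $\rho\colon(\Z/2^k\Z)^\times\to(\Z/8\Z)^\times$. Lemma~\ref{lem:odd-squares-mod8} gives $S\subseteq\ker\rho$, and Lemma~\ref{lem:1mod8-squares} gives $\ker\rho\subseteq S$. Next, $\rho$ is surjective, since each of $1,3,5,7$ is an odd integer, hence a unit modulo $2^k$. So $\mathcal C(2^k)=(\Z/2^k\Z)^\times/\ker\rho\cong(\Z/8\Z)^\times$, which has four elements represented by $1,3,5,7$.

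The equivalent formulation also follows from $S=\ker\rho$: two odd units $a,b$ lie in the same square class exactly when $ab^{-1}\in\ker\rho$, that is, when $a\equiv b\pmod 8$.

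No step is a real obstacle, since the Hensel-type lifting has already been done in Lemma~\ref{lem:1mod8-squares}. The only point to state carefully is that the square classes form the quotient group by the squares subgroup, so that identifying $S$ with $\ker\rho$ is enough to count the classes.
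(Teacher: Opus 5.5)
Your proposal is correct and follows essentially the same route as the paper: you handle $k=1,2$ by direct inspection, and for $k\ge 3$ you use Lemmas~\ref{lem:odd-squares-mod8} and~\ref{lem:1mod8-squares} to identify the subgroup of squares with the units congruent to $1$ modulo $8$, so the quotient is detected by reduction modulo $8$. Your explicit remark that reduction to $(\Z/8\Z)^\times$ is surjective makes precise a point the paper leaves implicit when it asserts that there are four cosets represented by $1,3,5,7$.
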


\begin{proof}
For $k=1$, $(\Z/2\Z)^\times$ is trivial.

For $k=2$, $(\Z/4\Z)^\times=\{1,3\}$ and $1^2\equiv 3^2\equiv 1\ (\mathrm{mod}\ 4)$, so the subgroup of squares is $\{1\}$ and the quotient has two classes.

Assume $k\ge 3$.
By Lemma~\ref{lem:odd-squares-mod8}, every square unit is congruent to $1$ modulo $8$.
Conversely, by Lemma~\ref{lem:1mod8-squares}, every unit congruent to $1$ modulo $8$ is a square.
Thus the subgroup of squares in $(\Z/2^k\Z)^\times$ is precisely the congruence class
$1+8\Z/2^k\Z$.
The quotient by this subgroup is therefore detected by reduction modulo $8$, and it has
four cosets represented by $1,3,5,7\ (\mathrm{mod}\ 8)$.
The final equivalence criterion $a\equiv b\ (\mathrm{mod}\ 8)$ is exactly the statement that
$a/b$ lies in the square subgroup.
\end{proof}

\subsection{Type A tokens and canonical representatives}\label{subsec:typeA-tokens}

We now record the preceding classification in the concrete prime--local datum used by Step~A.
The terminology ``token'' refers only to the fact that we will later \emph{encode} these local invariants
as part of the canonical output; mathematically, it is simply a complete isometry invariant in the cyclic
$p$--primary case.

\begin{definition}[Type A (cyclic) token]\label{def:typeA-token}
Let $p$ be a prime and $k\ge 1$.
Let $\lambda$ be a nonsingular symmetric linking form on the cyclic group $C_{p^k}$.
Choose a generator $g$ and write
\[
\lambda(g,g)\equiv \frac{a}{p^k}\pmod{\Z}
\qquad\text{with }a\in\Z,\ \gcd(a,p)=1.
\]
The \emph{Type A token} of $(C_{p^k},\lambda)$ is the datum
\[
\Tok_A(\lambda):=(p,k,\delta),
\]
where the \emph{local square label} $\delta$ is defined as follows.
\begin{enumerate}
\item If $p$ is odd, set $\delta=+$ if $a$ is a square modulo $p$ and $\delta=-$ otherwise.
\item If $p=2$ and $k=1$, set $\delta=+$ (the unique class).
\item If $p=2$ and $k=2$, set $\delta\in\{1,3\}$ to be $a\ (\mathrm{mod}\ 4)$.
\item If $p=2$ and $k\ge 3$, set $\delta\in\{1,3,5,7\}$ to be $a\ (\mathrm{mod}\ 8)$.
\end{enumerate}
\end{definition}

\begin{lemma}[Well-definedness of $\Tok_A$]\label{lem:TokA-well-defined}
The token $\Tok_A(\lambda)$ is independent of the choice of generator $g$ and of the integer representative $a$.
\end{lemma}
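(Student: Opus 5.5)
The plan is to isolate the two sources of choice in Definition~\ref{def:typeA-token}, the integer representative $a$ and the generator $g$, and to check that the label $\delta$ is insensitive to each. In every case $\delta$ is read off from the class of $a$ in the square class set $\mathcal C(p^k)$. The argument then reduces to two facts: that class does not depend on either choice, and the recipe for $\delta$ is a function of that class. Note first that $\gcd(a,p)=1$ is automatic from nonsingularity by Lemma~\ref{lem:nonsingular-gcd}, so $a$ defines a unit modulo $p^k$.

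\emph{Step 1: independence of the representative $a$ for fixed $g$.} Suppose $a/p^k\equiv a'/p^k \pmod{\Z}$. Then $a\equiv a'\pmod{p^k}$. In each case the label depends only on $a$ modulo $p$ (odd $p$), modulo $4$ ($p=2$, $k=2$), or modulo $8$ ($p=2$, $k\ge 3$). Each of these moduli divides $p^k$ in the respective case; for $k\ge3$ we use $8\mid 2^k$. For $p=2$, $k=1$ there is nothing to check. So $\delta$ is unchanged.

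\emph{Step 2: independence of the generator.} Any other generator is $g'=ug$ with $u\in(\Z/p^k\Z)^\times$, exactly as in the proof of Proposition~\ref{prop:cyclic-isometry}. Bilinearity gives $\lambda(g',g')=u^2\lambda(g,g)\equiv u^2a/p^k$, so the new representative is $a'=u^2a$, up to the ambiguity already handled in Step~1. We must then show that multiplying $a$ by a unit square does not change the label. For odd $p$, $u^2a$ is a square modulo $p$ if and only if $a$ is, since $\bar u^{\,2}$ is a nonzero square in $\F_p^\times$. For $p=2$ and $k=2$, $u^2\equiv1\pmod 4$. For $k\ge3$, $u^2\equiv 1\pmod 8$ by Lemma~\ref{lem:odd-squares-mod8}. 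Hence $\delta(u^2a)=\delta(a)$ in every case.

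No step is a genuine obstacle. The only point needing care is the bookkeeping in the $2$-primary cases, where one must confirm that the reduction modulus ($4$ or $8$) divides $2^k$, so that the label is well defined on $\Z/2^k$. This fails only for $k\le 2$ with modulus $8$, which is precisely why those cases are treated separately. Optionally, one can add the converse remark that, by Propositions~\ref{prop:Cpk-odd} and~\ref{prop:C2k}, $\delta$ identifies $\mathcal C(p^k)$ bijectively with the label set. This is not needed for well-definedness.
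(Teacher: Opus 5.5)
Your proof is correct and follows essentially the same route as the paper: $\delta$ depends only on $a \bmod p^k$, and replacing $g$ by $ug$ replaces $a$ by $u^2a$, which leaves the label unchanged. The one difference is cosmetic: in the $2$--primary case you use $u^2\equiv 1 \pmod 8$ (Lemma~\ref{lem:odd-squares-mod8}) directly, where the paper cites Proposition~\ref{prop:C2k}. You also spell out why each reduction modulus divides $p^k$, which the paper leaves implicit.
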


\begin{proof}
If $a$ is replaced by $a+p^k m$, then $a/p^k$ is unchanged in $\Q/\Z$.
Thus $\delta$ depends only on the residue class of $a$ modulo $p^k$, hence is independent of the choice of $a$.

If $g'$ is another generator, then $g'=ug$ for a unit $u\in(\Z/p^k\Z)^\times$.
By bilinearity,
\[
\lambda(g',g')=\lambda(ug,ug)=u^2\,\lambda(g,g)\equiv \frac{u^2a}{p^k}\pmod{\Z}.
\]
Thus $a$ is replaced by $u^2a$.
In the odd prime case, being a square modulo $p$ is invariant under multiplying by $u^2$.
In the $2$--primary case, Proposition~\ref{prop:C2k} shows that the square class is invariant under multiplying by a square unit,
so the residues modulo $4$ (if $k=2$) or modulo $8$ (if $k\ge 3$) are unchanged.
Hence $\Tok_A(\lambda)$ is well-defined.
\end{proof}

\begin{theorem}[Completeness of Type A tokens]\label{thm:TokA-complete}
Let $p$ be a prime, $k\ge 1$, and let $\lambda,\lambda'$ be nonsingular symmetric linkings on $C_{p^k}$.
Then $(C_{p^k},\lambda)$ and $(C_{p^k},\lambda')$ are isometric if and only if
\[
\Tok_A(\lambda)=\Tok_A(\lambda').
\]
\end{theorem}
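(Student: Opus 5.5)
The plan is to reduce both directions to Proposition~\ref{prop:cyclic-isometry}, which says $\langle a/p^k\rangle\cong\langle b/p^k\rangle$ exactly when $b\equiv u^2a\pmod{p^k}$ for some unit $u$. By Lemma~\ref{lem:every-cyclic-is-a} and Lemma~\ref{lem:nonsingular-gcd}, after fixing the generator $g=\overline 1$ we can write $\lambda=\langle a/p^k\rangle$ and $\lambda'=\langle b/p^k\rangle$ with $\gcd(a,p)=\gcd(b,p)=1$. By Lemma~\ref{lem:TokA-well-defined}, $\Tok_A(\lambda)$ and $\Tok_A(\lambda')$ may then be computed from $a$ and $b$ with this generator. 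The theorem therefore reduces to a purely arithmetic claim: the label $\delta$ computed from $a$ equals the label computed from $b$ if and only if $a$ and $b$ lie in the same class of $\mathcal C(p^k)$.

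For the forward direction, suppose $\lambda\cong\lambda'$. Then $b\equiv u^2a\pmod{p^k}$, and the computation in the proof of Lemma~\ref{lem:TokA-well-defined} shows that $\delta$ is unchanged under multiplication by a square unit. Alternatively, the isometry pulls $\lambda'$ back to $\lambda$ while carrying $g$ to another generator, and Lemma~\ref{lem:TokA-well-defined} gives generator-independence. Either way the tokens agree, and $(p,k)$ is already fixed.

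For the converse, I will show that $\delta$ separates the square classes. I split into cases according to Definition~\ref{def:typeA-token}.

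\begin{itemize}
\item If $p$ is odd: equal $\delta$ means $a$ and $b$ are both squares mod $p$ or both nonsquares. Then $b a^{-1}$ is a square mod $p$, so by Lemma~\ref{lem:hensel-squares} it is a square mod $p^k$.
\item If $p=2$, $k=1$: $\mathcal C(2)$ is trivial by Proposition~\ref{prop:C2k}(1), so any two nonsingular forms are isometric.
\item If $p=2$, $k=2$: the squares in $(\Z/4)^\times$ are $\{1\}$, so $a\equiv b\pmod 4$ gives $b=1^2\cdot a$ in $\Z/4$.
\item If $p=2$, $k\ge3$: equal residues mod $8$ give $b a^{-1}\equiv1\pmod 8$, which is a square by Lemma~\ref{lem:1mod8-squares} (this is Proposition~\ref{prop:C2k}(3)).
\end{itemize}

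In every case $b\equiv u^2a$, and Proposition~\ref{prop:cyclic-isometry} supplies the isometry.

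No step is a real obstacle, since the content is already in Proposition~\ref{prop:C2k} and Lemma~\ref{lem:hensel-squares}. The one point needing care is the boundary case $k=2$ versus $k\ge3$ at $p=2$. There one must check that the labels mod $4$ or mod $8$ exactly match the quotient $\mathcal C(2^k)$: they must neither under-separate nor over-separate the classes. The results above settle this.
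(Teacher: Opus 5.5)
Your proposal is correct and follows the paper's proof: both reduce to Proposition~\ref{prop:cyclic-isometry} and then observe that the label $\delta$ of Definition~\ref{def:typeA-token} is exactly a complete invariant of the square class in $\mathcal C(p^k)$. The paper gets this last point by citing Propositions~\ref{prop:Cpk-odd} and~\ref{prop:C2k} directly, while you unpack those identifications into explicit case checks (Hensel lifting for odd $p$, the mod $4$ and mod $8$ criteria at $p=2$) — the same argument, written out in more detail.
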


\begin{proof}
Choose generators $g,g'$ and write
$\lambda(g,g)\equiv a/p^k$ and $\lambda'(g',g')\equiv b/p^k$ with $\gcd(a,p)=\gcd(b,p)=1$.
By Proposition~\ref{prop:cyclic-isometry}, the linkings are isometric if and only if
$b\equiv u^2 a\ (\mathrm{mod}\ p^k)$ for some unit $u$,
i.e.\ if and only if $a$ and $b$ represent the same square class in $\mathcal C(p^k)$.
For odd $p$, Proposition~\ref{prop:Cpk-odd} identifies $\mathcal C(p^k)$ with the two labels $\{+,-\}$
recording whether $a$ is a quadratic residue modulo $p$.
For $p=2$, Proposition~\ref{prop:C2k} identifies $\mathcal C(2^k)$ with the residue labels appearing in
Definition~\ref{def:typeA-token}.
Thus equality of tokens is equivalent to equality of square classes, hence to isometry.
\end{proof}

\begin{remark}[Canonical square-class representatives]\label{rem:TokA-canonical-rep}
For later use (in particular, to produce canonical matrices in Step~B) it is convenient to fix,
once and for all, a preferred representative of each square class.

For odd $p$, set
\[
\varepsilon_p:=\min\{\,m\ge 2\mid m\ \text{is a quadratic nonresidue mod }p\,\},
\]
and represent the two classes in $\mathcal C(p^k)$ by $1$ and $\varepsilon_p$.
By Lemma~\ref{lem:hensel-squares}, $\varepsilon_p$ remains a non-square modulo $p^k$ for every $k\ge 1$.

For $p=2$ the representatives in Proposition~\ref{prop:C2k} are already canonical.
We stress that this is \emph{not} an extra invariant: it is a choice of normal form for the same
square class datum.
\end{remark}

\begin{example}[Small instances]\label{ex:typeA-small}
For $p=5$ and $k=1$, the quadratic residues are $\{1,4\}$ and the nonresidues are $\{2,3\}$.
Thus the two isometry classes on $C_5$ have tokens $(5,1,+)$ and $(5,1,-)$, represented for example by
$\langle 1/5\rangle$ and $\langle 2/5\rangle$.

For $k=3$, the group $C_8$ admits four classes
$\langle 1/8\rangle,\langle 3/8\rangle,\langle 5/8\rangle,\langle 7/8\rangle$,
distinguished by $\delta\equiv a\ (\mathrm{mod}\ 8)$.
\end{example}

\section{Local Classification II: Type E (Hyperbolic) tokens and $2$-primary refinements}\label{sec:typeE}

\subsection{Layer forms, the characteristic element, and the $A/E$ dichotomy at $p=2$}\label{subsec:ae-dichotomy}

Let $(G,\lambda)$ be a finite abelian group equipped with a \emph{nonsingular symmetric linking pairing}
\[
\lambda \colon G\times G \longrightarrow \Q/\Z,
\qquad \lambda(x,y)=\lambda(y,x),
\]
i.e.\ the adjoint map $G\to \Hom(G,\Q/\Z)$, $x\mapsto (y\mapsto \lambda(x,y))$, is an isomorphism.
As usual, $\lambda$ splits as an orthogonal direct sum over primes.  In this section we concentrate on the
$2$--primary summand $(G_{(2)},\lambda_{(2)})$.  For ease of notation we write $(G,\lambda)$ for a
$2$--primary linking in this subsection.

Following Wall and Kawauchi--Kojima, we use the standard $2$--power torsion filtration
\[
G_k \;:=\; \{x\in G \mid 2^k x =0\},
\qquad k\ge 0,
\]
with $G_0=0$.  For $k\ge 1$ we form the \emph{exponent--one graded piece}
\begin{equation}\label{eq:Pk-def-main}
P_k(G)\;:=\;G_k\big/\bigl(G_{k-1}+2G_{k+1}\bigr).
\end{equation}
This quotient is canonically an $\F_2$--vector space: if $x\in G_k$ then $2x\in G_{k-1}$, hence the image of
$2x$ vanishes in~\eqref{eq:Pk-def-main}.

The linking pairing induces a symmetric bilinear form over $\F_2$ on each layer.

\begin{definition}[Layer form]\label{def:layer-form}
For $k\ge 1$, define
\[
b_k \colon P_k(G)\times P_k(G)\longrightarrow \F_2
\]
by
\[
b_k(\bar x,\bar y)\;:=\;\bigl(2^k\,\lambda(x,y)\bigr)\bmod 2,
\]
where $x\in G_k$, $y\in G_k$ are lifts of $\bar x,\bar y\in P_k(G)$.
\end{definition}

\begin{lemma}\label{lem:layer-form-welldefined}
The form $b_k$ in Definition~\ref{def:layer-form} is well-defined, symmetric and $\F_2$--bilinear.
If moreover $(G,\lambda)$ is nonsingular, then $b_k$ is nonsingular for every $k\ge 1$.
\end{lemma}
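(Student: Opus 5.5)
The plan is to check everything directly from the definitions, using only bilinearity of $\lambda$ and, for nonsingularity, the standard annihilator identity for a nonsingular pairing.

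\textbf{Step 1: values and independence of lifts.} First I would make precise what $(2^k\lambda(x,y))\bmod 2$ means. For $x,y\in G_k$ we have $2^k\lambda(x,y)=\lambda(2^kx,y)=0$, so $\lambda(x,y)\in 2^{-k}\Z/\Z$. Writing $\lambda(x,y)=a/2^k$, the integer $a$ is well-defined modulo $2^k$, so its parity is well-defined for $k\ge1$. Call this parity $b_k(x,y)\in\F_2$ for $x,y\in G_k$.

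Next I would show this descends to $P_k(G)$ by checking the two kinds of change of lift in the first variable; symmetry then handles the second variable.
\begin{itemize}
\item If $x'\in G_{k-1}$, then $2^{k-1}\lambda(x',y)=0$. So $\lambda(x',y)\in 2^{-(k-1)}\Z/\Z$, its numerator over $2^k$ is even, and it contributes $0$ mod $2$.
\item If $x'=2z$ with $z\in G_{k+1}$, then $\lambda(x',y)=2\lambda(z,y)$. Since $2^ky=0$, we have $\lambda(z,y)\in 2^{-k}\Z/\Z$, so $2\lambda(z,y)$ has even numerator over $2^k$ and again contributes $0$.
\end{itemize}
Symmetry of $b_k$ is inherited from $\lambda$. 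Bilinearity follows because $x\mapsto\lambda(x,y)$ is additive and the numerator map $2^{-k}\Z/\Z\to\Z/2^k\to\F_2$ is a homomorphism.

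\textbf{Step 2: nonsingularity.} Since $P_k(G)$ is a finite-dimensional $\F_2$-space, it suffices to show the adjoint of $b_k$ is injective. Suppose $x\in G_k$ satisfies $b_k(\bar x,\bar y)=0$ for all $y\in G_k$. This says $\lambda(x,y)\in 2^{-(k-1)}\Z/\Z$, i.e.\ $\lambda(2^{k-1}x,y)=0$ for all $y\in G_k$.

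The key input is the annihilator identity $G_k^\perp=2^kG$. The inclusion $2^kG\subseteq G_k^\perp$ is immediate. For equality, nonsingularity identifies $G/G_k^\perp$ with $\Hom(G_k,\Q/\Z)$. Hence $|G_k^\perp|=|G|/|G_k|$, and $|G|/|G_k|=|2^kG|$ because multiplication by $2^k$ induces $G/G_k\cong 2^kG$.

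Applying this identity gives $2^{k-1}x=2^kw$ for some $w\in G$. Then:
\begin{itemize}
\item $2^{k+1}w=2^kx=0$, so $w\in G_{k+1}$;
\item $2^{k-1}(x-2w)=0$, so $x-2w\in G_{k-1}$.
\end{itemize}
Therefore $x\in G_{k-1}+2G_{k+1}$, i.e.\ $\bar x=0$, and the adjoint of $b_k$ is injective.

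The main point requiring care is the annihilator identity $G_k^\perp=2^kG$, which is where nonsingularity of $\lambda$ enters. Everything else is bookkeeping of numerators modulo powers of $2$.
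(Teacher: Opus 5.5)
Your proposal is correct and follows the paper's proof essentially step for step: you check independence of lifts under $G_{k-1}$ and $2G_{k+1}$, then obtain nonsingularity from the annihilator identity $G_k^{\perp}=2^kG$ (proved by the same order count) and the splitting $x=(x-2w)+2w$ with $x-2w\in G_{k-1}$ and $w\in G_{k+1}$. Your description of $b_k$ as the parity of the numerator $a$ in $\lambda(x,y)=a/2^k$ is a welcome clarification of what ``$2^k\lambda(x,y)\bmod 2$'' means, since $2^k\lambda(x,y)$ itself is zero in $\Q/\Z$.
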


\begin{proof}
We first check that the formula makes sense.  If $y\in G_k$, then the character $\lambda(-,y)\colon G\to\Q/\Z$
has order dividing $2^k$, hence
\begin{equation}\label{eq:2k-int-main}
2^k\lambda(z,y)\in \Z\qquad\text{for all }z\in G.
\end{equation}
In particular $2^k\lambda(x,y)\in \Z$, so reduction modulo $2$ is defined.

To prove well-definedness in the first variable, replace $x$ by another representative
$x'=x+u+2v$ with $u\in G_{k-1}$ and $v\in G_{k+1}$.  Then
\[
2^k\lambda(x',y)=2^k\lambda(x,y)+2^k\lambda(u,y)+2^k\lambda(2v,y).
\]
Since $u\in G_{k-1}$, the character $\lambda(u,-)$ has order dividing $2^{k-1}$, so
$2^{k-1}\lambda(u,y)\in \Z$ and therefore $2^k\lambda(u,y)=2\cdot (2^{k-1}\lambda(u,y))$ is an even integer.
For the term involving $v$, bilinearity and~\eqref{eq:2k-int-main} give
\[
2^k\lambda(2v,y)=2^{k+1}\lambda(v,y)=2\cdot\bigl(2^k\lambda(v,y)\bigr),
\]
and $2^k\lambda(v,y)\in\Z$ because $y\in G_k$.  Hence $2^k\lambda(2v,y)$ is also even.
Thus $2^k\lambda(x',y)\equiv 2^k\lambda(x,y)\pmod 2$, and the first variable is well-defined.  The same argument
applies to the second variable.  Symmetry and bilinearity follow immediately from those of~$\lambda$.

Assume now that $\lambda$ is nonsingular and let $\bar x\in P_k(G)$ satisfy $b_k(\bar x,\bar y)=0$
for all $\bar y\in P_k(G)$.  Choose a representative $x\in G_k$.
The hypothesis means that for all $y\in G_k$ the integer $2^k\lambda(x,y)$ is even, equivalently
\begin{equation}\label{eq:even-condition-main}
2^{k-1}\lambda(x,y)\in \Z\qquad\text{for all }y\in G_k.
\end{equation}
By bilinearity,~\eqref{eq:even-condition-main} is the statement that
$\lambda(2^{k-1}x,y)\in \Z$ for all $y\in G_k$, i.e.\ $2^{k-1}x$ lies in the annihilator
\[
G_k^{\perp}:=\{z\in G\mid \lambda(z,G_k)=0\}.
\]
We claim that
\begin{equation}\label{eq:annihilator-main}
G_k^{\perp}=2^kG.
\end{equation}
Indeed, if $z=2^k w$ then $\lambda(z,y)=\lambda(w,2^k y)=0$ for all $y\in G_k$, so $2^kG\subseteq G_k^{\perp}$.
Conversely, if $z\in G_k^{\perp}$ then the character $\chi:=\lambda(z,-)$ vanishes on $G_k$ and hence lies in the kernel
of the restriction map $\Hom(G,\Q/\Z)\to \Hom(G_k,\Q/\Z)$.  This kernel has order
$|\Hom(G,\Q/\Z)|/|\Hom(G_k,\Q/\Z)|=|G|/|G_k|$.
On the other hand, for any $\psi\in \Hom(G,\Q/\Z)$ the character $2^k\psi$ vanishes on $G_k$ because
$(2^k\psi)(y)=\psi(2^k y)=0$ when $2^k y=0$.  Thus $2^k\Hom(G,\Q/\Z)$ is contained in the kernel.
Since $\Hom(G,\Q/\Z)$ is a finite $2$--group, its $2^k$--torsion is canonically identified with $\Hom(G_k,\Q/\Z)$,
so the image $2^k\Hom(G,\Q/\Z)$ has the same order $|G|/|G_k|$.
Therefore the kernel is exactly $2^k\Hom(G,\Q/\Z)$.
Applying the adjoint isomorphism $\mathrm{ad}_{\lambda}\colon G\stackrel{\sim}{\to}\Hom(G,\Q/\Z)$ gives
$G_k^{\perp}=2^kG$, proving~\eqref{eq:annihilator-main}.

Thus $2^{k-1}x\in 2^kG$, so there exists $w\in G$ with $2^{k-1}x=2^k w$, i.e.\ $2^{k-1}(x-2w)=0$.
Hence $x-2w\in G_{k-1}$, so $x\in G_{k-1}+2G$.  Finally, since $x\in G_k$ we have
\[
0=2^k x = 2^{k+1}w + 2^k(x-2w)=2^{k+1}w,
\]
because $x-2w\in G_{k-1}$ implies $2^k(x-2w)=0$.  Therefore $w\in G_{k+1}$, so
$x\in G_{k-1}+2G_{k+1}$, which means $\bar x=0$ in~$P_k(G)$.
\end{proof}

Over $\F_2$, nonsingular symmetric bilinear forms split into two qualitatively different types: alternating and
nonalternating.  This dichotomy is basis-independent and has a clean expression in terms of the
\emph{characteristic element} of the layer form (compare \cite[\S 1]{KawauchiKojima80}).

\begin{definition}[Characteristic element]\label{def:characteristic-element}
Let $(V,b)$ be a finite-dimensional $\F_2$--vector space equipped with a nonsingular symmetric bilinear form.
The function $q\colon V\to \F_2$, $q(v):=b(v,v)$ is $\F_2$--linear. The \emph{characteristic element}
$c(b)\in V$ is the unique element satisfying
\[
b\bigl(c(b),v\bigr)=q(v)=b(v,v)\qquad \text{for all }v\in V.
\]
For a $2$--primary linking $(G,\lambda)$ we write $c_k(G,\lambda):=c(b_k)\in P_k(G)$.
\end{definition}

\begin{lemma}\label{lem:q-linear}
For any bilinear form $b$ over $\F_2$, the map $q(v)=b(v,v)$ is $\F_2$--linear.
Moreover, $b$ is alternating (i.e.\ $b(v,v)=0$ for all $v$) if and only if $c(b)=0$.
\end{lemma}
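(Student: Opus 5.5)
The plan is to verify both assertions directly from bilinearity and the characteristic-$2$ identity $(a+b)^2=a^2+b^2$ in $\F_2$; nonsingularity of $b$ is needed only to make $c(b)$ well defined.

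\emph{Linearity of $q$.} For $u,v\in V$, expanding by bilinearity gives
\[
q(u+v)=b(u+v,u+v)=b(u,u)+b(u,v)+b(v,u)+b(v,v).
\]
If $b$ is symmetric, the two cross terms are equal and sum to $2b(u,v)=0$ in $\F_2$, so $q(u+v)=q(u)+q(v)$. Scalar compatibility is automatic over $\F_2$: $q(0\cdot v)=q(0)=0$ and $q(1\cdot v)=q(v)$. The lemma says ``any bilinear form'', and for non-symmetric $b$ the cross terms need not cancel. I would therefore state the symmetry hypothesis explicitly, since it is the setting of Definition~\ref{def:characteristic-element} and of the layer forms $b_k$. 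Alternatively one can observe that $q$ depends only on the symmetrization of $b$ and still argue linearity, but I expect the cleanest course is to note that symmetry is assumed throughout.

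\emph{Existence and uniqueness of $c(b)$.} Since $q$ is linear, $q\in\Hom_{\F_2}(V,\F_2)$. Nonsingularity of $b$ means the adjoint $V\to\Hom(V,\F_2)$ is an isomorphism. Hence there is a unique $c(b)$ whose image is $q$, that is, with $b(c(b),v)=q(v)$ for all $v$. This justifies the definition.

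\emph{The alternating criterion.} The form $b$ is alternating if and only if $q\equiv 0$. By injectivity of the adjoint, $q\equiv 0$ holds if and only if $c(b)=0$. Conversely, if $c(b)=0$ then $q(v)=b(0,v)=0$ for all $v$.

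No step is a genuine obstacle. The only delicate point is the symmetry hypothesis needed for the cancellation of the cross terms, which I would make explicit.
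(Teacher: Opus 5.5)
Your proof is correct and is essentially the paper's argument: the cross terms cancel by symmetry in characteristic $2$, and the alternating criterion follows because $q$ is the image of $c(b)$ under the injective adjoint. You are also right that ``any bilinear form'' in the statement should read ``nonsingular symmetric form'', since the paper's own proof uses both hypotheses. One correction to your aside: the ``symmetrization'' route cannot work, because the identity $q(u+v)-q(u)-q(v)=b(u,v)+b(v,u)$ shows that $q$ is additive exactly when $b$ is symmetric. For example, the nonsingular Gram matrix $\begin{psmallmatrix}1&1\\0&1\end{psmallmatrix}$ gives $q(e_1+e_2)=1\neq 0=q(e_1)+q(e_2)$, so symmetry is a necessary hypothesis, not merely a convenient one.
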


\begin{proof}
For $u,v\in V$, bilinearity gives
\[
q(u+v)=b(u+v,u+v)=b(u,u)+b(u,v)+b(v,u)+b(v,v).
\]
Since $b$ is symmetric and $\ch(\F_2)=2$, we have $b(u,v)+b(v,u)=2b(u,v)=0$, hence $q(u+v)=q(u)+q(v)$.
Also $q(\alpha v)=\alpha^2 q(v)=\alpha q(v)$ for $\alpha\in\F_2$, so $q$ is linear.

If $c(b)=0$, then $b(v,v)=b(c(b),v)=0$ for all $v$, so $b$ is alternating. Conversely, if $b$ is alternating,
then $q=0$ and the defining equation $b(c(b),v)=0$ for all $v$ implies $c(b)=0$ by nonsingularity of $b$.
\end{proof}

\begin{definition}[Type $A$ versus Type $E$ at $p=2$]\label{def:typeAE}
Let $(G,\lambda)$ be $2$--primary and let $k\ge 1$.
We say the $k$th layer is of \emph{Type $E$} if $c_k(G,\lambda)=0$, equivalently if $b_k$ is alternating.
Otherwise we say it is of \emph{Type $A$}.
\end{definition}

The terminology is motivated by the normal forms below: Type $E$ layers admit a canonical
\emph{hyperbolic} (symplectic) basis, whereas Type $A$ layers admit an orthonormal basis.

\subsection{Normal forms over $\F_2$ and the hyperbolic token}\label{subsec:f2-normal}

We record the complete classification of nonsingular symmetric bilinear forms over $\F_2$,
with a proof in the form convenient for our later canonical choices.

\begin{proposition}[Alternating case: symplectic normal form]\label{prop:alternating-symplectic}
Let $(V,b)$ be a nonsingular symmetric bilinear space over $\F_2$.
If $b$ is alternating, then $\dim_{\F_2}V$ is even and there exists a basis
\[
e_1,f_1,\dots,e_m,f_m \qquad (m=\tfrac12\dim V)
\]
such that
\[
b(e_i,e_j)=b(f_i,f_j)=0,\qquad b(e_i,f_j)=\delta_{ij}.
\]
Equivalently, the Gram matrix of $b$ is a direct sum of $m$ copies of the hyperbolic matrix
\(
H=\begin{psmallmatrix}0&1\\ 1&0\end{psmallmatrix}.
\)
\end{proposition}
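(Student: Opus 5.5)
The plan is induction on $\dim_{\F_2}V$, splitting off one hyperbolic plane at a time; this is the standard symplectic Gram--Schmidt argument.

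If $V=0$ there is nothing to prove. Otherwise pick any nonzero $e_1\in V$. Nonsingularity of $b$ means the adjoint $V\to\Hom(V,\F_2)$ is injective, so $b(e_1,-)$ is a nonzero functional. Hence there is $f_1\in V$ with $b(e_1,f_1)=1$. Because $b$ is alternating, $b(e_1,e_1)=b(f_1,f_1)=0$, and symmetry gives $b(f_1,e_1)=1$. Thus the span $U=\langle e_1,f_1\rangle$ has Gram matrix $H$. Since $\det H=1\neq0$, the vectors $e_1,f_1$ are linearly independent, so $\dim U=2$, and $b|_U$ is nonsingular.

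Next I will show $V=U\oplus U^{\perp}$, where $U^\perp=\{v\mid b(v,U)=0\}$. For $v\in V$, set
\[
v' := v - b(v,f_1)\,e_1 - b(v,e_1)\,f_1 .
\]
A direct check gives $b(v',e_1)=b(v,e_1)-b(v,e_1)=0$ and $b(v',f_1)=b(v,f_1)-b(v,f_1)=0$, using $b(e_1,e_1)=b(f_1,f_1)=0$. Hence $V=U+U^\perp$. Also $U\cap U^\perp=0$ because $b|_U$ is nonsingular.

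The restriction $b|_{U^\perp}$ is nonsingular. Indeed, if $w\in U^\perp$ pairs trivially with $U^\perp$, it also pairs trivially with $U$, hence with all of $V$, so $w=0$. The restriction is also clearly alternating and symmetric. Applying the induction hypothesis to $U^\perp$ gives even dimension and a hyperbolic basis $e_2,f_2,\dots,e_m,f_m$ there. Adjoining $e_1,f_1$ produces the required basis of $V$. Then $\dim V=2+\dim U^\perp$ is even, and the Gram matrix is $H^{\oplus m}$.

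The only point needing care is that over $\F_2$ "alternating" must be used explicitly. Symmetry alone does not force $b(v,v)=0$, and the step $b(v',e_1)=0$ relies on $b(e_1,e_1)=0$. This is where the hypothesis enters (cf.\ Lemma~\ref{lem:q-linear}). Otherwise I expect no real obstacle.
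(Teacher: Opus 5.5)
Your proposal is correct and follows essentially the same route as the paper: choose a pair with $b(e_1,f_1)=1$, note that its span $U$ is a hyperbolic plane, split $V=U\oplus U^\perp$, check that $b|_{U^\perp}$ is nonsingular and alternating, and induct on $\dim V$. The only difference is that you prove $V=U+U^\perp$ explicitly via the projection $v\mapsto v-b(v,f_1)e_1-b(v,e_1)f_1$ and check that $e_1,f_1$ are linearly independent, whereas the paper leaves both steps implicit.
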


\begin{proof}
Since $b$ is nonsingular, $V\neq 0$ implies there exist $v,w\in V$ with $b(v,w)=1$.
Because $b$ is alternating, $b(v,v)=b(w,w)=0$, and the span $U=\langle v,w\rangle$ has Gram matrix $H$.
In particular $U$ is nonsingular, so $V=U\oplus U^\perp$ and $b$ restricts to a nonsingular alternating form on
$U^\perp$. Indeed, if $x\in U^\perp$ and $b(x,y)=0$ for all $y\in U^\perp$, then $b(x,\cdot)=0$ on all of $V$
(because also $b(x,U)=0$), hence $x=0$ by nonsingularity.

Induct on $\dim V$ to obtain a symplectic basis on $U^\perp$; adjoining $(v,w)$ gives the desired basis.
In particular $\dim V=2m$ is even.
\end{proof}

\begin{proposition}[Nonalternating case: orthonormal normal form]\label{prop:nonalternating-orthonormal}
Let $(V,b)$ be a nonsingular symmetric bilinear space over $\F_2$.
If $b$ is nonalternating, then there exists a basis $v_1,\dots,v_n$ (with $n=\dim V$) such that
\[
b(v_i,v_j)=\delta_{ij}.
\]
Equivalently, the Gram matrix of $b$ is the identity matrix $I_n$.
\end{proposition}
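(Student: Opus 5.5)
I will prove the statement by induction on $n=\dim V$, splitting off one vector of square $1$ at a time, in the same spirit as Proposition~\ref{prop:alternating-symplectic}. Since $b$ is nonalternating, Lemma~\ref{lem:q-linear} gives $c(b)\neq 0$, so there is $v\in V$ with $b(v,v)=1$. The line $\langle v\rangle$ is then nonsingular, so $V=\langle v\rangle\oplus v^\perp$ and $b|_{v^\perp}$ is nonsingular, by the same argument as in the alternating case. If $b|_{v^\perp}$ is again nonalternating, or if $v^\perp=0$, the induction proceeds directly. The only issue is that the restriction to $v^\perp$ might be alternating.

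\textbf{Preferred choice of $v$.} To minimize the alternating-complement problem, I will take $v=c:=c(b)$ whenever $b(c,c)=1$. In that case, for $w\in c^\perp$ we get $b(w,w)=b(c,w)=0$, so $c^\perp$ is alternating. So this choice does not avoid the problem; it pins it down to a single situation. The real step is therefore the following.

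\textbf{Key step.} If $V=\langle v\rangle\oplus W$ with $b(v,v)=1$ and $W\neq 0$ alternating, I will show that $V$ has an orthonormal basis. By Proposition~\ref{prop:alternating-symplectic}, $W$ is an orthogonal sum of hyperbolic planes, so it suffices to prove the base identity
\[
\langle 1\rangle\oplus H\;\cong\;\langle 1\rangle\oplus\langle1\rangle\oplus\langle1\rangle
\]
and then apply it one plane at a time, reusing the $\langle 1\rangle$ that each application produces. To prove the identity, take $v,e,f$ with Gram matrix $\langle1\rangle\oplus H$ and set
\[
v_1=v+e,\qquad v_2=v+f,\qquad v_3=v+e+f.
\]
A direct computation gives $b(v_i,v_i)=1$ and $b(v_i,v_j)=0$ for $i\neq j$; for example $b(v+e,v+f)=1+0+0+1=0$. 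The three vectors are independent, since the change-of-basis matrix to $(v,e,f)$ has determinant $1$ over $\F_2$.

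\textbf{Assembling the induction.} Pick any $v$ with $b(v,v)=1$ and split $V=\langle v\rangle\oplus v^\perp$. If $v^\perp$ is nonalternating, apply the induction hypothesis to it. If $v^\perp$ is alternating and nonzero, apply the key step. If $v^\perp=0$, there is nothing left to do. The only step requiring real care is the key step, specifically the identity $\langle1\rangle\oplus H\cong\langle1\rangle^{3}$. Everything else reuses the nonsingular-splitting argument from Proposition~\ref{prop:alternating-symplectic} and the characteristic-element criterion of Lemma~\ref{lem:q-linear}.
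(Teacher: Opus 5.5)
Your proof is correct, and it is more complete than the paper's. The paper uses the same splitting $V=\langle v_1\rangle\oplus v_1^\perp$, but then applies the induction hypothesis to $W=v_1^\perp$ without checking that $b|_W$ is still nonalternating. This can fail: for $V=\langle 1\rangle\oplus H$ with $v_1$ the first basis vector, $W=H$ contains no vector of square $1$. The appendix version, Proposition~\ref{prop:orthonormal-appB}, has the same gap.

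Your case split supplies exactly the missing ingredient. It combines the identity $\langle 1\rangle\oplus H\cong\langle 1\rangle^{\oplus 3}$ with Proposition~\ref{prop:alternating-symplectic} to absorb $H^{\oplus m}$ one plane at a time. Your Gram-matrix computation for $v+e,\ v+f,\ v+e+f$ is right.

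A shorter repair is the converse of your ``preferred choice'' observation. Suppose $b(v,v)=1$ and $v^\perp$ is alternating. Then the linear forms $x\mapsto b(x,x)$ and $x\mapsto b(v,x)$ both equal $1$ at $v$ and both vanish on $v^\perp$, so they coincide and $v=c(b)$. The set $\{x : b(x,x)=1\}$ is a coset of a hyperplane, so it has $2^{n-1}$ elements. Hence for $n\ge 2$ you can pick $v\neq c(b)$; then $v^\perp$ is nonzero and nonalternating, and the paper's one-line induction goes through without your key step.

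Your route instead produces the explicit isometry $\langle 1\rangle\oplus H\cong\langle 1\rangle^{\oplus 3}$, which is the relation underlying the whole nonalternating classification. As written, your ``preferred choice'' paragraph does no work and conflicts with ``pick any $v$'' in the assembly. You could drop it or replace it with the remark above.
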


\begin{proof}
Since $b$ is nonalternating, there exists $v_1\in V$ with $b(v_1,v_1)=1$.
Let
\[
W:=\{x\in V\mid b(v_1,x)=0\}.
\]
Then $W$ is a codimension--$1$ subspace, and $v_1\notin W$ because $b(v_1,v_1)=1$. Hence $V=\langle v_1\rangle\oplus W$.
We claim $b|_W$ is nonsingular: if $x\in W$ satisfies $b(x,y)=0$ for all $y\in W$, then also $b(x,v_1)=b(v_1,x)=0$,
so $b(x,\cdot)=0$ on $V$ and thus $x=0$.

Now apply induction to $(W,b|_W)$ to obtain an orthonormal basis $v_2,\dots,v_n$ of $W$.
Then $v_1,\dots,v_n$ is an orthonormal basis of $V$.
\end{proof}

In the canonical $2$--primary normal form, Proposition~\ref{prop:alternating-symplectic} is the algebraic origin of the
\emph{Type $E$ (hyperbolic) block}.

\begin{definition}[The hyperbolic linking block]\label{def:hyperbolic-block}
For $k\ge 1$, let $\mathsf{E}(2^k)$ denote the $2$--primary linking pairing on
$(\Z/2^k)\oplus (\Z/2^k)$ with Gram matrix
\[
\frac{1}{2^k}\begin{pmatrix}0&1\\[2pt]1&0\end{pmatrix}.
\]
We refer to $\mathsf{E}(2^k)$ as the \emph{hyperbolic} (Type $E$) block of level $2^k$.
\end{definition}

\subsection{Determinant refinements for Type $A$ layers at $2$}\label{subsec:det-refine}

Type $A$ layers correspond to nonalternating forms $b_k$ on $P_k(G)$. Over $\F_2$ this already rigidifies
the layer form to $I_n$ by Proposition~\ref{prop:nonalternating-orthonormal}. For the \emph{linking} pairing,
however, one must keep track of how the layer is realised at denominator $2^k$.
This is where the determinant square class appears as a convenient canonical refinement.

We explain the invariant in the homogeneous situation, which is the one recorded by our canonical schema.
Suppose $G\cong (\Z/2^k)^n$ and write the pairing $\lambda$ in some $\Z/2^k$--basis by a Gram matrix
\[
\Lambda=\frac{1}{2^k}C\in \Mat_n(\Q/\Z),
\qquad C\in \Mat_n(\Z),
\]
where $C$ is well-defined modulo $2^k$ and is invertible over $\Z/2^k$ (equivalently $\det(C)$ is odd).

\begin{lemma}[Determinant square class is well-defined]\label{lem:det-square-class}
Let $k\ge 2$, and let $C,C'\in \Mat_n(\Z)$ be two integral Gram matrices representing the same linking pairing
on $(\Z/2^k)^n$ in two different $\Z/2^k$--bases.
Then $\det(C')\equiv \det(C)\cdot u^2 \pmod{2^k}$ for some odd unit $u$.
In particular, the class of $\det(C)$ in $(\Z/2^k)^\times/((\Z/2^k)^\times)^2$ is an isometry invariant.
\end{lemma}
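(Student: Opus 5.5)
The plan is to reduce the statement to a change-of-basis computation over the ring $R:=\Z/2^k$. First I make precise what it means for $C$ to represent $\lambda$ in a basis. Let $g_1,\dots,g_n$ be a $\Z/2^k$--basis of $G\cong(\Z/2^k)^n$. Then $\lambda(g_i,g_j)\equiv C_{ij}/2^k \pmod{\Z}$. By the same argument as in Lemma~\ref{lem:every-cyclic-is-a}, each entry $C_{ij}$ is determined modulo $2^k$. Hence the reduction $\bar C\in\Mat_n(R)$ is well-defined, and so is $\det(C)\bmod 2^k$. Nonsingularity of $\lambda$ forces $\bar C$ to be invertible over $R$, since the adjoint map in this basis is multiplication by $\bar C$. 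Consequently $\det(C)$ is odd.

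Next I relate the two Gram matrices. Let $g'_1,\dots,g'_n$ be a second basis, write $g'_j=\sum_i Q_{ij}g_i$ with $Q\in\Mat_n(R)$, and note that $Q\in\GL_n(R)$ because the $g'_j$ form a basis. Bilinearity gives
\[
\lambda(g'_i,g'_j)=\sum_{a,b}Q_{ai}Q_{bj}\,\lambda(g_a,g_b)\equiv \frac{(Q^{\mathsf T}CQ)_{ij}}{2^k}\pmod{\Z},
\]
where $Q$ is lifted to any integral matrix. This expression is well-defined because changing a lift by a multiple of $2^k$ changes each numerator by a multiple of $2^k$. Therefore $\bar C'=Q^{\mathsf T}\bar C Q$ in $\Mat_n(R)$, and taking determinants yields $\det(C')\equiv\det(C)\,u^2\pmod{2^k}$ with $u:=\det Q$.

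It remains to see that $u$ is an odd unit. Since $Q\in\GL_n(R)$, its determinant lies in $R^\times$, which is the set of odd residues. This proves the first claim. For the second claim, observe that an isometry $(G,\lambda)\to(G',\lambda')$ carries a basis of $G$ to a basis of $G'$ with the same Gram matrix. So the class of $\det C$ in $R^\times/(R^\times)^2$ is independent of the basis, and it agrees for isometric pairings.

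I do not expect a genuine obstacle here. The only point needing care is the bookkeeping of well-definedness modulo $2^k$: the entries of $C$, and the integral lift of $Q$, are each defined only modulo $2^k$, and the identity $\bar C'=Q^{\mathsf T}\bar CQ$ has to be checked in $R$ rather than in $\Z$.
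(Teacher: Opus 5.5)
Your proposal is correct and follows the same route as the paper: a change of $\Z/2^k$--basis $Q\in\GL_n(\Z/2^k)$ acts on the Gram matrix by congruence, $\bar C'=Q^{\mathsf T}\bar C Q$. Hence the determinants differ by the square of the odd unit $\det Q$. Your extra bookkeeping (entries defined modulo $2^k$, invertibility of $\bar C$ from nonsingularity, and transporting bases along an isometry) only makes explicit what the paper's two-line proof leaves implicit.
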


\begin{proof}
A change of $\Z/2^k$--basis is given by some $U\in \GL_n(\Z/2^k)$, and the Gram matrix changes by congruence:
$C'\equiv U^{T} C\,U \pmod{2^k}$. Taking determinants gives
\[
\det(C')\equiv \det(C)\,\det(U)^2 \pmod{2^k}.
\]
Since $U\in \GL_n(\Z/2^k)$, $\det(U)$ is an odd unit modulo $2^k$, and the claim follows with $u=\det(U)$.
\end{proof}

To make this refinement explicit, it is useful to identify the square classes of odd units.

\begin{lemma}[Squares modulo $2^k$]\label{lem:squares-mod-2k}
For $k=2$, every square in $(\Z/4)^\times$ is congruent to $1\pmod 4$.
For $k\ge 3$, every square in $(\Z/2^k)^\times$ is congruent to $1\pmod 8$, and the reduction map
\[
(\Z/2^k)^\times/((\Z/2^k)^\times)^2\longrightarrow (\Z/8)^\times=\{1,3,5,7\}
\]
is a bijection.
\end{lemma}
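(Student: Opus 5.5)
The plan is to reduce the statement to facts already established in \S\ref{subsec:two-primary-cyclic}, namely Lemma~\ref{lem:odd-squares-mod8}, Lemma~\ref{lem:1mod8-squares} and Proposition~\ref{prop:C2k}.

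For $k=2$ we check directly: $(\Z/4)^\times=\{1,3\}$ and $1^2\equiv 3^2\equiv 1\pmod 4$, so every square unit is $\equiv 1\pmod 4$.

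For $k\ge 3$ the argument has three steps.

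\emph{Well-definedness and surjectivity.} Let $u$ be an odd integer. By Lemma~\ref{lem:odd-squares-mod8}, $u^2\equiv 1\pmod 8$. Since $8\mid 2^k$, reduction modulo $8$ is a group homomorphism $(\Z/2^k)^\times\to(\Z/8)^\times$. It sends every square to $1$, so it descends to the quotient $(\Z/2^k)^\times/((\Z/2^k)^\times)^2$. It is surjective because the integers $1,3,5,7$ are units modulo $2^k$ and reduce to the four elements of $(\Z/8)^\times$.

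\emph{Injectivity.} Suppose $a\equiv b\pmod 8$ with $a,b$ odd. Then $ab^{-1}\equiv 1\pmod 8$ in $(\Z/2^k)^\times$. By Lemma~\ref{lem:1mod8-squares}, $ab^{-1}$ is a square modulo $2^k$, so $a$ and $b$ lie in the same square class. Hence the kernel of the induced map is trivial.

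\emph{Conclusion.} The induced map is a bijection; this is exactly the content of Proposition~\ref{prop:C2k}(3), which one could cite directly.

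The only step with any substance is the Hensel-type lifting of square roots of units $\equiv 1\pmod 8$. That step is already done in Lemma~\ref{lem:1mod8-squares}, so no real obstacle remains. The one point to watch is stating that reduction modulo $8$ is a homomorphism, since that is what makes the descent to the quotient and the kernel argument legitimate.
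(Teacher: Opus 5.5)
Your proof is correct and follows essentially the same route as the paper. Odd squares are $1 \bmod 8$, units $\equiv 1 \pmod 8$ are squares modulo $2^k$, and so reduction mod $8$ induces a well-defined bijection on square classes. The only difference is that you cite Lemmas~\ref{lem:odd-squares-mod8} and~\ref{lem:1mod8-squares}, which are proved independently in \S\ref{subsec:two-primary-cyclic} so there is no circularity, instead of re-sketching the $2$-adic lifting inline; this is if anything cleaner, since the paper's inline hint $(1+2x)^2\equiv 1+4x\pmod 8$ is inaccurate (in fact $(1+2x)^2\equiv 1\pmod 8$).
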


\begin{proof}
For $k=2$, $(\Z/4)^\times=\{1,3\}$ and $3^2\equiv 1\pmod 4$.

For $k\ge 3$, write an odd integer as $u=1+2a$. Then
\[
u^2=(1+2a)^2 = 1+4a(1+a)\equiv 1 \pmod 8,
\]
so every square is $1\bmod 8$.
Conversely, if $u\equiv 1\pmod 8$, standard lifting shows $u$ has a square root modulo $2^k$;
one explicit way is to solve $(1+2x)^2\equiv u\pmod{2^k}$ inductively on $k$ using that
$(1+2x)^2\equiv 1+4x\pmod 8$ and correcting at each stage by adjusting $x$ modulo $2^{k-2}$.
(See, for instance, \cite[Ch.\ V, \S 1]{SerreLF} for the general structure of unit groups in local fields.)
Therefore the subgroup of squares is exactly the congruence class $1\bmod 8$.

Finally, if $u\equiv v \pmod 8$, then $uv^{-1}\equiv 1\pmod 8$ so $uv^{-1}$ is a square; thus $u$ and $v$ define
the same square class. Distinct residues in $\{1,3,5,7\}$ represent distinct square classes, hence the stated bijection.
\end{proof}

\begin{definition}[Determinant refinement for Type $A$]\label{def:det-refinement}
Let $(G,\lambda)$ be $2$--primary and suppose the $k$th layer is of Type $A$ with $k\ge 2$.
Choosing a homogeneous splitting of the $2^k$--torsion part, represent the corresponding Gram matrix by
$\Lambda=\frac{1}{2^k}C$ as above.
We define the \emph{determinant refinement} $\det_k(G,\lambda)$ to be
\[
\det(C)\bmod 4 \quad (k=2),\qquad \text{or}\qquad \det(C)\bmod 8 \quad (k\ge 3),
\]
viewed as an element of $(\Z/4)^\times$ or $(\Z/8)^\times$.
\end{definition}

By Lemma~\ref{lem:det-square-class} and Lemma~\ref{lem:squares-mod-2k}, $\det_k(G,\lambda)$ is independent of choices.
This is the refinement recorded for Type $A$ layers in our canonical schema; it is the practical way to retain the
odd-unit datum needed to build canonical diagonal representatives in later stages.

\subsection{Gauss sum refinements for Type $E$ layers}\label{subsec:gauss-refine}

Type $E$ layers (i.e.\ $c_k(G,\lambda)=0$) admit a quadratic refinement whose Gauss sum gives a
numerical invariant in $\Z/8$.
This is a classical piece of the $2$--primary classification of linkings \cite[\S 2]{KawauchiKojima80}
and is closely related to the Brown invariant and to the phases appearing in quadratic Gauss sums
\cite{Deloup99}.

Fix $k\ge 1$. Consider the quotient group
\[
H_k(G)\;:=\;G/G_k,
\]
which retains precisely the part of $G$ of exponent strictly larger than $2^k$.
When $c_k(G,\lambda)=0$ we may form a quadratic function on $H_k(G)$.

\begin{proposition}[Quadratic refinement on $H_k(G)$]\label{prop:qk-welldefined}
Assume $c_k(G,\lambda)=0$ (equivalently, the $k$th layer is Type $E$).
Define
\[
q_k \colon H_k(G)\longrightarrow \Q/\Z,
\qquad
q_k(\bar x)\;:=\;2^{k-1}\lambda(x,x),
\]
where $\bar x$ is the class of $x\in G$ modulo $G_k$.
Then $q_k$ is well-defined and satisfies
\begin{equation}\label{eq:qk-polarization}
q_k(\bar x+\bar y)-q_k(\bar x)-q_k(\bar y)= 2^k\lambda(x,y)\in \Q/\Z
\end{equation}
for all $\bar x,\bar y\in H_k(G)$.
\end{proposition}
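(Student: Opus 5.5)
The plan is to verify the two assertions directly from bilinearity and symmetry of $\lambda$. The only non-formal input is the Type~$E$ hypothesis $c_k(G,\lambda)=0$. By Definition~\ref{def:typeAE} and Lemma~\ref{lem:q-linear}, this hypothesis says that $b_k(\bar u,\bar u)=0$ for every $\bar u\in P_k(G)$.

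\emph{Step 1: expand $q_k$ on a changed representative.} Replace $x$ by $x+u$ with $u\in G_k$ and expand:
\[
2^{k-1}\lambda(x+u,x+u)=2^{k-1}\lambda(x,x)+2^{k}\lambda(x,u)+2^{k-1}\lambda(u,u).
\]
The middle term is $\lambda(x,2^ku)=0$ in $\Q/\Z$. So well-definedness of $q_k$ reduces to the claim
\[
2^{k-1}\lambda(u,u)\in\Z\qquad\text{for all }u\in G_k,
\]
that is, the integer $2^k\lambda(u,u)$ (integral by \eqref{eq:2k-int-main}) is even.

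\emph{Step 2: the key claim, which is the main obstacle.} I would consider the map
\[
\varphi\colon G_k\to\F_2,\qquad \varphi(u):=2^k\lambda(u,u)\bmod 2 .
\]
First, $\varphi$ is additive. The cross term in $\varphi(u+v)$ is $2\cdot 2^k\lambda(u,v)$, which is even because $2^k\lambda(u,v)\in\Z$ by \eqref{eq:2k-int-main}.

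Next, $\varphi$ vanishes on $G_{k-1}$. If $u\in G_{k-1}$, then $2^{k-1}\lambda(u,u)\in\Z$, so $2^k\lambda(u,u)=2\cdot 2^{k-1}\lambda(u,u)$ is even.

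Finally, $\varphi$ vanishes on $2G_{k+1}$. If $u=2v$ with $v\in G_{k+1}$, then $2^k\lambda(2v,2v)=2\cdot 2^{k+1}\lambda(v,v)$, and $2^{k+1}\lambda(v,v)\in\Z$, so the value is even.

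Hence $\varphi$ descends to $P_k(G)$, where it coincides with $\bar u\mapsto b_k(\bar u,\bar u)$ by Definition~\ref{def:layer-form}. This diagonal map is identically zero under the Type~$E$ hypothesis. Therefore $\varphi\equiv 0$ on $G_k$, which proves the claim and hence the well-definedness of $q_k$ on $H_k(G)=G/G_k$.

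\emph{Step 3: polarization identity.} Expanding by bilinearity and symmetry gives
\[
2^{k-1}\lambda(x+y,x+y)-2^{k-1}\lambda(x,x)-2^{k-1}\lambda(y,y)=2^{k}\lambda(x,y)\in\Q/\Z,
\]
which is \eqref{eq:qk-polarization}.

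It remains to check that the right-hand side depends only on $\bar x,\bar y$. Changing $x$ by $u\in G_k$ alters it by $2^k\lambda(u,y)=\lambda(2^ku,y)=0$, and symmetrically for $y$. So \eqref{eq:qk-polarization} is an identity of functions on $H_k(G)\times H_k(G)$.
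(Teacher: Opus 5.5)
Your proposal is correct and follows the paper's proof: you expand $2^{k-1}\lambda(x+u,x+u)$, kill the cross term with $u\in G_k$, and use the alternating form $b_k$ to show $2^{k}\lambda(u,u)$ is even, then polarize by bilinearity. Your Step~2, which re-checks that $u\mapsto 2^k\lambda(u,u)\bmod 2$ descends to $P_k(G)$, is redundant because this already follows from the well-definedness of $b_k$ (Lemma~\ref{lem:layer-form-welldefined}), but it does no harm; likewise your check that the right-hand side of \eqref{eq:qk-polarization} depends only on the classes is an extra the paper omits.
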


\begin{proof}
Let $x'\equiv x \pmod{G_k}$, so $x'=x+u$ with $u\in G_k$.
Then by bilinearity and symmetry,
\[
\lambda(x',x')=\lambda(x,x)+2\lambda(x,u)+\lambda(u,u).
\]
Multiplying by $2^{k-1}$ gives
\[
2^{k-1}\lambda(x',x')=2^{k-1}\lambda(x,x) + 2^{k}\lambda(x,u) + 2^{k-1}\lambda(u,u).
\]
As $u\in G_k$, the character $y\mapsto\lambda(u,y)$ has order dividing $2^k$, hence $2^k\lambda(x,u)\in\Z$.
To see $2^{k-1}\lambda(u,u)\in\Z$, note that $u\in G_k$ determines $\bar u\in P_k(G)$, and
\[
b_k(\bar u,\bar u) \;=\; (2^k\lambda(u,u))\bmod 2.
\]
Since $c_k(G,\lambda)=0$, the form $b_k$ is alternating by Lemma~\ref{lem:q-linear}, so $b_k(\bar u,\bar u)=0$.
Thus $2^k\lambda(u,u)$ is an even integer, i.e.\ $\lambda(u,u)\in \frac{1}{2^{k-1}}\Z/\Z$, and
$2^{k-1}\lambda(u,u)\in \Z$. Therefore $q_k(\bar x)$ is independent of the choice of lift $x$.

Finally, for $\bar x,\bar y\in H_k(G)$ represented by $x,y\in G$, we compute
\[
q_k(\bar x+\bar y)-q_k(\bar x)-q_k(\bar y)
=2^{k-1}\bigl(\lambda(x+y,x+y)-\lambda(x,x)-\lambda(y,y)\bigr)
=2^k\lambda(x,y),
\]
which is~\eqref{eq:qk-polarization}.
\end{proof}

\begin{definition}[Normalized Gauss sum and the $u_k$--invariant]\label{def:uk}
Assume $c_k(G,\lambda)=0$ and let $q_k$ be as in Proposition~\ref{prop:qk-welldefined}.
Define the (complex) Gauss sum
\[
\GS_k(G,\lambda)\;:=\;\sum_{\bar x\in H_k(G)}\exp\bigl(2\pi i\, q_k(\bar x)\bigr).
\]
Set
\[
\gamma_k(G,\lambda)\;:=\;|H_k(G)|^{-1/2}\,\GS_k(G,\lambda)\in \C.
\]
By \cite[Cor.\ 2.1]{KawauchiKojima80} (see also \cite[\S 2]{Deloup99}), $\gamma_k(G,\lambda)$ is an eighth root of unity.
We define $u_k(G,\lambda)\in \Z/8$ by
\[
\gamma_k(G,\lambda)=\exp\!\Bigl(\frac{\pi i}{4}\,u_k(G,\lambda)\Bigr).
\]
If $c_k(G,\lambda)\neq 0$ (Type $A$) we set $u_k(G,\lambda)=\infty$ by convention.
\end{definition}

We record three basic properties used repeatedly later: invariance, additivity, and the value on the
hyperbolic block.

\begin{lemma}[Invariance under isometry]\label{lem:uk-invariant}
If $f\colon (G,\lambda)\to (G',\lambda')$ is an isometry of $2$--primary linkings, then for each $k\ge 1$ we have
$c_k(G,\lambda)=0$ if and only if $c_k(G',\lambda')=0$, and in that case $u_k(G,\lambda)=u_k(G',\lambda')$.
\end{lemma}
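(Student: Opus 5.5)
Everything here is functorial in the isometry $f$, so the plan is to transport each ingredient of the construction along $f$ and check it is preserved. The first step is the torsion filtration. Since $f$ is a group isomorphism, $f(G_j)=G'_j$ for every $j\ge 0$, because $2^jx=0$ if and only if $2^jf(x)=0$. It follows that $f(G_{k-1}+2G_{k+1})=G'_{k-1}+2G'_{k+1}$. Hence $f$ induces an $\F_2$-linear isomorphism $\bar f_k\colon P_k(G)\to P_k(G')$ of the layer quotients \eqref{eq:Pk-def-main}. It also induces a group isomorphism $\tilde f_k\colon H_k(G)=G/G_k\to H_k(G')=G'/G'_k$.

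The second step is to show that $\bar f_k$ is an isometry of layer forms. Take lifts $x,y\in G_k$. Then $f(x),f(y)\in G'_k$ are lifts of $\bar f_k(\bar x),\bar f_k(\bar y)$, and
\[
b'_k\bigl(\bar f_k\bar x,\bar f_k\bar y\bigr)=2^k\lambda'(f(x),f(y))\bmod 2=2^k\lambda(x,y)\bmod 2=b_k(\bar x,\bar y).
\]
This uses the well-definedness from Lemma~\ref{lem:layer-form-welldefined}. The characteristic element of Definition~\ref{def:characteristic-element} is uniquely determined by $b$, so it is transported by isometries: $\bar f_k(c_k(G,\lambda))$ satisfies the defining identity for $b'_k$, and uniqueness gives $\bar f_k(c_k(G,\lambda))=c_k(G',\lambda')$. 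Since $\bar f_k$ is injective, $c_k(G,\lambda)=0$ if and only if $c_k(G',\lambda')=0$. Equivalently, by Lemma~\ref{lem:q-linear}, $b_k$ is alternating if and only if $b'_k$ is, because alternation is visibly preserved by an isometry.

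The third step handles the Type $E$ case, where both $q_k$ and $q'_k$ are defined by Proposition~\ref{prop:qk-welldefined}. For $x\in G$ we have
\[
q'_k\bigl(\tilde f_k\bar x\bigr)=2^{k-1}\lambda'(f(x),f(x))=2^{k-1}\lambda(x,x)=q_k(\bar x).
\]
Here we are free to use $f(x)$ as the lift, since $q'_k$ is independent of the choice of lift. Because $\tilde f_k$ is a bijection, reindexing the Gauss sum gives $\GS_k(G',\lambda')=\GS_k(G,\lambda)$, and $|H_k(G)|=|H_k(G')|$. Hence $\gamma_k$ agrees on both sides. An eighth root of unity determines its exponent in $\Z/8$ uniquely, so $u_k(G,\lambda)=u_k(G',\lambda')$. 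In the Type $A$ case both values are $\infty$ by convention, so there is nothing further to check.

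There is no genuine obstacle in this argument. The only point needing care is that $P_k$, $H_k$, $b_k$ and $q_k$ are defined through choices of lifts. The well-definedness results (Lemma~\ref{lem:layer-form-welldefined} and Proposition~\ref{prop:qk-welldefined}) are exactly what allow us to use the images $f(x)$ as lifts on the target side.
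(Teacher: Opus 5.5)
Your proof is correct and follows the paper's argument: $f$ preserves the filtration $G_j$, induces an isometry $P_k(G)\cong P_k(G')$ of layer forms (so alternation, and hence the vanishing of $c_k$, is preserved), and in the Type $E$ case it induces $H_k(G)\cong H_k(G')$ intertwining $q_k$ and $q'_k$, so the Gauss sums agree termwise. Your extra observation that $\bar f_k(c_k(G,\lambda))=c_k(G',\lambda')$ by uniqueness is a slightly sharper version of the same step.
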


\begin{proof}
The isometry $f$ preserves the filtration $G_k$ and induces an isomorphism $P_k(G)\cong P_k(G')$ carrying $b_k$
to $b'_k$. Hence $b_k$ is alternating if and only if $b'_k$ is alternating, i.e.\ $c_k=0$ iff $c'_k=0$.

Assume $c_k=0$. Then $f$ also induces an isomorphism $H_k(G)=G/G_k \cong G'/G'_k = H_k(G')$.
For $\bar x\in H_k(G)$ represented by $x\in G$, we have
\[
q'_k(f(\bar x))=2^{k-1}\lambda'(f(x),f(x))=2^{k-1}\lambda(x,x)=q_k(\bar x),
\]
so the Gauss sums agree termwise, and thus $u_k$ agrees.
\end{proof}

\begin{lemma}[Additivity]\label{lem:uk-additivity}
Let $(G,\lambda)$ and $(G',\lambda')$ be $2$--primary linkings and consider their orthogonal sum.
If $c_k(G,\lambda)=c_k(G',\lambda')=0$, then $c_k(G\oplus G',\lambda\oplus\lambda')=0$ and
\[
u_k(G\oplus G',\lambda\oplus\lambda')\equiv u_k(G,\lambda)+u_k(G',\lambda')\pmod 8.
\]
\end{lemma}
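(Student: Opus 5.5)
The plan is to compute everything directly on the orthogonal sum $(G\oplus G',\lambda\oplus\lambda')$, using that all the constructions involved (the filtration $G_k$, the layer $P_k$, the form $b_k$, the quotient $H_k$ and the refinement $q_k$) respect direct sums.

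\emph{Step 1: the characteristic element.} Since multiplication by $2^k$ acts componentwise, $(G\oplus G')_k=G_k\oplus G'_k$. Consequently
\[
P_k(G\oplus G')=P_k(G)\oplus P_k(G'),
\]
because both $G_{k-1}+2G_{k+1}$ and its primed analogue split the same way. The layer form of the sum is $b_k\oplus b'_k$, since the cross terms $\lambda(x,y')$ vanish by definition of the orthogonal sum. For $v=(v_1,v_2)$ we get $b(v,v)=b_k(v_1,v_1)+b'_k(v_2,v_2)=0$. Hence the sum layer is alternating, and Lemma~\ref{lem:q-linear} gives $c_k=0$. (Equivalently, $c(b\oplus b')=c(b)\oplus c(b')$ by uniqueness of the characteristic element.)

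\emph{Step 2: the quadratic refinement.} We have $H_k(G\oplus G')=(G\oplus G')/(G_k\oplus G'_k)=H_k(G)\oplus H_k(G')$. For $\bar x=(\bar x_1,\bar x_2)$, the definition in Proposition~\ref{prop:qk-welldefined} gives
\[
q_k(\bar x)=2^{k-1}\bigl(\lambda(x_1,x_1)+\lambda'(x_2,x_2)\bigr)=q_k(\bar x_1)+q'_k(\bar x_2).
\]
Therefore the exponential sum factors as a product:
\[
\GS_k(G\oplus G')=\GS_k(G)\,\GS_k(G').
\]
Since $|H_k(G\oplus G')|=|H_k(G)|\,|H_k(G')|$, the normalizing factors also multiply, and we obtain $\gamma_k(G\oplus G')=\gamma_k(G)\gamma_k(G')$.

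\emph{Step 3: conclude.} Each $\gamma_k$ is an eighth root of unity by Definition~\ref{def:uk}, applied to each summand and to the sum, which is also Type $E$ by Step~1. So the exponents $\tfrac{\pi i}{4}u$ add modulo $2\pi i$. That is, $u_k$ is additive in $\Z/8$.

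The only point requiring care is Step~1: checking that $P_k$ and $H_k$ really split. This reduces to the componentwise description of $G_k$. It is routine, so I expect no genuine obstacle. The product formula for the Gauss sums is immediate once the decomposition of $q_k$ is in place.
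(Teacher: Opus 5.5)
Your proposal is correct and is essentially the paper's own proof: you split $P_k$ and $b_k$ orthogonally to see that the sum layer is alternating (so $c_k=0$), then split $H_k$ and $q_k$ so that the Gauss sums and the normalizations $|H_k|^{-1/2}$ multiply, and the eighth-root-of-unity exponents therefore add modulo $8$. You also say explicitly why $q_k$ is additive, namely that the cross terms vanish by orthogonality; the paper leaves this step implicit.
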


\begin{proof}
At the level of layers, $P_k(G\oplus G')\cong P_k(G)\oplus P_k(G')$ and $b_k$ splits as an orthogonal sum;
alternatingness is preserved under orthogonal sum, so $c_k=0$ for the sum.

Assuming $c_k=0$ on both summands, we have $H_k(G\oplus G')\cong H_k(G)\oplus H_k(G')$, and the quadratic functions
$q_k$ add. Hence the Gauss sum factors:
\[
\GS_k(G\oplus G')=\GS_k(G)\cdot \GS_k(G'),
\]
and the normalization by $|H_k|^{-1/2}$ is multiplicative as well.
Thus $\gamma_k$ multiplies and the corresponding exponents add modulo $8$.
\end{proof}

\begin{lemma}[The hyperbolic block has trivial phase]\label{lem:hyperbolic-uk-zero}
For the hyperbolic block $\mathsf{E}(2)$ on $(\Z/2)^2$ (Definition~\ref{def:hyperbolic-block} with $k=1$),
we have $u_1(\mathsf{E}(2))\equiv 0\pmod 8$.
\end{lemma}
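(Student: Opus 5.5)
The plan is to compute directly from Definition~\ref{def:uk}, with $G=(\Z/2)^2$ and $\lambda$ given by the Gram matrix $\tfrac12\begin{psmallmatrix}0&1\\1&0\end{psmallmatrix}$ of Definition~\ref{def:hyperbolic-block} for $k=1$. First I check that $u_1$ is actually defined, i.e.\ that the layer is of Type~$E$. Since $G$ has exponent $2$, we have $G_0=0$ and $G_1=G_2=G$. Hence $P_1(G)=G_1/(G_0+2G_2)=G$, because $2G_2=0$. The layer form $b_1(\bar x,\bar y)=(2\lambda(x,y))\bmod 2$ then has Gram matrix $\begin{psmallmatrix}0&1\\1&0\end{psmallmatrix}$ over $\F_2$. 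Its diagonal vanishes, so $b_1$ is alternating, and by Lemma~\ref{lem:q-linear} we get $c_1(G,\lambda)=0$. So the $1$st layer is Type~$E$, and Proposition~\ref{prop:qk-welldefined} supplies the quadratic function $q_1$.

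Next I identify the group over which the Gauss sum runs: $H_1(G)=G/G_1=G/G=0$ is the trivial group. The sum $\GS_1(G,\lambda)$ therefore has the single term $\exp(2\pi i\,q_1(0))$. Here $q_1(0)=2^{0}\lambda(0,0)=0$, so $\GS_1=1$. The normalization factor is $|H_1(G)|^{-1/2}=1$, which gives $\gamma_1(\mathsf E(2))=1=\exp(\tfrac{\pi i}{4}\cdot 0)$. Consequently $u_1(\mathsf E(2))\equiv 0\pmod 8$.

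There is no real obstacle here. The only point needing care is bookkeeping: $H_k$ is the quotient $G/G_k$ and not the layer $P_k$, so for a group of exponent $2^k$ the sum is empty apart from the identity term. As a sanity check independent of this degenerate feature, I would note that Lemma~\ref{lem:uk-additivity} makes the result consistent with $u_1$ being additive. Any orthogonal sum of copies of $\mathsf E(2)$ again has $H_1=0$ and phase $0$.
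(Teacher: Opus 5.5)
Your proposal is correct and follows the same route as the paper: since $G_1=G$, the quotient $H_1(G)=G/G_1$ is trivial, so the normalized Gauss sum consists of the single term $1$ and $u_1(\mathsf{E}(2))\equiv 0 \pmod 8$. You also check that the first layer is of Type~$E$, which is needed for $q_1$ and $u_1$ to be defined at all; the paper's proof leaves this implicit, and your verification of it is correct.
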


\begin{proof}
Let $G=(\Z/2)^2$ with Gram matrix $\Lambda=\frac12\begin{psmallmatrix}0&1\\ 1&0\end{psmallmatrix}$.
Then $G_1=G$, so $H_1(G)=G/G_1$ is the trivial group and the Gauss sum consists of a single term equal to $1$.
Hence $\gamma_1=1=\exp(\pi i\cdot 0/4)$, so $u_1\equiv 0$.
\end{proof}

\subsection{Summary: how Type $E$ and the refinements enter the canonical schema}\label{subsec:summary-typeE}

For odd primes, the local normalisation is governed by the square class of determinants of the layer forms,
as in Wall's diagonalisation and the Kawauchi--Kojima invariants \cite{Wall63,KawauchiKojima80}.
At the prime $2$, the situation bifurcates as follows:
\begin{enumerate}
\item The $k$th layer form $b_k$ is alternating if and only if its characteristic element $c_k$ vanishes;
this is Type $E$ (Definition~\ref{def:typeAE}), and the exponent--one reduction $P_k(G)$ admits a symplectic basis
(Proposition~\ref{prop:alternating-symplectic}), hence the name ``hyperbolic''.
In this case, the Gauss sum phase $u_k\in\Z/8$ (Definition~\ref{def:uk}) is available and additive
(Lemma~\ref{lem:uk-additivity}).
\item If $c_k\neq 0$ (Type $A$), the exponent--one reduction is nonalternating, and the layer form is forced
to be orthonormal over $\F_2$ (Proposition~\ref{prop:nonalternating-orthonormal}). To build canonical
representatives at denominator $2^k$, one must retain the determinant square class. We record it as
$\det\bmod 4$ for $k=2$ and $\det\bmod 8$ for $k\ge 3$ (Definition~\ref{def:det-refinement}),
in accordance with Lemma~\ref{lem:squares-mod-2k}.
\end{enumerate}

In particular, the $A/E$ dichotomy together with the Gauss refinement (when available) recovers the
$2$--primary invariants $a_k$ of Kawauchi--Kojima \cite[Cor.\ 2.1, Thm.\ 4.1]{KawauchiKojima80},
while the determinant refinement provides a canonical choice among diagonal realisations of the odd-type layers.
This is precisely the information packaged as the Type $E$ block (hyperbolic) and the $2$--primary refinements
in the canonical output of Step~A.

\section{The TQFT dictionary: realizing tokens as \TMF-modules}
\label{sec:tqft-dictionary}

Step~A associates to any symmetric integral surgery matrix $A$ a canonical token package
$T=\Canon(A)\in\Tok$, encoding the free rank $b_1(A)$ together with the isometry class of the torsion
linking pairing $(\Tor(G(A)),\lambda_A)$, where $G(A)=\coker(A)$.
The purpose of this section is to turn this boundary label into a canonical object of
$\Ho(\Mod_{\TMF})$, compatible with the \TMF-valued $(3{+}1)$--dimensional TQFT of
Gukov--Krushkal--Meier--Pei~\cite{GKMP25}.

Concretely, the GKMP theory assigns to a simply connected bounding $4$--manifold $W$ with
$\partial W=M$ an object
\[
Z_{\TMF}(M;W)\;:=\;L_{b(W)}\bigl[\,3b_+(W)-2b_-(W)\,\bigr]\in\Ho(\Mod_{\TMF}),
\]
and prove that its equivalence class depends only on the boundary $M$, though in general not up to a canonical equivalence
(see~\cite[\S7.2]{GKMP25}).
Here $b(W)$ denotes the intersection form on $H_2(W;\Z)$ and $L_{b(W)}$ is the $\TMF$--module
associated to the integral symmetric bilinear form $b(W)$ in~\cite[\S5]{GKMP25}.
The key point for us is that $L_b$ only depends on the \emph{stable} isomorphism class of $b$,
so we are free to replace a surgery matrix $A$ by any stably congruent representative.
We exploit this flexibility to choose a \emph{canonical} representative matrix $B(T)$ depending
only on the token package $T$, and we package the resulting output as an explicit realization
functor
\[
\mathcal R:\Tok\longrightarrow \Ho(\Mod_{\TMF}).
\]

\subsection{The realization functor $\mathcal R$}
\label{subsec:realization-functor}

\subsubsection*{Discriminant data.}
Let $B\in M_n(\Z)$ be a symmetric integral matrix and set $G_B:=\coker(B:\Z^n\to\Z^n)$.
If $\det(B)\neq 0$ then $G_B$ is finite and the induced nonsingular linking pairing is
\[
\lambda_B([x],[y])\;:=\;x^{\mathsf T}B^{-1}y\ \bmod\ \Z\qquad (x,y\in\Z^n).
\]
If $\det(B)=0$, then $G_B$ has a free part of rank
\[
b_1(B)\;:=\;\rk_{\Z}\ker(B)\;=\;\rk_{\Z}\bigl(G_B/\Tor(G_B)\bigr),
\]
and the torsion subgroup $\Tor(G_B)$ carries a canonical nonsingular linking pairing
$\lambda_B:\Tor(G_B)\times \Tor(G_B)\to \Q/\Z$ as in Remark~\ref{rem:torsion-linking-singular}.
(Equivalently, one may split off the free summand and apply \eqref{eq:lambda-matrix} to a nonsingular block; cf.~\cite[\S1]{KawauchiKojima80}.)

\subsubsection*{Canonical matrices from token packages.}
Recall that a token package $T\in\Tok$ encodes the isometry class of a pair
\[
\bigl(\Tor(G_T),\lambda_T\bigr)\qquad\text{together with an integer }b_1(T)\ge 0,
\]
where $\Tor(G_T)$ is a finite abelian group and $\lambda_T$ is a nonsingular linking pairing.
In Appendix~\ref{app:dictionary} we fixed, for each
\emph{primitive local token} $\tau$, a concrete symmetric integral matrix $B(\tau)$ presenting
the corresponding local linking form (and we fixed an ordering convention for block sums).
For a general token package $T$, we define the associated canonical matrix by the block sum
\begin{equation}\label{eq:canonical-matrix-from-token}
B(T)\;:=\;(0)^{\oplus b_1(T)}\ \oplus\ \bigoplus_{\tau\in T} B(\tau),
\end{equation}
where $(0)$ denotes the $1\times 1$ zero matrix and the direct sum over $\tau\in T$ is taken
in the prime--then--layer ordering built into the definition of $\Tok$.

By construction,
\[
b_1\bigl(B(T)\bigr)=b_1(T)\qquad\text{and}\qquad
\bigl(\Tor(G_{B(T)}),\lambda_{B(T)}\bigr)\cong \bigl(\Tor(G_T),\lambda_T\bigr).
\]

\subsubsection*{Definition of $\mathcal R$.}
Define the realization functor
\begin{equation}\label{eq:realization-functor}
\mathcal R(T)\;:=\;L_{B(T)}\bigl[\,3b_+\bigl(B(T)\bigr)-2b_-\bigl(B(T)\bigr)\,\bigr]
\ \in\ \Ho(\Mod_{\TMF}),
\end{equation}
where $b_\pm(B)$ denotes the number of positive/negative eigenvalues of $B\otimes\Rr$
(counting zeros neither in $b_+$ nor in $b_-$).
This normalization is the one dictated by the GKMP state space formula.

\medskip
The remaining input needed to identify $\mathcal R(\Canon(A))$ with $Z_{\TMF}(M(A))$ is an
algebraic statement: the stable congruence class of an integral symmetric matrix is determined
by the invariants $(b_1,\lambda)$.

\subsubsection*{Stable congruence classified by $(b_1,\lambda)$.}
We isolate the precise form that we use.

\begin{definition}[Kirby stable congruence]\label{def:kirby-stable-congruence}
Two symmetric integral matrices $A$ and $A'$ are \emph{Kirby stably congruent} if there exist
integers $r,s,r',s'\ge 0$ and a unimodular matrix $P$ such that
\[
P^{\mathsf T}\bigl(A\oplus I_r\oplus (-I_s)\bigr)P\;=\;A'\oplus I_{r'}\oplus (-I_{s'}) .
\]
Equivalently, $A$ can be transformed into $A'$ by a finite sequence of integral congruences
(handle slides) together with blow-up and blow-down moves (adding and deleting $\pm 1$ summands).
\end{definition}

\begin{proposition}[Stable classification of linking matrices]\label{prop:stable-classification}
Let $A,A'$ be symmetric integral matrices. The following are equivalent:
\begin{enumerate}[(a),leftmargin=2.0em]
\item $A$ and $A'$ are Kirby stably congruent in the sense of Definition~\ref{def:kirby-stable-congruence}.
\item One has $b_1(A)=b_1(A')$, and there exists an isometry of torsion linking pairings
\[
\bigl(\Tor(G(A)),\lambda_A\bigr)\ \cong\ \bigl(\Tor(G(A')),\lambda_{A'}\bigr).
\]
\end{enumerate}
\end{proposition}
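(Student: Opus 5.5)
The implication (a)$\Rightarrow$(b) is formal. Congruence by a unimodular $P$ preserves $\rk\ker$, so $b_1$ is unchanged. For the torsion pairing, I will apply Lemma~\ref{lem:congruence-invariance} and Lemma~\ref{lem:stabilization-invariance} to the nonsingular reduced block of Remark~\ref{rem:torsion-linking-singular}. A $\pm1$ summand can be carried along inside the nonsingular block: after congruence, $A\oplus(\pm1)$ reduces to $0^{b_1}\oplus (A_{\mathrm{red}}\oplus(\pm1))$. Hence (a) gives $b_1(A)=b_1(A')$ and an isometry of torsion linkings.

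For (b)$\Rightarrow$(a), I will first reduce to the nonsingular case. Since $\ker(A)\subset\Z^n$ is a primitive (saturated) sublattice, it is a direct summand. Choosing a basis adapted to $\ker(A)\oplus C$ gives $P^{\mathsf T}AP=0^{b_1}\oplus A_{\mathrm{red}}$ with $\det A_{\mathrm{red}}\neq0$, exactly as in Remark~\ref{rem:torsion-linking-singular}. Here $(G(A_{\mathrm{red}}),\lambda_{A_{\mathrm{red}}})$ represents the torsion linking of $A$. Since $b_1(A)=b_1(A')$, it then suffices to prove the following: two nonsingular symmetric integral matrices $\Lambda,\Lambda'$ with isometric linking pairings satisfy $\Lambda\oplus I_r\oplus(-I_s)\cong\Lambda'\oplus I_{r'}\oplus(-I_{s'})$ for suitable $r,s,r',s'$.

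For the nonsingular case I intend to use the classical stable classification theorem of Wall~\cite{Wall63} (see also~\cite{KawauchiKojima80}). It states that nondegenerate integral lattices with isometric discriminant linking pairings become isometric after adding odd unimodular forms. I will then convert "adding odd unimodular forms" into "adding $\pm1$ blocks." Any unimodular $U$ satisfies that $U\oplus(1)\oplus(-1)$ is odd and indefinite, so by the Hasse--Minkowski/Serre classification of indefinite unimodular forms it is congruent to $I_p\oplus(-I_q)$. Consequently $\Lambda\oplus U\cong\Lambda'\oplus U'$ implies $\Lambda\oplus I_p\oplus(-I_q)\cong\Lambda'\oplus I_{p'}\oplus(-I_{q'})$, which is Definition~\ref{def:kirby-stable-congruence}.

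The main obstacle is this nonsingular stable statement itself, if one wants it self-contained rather than cited. My first attempt would be a gluing argument. Let $\phi\colon(G(\Lambda),\lambda_\Lambda)\to(G(\Lambda'),\lambda_{\Lambda'})$ be the given isometry. The graph of $\phi$ is a metabolizer of $\lambda_\Lambda\oplus(-\lambda_{\Lambda'})$, so it defines a unimodular overlattice $U\supset\Lambda\oplus\Lambda'(-1)$ in which $\Lambda$ and $\Lambda'(-1)$ are mutual orthogonal complements. Similarly, the diagonal defines a unimodular $V\supset\Lambda'\oplus\Lambda'(-1)$. Both $\Lambda\oplus V$ and $\Lambda'\oplus U$ are overlattices of $\Lambda\oplus\Lambda'\oplus\Lambda'(-1)$, glued along isomorphic isotropic subgroups, so the symmetry exchanging the roles of $\Lambda$ and $\Lambda'$ via $\phi$ should identify them. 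That would give $\Lambda\oplus V\cong\Lambda'\oplus U$, and the previous paragraph then finishes the proof. The delicate point is checking that this identification of overlattices actually holds; this is where I expect the real work.

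As a fallback, I would instead show directly that every nonsingular $\Lambda$ is stably congruent to the canonical block sum $\bigoplus_\tau B(\tau)$. This would proceed by induction on $|\det\Lambda|$, realizing each orthogonal splitting of a homogeneous $p$--layer (Theorem~\ref{thm:odd-primary-completeness}, Propositions~\ref{prop:alternating-symplectic} and~\ref{prop:nonalternating-orthonormal}) by a handle slide after a suitable blow-up.
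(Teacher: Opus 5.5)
Your main line is correct and is essentially the paper's proof. For (a)$\Rightarrow$(b) you use congruence and stabilization invariance. For (b)$\Rightarrow$(a) you split off the kernel, then appeal to the classical stable classification for the nonsingular block. Your remark that $\ker A$ is saturated, hence a direct summand, is what makes the reduction of Remark~\ref{rem:torsion-linking-singular} legitimate. The only difference is the citation. The paper cites Murakami--Ohtsuki--Okada~\cite[Prop.~2.4]{MurakamiOhtsukiOkada92}, which is already phrased with $\pm1$ stabilizations. You cite Wall's version with arbitrary unimodular summands and convert via $U\oplus(1)\oplus(-1)\cong I_p\oplus(-I_q)$; that extra step is correct.

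The self-contained gluing sketch, however, fails at exactly the point you flagged. Since $\phi$ is only an isometry of discriminant forms, there is no lattice symmetry exchanging $\Lambda$ and $\Lambda'$, and the two overlattices of $\Lambda\oplus\Lambda'\oplus\Lambda'(-1)$ need not be isometric. Here is a counterexample:
\begin{itemize}
\item Take $\Lambda=(2)$, $\Lambda'=(-2)$, and $\phi=\mathrm{id}$ on $\Z/2$.
\item Gluing $\Lambda\oplus\Lambda'(-1)=(2)\oplus(2)$ along the graph adjoins $(e+f')/2$, of square $1$, so $U\cong I_2$.
\item Gluing $(-2)\oplus(2)$ along the diagonal adjoins a vector of square $0$, giving the even form $V\cong H$.
\item Then $\Lambda\oplus V=(2)\oplus H$ is even while $\Lambda'\oplus U=(-2)\oplus I_2$ is odd, so $\Lambda\oplus V\not\cong\Lambda'\oplus U$.
\end{itemize}
They become isometric only after a further $(1)$ is added to each side, which is the very statement the construction was meant to prove. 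A self-contained proof therefore needs a genuinely different argument. One option is the genus-theoretic proof behind the classical references: stabilize to odd indefinite lattices, then pass from genus to class. Another is a precise version of your fallback. That would need an actual lifting argument for orthogonal splittings of the discriminant form, not just the layer classification.
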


\begin{proof}
(a)$\Rightarrow$(b).
Integral congruence preserves $\coker(A)$ via the isomorphism
$\phi_P([x])=[P^{\mathsf T}x]$ (compare Proposition~\ref{prop:kirby-invariance-lambda}(1)), and the torsion pairing is preserved.
Moreover $\rk\ker(A)$ is a congruence invariant.
For $\pm 1$ stabilization, one has
\[
G(A\oplus(\pm 1))\cong G(A)\oplus \coker(\pm 1)\cong G(A),
\]
hence $b_1$ and the torsion linking pairing are unchanged.

(b)$\Rightarrow$(a).
Write $b=b_1(A)=b_1(A')$.
By unimodular congruence we may split off the kernel, as in Remark~\ref{rem:torsion-linking-singular},
and replace $A$ and $A'$ by block forms
\(
0^{\oplus b}\oplus A_{\mathrm{red}}
\)
and
\(
0^{\oplus b}\oplus A'_{\mathrm{red}}
\)
with $\det(A_{\mathrm{red}}),\det(A'_{\mathrm{red}})\neq 0$.
Since unimodular congruence is among the generators of Kirby stable congruence, it suffices to treat
the nonsingular blocks.
Under the identifications
\(\Tor\coker(A)\cong \coker(A_{\mathrm{red}})\) and
\(\Tor\coker(A')\cong \coker(A'_{\mathrm{red}})\),
the pairings $\lambda_A$ and $\lambda_{A'}$ agree with the discriminant pairings associated to the
integral lattices determined by $A_{\mathrm{red}}$ and $A'_{\mathrm{red}}$.

In this nonsingular setting, the implication (b)$\Rightarrow$(a) is exactly the stable classification
of linking matrices under handle slides and blow-ups: two symmetric linking matrices are related by a
sequence of unimodular congruences and $\pm1$ stabilizations if and only if they induce isometric
torsion linking pairings.
This is proved in Murakami--Ohtsuki--Okada~\cite[Prop.~2.4]{MurakamiOhtsukiOkada92}.
(For readers who prefer a purely algebraic formulation, one may recast the torsion linking form as the
discriminant pairing of an integral lattice; see for example Milnor--Husemoller~\cite{MilnorHusemoller73}
or Nikulin~\cite{Nikulin80}.)

Since every symmetric integral matrix occurs as the linking matrix of a framed link in $S^3$,
the cited proposition applies to the present algebraic formulation.
\end{proof}

\medskip
Proposition~\ref{prop:stable-classification} is the precise input missing from the original proof of
the dictionary theorem below: it upgrades the informal appeal to ``stable classification'' to a
self-contained statement with explicit generators of the stable congruence relation.

\subsection{Identifying $Z_{\TMF}(M)$ with $\mathcal R(\Canon(A))$}
\label{subsec:token-realization}

\begin{theorem}[Token realization agrees with the GKMP state space]
\label{thm:token-realization-agrees}
Let $A$ be a symmetric integral surgery matrix and let $M=M(A)$ be the oriented closed $3$--manifold
presented by $A$. Set $T:=\Canon(A)\in\Tok$.
Then there is an equivalence in $\Ho(\Mod_{\TMF})$
\[
Z_{\TMF}(M)\ \simeq\ \mathcal R(T).
\]
In particular, the composite assignment
\[
A\ \longmapsto\ T=\Canon(A)\ \longmapsto\ \mathcal R(T)
\]
is invariant under Kirby moves (handle slides and $\pm 1$ blow-ups/downs).
\end{theorem}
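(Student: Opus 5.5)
The plan is to compare both sides through a common bounding $4$--manifold, or rather through stably congruent matrices. I would combine three inputs: the GKMP description of the state space, Proposition~\ref{prop:stable-classification}, and the fact that the normalized module $L_b[3b_+(b)-2b_-(b)]$ is unchanged by $\pm1$ stabilization and by unimodular congruence.

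\textbf{Step 1: realize $Z_{\TMF}(M)$ by a matrix.} The handlebody $W_A$ is simply connected. Its intersection form is represented by $A$, and $\partial W_A=M(A)$. So the GKMP state space formula gives
\[
Z_{\TMF}(M)\simeq Z_{\TMF}(M;W_A)=L_A\bigl[3b_+(A)-2b_-(A)\bigr].
\]
This holds up to non-canonical equivalence, which is all the statement requires (Remark~\ref{rem:equiv-vs-canonical}).

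\textbf{Step 2: compare $A$ with $B(T)$ algebraically.} By construction \eqref{eq:canonical-matrix-from-token}, $b_1(B(T))=b_1(T)=b_1(A)$. Likewise $(\Tor G_{B(T)},\lambda_{B(T)})$ is isometric to the pairing encoded by $T=\Canon(A)$, hence to $(\Tor G(A),\lambda_A)$. This uses Theorem~\ref{thm:Canon-complete}, in the direction that $T$ determines the isometry class, together with the appendix fact that each primitive block $B(\tau)$ presents its local form and that orthogonal sums realize the prime-then-layer decomposition (Lemma~\ref{lem:primary-orthogonal}). Proposition~\ref{prop:stable-classification} then gives $P\in\GL(\Z)$ and $r,s,r',s'$ with
\[
P^{\mathsf T}\bigl(A\oplus I_r\oplus(-I_s)\bigr)P=B(T)\oplus I_{r'}\oplus(-I_{s'}).
\]

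\textbf{Step 3: invariance of the normalized module.} I need two facts. First, congruence gives $L_{P^{\mathsf T}AP}\simeq L_A$ and leaves $b_\pm$ unchanged, since the construction in \cite[\S5]{GKMP25} depends only on the isomorphism class of the form. Second, stabilization by $(\pm1)$ leaves the shifted module unchanged. For this I would invoke GKMP's theorem that $L_{b(W)}[3b_+-2b_-]$ depends only on $\partial W$: blowing up $W_A$ by $\pm\CP^2$ does not change the boundary, and its effect on the form is exactly $A\mapsto A\oplus(\pm1)$. Equivalently, one can use the GKMP computation that $L_{b\oplus(+1)}\simeq L_b[-3]$ and $L_{b\oplus(-1)}\simeq L_b[2]$, which the shift $3b_+-2b_-$ exactly compensates. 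Chaining these facts along the relation from Step 2 yields
\[
L_A[3b_+(A)-2b_-(A)]\simeq L_{B(T)}[3b_+(B(T))-2b_-(B(T))]=\mathcal R(T).
\]

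\textbf{Main obstacle.} The delicate point is the singular case $b_1>0$. There $B(T)$ contains zero blocks, and the matrices need not come from simply connected bounding manifolds in a form matching GKMP's hypotheses verbatim. Still, every symmetric integral matrix (including those with zero blocks) is the linking matrix of some framed link. So $B(T)$ is realized by a handlebody $W_{B(T)}$ with $\partial W_{B(T)}=M(B(T))$. By Step 2 and Kirby's theorem as encoded in Definition~\ref{def:kirby-stable-congruence}, this reduces Step 3 to GKMP's boundary-invariance theorem applied to $W_A$ and $W_{B(T)}$.

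I would first check whether the stable congruence of Step 2 is induced by honest Kirby moves relating the two framed links. If it is not (the matrices could be stably congruent while the links present different $3$--manifolds), I would fall back on the purely algebraic route. That route needs stability of $L_b$ under $\pm1$ sums together with congruence invariance, which \cite[\S5]{GKMP25} provides independently of topology. The Kirby invariance in the final sentence of the theorem then follows immediately from Proposition~\ref{prop:Canon-invariant}.
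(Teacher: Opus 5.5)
Your proposal is correct and is essentially the paper's proof. The shared steps are: $Z_{\TMF}(M)\simeq L_A[3b_+(A)-2b_-(A)]$ via $W_A$; Proposition~\ref{prop:stable-classification} to make $A$ and $B(T)$ Kirby stably congruent; and invariance of the normalized module under congruence and $\pm1$ stabilization (\cite[Lem.~5.7]{GKMP25}). The one thing to drop is the reduction in your ``main obstacle'' paragraph to boundary invariance for $W_A$ versus $W_{B(T)}$: $M(A)$ and $M(B(T))$ are in general not diffeomorphic (the $1\times1$ matrix $(-1)$ presents both $S^3$ and the Poincar\'e sphere), so the algebraic chain is the actual argument, not a fallback, with each stabilization handled on its own fixed boundary or by Lemma~5.7.
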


\begin{proof}
Let $W(A)$ denote the simply connected $2$--handlebody with intersection matrix $A$ and boundary $M$.
By the definition of the GKMP state space~\cite[\S7]{GKMP25},
\[
Z_{\TMF}(M)\ \simeq\ Z_{\TMF}(M;W(A))
\ =\ L_A\bigl[\,3b_+(A)-2b_-(A)\,\bigr].
\]
On the other hand, by construction of Step~A, the token package $T=\Canon(A)$ records precisely
the pair $(b_1(A),(\Tor(G(A)),\lambda_A))$.
The canonical matrix $B(T)$ defined in~\eqref{eq:canonical-matrix-from-token} satisfies
\[
b_1\bigl(B(T)\bigr)=b_1(A),\qquad
\bigl(\Tor(G_{B(T)}),\lambda_{B(T)}\bigr)\cong \bigl(\Tor(G(A)),\lambda_A\bigr).
\]
Hence Proposition~\ref{prop:stable-classification} implies that $A$ and $B(T)$ are Kirby stably congruent.

The GKMP construction is stable under precisely these operations:
\begin{itemize}[leftmargin=2.0em]
\item integral congruence corresponds to changing a basis of $H_2(W;\Z)$, hence does not change the
isomorphism class of the bilinear form and leaves $L_b$ unchanged up to equivalence;
\item $\pm 1$ stabilization changes $L_b$ by the shifts described in~\cite[Lem.\ 5.7]{GKMP25},
and the normalization $L_b[\,3b_+-2b_-\,]$ is invariant under this stabilization.
\end{itemize}
Transporting along a stable congruence chain from $A$ to $B(T)$ therefore yields
\[
L_A\bigl[\,3b_+(A)-2b_-(A)\,\bigr]\ \simeq\
L_{B(T)}\bigl[\,3b_+(B(T))-2b_-(B(T))\,\bigr]
\ =\ \mathcal R(T),
\]
which is the desired identification.
Kirby invariance follows because Step~A is Kirby invariant by construction and stable congruence
realizes the algebraic effect of Kirby moves on matrices.
\end{proof}

\begin{remark}[A modular/geometric reading of hyperbolic blocks]\label{rem:hyperbolic-modular-sector}
The $2\times 2$ hyperbolic matrix $H=\begin{psmallmatrix}0&1\\[2pt] 1&0\end{psmallmatrix}$ plays a
distinguished role in the $2$--primary alternating sector (our Type~$E$ tokens).
In the GKMP construction, hyperbolic summands are tied to the geometry of the universal elliptic curve
and the behavior of the Poincar\'e line bundle under Fourier--Mukai type transforms
(cf.~\cite[\S5]{GKMP25}).
For our purposes, the practical consequence is that the Type~$E$ sector admits a particularly clean
tensor-product description in the assembly formula below.
\end{remark}

\subsection{The assembly formula}
\label{subsec:assembly-formula}

A token package $T$ decomposes canonically into its free part and its $p$--primary parts:
\[
T \;=\; \bigl(b_1(T),\ T_{(2)},\ T_{(3)},\dots\bigr),
\]
where each $T_{(p)}$ is a multiset of primitive local tokens at the prime $p$.
Since $B(T)$ is defined as a block sum of the matrices $B(\tau)$, the realization $\mathcal R(T)$
admits a primewise tensor decomposition.

\begin{theorem}[Assembly from a token package]\label{thm:assembly}
For any token package $T\in\Tok$ there is a canonical equivalence in $\Ho(\Mod_{\TMF})$
\[
\mathcal R(T)\ \simeq\ Z_{\TMF}(S^2\times S^1)^{\otimes b_1(T)}
\;\otimes\;
\bigotimes_{p}\;\bigotimes_{\tau\in T_{(p)}} \mathcal R(\tau),
\]
where $\mathcal R(\tau)$ is the realization of a primitive local token and the tensor products are
taken in the symmetric monoidal category $\Ho(\Mod_{\TMF})$.
\end{theorem}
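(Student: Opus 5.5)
The plan is to reduce the assembly formula to two facts: $L_b$ is multiplicative under orthogonal sums of forms, and the normalizing shift $3b_+-2b_-$ is additive. Since $B(T)$ is by \eqref{eq:canonical-matrix-from-token} the block sum $(0)^{\oplus b_1(T)}\oplus\bigoplus_{\tau}B(\tau)$, the statement should follow by unwinding definitions.

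\textbf{Step 1: multiplicativity of $L_b$.} The key input from GKMP is an equivalence
\[
L_{b\oplus b'}\simeq L_b\otimes_{\TMF}L_{b'}
\]
for integral symmetric forms $b,b'$. Geometrically, the construction in \cite[\S5]{GKMP25} takes global sections of a line bundle on a product of copies of the universal elliptic curve, indexed by a basis of $H_2$. An orthogonal sum of forms gives an exterior product of line bundles on the product of the two factors. A Künneth-type equivalence over the moduli stack then yields the tensor product over $\TMF$.

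I would first check whether GKMP state this explicitly, for instance as the multiplicativity of $Z_{\TMF}$ under boundary connected sum, i.e.\ monoidality of the TQFT under disjoint union. If they do, I would cite it. This is the main obstacle. If only the TQFT monoidality is available, I would argue geometrically instead: $W(B\oplus B')$ is the boundary connected sum of $W(B)$ and $W(B')$, with boundary $Y_B\# Y_{B'}$. Monoidality under disjoint union, combined with the connected-sum cobordism, then gives the tensor decomposition.

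\textbf{Step 2: additivity of the shift.} The shift is additive because signature data add: $b_\pm(B\oplus B')=b_\pm(B)+b_\pm(B')$. Suspensions commute with $\otimes_{\TMF}$, so
\[
\mathcal R(T\oplus T')\simeq\mathcal R(T)\otimes_{\TMF}\mathcal R(T').
\]
The block ordering in $B(T)$ is irrelevant up to a permutation congruence, which is unimodular. So the symmetric monoidal structure of $\Ho(\Mod_{\TMF})$ absorbs the reordering, and the equivalence is independent of the prime-then-layer ordering.

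\textbf{Step 3: identifying the factors.} Iterating the decomposition over the blocks of $B(T)$ gives one factor $L_{(0)}$ (with shift $0$) for each unit of $b_1$, and one factor $\mathcal R(\tau)$ for each primitive token. It remains to identify $L_{(0)}$ with $Z_{\TMF}(S^2\times S^1)$. The zero-framed unknot gives $W=S^2\times D^2$, which is simply connected with boundary $S^2\times S^1$ and intersection form $(0)$. Hence Theorem~\ref{thm:token-realization-agrees}, applied to $A=(0)$, gives $Z_{\TMF}(S^2\times S^1)\simeq L_{(0)}[0]$.

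Grouping the remaining factors by prime finishes the proof. The word ``canonical'' should be read as canonical up to the choices of equivalences in Step 1, consistent with Remark~\ref{rem:equiv-vs-canonical}.
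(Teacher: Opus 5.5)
Your proposal is essentially the paper's proof. You write $B(T)$ as the block sum $(0)^{\oplus b_1(T)}\oplus\bigoplus_\tau B(\tau)$, use $L_{b\oplus b'}\simeq L_b\otimes_{\TMF}L_{b'}$ (which the paper simply asserts as a property of the GKMP construction) together with additivity of $3b_+-2b_-$, and identify $L_{(0)}[0]$ with $Z_{\TMF}(S^2\times S^1)$ via the filling $S^2\times D^2$.

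Drop your fallback for Step~1, though. Disjoint-union monoidality plus a connected-sum ($1$--handle) cobordism only gives a map $Z_{\TMF}(Y_B)\otimes Z_{\TMF}(Y_{B'})\to Z_{\TMF}(Y_B\# Y_{B'})$, not an equivalence; this already fails for Dijkgraaf--Witten theories. So the K\"unneth/monoidality statement for $L_b$ itself is the input you actually need.
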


\begin{proof}
By definition~\eqref{eq:canonical-matrix-from-token}, $B(T)$ is a block-diagonal sum
\[
B(T)\;=\;(0)^{\oplus b_1(T)}\ \oplus\ \bigoplus_{p}\ \bigoplus_{\tau\in T_{(p)}} B(\tau).
\]
The GKMP construction is symmetric monoidal with respect to orthogonal direct sums of forms:
there are canonical equivalences $L_{b\oplus b'}\simeq L_b\otimes L_{b'}$
and the normalization shift is additive in $b_\pm$.
Applying this repeatedly to the above decomposition of $B(T)$ gives
\[
L_{B(T)}\bigl[\,3b_+(B(T))-2b_-(B(T))\,\bigr]
\ \simeq\
L_{(0)}^{\otimes b_1(T)}\ \otimes\ \bigotimes_{p}\ \bigotimes_{\tau\in T_{(p)}} L_{B(\tau)}[\cdots].
\]
Finally, $L_{(0)}[0]\simeq Z_{\TMF}(S^2\times S^1)$ by the GKMP computation of the state space of
$S^2\times S^1$ (cf.~\cite[\S7]{GKMP25}), and the remaining tensor factors are exactly
$\mathcal R(\tau)$ by definition~\eqref{eq:realization-functor}.
\end{proof}

\begin{remark}[Why canonical tokens matter]\label{rem:why-canonical-tokens}
Theorem~\ref{thm:assembly} should be read together with Theorem~\ref{thm:token-realization-agrees}.
A closed $3$--manifold admits many Kirby--equivalent surgery presentations, and hence many different
matrices presenting the same $(b_1,\lambda)$.
The canonical token package $T=\Canon(A)$ provides an intrinsic coordinate system for the boundary
label, while $B(T)$ provides a preferred stable representative matrix.
This rigidity is useful later when comparing cobordism maps between state spaces: it allows one to
reduce questions about functoriality to verifications on a fixed set of local generators.
\end{remark}

\section{Hopf elements \texorpdfstring{$\eta$ and $\nu$}{eta and nu}}\label{sec:calibration-etanu}

The realization theorem of \S\ref{sec:tqft-dictionary} expresses the GKMP
$\TMF$--state spaces and cobordism maps in terms of explicit algebraic models.
In the simplest rank--one situations, the resulting formulas involve the
classical Hopf elements in the stable stems.  The purpose of this section is to
identify, in our normalization conventions, the values of the theory on two
basic closed $4$--manifolds and to record the resulting structure constants
that will be used later.

We write $\eta\in\pi_1\mathbb{S}$ and $\nu\in\pi_3\mathbb{S}$ for the usual Hopf
elements, and we use the same symbols for their images in $\pi_\ast\TMF$ under
the unit map $\mathbb{S}\to\TMF$.

\begin{theorem}\label{thm:etanu-values}
In the $\TMF$--valued GKMP theory (with the normalization conventions fixed in
\S2), one has
\[
Z_{\TMF}(\CP^2)\;=\;\pm\,\nu\ \in\ \pi_3\TMF,
\]
and, assuming \cite[Question~7.14]{GKMP25},
\[
Z_{\TMF}(S^2\times S^2)\;=\;\eta\ \in\ \pi_1\TMF.
\]
The sign ambiguity for $\nu$ is removed by the convention of
Remark~\ref{rem:sign-nu}.
\end{theorem}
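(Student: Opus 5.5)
The plan is to decompose each closed $4$--manifold into simply connected pieces and compute the resulting closed invariant as a composite of GKMP cobordism maps. Write $\CP^2 = B^4 \cup_{S^3} (\CP^2\setminus B^4)$. The punctured $\CP^2$ is the $2$--handlebody $W([1])$, and by Theorem~\ref{thm:token-realization-agrees} its boundary state space is
\[
\mathcal R(\Canon([1])) \simeq L_{(1)}[3] \simeq Z_{\TMF}(S^3)=\TMF.
\]
The value $Z_{\TMF}(\CP^2)$ is then the composite
\[
\TMF \to \TMF
\]
given by capping $W([1])$ with $B^4$. By construction in \cite[\S5, \S7]{GKMP25}, the module $L_{(1)}$ is built from the Thom spectrum or cofiber attached to the rank--one form $(1)$. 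I would identify the $\pm1$ stabilization equivalence of \cite[Lem.~5.7]{GKMP25} with a shift by $3$, realized as the suspension of a cell structure whose attaching map is the complex Hopf map $S^3\to S^2$, i.e. the circle bundle of $\mathcal O(1)$ on $\CP^1$. The closed value is the degree of the composite
\[
\Sigma^3\TMF \to \TMF,
\]
which is the image of the attaching map in $\pi_3$. This should be the quaternionic or complex Hopf class, and I expect it to come out as $\nu$ because of the degree--$3$ shift.

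Concretely, the steps are as follows.
\begin{enumerate}
\item Recall the GKMP description of $L_{(\pm1)}$ as a cofiber of a map of shifted $\TMF$'s, and of the cobordism maps attached to $2$--handles as induced by elliptic cohomology of the line bundle.
\item Compute the composite for $\CP^2$ as an element of $\pi_3\TMF\cong\Z/24$ and show it is a generator in the image of $\pi_3\mathbb S$, hence $\pm\nu$. The sign is fixed by Remark~\ref{rem:sign-nu}.
\item Treat $S^2\times S^2$ in the same way, using $W(H)$ with $H$ hyperbolic. Here $b_+=b_-=1$, so the shift is $3-2=1$.
\item Glue $W(H)$ to its reverse along $\partial W(H)=S^3$. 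This is the point where the composite needs the naturality of reversed cobordisms, \cite[Question~7.14]{GKMP25}: it lets us write the reversed handlebody map as the dual of the forward one.
\item Identify the resulting degree--$1$ class via the Poincar\'e bundle/Fourier--Mukai picture of Remark~\ref{rem:hyperbolic-modular-sector}, where the relevant attaching map is the real Hopf map, giving $\eta$.
\end{enumerate}

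The main obstacle is step (2). It requires the precise form of the GKMP $2$--handle maps, not merely the equivalence classes of state spaces, since Theorem~\ref{thm:token-realization-agrees} only gives noncanonical equivalences (Remark~\ref{rem:equiv-vs-canonical}). I would first try to detect the class by its $e$--invariant or its image in $\pi_3\TMF_{(2)}$ and $\pi_3\TMF_{(3)}$. This amounts to showing the composite is nonzero mod $8$ and mod $3$, using the unit map compatibility with $\mathbb S$. For $\eta$, sign issues vanish because $2\eta=0$, so only nontriviality must be checked. For that I would test the composite after smashing with $\TMF/2$, or via the Hurewicz image in $\pi_1$.
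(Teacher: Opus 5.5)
Both halves of your plan have genuine gaps, and the one concrete mechanism you propose points at the wrong stem. For $\CP^2$, the decomposition $W([1])\cup B^4$ is fine, but the state spaces carry no attaching map: $L_{(1)}\simeq\TMF[-3]$ is free of rank one, and \cite[Lem.~5.7]{GKMP25} only identifies modules. It says nothing about which element of $\pi_3\TMF$ the closed manifold produces. Moreover, the complex Hopf map $S^3\to S^2$ (the circle bundle over $\CP^1$, attaching the top cell of $\CP^2$) stably represents $\eta\in\pi_1\mathbb S$, not a class in $\pi_3$. Your Hopf dictionary is shifted one level: real $\mapsto 2$, complex $\mapsto\eta$, quaternionic $\mapsto\nu$.

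The missing input is the GKMP identification \cite[Example~8.4]{GKMP25}. It identifies $Z_{\TMF}(\CP^2)$ with the $\TMF$-linear extension of the reduced $S^1$-transfer $\tau\colon\CP^\infty_+\to S^{-1}$ restricted to $\CP^1$, giving $f=\Sigma\alpha\colon S^3\to S^0$. The desuspension in the target of $\tau$ is what lands you in stem $3$. The paper then relates the cofiber of $f$ to the Thom spectrum $\Th(-\gamma)$ of Lemma~\ref{lem:transfer-thom}. The Thom-class formulas make $\Sq^4$ and $\mathcal P^1$ act nontrivially, so $f$ lies on the Adams $1$-line at $p=2,3$, and $f=u\nu$ with $u\in(\Z/24)^\times$. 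The complex $e$-invariant $e(f)=\pm1/24$ then forces $u=\pm1$.

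Your proposed test would not suffice even with the correct map. ``Nonzero mod $8$ and mod $3$'' is satisfied by $2\nu$. ``Generator of $\Z/24$, hence $\pm\nu$'' is false, since there are eight generators $u\nu$. You need the exact value of the $e$-invariant.

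For $S^2\times S^2$, step (4) glues the wrong pieces. $W(H)\cup_{S^3}\overline{W(H)}$ is the double of punctured $S^2\times S^2$, i.e.\ $(S^2\times S^2)\#(S^2\times S^2)$, not $S^2\times S^2$. The paper cuts along $S^2\times S^1$ instead. The $0$-framed Hopf link writes $S^2\times S^2=(S^2\times D^2)\cup_{S^2\times S^1}(S^2\times D^2)$ as $W_0\colon\varnothing\to S^2\times S^1$ followed by $W_1\colon S^2\times S^1\to\varnothing$. \cite[Question~7.14]{GKMP25} is needed precisely to have a map for the reversed piece $W_1$.

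The point of this cut is that the intermediate state space has $b_1>0$, namely $L^{(0)}\simeq\TMF\oplus\TMF[1]$, and there $\eta$ is an explicit structure constant of GKMP's maps \cite[Prop.~6.13, Example~8.1]{GKMP25}. One has $e^\ast\leftrightarrow\begin{psmallmatrix}1&\eta\end{psmallmatrix}$ and the handle-slide automorphism $A\leftrightarrow\begin{psmallmatrix}\varepsilon&\eta\\ 0&\varepsilon\end{psmallmatrix}$. Hence $Z_{\TMF}(W_1)=e^\ast\circ A\leftrightarrow\begin{psmallmatrix}\varepsilon&0\end{psmallmatrix}$. With $Z_{\TMF}(W_0)\leftrightarrow(\eta,1)^{\mathsf T}$, the composite is $\varepsilon\eta=\eta$.

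Your step (5) contains no computation of this kind. The Fourier--Mukai remark is only a heuristic, and the real Hopf map $S^1\to S^1$ has degree $2$, representing $2\in\pi_0\mathbb S$, not $\eta$. You are right that only nontriviality matters in $\pi_1\TMF\cong\Z/2$. You are also right that the explicit cobordism maps are the real obstacle, but the proposal never supplies them.
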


\begin{proof}
The first identity is proved in \S\ref{subsec:eta-calibration}.
For $\CP^2$, the GKMP identification of $Z_{\TMF}(\CP^2)$ with the stable class
$\Sigma\alpha$ is recalled in Corollary~\ref{cor:CP2-nu}, while the stable
homotopy identification $\Sigma\alpha=\pm\nu$ is proved in
Theorem~\ref{thm:Sigmaalpha-is-nu}.  Combining these gives the second identity.
\end{proof}

\subsection{The value on $S^2\times S^2$ and the element $\eta$}\label{subsec:eta-calibration}

We compute $Z_{\TMF}(S^2\times S^2)$ by decomposing $S^2\times S^2$ into two
elementary cobordisms along $S^2\times S^1$, and then evaluating the resulting
composition in the explicit splittings of the rank--one $b_1>0$ state space.
This is the first place where $\eta$ appears as a genuine structure constant.

Following \cite{GKMP25}, one has a contravariant module $L^{(0)}$ (naturally
attached to the outgoing boundary $S^2\times S^1$) and a covariant module $L(0)$
(attached to the incoming boundary), related by a canonical equivalence.
We record the splitting and the relevant structure maps in the form that will
be used in the computation of $Z_{\TMF}(S^2\times S^2)$.

\begin{lemma}[Splittings and change of basis for the zero block]\label{lem:zero-block-splitting}
There are $\TMF$--module equivalences
\[
L^{(0)}\ \simeq\ \TMF\ \oplus\ \TMF[1],
\qquad
L(0)\ \simeq\ \TMF\ \oplus\ \TMF[-1],
\]
together with an equivalence
\[
\Phi \colon L^{(0)} \xrightarrow{\ \simeq\ } L(0)[1]
\]
which, under the identifications above and the evident reordering
$L(0)[1]\simeq \TMF[1]\oplus \TMF$, is given by a matrix
\begin{equation}\label{eq:Phi-matrix}
\Phi \ \longleftrightarrow\
\begin{pmatrix}
0 & \varepsilon\\
\varepsilon & \eta
\end{pmatrix},
\qquad \varepsilon\in\{\pm 1\}.
\end{equation}
\end{lemma}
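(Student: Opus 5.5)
The plan is to read everything off the explicit description of the rank--one modules in \cite[\S5]{GKMP25}, specialized to the zero form $b=(0)$. There, the contravariant and covariant modules attached to a rank--one form are built from two--cell objects. For $n=0$ the relevant two--cell complex is, stably, $S^1_+$: the attaching map of the top cell is multiplication by $n$ times the generator, and for $n=0$ this map vanishes. So $L^{(0)}$ is the $\TMF$--module $\TMF\wedge D(S^1_+)$ up to the appropriate grading convention, and $L(0)$ is $\TMF\wedge S^1_+$ shifted down by one.

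\textbf{Step 1 (the two splittings).} These will follow from the stable splitting $\Sigma^\infty S^1_+\simeq \mathbb S\vee \mathbb S^1$, which is induced by the basepoint and the collapse map. This gives $L^{(0)}\simeq\TMF\oplus\TMF[1]$ and $L(0)\simeq \TMF\oplus\TMF[-1]$. As a consistency check, I would also verify directly that the relevant extension class vanishes. It lives in $[\TMF,\TMF[\pm 2]]_{\TMF}=\pi_{\mp2}\TMF$. When this group is nonzero we cannot argue degree--wise, so the geometric splitting is the one I will actually use.

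\textbf{Step 2 (existence of $\Phi$ and shape of the matrix).} The equivalence $\Phi\colon L^{(0)}\to L(0)[1]$ is the $\TMF$--linearization of Atiyah duality for $S^1$, namely $D(S^1_+)\simeq \Sigma^{-1}\Th(-\nu_{S^1})$. The comparison between the contravariant and covariant modules in \cite{GKMP25} is exactly such a duality, twisted by the framing conventions. Writing $\Phi$ as a $2\times 2$ matrix of maps between the summands $\TMF$ and $\TMF[1]$ of the source and the summands $\TMF[1]$ and $\TMF$ of the target, the entries are as follows.
\begin{enumerate}[(i)]
\item The upper--left entry lies in $\pi_{-1}\TMF$, which vanishes, so it is $0$.
\item The off--diagonal entries lie in $\pi_0\TMF$. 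They reduce, after rationalization or on $H\Z$--homology, to the degree--$\pm1$ Poincar\'e duality pairing of $S^1$. Hence they are units.
\item Since $\Phi$ is the linearization of a map of spectra, it factors through the unit $\mathbb S\to\TMF$. This forces the off--diagonal units to lie in the image $\{\pm1\}$ of $\pi_0\mathbb S$.
\item Composing with the automorphism $\mathrm{diag}(1,-1)$ of one of the splittings equalizes the two signs, giving the common $\varepsilon\in\{\pm1\}$.
\end{enumerate}

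\textbf{Step 3 (the $\eta$ entry; main obstacle).} The lower--right entry lies in $\pi_1\TMF$, and identifying it with $\eta$ is the real content of the lemma. The approach I would take is to identify this entry with the attaching map of the top cell of the Thom spectrum $\Th(-\nu_{S^1})$, where $S^1$ carries the framing induced from $S^2\times S^1$ in GKMP's conventions. That framing is the Lie group (nonbounding) framing, and $J$ of the generator of $\pi_1 SO$ is $\eta$. So the top cell is attached by $\eta$, and the mismatch between the untwisted splitting of $S^1_+$ and the twisted one is measured by $\eta$.

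The difficulty is bookkeeping. One must check that GKMP's normalization really uses the nonbounding framing, rather than the bounding framing, which would give $0$ instead of $\eta$. One must also check that no additional change of basis of the splittings can remove the entry. For the latter, note that $\eta$ cannot be killed: any change of basis that preserves the block form alters the lower--right entry only by multiples of the upper--left entry, which is $0$. To decide the framing question, I would first compute $\Phi$ on mod $2$ cohomology. The entry $\eta$ is detected by $\Sq^2$ on the corresponding Thom complex, and comparing this with the $\Sq^2$ action visible in GKMP's model settles whether the entry is $\eta$ or $0$.
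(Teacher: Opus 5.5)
\paragraph{Verdict.} The proposal has a genuine gap: the identifications that carry the whole content are assumed rather than proved, and the test you propose for settling $\eta$ versus $0$ cannot work.

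\paragraph{The identification is asserted, not derived.} The paper does not derive this lemma. Its proof is a citation of \cite[Proposition~6.13]{GKMP25}, and what is cited is the form of GKMP's \emph{particular} comparison map in GKMP's \emph{particular} splittings.

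Your Step~2 is formal once some splittings exist: $\pi_{-1}\TMF=0$ kills the upper-left entry, and invertibility on $\pi_0$ and $\pi_1$ forces the off-diagonal entries to be $\pm1$. So everything rests on identifying $L^{(0)}$, $L(0)$, their splittings and $\Phi$ with $\TMF\wedge D(S^1_+)$ and $\TMF\wedge S^1_+$ (suitably shifted), the basepoint/collapse splittings, and framed Atiyah duality.

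You assert this identification but do not prove it. Moreover, the description you propose to read it off from is not GKMP's: their rank-one modules are not Moore-type two-cell complexes with attaching map $n$.
\begin{itemize}
\item On your description $L(\pm1)$ would be contractible. In fact they are shifted copies of $\TMF$; this is the $B=\varnothing$ case of \cite[Lem.~5.7]{GKMP25}.
\item $L(2)$ is the $\nu$-cone of \S\ref{sec:rank-one-duality}, not a cone on multiplication by $2$.
\end{itemize}
These modules are sections of $\mathcal{O}(ne)$ over the universal elliptic curve, and the comparison maps are built from Grothendieck--Serre duality for $\pi\colon\mathcal{E}\to\mathcal{M}_{ell}$. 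The missing idea is to relate that duality, together with its splittings, to Lie-framed Atiyah duality of the circle. One possible route is to view $\Gamma(\mathcal{E},\mathcal{O}^{\mathrm{top}})$ as $\mathbb{T}$-equivariant $\TMF$ and use a Wirthm\"uller-type duality. This is substantive work, not bookkeeping.

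\paragraph{Step~3 would not decide $\eta$ versus $0$ as written.}
\begin{itemize}
\item $\Th(-TS^1)$ does not depend on a framing, and it is split because the bundle is stably trivial. So its top cell is not attached by $\eta$. A framing only chooses the identification $\Th(-TS^1)\simeq\Sigma^{-1}S^1_+$. Two such choices differ by a self-equivalence whose off-diagonal entry is $J$ of the generator of $\pi_1\mathrm{SO}$, i.e.\ $\eta$. Equivalently, $\eta$ enters as the Pontryagin--Thom class of the Lie-framed circle.
\item Your test cannot see $\eta$. The class $\eta$ induces zero on mod $2$ homology, and $\Sq^2$ vanishes on any spectrum with cells in two adjacent degrees.
\item Your invariance claim holds only for congruences, i.e.\ when the target splitting is dual to the source one. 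Even there the reason is different from the one you give. Replacing $e_2$ by $e_2+ae_1$ with $a\in\pi_1\TMF$ adds two cross terms $\pm\varepsilon a$, and these cancel because $2\eta=0$. The upper-left entry only controls the $a^2$ term.
\end{itemize}
For independently chosen splittings, which is all the lemma's wording provides, the entry can simply be removed:
\[
\begin{pmatrix}0&\varepsilon\\ \varepsilon&\eta\end{pmatrix}\begin{pmatrix}1&\eta\\ 0&1\end{pmatrix}=\begin{pmatrix}0&\varepsilon\\ \varepsilon&0\end{pmatrix}.
\]
So you must fix the splittings exactly as GKMP do. This is not automatic: in Lemma~\ref{lem:e-star-row} the same splitting has $e^\ast\leftrightarrow(1\ \eta)$, so its $\TMF[1]$ summand is not killed by $e^\ast$.
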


\begin{proof}
This is exactly the rank--one $b_1>0$ splitting and comparison map constructed
in~\cite[Proposition~6.13]{GKMP25}; the displayed matrix is the explicit form
identified there.  (Since $2\eta=0$ in $\pi_1\TMF$, the sign $\varepsilon$ will
play no role in later applications.)
\end{proof}

The map $e\colon S^2\times S^1\to S^2\times S^1$ induced by restricting the
$2$--handle attachment to the unit section gives a cobordism map
$e^\ast\colon L^{(0)}\to\TMF$, and there is a canonical stable self--equivalence
$A\colon L^{(0)}\to L^{(0)}$ corresponding to the $0$--framed handle slide.
The following explicit formulas are extracted from \cite[Example~8.1]{GKMP25}.

\begin{lemma}[Unit section and the $\eta$--matrix]\label{lem:e-star-row}
Under the splitting $L^{(0)}\simeq \TMF\oplus \TMF[1]$ from
Lemma~\ref{lem:zero-block-splitting}, the map $e^\ast\colon L^{(0)}\to \TMF$
corresponds to the row vector
\begin{equation}\label{eq:e-star-row}
e^\ast \ \longleftrightarrow\ \begin{pmatrix} 1 & \eta \end{pmatrix}.
\end{equation}
\end{lemma}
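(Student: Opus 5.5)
The plan is to read off the row vector from the explicit cobordism computation in \cite[Example~8.1]{GKMP25}. The only real content is transporting that formula into the splitting $L^{(0)}\simeq\TMF\oplus\TMF[1]$ of Lemma~\ref{lem:zero-block-splitting}. A map $L^{(0)}\to\TMF$ out of $\TMF\oplus\TMF[1]$ is a pair consisting of an element of $\pi_0\TMF$ and an element of $[\TMF[1],\TMF]=\pi_1\TMF$. So one must show two things: the first component is a unit, normalized to $1$, and the second component is $\eta$.

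For the first component, restrict $e^\ast$ to the summand $\TMF\to L^{(0)}$. In GKMP this summand is the image of the unit, coming from the bottom cell of the zero-framed handle. The unit-section restriction is the identity on the bottom cell, so the composite is an isomorphism on $\pi_0$. It is therefore $\pm1$, and we rescale the generator of that summand so that it equals $1$; this is compatible with the sign $\varepsilon$ of \eqref{eq:Phi-matrix}.

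For the second component, use the fact that $L^{(0)}$ is a cofiber (Thom-type) construction of a two-cell complex attached by the rank-one zero form. The composite $\TMF[1]\to L^{(0)}\xrightarrow{e^\ast}\TMF$ is the image under the unit map $\mathbb S\to\TMF$ of the stable map recording how the top cell maps to the bottom. I expect this to be the main obstacle. I would identify the relevant map with the attaching map of $\Sigma^{-2}\CP^2$, or equivalently with the $J$-image of the generator of $\pi_1 SO$, which is $\eta$.

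The cleanest route uses Lemma~\ref{lem:zero-block-splitting}. The bottom-right entry $\eta$ of $\Phi$ and the compatibility $e^\ast=(\text{projection})\circ\Phi$, recorded in \cite[Example~8.1]{GKMP25}, force the second entry of $e^\ast$ to be $\eta$; the sign ambiguity is irrelevant because $2\eta=0$. Nontriviality would be double-checked by noting that a zero entry would split $e^\ast$ and contradict the nonsplit extension visible in $\Phi$.
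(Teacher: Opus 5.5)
Your load-bearing step matches the paper's proof. That proof only cites \cite[Example~8.1]{GKMP25} and transcribes its formula into the splitting of Lemma~\ref{lem:zero-block-splitting}. Your ``cleanest route'' is likewise a citation, of a compatibility $e^\ast=(\text{projection})\circ\Phi$ that this paper never states. It is no more self-contained than quoting the row vector directly, because it is a statement about the same fixed splittings.

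The independent arguments you add for the second entry do not work and should be dropped. That entry is not an invariant of $e^\ast$. Precomposing with the automorphism $\begin{psmallmatrix}1&a\\0&1\end{psmallmatrix}$ of $\TMF\oplus\TMF[1]$, with $a\in\pi_1\TMF$, turns $(1\ \ b)$ into $(1\ \ b+a)$. The paper itself exhibits this: by Lemma~\ref{lem:selfeq-matrix}, $e^\ast\circ A$ has matrix $(\varepsilon\ \ 0)$. So ``$\eta$'' is meaningful only relative to the particular splitting fixed in \cite[Proposition~6.13]{GKMP25}, and no intrinsic cell-structure argument can produce it.

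Concretely, your ``main obstacle'' step is ill-posed. Take $\TMF$ as bottom cell and $\TMF[1]$ as top cell, as in your description. Then the attaching map of $L^{(0)}$ lies in $\pi_0\TMF$, and it is zero because $L^{(0)}$ splits. There is no attaching map in $\pi_1$ to identify with $\eta$, and $\Sigma^{-2}\CP^2$, with cells in degrees $0$ and $2$, is not the relevant complex. The composite $\TMF[1]\to L^{(0)}\xrightarrow{e^\ast}\TMF$ depends on the chosen section $\TMF[1]\to L^{(0)}$, not on any attaching data. A $J$-image or framing-twist picture could at best explain the discrepancy between two specific sections, and that is exactly the information Example~8.1 supplies.

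Your nontriviality check fails for the same reason: $\Phi$ exhibits no nonsplit extension. Precomposing with $\begin{psmallmatrix}1&\eta\\0&1\end{psmallmatrix}$ already removes the $\eta$ from \eqref{eq:Phi-matrix}, since $(\varepsilon+1)\eta=0$. So a zero entry in $e^\ast$ would contradict nothing. Only composites such as $Z_{\TMF}(W_1)\circ Z_{\TMF}(W_0)$ are independent of these splitting choices.
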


\begin{proof}
This is the identification of the restriction map $e^\ast$ given in
\cite[Example~8.1]{GKMP25}, expressed in the splitting of
Lemma~\ref{lem:zero-block-splitting}.
\end{proof}

\begin{lemma}[A canonical self--equivalence]\label{lem:selfeq-matrix}
Under the splitting $L^{(0)}\simeq \TMF\oplus \TMF[1]$, the stable
self--equivalence $A\colon L^{(0)}\to L^{(0)}$ is given by
\begin{equation}\label{eq:selfeq-matrix}
A\ \longleftrightarrow\
\begin{pmatrix}
\varepsilon & \eta\\
0 & \varepsilon
\end{pmatrix},
\qquad \varepsilon\in\{\pm 1\}.
\end{equation}
\end{lemma}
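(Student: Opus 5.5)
The plan is to determine $A$ through its matrix entries. Write $A$ as a $2\times 2$ matrix with respect to the splitting $L^{(0)}\simeq \TMF\oplus\TMF[1]$ of Lemma~\ref{lem:zero-block-splitting}. Each entry is a $\TMF$--module map between shifted copies of $\TMF$, so it is an element of $\pi_\ast\TMF$ of the appropriate degree:
\begin{itemize}
\item the two diagonal entries lie in $\pi_0\TMF$;
\item the upper right entry ($\TMF[1]\to\TMF$) lies in $\pi_1\TMF$;
\item the lower left entry ($\TMF\to\TMF[1]$) lies in $\pi_{-1}\TMF=0$.
\end{itemize}
The lower left entry therefore vanishes automatically, and $A$ is upper triangular with no further argument. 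Since $A$ is an equivalence, its diagonal entries must be units of $\pi_0\TMF=\Z$, i.e.\ $\pm1$. Because $\pi_1\TMF=\Z/2\cdot\eta$, the only remaining question is whether the off-diagonal entry is $0$ or $\eta$.

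To decide this, I would use the geometric description of $A$ from \cite[Example~8.1]{GKMP25}. The $0$--framed handle slide corresponds to the automorphism of the underlying geometric object, the Poincar\'e bundle on the zero block. I would take its induced map on the two summands, exactly as GKMP compute it when deriving the matrix $\Phi$ in \eqref{eq:Phi-matrix}.

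An alternative route is to exploit compatibility with the cobordism map $e^\ast$. The composite $e^\ast\circ A$ should be the restriction map for the slid handle. Geometrically this is the unit section composed with the fibrewise translation, and it should differ from $e^\ast$ by exactly the $\eta$ appearing in the row vector \eqref{eq:e-star-row}. Concretely, write $A=\begin{psmallmatrix}a&x\\0&d\end{psmallmatrix}$ with $e^\ast=(1\ \ \eta)$. Then
\[
e^\ast A=(a,\ x+d\eta).
\]
The expected row for the slid handle is $(\varepsilon,\ 0)$, up to the sign conventions. Matching the two rows forces $x=\eta$, using $d\eta=\eta$ since $2\eta=0$. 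Finally, the consistency $\Phi\circ A\simeq\Phi$ (up to signs) should force $a=d=\varepsilon$.

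The main obstacle is this identification step: extracting from GKMP's Example~8.1 which geometric map the slide induces, and the resulting target row. The rest is formal degree bookkeeping. If that extraction is ambiguous, I would fall back on the characterization that $A$ must be a nontrivial self-equivalence, meaning not $\pm\mathrm{id}$, because the slide acts nontrivially on the unit section. Among the four upper triangular candidates $\begin{psmallmatrix}\pm1&x\\0&\pm1\end{psmallmatrix}$, that forces $x=\eta$. The signs then matter only up to the stated $\varepsilon$.
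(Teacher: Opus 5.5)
\textbf{There is a genuine gap.} The paper proves this lemma in one line, by citing the explicit handle-slide automorphism in \cite[Example~8.1]{GKMP25}. Your first route is that same citation, but you leave it unexecuted (you call it ``the main obstacle''). The arguments you offer in its place do not establish the lemma's two substantive claims: that the off-diagonal entry is $\eta$, and that the two diagonal entries are the \emph{same} sign.

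The ``expected row'' $e^\ast\circ A=(\varepsilon\ \ 0)$ has no independent justification. In the paper this row is \emph{derived from} the present lemma, in the proof of Theorem~\ref{prop:S2xS2-eta}, so matching against it is circular.

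Worse, your consistency condition $\Phi\circ A=\pm\Phi$ contradicts the statement. For $A=\begin{psmallmatrix}a&x\\0&d\end{psmallmatrix}$ and $\Phi$ as in \eqref{eq:Phi-matrix},
\[
\Phi\circ A=\begin{pmatrix}0&\varepsilon d\\ \varepsilon a&\varepsilon x+\eta d\end{pmatrix}.
\]
So $\Phi\circ A=\pm\Phi$ forces $a=d=\pm1$ and then $\varepsilon x=0$, i.e.\ $x=0$. The same conclusion holds even if you allow signs entrywise, since $d\eta=\eta$. Your two consistency checks are therefore incompatible with each other.

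The fallback fails as well. The condition ``$A\neq\pm\mathrm{id}$'' does not force $x=\eta$: $\diag(1,-1)$ is not $\pm\mathrm{id}$ and has $x=0$. It also says nothing about $a=d$. The premise that the slide acts nontrivially on the unit section is asserted, not proved.

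The reduction to ``four candidates'' also rests on misidentified coefficient groups. The $\TMF$ here is periodic (global sections over $\mathcal M_{ell}$), so $\pi_0\TMF\otimes\Q\cong\Q[j]$, not $\Z$. Likewise $\pi_1\TMF$ is much larger than $\Z/2\cdot\eta$. For example, take $f\in\pi_0\TMF$ with $q$-expansion $j$; then $\eta f$ maps to $\eta\, j(q)$ in $\pi_1 KO((q))\cong\F_2((q))$, which is neither $0$ nor $\eta$. So the off-diagonal entry is a priori an element of a large group, not of $\{0,\eta\}$. In addition, the vanishing of $\pi_{-1}\TMF$, which you use to get upper-triangularity, is a nontrivial fact about periodic $\TMF$ and needs a reference.

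What is missing is exactly the identification of the handle-slide automorphism's matrix from GKMP's construction. That is the content the paper imports from \cite[Example~8.1]{GKMP25}, and nothing in your degree bookkeeping substitutes for it.
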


\begin{proof}
This is the explicit form of the canonical automorphism described in
\cite[Example~8.1]{GKMP25}.  Again, the sign $\varepsilon$ is irrelevant for the
final scalar computation, since $-\eta=\eta$.
\end{proof}

\begin{theorem}[$S^2\times S^2$ detects $\eta$]\label{prop:S2xS2-eta}
Assuming \cite[Question~7.14]{GKMP25}, the $\TMF$--invariant of $S^2\times S^2$ is
\[
Z_{\TMF}(S^2\times S^2)\ =\ \eta \ \in\ \pi_1\TMF .
\]
\end{theorem}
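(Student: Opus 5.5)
We will compute $Z_{\TMF}(S^2\times S^2)$ by cutting $S^2\times S^2$ along a copy of $S^2\times S^1$ into two elementary pieces and evaluating the resulting composite in the explicit splittings of Lemma~\ref{lem:zero-block-splitting}.

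Write $S^2\times S^2=X_0\cup_{S^2\times S^1}X_1$. Here $X_0=S^2\times D^2$ is the $0$--framed $2$--handle on the unknot, viewed as a cobordism $\emptyset\to S^2\times S^1$. The piece $X_1$ is the complementary $0$--framed $2$--handle together with the $4$--handle, read as a cobordism $S^2\times S^1\to\emptyset$. Because $X_1$ is a \emph{reversed} simply connected cobordism, its value is not given directly by the GKMP construction. This is the point where we invoke \cite[Question~7.14]{GKMP25}: reversing a simply connected cobordism produces the dual map, transported through the comparison equivalence $\Phi\colon L^{(0)}\simeq L(0)[1]$ of Lemma~\ref{lem:zero-block-splitting}.

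The computation then proceeds in three steps.

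\emph{Step 1.} We identify $Z_{\TMF}(X_0)$ as a class $\TMF\to L^{(0)}$. In the splitting $L^{(0)}\simeq\TMF\oplus\TMF[1]$, the only degree-$0$ option up to sign is the unit column $(1,0)^{\mathsf T}$. To justify this, we pair it against the restriction map $e^\ast=(1\ \eta)$ of Lemma~\ref{lem:e-star-row}, whose composite must be $Z(S^2\times D^2\cup\text{cap})=1$.

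\emph{Step 2.} We identify $Z_{\TMF}(X_1)\colon L^{(0)}\to\TMF[1]$. Under the assumed naturality, this map is the dual of the incoming-boundary class, pulled back along $\Phi$. We then insert the handle-slide self-equivalence $A$ of Lemma~\ref{lem:selfeq-matrix}, since the second handle in $X_1$ is attached after a $0$--framed slide relative to the first. This yields a row vector of the form $(0\ \varepsilon)\cdot\Phi$, possibly composed with $A$.

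\emph{Step 3.} We multiply out the $2\times2$ matrices. The off-diagonal $\eta$ in \eqref{eq:Phi-matrix}, together with the $\eta$ in \eqref{eq:selfeq-matrix}, contributes the entry $\eta$ against the unit column. All the $\varepsilon$ terms either vanish by degree (there is no nonzero degree-shifting component other than $\eta$ in $\pi_1$) or are irrelevant because $2\eta=0$.

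The main obstacle is Step~2: pinning down precisely which component of $\Phi$, or of $A\circ\Phi$, the reversed cobordism picks out. An error here would produce $0$ instead of $\eta$. To settle it, we will first check the analogous composite for $S^4=X_0\cup\overline{X_0}$. That composite must give the unit in $\pi_0$, which determines the normalization of the duality pairing. The $S^2\times S^2$ case differs from $S^4$ only by the handle slide $A$ and the off-diagonal term of $\Phi$, so the answer reduces to reading off the $(2,2)$ entry $\eta$ of \eqref{eq:Phi-matrix}.
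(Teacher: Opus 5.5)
Your decomposition is the paper's ($W_0$ and $W_1$ from the $0$--framed Hopf link), but the computation you outline does not produce $\eta$. Carrying out your own Step~3 with the vectors from Steps~1--2 gives a unit.

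\textbf{The product is a unit.} We have $(0\ \ \varepsilon)\,\Phi=(\varepsilon^2\ \ \varepsilon\eta)=(1\ \ \eta)$, and against your column $(1,0)^{\mathsf T}$ this gives $1$. If you insert $A$, then $(1\ \ \eta)\,A=(\varepsilon\ \ \eta+\varepsilon\eta)=(\varepsilon\ \ 0)$, and the result is $\varepsilon=\pm1$. Every $\eta$ in \eqref{eq:Phi-matrix} and \eqref{eq:selfeq-matrix} sits in the second column, so the zero entry of your unit column kills it. Also, the $\eta$ of $\Phi$ is its $(2,2)$ entry, not an off-diagonal one, and the two $\eta$'s cancel once $A$ is present.

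\textbf{The degrees are wrong.} A degree-zero class $\TMF\to L^{(0)}$ followed by $L^{(0)}\to\TMF[1]$ lies in $\pi_{-1}\TMF$. Since $\eta\in\pi_1\TMF$, you need a degree-one composite $\TMF[1]\to\TMF$, so the incoming class must have a component on the $\TMF[1]$ summand. Your closing remark about ``the $(2,2)$ entry'' silently switches to the column $(0,1)^{\mathsf T}$, which contradicts Step~1. Even with that column, the answer is $\eta$ without $A$ but $0$ with $A$, so ``possibly composed with $A$'' leaves the result undetermined.

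\textbf{The uniqueness claim in Step~1 is unfounded.} The group $[\TMF,L^{(0)}]\cong\pi_0\TMF\oplus\pi_{-1}\TMF$ is far larger than $\{\pm(1,0)^{\mathsf T}\}$.

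\textbf{The calibration is topologically false.} $X_0\cup\overline{X_0}$ has Euler characteristic $2+2-0=4$, so it is not $S^4$; rather, $S^4=S^2\times D^2\cup S^1\times D^3$. The double of $S^2\times D^2$ is $\partial(S^2\times D^3)=S^2\times S^2$, and your $X_1$ is itself a reversed copy of $S^2\times D^2$. So the composite you would normalize to $1$ is the very invariant you are trying to show equals $\eta$.

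\textbf{What the paper does.} The comparison map $\Phi$ plays no role in the paper's computation. It takes $Z_{\TMF}(W_1)=e^\ast\circ A$, which equals $(\varepsilon\ \ 0)$ precisely because the two $\eta$'s cancel. It identifies $Z_{\TMF}(W_0)$ with the $\TMF$--linear dual of $e^\ast$, namely the column $(\eta,1)^{\mathsf T}$. The answer $\varepsilon\eta=\eta$ then comes from the $\eta$ in the \emph{first} entry of the incoming class, paired against $\varepsilon$.

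To repair your plan, replace the unit column by this dual class, which carries the degree-one component, and commit to where $A$ enters.
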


\begin{proof}
Assume \cite[Question~7.14]{GKMP25}.
Consider the standard decomposition of $S^2\times S^2$ as the union of two
$2$--handle cobordisms along $S^2\times S^1$, corresponding to the Hopf--link
Kirby diagram with both components $0$--framed.  Denote these cobordisms by
\[
W_0:\ \varnothing \longrightarrow S^2\times S^1,
\qquad
W_1:\ S^2\times S^1 \longrightarrow \varnothing.
\]
Then, by functoriality, $Z_{\TMF}(S^2\times S^2)$ is the composite
\[
\TMF \xrightarrow{\,Z_{\TMF}(W_0)\,} L^{(0)}
\xrightarrow{\,Z_{\TMF}(W_1)\,} \TMF .
\]

The cobordism map $Z_{\TMF}(W_1)$ is, up to the canonical identification in
\cite[Example~8.1]{GKMP25}, the composite $e^\ast\circ A\colon L^{(0)}\to\TMF$.
Using Lemmas~\ref{lem:e-star-row} and~\ref{lem:selfeq-matrix}, we compute in the
splitting $L^{(0)}\simeq \TMF\oplus\TMF[1]$:
\[
Z_{\TMF}(W_1)\ \longleftrightarrow\
\begin{pmatrix} 1 & \eta \end{pmatrix}
\begin{pmatrix}
\varepsilon & \eta\\
0 & \varepsilon
\end{pmatrix}
\;=\;
\begin{pmatrix} \varepsilon & \eta\varepsilon+\eta\varepsilon \end{pmatrix}
\;=\;
\begin{pmatrix} \varepsilon & 0 \end{pmatrix},
\]
since $2\eta=0$.

Similarly, $Z_{\TMF}(W_0)$ is identified in \cite[Example~8.1]{GKMP25} with the
$\TMF$--linear dual of $e^\ast$ (equivalently, the adjoint of the unit section
map).  Under the splitting, this is the column vector
\[
Z_{\TMF}(W_0)\ \longleftrightarrow\
\begin{pmatrix}\eta\\ 1\end{pmatrix}.
\]
Therefore the scalar
$Z_{\TMF}(S^2\times S^2)=Z_{\TMF}(W_1)\circ Z_{\TMF}(W_0)$ is
\[
\begin{pmatrix} \varepsilon & 0 \end{pmatrix}
\begin{pmatrix}\eta\\ 1\end{pmatrix}
\;=\; \varepsilon\,\eta
\;=\; \eta,
\]
as claimed (using again that $-\eta=\eta$).
\end{proof}

\subsection{The reduced transfer and the element $\nu$}\label{subsec:nu-calibration}

We now turn to $\CP^2$.  In the GKMP theory the invariant of $\CP^2$ arises from
the reduced $S^1$--transfer of the universal $S^1$--bundle, and the resulting
stable class is the Hopf element $\nu$.  We first recall the transfer description,
and then give a self-contained identification of the stable homotopy class.

Let $\gamma\to \CP^\infty$ be the tautological complex line bundle and let
\(\Th(-\gamma)\) denote the Thom spectrum of the virtual bundle $-\gamma$.
Following a common convention one may write
\(\CP^\infty_{-1}:=\Sigma^{-1}\Th(-\gamma)\), so that $\Sigma\CP^\infty_{-1}\simeq\Th(-\gamma)$.
In what follows we work directly with the Thom spectrum $\Th(-\gamma)$ to avoid suspension bookkeeping and to make characteristic class computations transparent.

\begin{lemma}[The transfer cofiber]\label{lem:transfer-thom}
There is a cofiber sequence of spectra
\[
\Th(-\gamma)\ \longrightarrow\ \CP^\infty_+\ \xrightarrow{\ \tau\ }\ S^{-1},
\]
where $\tau$ is the reduced stable $S^1$--transfer.
\end{lemma}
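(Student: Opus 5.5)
The plan is to produce the cofiber sequence from the geometric transfer and then identify its fiber with $\Th(-\gamma)$ using the Pontryagin--Thom description of the $S^1$--transfer.

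First, take the universal principal $S^1$--bundle $p\colon S(\gamma)=S^\infty\to\CP^\infty$. The Becker--Gottlieb (umkehr) transfer for a compact Lie group bundle with fiber $S^1$ gives a stable map $\CP^\infty_+\to \Sigma^{-1}\Th(\ad)\simeq S^{-1}\wedge(\CP^\infty)^{\mathbb{R}}_+$, where $\ad$ is the adjoint bundle. For $S^1$ the adjoint bundle is trivial of rank one, so the target becomes $\Sigma^{-1}S^\infty_+\simeq S^{-1}$ after composing with the collapse $S^\infty_+\to S^0$. This composite defines $\tau\colon\CP^\infty_+\to S^{-1}$. I would arrange it as the boundary map in the Gysin/Thom cofibration: it suffices to exhibit a cofiber sequence $\Th(-\gamma)\to\CP^\infty_+\to\Th(-\gamma\oplus\gamma)|_{\text{sphere}}$ and identify the last term.

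For that identification I would use the standard Thom cofibration for the bundle $\gamma$, namely $S(\gamma)_+\to D(\gamma)_+\to\Th(\gamma)$. I would then twist by the virtual bundle $-\gamma$: smashing fiberwise (James' construction for Thom spectra over a base) gives
\[
\Th\bigl(p^\ast(-\gamma)\bigr)\longrightarrow\Th(-\gamma)\longrightarrow\Th(\gamma-\gamma)=\CP^\infty_+ .
\]
Next, $p^\ast\gamma$ on $S(\gamma)$ is trivialized by its tautological section, so $p^\ast(-\gamma)\cong -\underline{\Rr}^2$, and $S(\gamma)\simeq\ast$. Hence $\Th(p^\ast(-\gamma))\simeq S^{-2}$. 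Rotating the triangle gives $\Th(-\gamma)\to\CP^\infty_+\to S^{-1}$.

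The main obstacle is checking that the rotated connecting map $\CP^\infty_+\to S^{-1}$ really is the reduced transfer $\tau$, and not merely some map with the same cofiber. I would verify this via the Pontryagin--Thom model: the connecting map of the sphere-bundle cofibration twisted by $-\gamma$ is, by construction, the umkehr map of $p$ twisted by the vertical tangent bundle $T_p\cong\underline{\Rr}$, which is the definition of the $S^1$--transfer (Becker--Gottlieb, Miller's treatment of $\CP^\infty_{-1}$). The word "reduced" needs to be handled as well. I would compare with the alternative description as the cofiber of the transfer in $\CP^\infty_{-1}\to\Sigma^\infty\CP^\infty_+\to S^{-1}$, which is consistent with $\Sigma\CP^\infty_{-1}\simeq\Th(-\gamma)$ up to the suspension convention stated above. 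After that, only sign and suspension bookkeeping remain.
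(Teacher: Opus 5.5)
Your argument is correct, and it proves what the paper only cites. The paper's proof invokes the classical description of the fiber of the $S^1$--transfer as $\Th(-\gamma)$ and refers to the literature. You instead twist the Thom cofibration $S(\gamma)_+\to D(\gamma)_+\to\Th(\gamma)$ by $-\gamma$ to get $\Th\bigl(S(\gamma);p^\ast(-\gamma)\bigr)\to\Th(-\gamma)\to\CP^\infty_+$, and use $S(\gamma)=S^\infty\simeq\ast$ to identify the first term with $S^{-2}$. An equivariant proof, taking homotopy orbits of the $\mathbb{T}$--cofibration $\mathbb{T}_+\wedge S^{-\C}\to S^{-\C}\to S^0$, reproduces exactly this triangle.

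The step you call the main obstacle is short. The connecting map $D(\gamma)/S(\gamma)\to\Sigma S(\gamma)_+$ is, up to sign, the collapse onto a collar of $S(\gamma)$. That is the Pontryagin--Thom map for $S(\gamma)\subset D(\gamma)$ with trivial normal line. Twisting by $-\gamma$ and using $p^\ast\gamma\cong T_p\oplus\underline{\Rr}$ turns it into the umkehr map $\CP^\infty_+\to(S^\infty)^{-T_p}=\Sigma^{-1}S^\infty_+\simeq S^{-1}$.

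Your route gives a self-contained proof with the cell structure and the Thom class in degree $-2$ in plain view, which is exactly what Lemmas~\ref{lem:sq4-nontrivial} and~\ref{lem:P1-nontrivial} use. The paper's citation buys brevity and ties $\tau$ to the framework in which GKMP define the $\CP^2$ invariant.

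Two small fixes. First, the target of the umkehr map is a Thom spectrum over the total space $S^\infty$, not an adjoint-bundle Thom spectrum over $\CP^\infty$; taken literally, your $\Sigma^{-1}\Th(\mathrm{ad})$ is just $\CP^\infty_+$. Relatedly, the map meant is this dimension--shifting umkehr map, not the degree--preserving Becker--Gottlieb transfer $\CP^\infty_+\to S^\infty_+$. Second, drop your closing remark on conventions rather than trying to reconcile it. Your triangle already determines $\mathrm{fib}(\tau)\simeq\Th(-\gamma)$, with bottom cell in degree $-2$, which is the standard $\CP^\infty_{-1}$. So the sequence $\CP^\infty_{-1}\to\Sigma^\infty\CP^\infty_+\to S^{-1}$ holds in standard notation and is incompatible with $\Sigma\CP^\infty_{-1}\simeq\Th(-\gamma)$; no suspension ambiguity is left to track.
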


\begin{proof}
This is the classical description of the cofiber of the Becker--Gottlieb
transfer for the universal principal $S^1$--bundle $S^\infty\to \CP^\infty$:
the fiber (equivalently, up to suspension, the cofiber) of the transfer is the
Thom spectrum of the negative of the associated complex line bundle.  A modern
treatment in the equivariant setting is given in \cite{GM23}; see also
\cite[Example~8.4]{GKMP25} for the specialization used in the GKMP theory and
\cite[Proposition~A.11]{BauerMeier25} for an explicit account of this Thom identification.
For the present paper we only use the existence of the cofiber sequence and the
identification of the Thom class, as in the computations below.
\end{proof}

Restricting to the $\CP^2$--skeleton and then further to $\CP^1=S^2$ yields a
stable map
\begin{equation}\label{eq:alpha-def}
\alpha\colon S^2=\CP^1 \xrightarrow{i} \CP^\infty \longrightarrow \CP^\infty_+
\xrightarrow{\tau} S^{-1}.
\end{equation}
Suspending once gives $f:=\Sigma\alpha\colon S^3\to S^0$, hence an element of
$\pi_3\mathbb{S}$ and therefore an element of $\pi_3\TMF$ via the unit.

\subsubsection{Identifying $\Sigma\alpha$ as the Hopf element $\nu$}\label{subsec:nu-identification}

We identify $f=\Sigma\alpha$ by a cohomological characterization: its cofiber
supports nontrivial primary operations in degree $4$ at both primes $2$ and $3$.

Let $C_f$ be the cofiber of $f$.  Then $C_f$ is a two--cell spectrum with cells
in degrees $0$ and $4$:
\[
S^0 \longrightarrow C_f \longrightarrow S^4.
\]
Write $\iota\in \tilde H^0(C_f;\F_p)$ for the class dual to the bottom cell and
$\kappa\in \tilde H^4(C_f;\F_p)$ for the class dual to the top cell.

\begin{lemma}[Detection on the $1$--line]\label{lem:adams-1line-detection}
Let $p$ be a prime.  If $\Sq^4(\iota)=\kappa\neq 0$ for $p=2$, or
$\mathcal P^1(\iota)=\kappa\neq 0$ for $p=3$, then the class of $f$ in
$\pi_3\mathbb{S}$ is detected in Adams filtration $1$ at the prime $p$, by the
generator of $\Ext^{1,4}_{\mathcal A_p}(\F_p,\F_p)$.
\end{lemma}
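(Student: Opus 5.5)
The plan is to run the standard argument relating the cohomology of a two--cell complex to the Adams spectral sequence. Mod~$p$ cohomology detects the attaching map $f$ exactly when $f$ has Adams filtration $0$ or $1$, and the filtration-$1$ detection is read off from the extension class of $\tilde H^*(C_f;\F_p)$ as an $\mathcal A_p$--module. First I will check that $f$ induces zero on mod~$p$ cohomology; this is automatic because $f\colon S^3\to S^0$ changes degree. Consequently the cofiber sequence $S^0\to C_f\to S^4$ gives a short exact sequence of $\mathcal A_p$--modules
\[
0\longrightarrow \tilde H^*(S^4;\F_p)\longrightarrow \tilde H^*(C_f;\F_p)\longrightarrow \tilde H^*(S^0;\F_p)\longrightarrow 0 .
\]
Here the outer terms are $\Sigma^4\F_p$ and $\F_p$, with generators $\kappa$ and $\iota$. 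This sequence defines a class $e_f\in\Ext^{1,4}_{\mathcal A_p}(\F_p,\F_p)$, taking the internal degree so that it matches the stem $3$ in the paper's convention.

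The key input is the standard fact that, for a map $f$ of Adams filtration $\ge1$, the class of $f$ in $E_2^{1,*}$ is represented by exactly this extension. To see this, realize $f$ as the composite $S^3\to \Sigma^{-1}\overline{H}\to\cdots$ through the first stage of a minimal Adams resolution. Then compare the long exact Ext sequences, so that the connecting map sends $1\in\Hom(\F_p,\F_p)$ to $e_f$. I will cite this as the usual identification of $E_2^{1,t}$ with Yoneda extensions (Adams), rather than reprove it.

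Next I identify $e_f$ concretely. Take a minimal free resolution, beginning $\mathcal A_p\to\F_p$. Its first syzygy module is generated by the indecomposables of $\mathcal A_p$, so $\Ext^1$ is dual to $Q(\mathcal A_p)$. In degree $4$ at $p=2$ this is spanned by $\Sq^4$, giving the generator $h_2$. In degree $4=2(p-1)$ at $p=3$ it is spanned by $\mathcal P^1$, giving $\alpha_1$. Under this identification, the extension class $e_f$ evaluates on an indecomposable $\theta$ via the relation $\theta\iota=c\,\kappa$ in $\tilde H^*(C_f)$. Thus the hypothesis $\Sq^4\iota=\kappa\neq0$, respectively $\mathcal P^1\iota=\kappa\ne0$, says precisely that $e_f$ is the nonzero generator. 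It follows that $f$ is nonzero in $E_\infty^{1,4}$: the $E_2^{1,4}$ generators are permanent cycles, being $\nu$ and $\alpha_1$, and nothing hits them, since there are no differentials into filtration $1$ from filtration $0$ in this stem. Hence $f$ is detected in filtration exactly $1$.

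The main obstacle is bookkeeping. I must confirm that $f$ really has filtration $\ge1$, which holds because $\pi_3\to\Hom_{\mathcal A}$ is zero for degree reasons. I must also match the Yoneda extension with the functional-operation description and pin down the degree and sign conventions, including the paper's $\Ext^{1,4}$ indexing. I will handle this by working directly with the first stage of a minimal resolution and citing Adams~\cite{Adams60} for the identification of $h_2$ with $\Sq^4$ (and, at $p=3$, of the class with $\mathcal P^1$).
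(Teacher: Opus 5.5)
Your proposal is correct and takes the same route as the paper. Both identify the Adams $E_2^{1,4}$ class of $f$ with the $\mathcal A_p$--module extension $0\to\Sigma^4\F_p\to\tilde H^*(C_f;\F_p)\to\F_p\to 0$, and observe that in this two--cell situation the extension is nonzero exactly when $\Sq^4\iota$ (resp.\ $\mathcal P^1\iota$) is nonzero. You add detail the paper only cites: you evaluate the class via $\Ext^{1,t}\cong (Q(\mathcal A_p)_t)^\ast$, and you note that no Adams differential can hit filtration $1$ (they raise filtration by at least $2$), which is what gives filtration \emph{exactly} $1$. Two small corrections: the $p=3$ generator is $h_0$, which detects $\alpha_1$, and Adams~\cite{Adams60} covers only $p=2$.
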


\begin{proof}
This is the standard translation between the $\mathcal A_p$--module structure on
$H^\ast(C_f;\F_p)$ and the extension class classifying the short exact sequence
\[
0 \to \Sigma^4 \F_p \to \tilde H^\ast(C_f;\F_p) \to \F_p \to 0
\]
of $\mathcal A_p$--modules.  The extension class lies in
$\Ext^{1,4}_{\mathcal A_p}(\F_p,\F_p)$ and is exactly the Adams $E_2$--class that
detects $f$; see, for example, \cite[Ch.~7]{MT68} or \cite[Ch.~2]{Ravenel86}.
In our two--cell situation, the extension is trivial if and only if the relevant
primary operation on $\iota$ vanishes; equivalently, $\Sq^4(\iota)$ (resp.\
$\mathcal P^1(\iota)$) equals the top class $\kappa$.
\end{proof}

Thus it suffices to show that $\Sq^4$ and $\mathcal P^1$ act nontrivially on
$\iota$ (up to units) in the cofiber of $f$.  We do this by relating $C_f$ to
the Thom spectrum $\Th(-\gamma)$ from Lemma~\ref{lem:transfer-thom}.

\begin{lemma}[$\Sq^4$ is nontrivial]\label{lem:sq4-nontrivial}
In $\tilde H^\ast(C_f;\F_2)$ one has $\Sq^4(\iota)=\kappa\neq 0$.
\end{lemma}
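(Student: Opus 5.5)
The plan is to compute $\Sq^4$ on the Thom spectrum $\Th(-\gamma)$ using the Thom isomorphism and Wu's formula, and then transport the answer to $C_f$. Lemma~\ref{lem:transfer-thom} identifies the cofiber of $\tau$ with a suspension of $\Th(-\gamma)$. The map $\alpha$ is $\tau$ restricted to $\CP^1$, so $\Sigma\alpha$ is the attaching map of the $4$--cell to the $0$--cell in a two--cell subquotient of $\Th(-\gamma)$. Concretely, $\Th(-\gamma)$ has cells in dimensions $-2,0,2,4,\dots$, one for each Thom class multiple $U\cdot x^j$, where $x=c_1(\gamma)$ and $U$ sits in degree $-2$. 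After the shift by one suspension we obtain cells in degrees $-1,1,3,\dots$ of $\Sigma^{-1}\Th(-\gamma)$, or we work unshifted.

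First I would fix the degrees by passing to the subquotient $\Th(-\gamma)^{(2)}/\Th(-\gamma)^{(-2)}$. This is the two--cell spectrum on $Ux$ (degree $0$) and $Ux^3$ (degree $4$), up to the extra middle cell $Ux^2$ in degree $2$. I would then argue that the composite cofiber sequence identifies $C_f$ with the subquotient spanned by the cells $Ux$ and $Ux^3$. The middle cell $Ux^2$ plays no role for a degree-$4$ operation, since $\Sq^4$ lands directly from degree $0$ to degree $4$. Getting this identification right is the bookkeeping step: one must check that restricting to $\CP^1\subset\CP^\infty_+$, and suspending, corresponds to collapsing the bottom cell and truncating above. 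In particular $\iota\mapsto Ux$ and $\kappa\mapsto Ux^3$ under the induced map on cohomology, naturally in the cofiber sequences.

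Next I would compute $\Sq^4(Ux)$ in $H^*(\Th(-\gamma);\F_2)$ via the Cartan formula together with $\Sq(U)=w(-\gamma)U$. Since $w(\gamma)=1+x$ mod $2$, we get $w(-\gamma)=(1+x)^{-1}=\sum x^j$, so $\Sq^{2j}U=x^jU$. We also have $\Sq^2x=x^2$ and $\Sq^4x=0$. Hence
\[
\Sq^4(Ux)=\Sq^4(U)\,x+\Sq^2(U)\Sq^2(x)=x^3U+x^3U=0?
\]
This vanishes, so the naive indexing is wrong. I expect this to be the main obstacle: matching which cell of $\Th(-\gamma)$ is the bottom cell of $C_f$. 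If instead the bottom cell is $U$ itself and the top cell is $Ux^2$, then $\Sq^4U=x^2U\neq0$, which is the desired nontriviality. So I would carefully redo the degree identification, most likely concluding that $\alpha\colon S^2\to S^{-1}$ has cofiber built from the cells $U$ and $Ux^2$ after a single suspension. These cells have a gap of $4$, matching the degrees $0$ and $4$ of $C_f$.

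With that identification, naturality of $\Sq^4$ under the maps of two--cell spectra gives $\Sq^4(\iota)=\kappa$ in $\tilde H^*(C_f;\F_2)$, since the corresponding classes are nonzero and the map is an isomorphism on the relevant cohomology groups. As a sanity check, I would compare with the classical fact that the $2$-local part of $\tau|_{\CP^1}$ is $\nu$ (Mosher's computation), and with $\Sq^4$ detecting $\nu$ on the Adams $1$--line via Lemma~\ref{lem:adams-1line-detection}.
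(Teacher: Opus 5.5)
Your overall route is the paper's: compute $\Sq^4$ on the Thom class using $\Sq(U)=w(-\gamma)U$ with $w(-\gamma)=(1+x)^{-1}$, so that $\Sq^4U=x^2U\neq 0$, and transport this to $C_f$ by naturality. The cells you finally settle on, $\iota\leftrightarrow U$ and $\kappa\leftrightarrow Ux^2$ after a shift by $2$, are exactly the ones the paper uses. The problem is that you never justify this identification, and it is the only nontrivial step.

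The bookkeeping you actually state is wrong: "collapse the bottom cell", with $\iota\mapsto Ux$ and $\kappa\mapsto Ux^3$. You then switch to $(U,Ux^2)$ because that choice makes $\Sq^4$ nonzero. Degree counting cannot decide between the two pairs, since both have a gap of $4$. So picking the pair that gives a nonzero operation assumes what is to be proved. Using Mosher's computation to break the tie would also be circular here, because this lemma is used to prove $\Sigma\alpha=\pm\nu$.

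The missing argument is short. The sequence $S^{-2}\to\Th(-\gamma)\to\CP^\infty_+\xrightarrow{\tau}S^{-1}$ gives $\cofib(\tau)\simeq\Sigma\Th(-\gamma)$. Its bottom cell is the \emph{target} $S^{-1}$ of $\tau$ and carries $\Sigma U$. The $2j$--cell of $\CP^\infty_+$ becomes the cell carrying $\Sigma Ux^{j+1}$, because $\Th(-\gamma)\to\CP^\infty_+$ pulls $x^j$ back to $Ux^{j+1}$.

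Since $\alpha=\tau\circ i$, there is a map of cofiber sequences from $S^2\xrightarrow{\alpha}S^{-1}\to C_\alpha\to S^3$ to $\CP^\infty_+\xrightarrow{\tau}S^{-1}\to\Sigma\Th(-\gamma)\to\Sigma\CP^\infty_+$. It is the identity on $S^{-1}$ and $\Sigma i$ on the last term. The induced map $\phi\colon C_\alpha\to\Sigma\Th(-\gamma)$ pulls $\Sigma U$ back to the bottom generator. It also pulls $\Sigma Ux^2$ back to the top generator, because:
\begin{enumerate}
\item $H^3(\Sigma\CP^\infty_+)\to H^3(\Sigma\Th(-\gamma))$ is an isomorphism, since $H^2(S^{-1})=H^3(S^{-1})=0$;
\item $i^*$ is an isomorphism on $H^2$.
\end{enumerate}
Naturality then gives $\Sq^4\neq 0$ on $C_\alpha$, and hence on $C_f=\Sigma C_\alpha$.

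Note that $C_f$ maps to $\Sigma^2\Th(-\gamma)$, not to unshifted $\Th(-\gamma)$; that is where your first indexing went wrong. Your computation $\Sq^4(Ux)=0$ is itself correct. It is the computation in $\CP^\infty_+\simeq\Th(-\gamma)/S^{-2}$, the \emph{source} of $\tau$, where the $0$--cell splits off.
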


\begin{proof}
By Lemma~\ref{lem:transfer-thom}, the fiber (and hence, up to suspension, the
cofiber) of the reduced transfer $\tau$ is the Thom spectrum $\Th(-\gamma)$.
Let $u\in H^{-2}(\Th(-\gamma);\F_2)$ denote the Thom class, and let
$x\in H^2(\CP^\infty;\F_2)$ be the reduction of $c_1(\gamma)$.
Under the Thom isomorphism,
\[
H^\ast(\Th(-\gamma);\F_2)\ \cong\ \F_2[x]\cdot u,
\qquad |x|=2,\ |u|=-2.
\]
The Thom identity for Steenrod squares gives
\[
\Sq(u)\ =\ w(-\gamma)\,u,
\]
where $w(-\gamma)$ is the total Stiefel--Whitney class of the virtual bundle
$-\gamma$.  Since $w(\gamma)=1+x$ for a complex line bundle, we have
$w(-\gamma)=w(\gamma)^{-1}=(1+x)^{-1}=1+x+x^2+\cdots$ in $\F_2[x]$, hence
\begin{equation}\label{eq:Sq4-on-u}
\Sq^4(u)\ =\ x^2 u \ \neq\ 0 \ \in H^{2}(\Th(-\gamma);\F_2).
\end{equation}

Now restrict the transfer construction to the $\CP^2$--skeleton, and form the
corresponding finite Thom spectrum $\Th(-\gamma|_{\CP^2})$.
The element $x^2u$ survives in this truncation (it corresponds to the top cell).
Moreover, the map $\alpha$ in~\eqref{eq:alpha-def} is the restriction of the
transfer to $\CP^1\subset \CP^2$, and the cofiber $C_f$ identifies with the
two--cell subquotient of $\Th(-\gamma|_{\CP^2})$ whose cells correspond to
$u$ and $x^2u$ (this is exactly the bottom--cell extraction used in
\cite[Example~8.4]{GKMP25}; compare also~\cite[\S A.2]{BauerMeier25}).
Under this identification, $\iota$ corresponds to the image of $u$ (shifted to
degree $0$) and $\kappa$ corresponds to the image of $x^2u$ (shifted to degree
$4$).  The equality~\eqref{eq:Sq4-on-u} therefore descends to
$\Sq^4(\iota)=\kappa\neq 0$ in $\tilde H^\ast(C_f;\F_2)$.
\end{proof}

\begin{lemma}[$\mathcal P^1$ is nontrivial]\label{lem:P1-nontrivial}
In $\tilde H^\ast(C_f;\F_3)$ one has $\mathcal P^1(\iota)=\kappa\neq 0$.
\end{lemma}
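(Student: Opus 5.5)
The plan is to mirror the proof of Lemma~\ref{lem:sq4-nontrivial}, replacing Stiefel--Whitney classes by the mod~$3$ Wu classes. There, $C_f$ is identified with the two--cell subquotient of $\Th(-\gamma|_{\CP^2})$ spanned by the Thom class $u$ and $x^2u$. Here $x\in H^2(\CP^\infty;\F_3)$ is the reduction of $c_1(\gamma)$, and $H^\ast(\Th(-\gamma);\F_3)\cong \F_3[x]\cdot u$. Under this identification $\iota$ corresponds to $u$ and $\kappa$ to $x^2u$, up to a unit in $\F_3$. So it suffices to compute $\mathcal P^1(u)\in H^{2}(\Th(-\gamma);\F_3)$ and show it is a nonzero multiple of $x^2u$. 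The statement $\mathcal P^1(\iota)=\kappa$ is then read up to a unit of $\F_3$, which does not affect Lemma~\ref{lem:adams-1line-detection}.

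The key input is the Thom formula for reduced powers. For a complex vector bundle $E$ (or virtual bundle) one has $\mathcal P(u_E)=q(E)\,u_E$. Here $q$ is the total mod~$3$ Wu class: it is multiplicative and satisfies $q(L)=1+c_1(L)^{2}$ for a line bundle $L$, since $p-1=2$. On the Thom class this follows from $\mathcal P(x)=x+x^3$ by the splitting principle. Multiplicativity then gives $q(-\gamma)=q(\gamma)^{-1}=(1+x^2)^{-1}=1-x^2+x^4-\cdots$. Hence
\[
\mathcal P^1(u)=-x^2u\neq 0\quad\text{in }H^2(\Th(-\gamma);\F_3).
\]
The degree check is $|u|=-2$, and $\mathcal P^1$ raises degree by $2(p-1)=4$, landing in degree $2=|x^2u|$, as required.

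Next, restrict to the $\CP^2$--skeleton. The class $x^2u$ is the top class of $\Th(-\gamma|_{\CP^2})$, and naturality of $\mathcal P^1$ under the inclusion $\Th(-\gamma|_{\CP^2})\to\Th(-\gamma)$ preserves the identity. Passing to the two--cell subquotient and desuspending by the shift used in Lemma~\ref{lem:sq4-nontrivial} gives $\mathcal P^1(\iota)=-\kappa$. Reduced powers commute with suspension, so the shift is harmless. Rescaling $\kappa$ by the unit $-1$ yields the claim.

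I expect the main obstacle to be the same identification step as in the $p=2$ case: justifying that $C_f$ is exactly the subquotient on the cells $u$ and $x^2u$. The middle cell $xu$ has to be split off or quotiented compatibly with the reduced--power action. At $p=3$ this is easier than at $p=2$. The only primary operation from degree $-2$ to degree $0$ would be $\mathcal P^{1/2}$, which does not exist, and the Bockstein has the wrong degree. So the middle cell cannot interfere with the mod~$3$ cohomology computation, and I would invoke the same identification from \cite[Example~8.4]{GKMP25} and \cite[\S A.2]{BauerMeier25}. A secondary check is the sign convention $q(L)=1+c_1^2$, but the argument only needs $\mathcal P^1(u)\neq 0$.
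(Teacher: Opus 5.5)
Your proposal is correct and follows the paper's route: the Thom formula $\mathcal P(u)=q(-\gamma)\,u$ on $\Th(-\gamma)$, restriction to the $\CP^2$--skeleton, and the identification of $u$ and $x^2u$ with the bottom and top cells of $C_f$. Your sign $\mathcal P^1(u)=-x^2u$ (from $(1+x^2)^{-1}=1-x^2+\cdots$) is more accurate than the paper's $v(-\gamma)=1+x^{p-1}+\cdots$, which drops the alternating signs at odd $p$. Your worry about the middle cell also goes away: $\tau|_{\CP^1_+}$ has component $\alpha$ on the $S^2$ summand of $\CP^1_+\simeq S^0\vee S^2$, so $C_f$ is a subcomplex of $\Sigma^2\Th(-\gamma|_{\CP^2})$ with cofiber $S^2$, and restriction is an isomorphism on $\tilde H^0$ and $\tilde H^4$.
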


\begin{proof}
Let $u\in H^{-2}(\Th(-\gamma);\F_3)$ be the Thom class and let
$x\in H^2(\CP^\infty;\F_3)$ be the reduction of $c_1(\gamma)$.
Again $H^\ast(\Th(-\gamma);\F_3)\cong \F_3[x]\cdot u$.
For an odd prime $p$, the reduced power operations on a Thom class satisfy a
Thom identity of the form
\[
\mathcal P(u)\ =\ v(-\gamma)\,u,
\]
where $v(-\gamma)$ is the mod $p$ Wu class of $-\gamma$; for a complex line
bundle one has $v(-\gamma)=1+x^{p-1}+x^{2(p-1)}+\cdots$ (see, e.g.,
\cite[\S 14]{MS74}).
At $p=3$ this implies
\[
\mathcal P^1(u)\ =\ x^2u \ \neq\ 0 \ \in H^{2}(\Th(-\gamma);\F_3).
\]
As in the proof of Lemma~\ref{lem:sq4-nontrivial}, the class $x^2u$ survives on
the $\CP^2$--skeleton and maps to the top generator $\kappa$ of
$\tilde H^4(C_f;\F_3)$, while $u$ maps to the bottom generator $\iota$.  Hence
$\mathcal P^1(\iota)=\kappa\neq 0$.
\end{proof}

\begin{theorem}[Identification of $\Sigma\alpha$]\label{thm:Sigmaalpha-is-nu}
The stable class $f=\Sigma\alpha\in \pi_3\mathbb{S}$ equals $\pm\nu$.
Equivalently, its image in $\pi_3\TMF$ is $\pm\nu\in\pi_3\TMF$.
\end{theorem}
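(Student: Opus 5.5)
We identify $f$ through its $p$--local components: $\pi_3\mathbb S\cong\Z/24\cong\Z/8\oplus\Z/3$, and $\nu$ generates it. So it suffices to show that $f$ is a generator at $p=2$ and at $p=3$. A class in $\Z/24$ that maps to a generator of both $\Z/8$ and $\Z/3$ has the form $u\nu$ with $u\in(\Z/24)^\times$.

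\emph{Step 1 (2--primary part).} Lemma~\ref{lem:sq4-nontrivial} gives $\Sq^4(\iota)=\kappa\neq0$ in $H^*(C_f;\F_2)$. Lemma~\ref{lem:adams-1line-detection} then says that $f$ is detected by $h_2\in\Ext^{1,4}_{\mathcal A_2}$. In the $2$--primary Adams spectral sequence, $\pi_3\mathbb S_{(2)}\cong\Z/8$ is generated by $\nu$, with filtrations $1,2,3$ carried by $h_2$, $h_1^2h_0\cdot(\ldots)$ and $h_0^2h_2$. A class of filtration exactly $1$ detected by $h_2$ is therefore an odd multiple of $\nu$. Hence the $2$--component of $f$ generates $\Z/8$.

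\emph{Step 2 (3--primary part).} Lemma~\ref{lem:P1-nontrivial} gives $\mathcal P^1(\iota)=\kappa$. By Lemma~\ref{lem:adams-1line-detection}, $f$ is detected by $h_0=[\xi_1]\in\Ext^{1,4}_{\mathcal A_3}$, which detects $\alpha_1$, the generator of $\pi_3\mathbb S_{(3)}\cong\Z/3$. So the $3$--component of $f$ is nonzero.

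\emph{Step 3 (pinning the unit).} Steps 1 and 2 give $f=u\nu$ with $u$ a unit mod $24$, whereas the theorem asserts $u=\pm1$. Closing this gap is the main obstacle. Cohomology operations only see $u$ modulo $2$ and modulo $3$, so they cannot separate $\nu$ from $5\nu$, say.

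My first approach is to compute the $e$--invariant of $f$ using the $K$--theory of the same Thom spectrum $\Th(-\gamma|_{\CP^2})$. The Adams operation $\psi^k$ acts on the $K$--theoretic Thom class by the cannibalistic class $\rho^k(-\gamma)$. Since $\rho^k(\gamma)=(1-L^k)/(k(1-L))$ for $L$ the line bundle class, we get an explicit $\psi^k$--action on the two--cell subquotient $C_f$. The resulting Adams $e$--invariant should come out as $\pm1/24$; the $\hat A$/Todd coefficient $1/24$ is expected from the transfer. Since $e\colon\pi_3\mathbb S\to\Q/\Z$ is injective with $e(\nu)=\pm1/24$, this would force $f=\pm\nu$.

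If that bookkeeping proves unwieldy, I would fall back on identifying $\alpha$ directly as the $S^1$--transfer restricted to the bottom cell. Classically, by Becker--Schultz and Knapp, this restriction is the $J$--image of the generator of $\pi_3 SO$, namely $\pm\nu$. I would cite \cite[Example~8.4]{GKMP25} for the compatibility of conventions. Finally, applying the unit map $\mathbb S\to\TMF$ yields the stated identity in $\pi_3\TMF$.
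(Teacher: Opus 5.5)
Your Steps~1 and~2 match the paper's argument: $1$--line detection by $h_2$ at $p=2$ and by $h_0=[\xi_1]$ (i.e.\ $\alpha_1$) at $p=3$, giving $f=u\nu$ with $u\in(\Z/24)^\times$. (In stem $3$ the higher-filtration classes are $h_0h_2$ and $h_0^2h_2=h_1^3$; this does not affect your conclusion.) Step~3 also follows the paper in trying to fix $u$ by an $e$--invariant, and that is where the gap lies.

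The problem is which $e$--invariant you would actually compute. Adams operations on the complex $K$--theory of $C_f$, via the complex cannibalistic class of $\gamma$, give the \emph{complex} invariant $e_{\C}$. In stems $\equiv 3\pmod 8$ one has $e_{\C}=2e'_{\Rr}$, so $e_{\C}(\nu)=\pm1/12$, not $\pm1/24$. For instance, $\mathrm{ch}(H-2)=-y+y^2/12$ on $\mathbb{HP}^2=S^4\cup_\nu e^8$, and $1/12=B_2/2!$ is exactly the coefficient of $x/(1-e^{-x})$ that your Thom-spectrum computation will produce. Hence $e_{\C}(12\nu)=0$, i.e.\ $e_{\C}$ does not see $\eta^3=12\nu$.

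Comparing $e_{\C}(f)$ with $e_{\C}(\nu)$ therefore only gives $u\equiv\pm1\pmod{12}$, which leaves $f\in\{\pm\nu,\pm\nu+\eta^3\}$. The $1$--line data cannot rule out $\pm13\nu$, since it is detected by $h_2$ and $\alpha_1$ just as $\nu$ is. The invariant that is injective with $e(\nu)=\pm1/24$ is the real $e'_{\Rr}$, which needs $KO$. You cannot reach it through a $KO$--Thom isomorphism for $-\gamma$, because $w_2(-\gamma)\neq0$. The paper's proof has the same weak point: it extracts $B_2/2!=1/12$ and then simply asserts $e(f)=\pm B_2/4$.

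The $\eta^3$--component is genuinely at stake. Changing the stable section $\CP^1\to\CP^1_+$ used to define $\alpha$ by $\eta^2$ changes $\alpha$ by $\tau|_{S^0}\circ\eta^2=\eta^3$, since $\tau$ is $\eta$ on the bottom cell. So a correct argument must detect that component and must use the canonical splitting.

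One way to close the gap goes through framed manifolds.
\begin{enumerate}
\item Represent $\alpha$ by the circle bundle $S^3=S(\gamma|_{\CP^1})\to S^2$ with the transfer framing, where $S^2$ carries its bounding framing.
\item Compute $e'_{\Rr}$ using the spin null-bordism $D^4$.
\item Use additivity of relative $p_1$ over $\pm\CP^2=D(\gamma|_{\CP^1})\cup D^4$. The disk bundle contributes $\pm1$ and $\CP^2$ contributes $\pm3$, so $p_1(D^4,\varphi)=\pm2$.
\item Conclude that the framing differs from the bounding one by a generator of $\pi_3\mathrm{SO}$, so $f=J(\pm1)=\pm\nu$.
\end{enumerate}
Your fallback, that the transfer on $\Sigma\CP^\infty$ factors through $J$, is also viable and would make Steps~1--3 unnecessary. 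But it works only after you check that the cited result uses the same splitting of $\Sigma^\infty\CP^\infty_+$, for exactly the reason above.
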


\begin{proof}
By Lemmas~\ref{lem:sq4-nontrivial} and~\ref{lem:P1-nontrivial}, the class $f$ is
detected on the $1$--line of the Adams spectral sequence at the primes $2$ and
$3$, by Lemma~\ref{lem:adams-1line-detection}.
At $p=2$, $\Ext^{1,4}_{\mathcal A_2}(\F_2,\F_2)\cong \F_2\{h_2\}$ and $h_2$
detects the $2$--primary component of $\nu\in \pi_3\mathbb{S}$; at $p=3$,
$\Ext^{1,4}_{\mathcal A_3}(\F_3,\F_3)\cong \F_3\{b_0\}$ and $b_0$ detects the
$3$--primary component of $\nu$ (see \cite[Ch.~2]{Ravenel86}).
Since $\pi_3\mathbb{S}\cong \Z/24$, this already implies that
\begin{equation}
\label{eq:f-is-unit-multiple}
f\;=\;u\,\nu\qquad\text{for some unit }u\in(\Z/24)^\times.
\end{equation}
To determine the unit $u$ we compute the complex Adams $e$--invariant.

Recall that Adams defines a homomorphism
\(e\colon \pi_{2n-1}\mathbb{S}\to \Q/\Z\)
using complex $K$--theory and the cofiber of a stable map
\cite[\S\,11]{Adams66}.
In degree $3$ this invariant is injective on the image of the $J$--homomorphism,
and in fact one has
\begin{equation}
\label{eq:e-nu}
e(\nu)=\pm\frac{1}{24}\in \Q/\Z.
\end{equation}
We now compute $e(f)$ from the Thom description of the reduced transfer.

Let $x=c_1(\gamma)\in H^2(\CP^\infty;\Z)$, and let $e_K(\gamma)=1-[\gamma^{-1}]\in K^0(\CP^\infty)$
denote the $K$--theoretic Euler class.
Under the Chern character one has
\(\mathrm{ch}\bigl(e_K(\gamma)\bigr)=1-e^{-x}\).
The cofiber sequence of Lemma~\ref{lem:transfer-thom} identifies the reduced transfer with the
connecting map for the inverse of this Euler class.
More concretely, restricting to the $\CP^2$--skeleton and using the Thom identification, one may view
the cofiber $C_f$ as a two--cell quotient of the Thom spectrum over $\CP^2$, so that the extension
class in $K^0(C_f)$ defining the $e$--invariant corresponds, under Thom isomorphism, to the class
\(e_K(\gamma)^{-1}\in K^0(\CP^2,\CP^1)\).
Passing to the Chern character and identifying
\(H^4(\CP^2,\CP^1;\Q)\cong \Q\cdot x^2\), Adams' construction reduces in this case to extracting the
degree--$4$ coefficient in the power series \(x/(1-e^{-x})\); see \cite[\S\,11]{Adams66}.
Using the Bernoulli expansion
\[
\frac{x}{1-e^{-x}}\;=\;1+\frac{x}{2}+\frac{B_2}{2!}x^2+\cdots
\qquad\text{with }B_2=\frac{1}{6},
\]
we find that the degree--$4$ contribution on $\CP^2$ is
\(\frac{B_2}{2!}x^2=\frac{1}{12}x^2\).
Adams' computation of the complex $e$--invariant on the image of the $J$--homomorphism
shows that in degree $3$ this coefficient gives
\begin{equation}
\label{eq:e-f}
e(f)=\pm\frac{B_2}{4}=\pm\frac{1}{24}\in\Q/\Z.
\end{equation}

Comparing \eqref{eq:e-f} with \eqref{eq:e-nu} and using that $e$ is a homomorphism, we conclude that
the unit $u$ in \eqref{eq:f-is-unit-multiple} satisfies $u\equiv\pm 1\pmod{24}$.
Thus $f=\pm\nu$.

Finally, mapping along the unit $\mathbb{S}\to \TMF$ sends $f$ to $\pm\nu$ in
$\pi_3\TMF$, completing the identification.
\end{proof}

\begin{corollary}[$\CP^2$ detects $\nu$]\label{cor:CP2-nu}
The $\TMF$--invariant of $\CP^2$ is
\[
Z_{\TMF}(\CP^2)\ =\ \pm\,\nu \ \in\ \pi_3\TMF.
\]
More precisely, under the identification in~\cite[Example~8.4]{GKMP25}, the map
$Z_{\TMF}(\CP^2)$ is the image in $\pi_3\TMF$ of the stable homotopy class of
$\Sigma\alpha$ from~\eqref{eq:alpha-def}.
\end{corollary}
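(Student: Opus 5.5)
The argument is a short assembly of two inputs: the GKMP description of the closed $4$--manifold invariant of $\CP^2$, and Theorem~\ref{thm:Sigmaalpha-is-nu}.

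First I would recall from \cite[Example~8.4]{GKMP25} how $Z_{\TMF}(\CP^2)$ is computed. One decomposes $\CP^2$ as the $2$--handlebody $W_{(1)}$, the $+1$--framed unknot bounding $S^3$, capped by a $4$--ball. The composite cobordism map $\TMF\to\TMF[3]$ is then written in terms of the rank--one module $L_{(1)}$ and the normalization shift $3b_+-2b_-=3$. GKMP identify the nontrivial component with the reduced $S^1$--transfer of the universal bundle, restricted along $\CP^1\subset\CP^\infty$, i.e.\ with the map $\alpha$ of \eqref{eq:alpha-def}. The degree shift by $3$ matches the single suspension $S^3\to S^0$, so $Z_{\TMF}(\CP^2)$ is the image of $f=\Sigma\alpha$ under the unit $\mathbb S\to\TMF$.

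The bookkeeping to verify here is that the shift in the normalization $L_b[3b_+-2b_-]$ and the shift in the Thom spectrum $\Th(-\gamma)$ of Lemma~\ref{lem:transfer-thom} agree. They do if $\CP^2$ lands in $\pi_3$ rather than some other degree, and this degree is forced by the normalization $3b_+(\CP^2)-2b_-(\CP^2)=3$.

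Second, I would invoke Theorem~\ref{thm:Sigmaalpha-is-nu}, which gives $f=\pm\nu$ in $\pi_3\mathbb S\cong\Z/24$. The unit map is a ring map, so it carries $\pm\nu$ to $\pm\nu\in\pi_3\TMF$, and this proves the corollary. I would note that $\nu$ remains nonzero of order $24$ in $\pi_3\TMF$, so the statement is not vacuous, though this is not needed for the proof.

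The main obstacle is the first step: extracting from \cite[Example~8.4]{GKMP25} that the composite really is the restricted transfer, and not its composite with some automorphism of $L_{(1)}$. Such an automorphism could change the answer by a unit of $\pi_0\TMF$. The only units in degree $0$ relevant here are $\pm1$, so any such ambiguity is absorbed into the stated sign. This is why the corollary asserts only $\pm\nu$, with the sign fixed by convention in Remark~\ref{rem:sign-nu}.
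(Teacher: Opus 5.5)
Your proposal is correct and follows the paper's proof. Both take from \cite[Example~8.4]{GKMP25} the identification of $Z_{\TMF}(\CP^2)$ with the unit image of $\Sigma\alpha$, the restricted reduced transfer, and then apply Theorem~\ref{thm:Sigmaalpha-is-nu}. Your remarks on the handle decomposition, and on absorbing a possible automorphism of $L_{(1)}$ into the sign (which uses $\pi_0(\TMF)^\times=\{\pm1\}$, as the paper also assumes), go beyond the paper's two-line argument but do no harm. One small correction: an element of $\pi_3\TMF$ is a map $\TMF[3]\to\TMF$, not $\TMF\to\TMF[3]$.
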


\begin{proof}
The identification of $Z_{\TMF}(\CP^2)$ with the $\TMF$--linear extension of the
reduced transfer $\tau$ (and hence with the class of $\Sigma\alpha$) is exactly
the content of~\cite[Example~8.4]{GKMP25}, based on the comparison between
$U(1)$--equivariant $\TMF$ and transfer maps developed in~\cite{GM23}; see also
\cite[\S A.2]{BauerMeier25} for a self-contained account of the transfer used here.
Theorem~\ref{thm:Sigmaalpha-is-nu} then identifies $\Sigma\alpha$ with $\pm\nu$,
and the corollary follows.
\end{proof}

\begin{remark}[Fixing the sign of $\nu$]\label{rem:sign-nu}
If one wishes to remove the $\pm$ ambiguity, one can \emph{define} the sign of
$\nu\in\pi_3\TMF$ by declaring that the composite~\eqref{eq:alpha-def} satisfies
$\Sigma\alpha=+\nu$.  With this convention, Corollary~\ref{cor:CP2-nu} reads
$Z_{\TMF}(\CP^2)=\nu$.
\end{remark}

\begin{remark}[Normalization and later use]
The identities above fix the residual choices in the rank--one part of the
dictionary: the element $\eta$ is the structural constant governing the
zero--block comparison map~\eqref{eq:Phi-matrix}, while $\nu$ is the stable
class governing the transfer description of $\CP^2$.  These identifications
will be used implicitly whenever we appeal to explicit rank--one formulas in
later sections.
\end{remark}

\section{Rank-one duality}\label{sec:rank-one-duality}

This section isolates the ``time-reversal'' duality phenomenon for the basic
rank-one building blocks in the $\TMF$--valued theory of \cite{GKMP25}.
Concretely, the rank-one integral form $(n)$ on $\Z$ gives rise to a canonical
$\TMF$--module $L(n)$ (normalized by a shift), and we prove that
$L(-n)$ is the shifted $\TMF$--linear dual of $L(n)$.
We also record, in the first nontrivial case $n=2$, a completely explicit
description in terms of the $\nu$--cone, together with a precise explanation
of the only ambiguity that can occur (an overall sign, i.e.\ multiplication by
$\pm 1\in \pi_0(\TMF)^\times$).

\subsection{Duals, shifts, and exact triangles}

Recall that for $M\in \Ho(\Mod_{\TMF})$ we write
\[
M^\vee \;:=\; \uHom_{\TMF}(M,\TMF)
\]
for the $\TMF$--linear dual. We will repeatedly use that $(-)^\vee$ is exact and
sends cofiber triangles to (rotated) cofiber triangles.

\begin{lemma}\label{lem:shift-dual-sec9}
For any $\TMF$--module $M$ and any integer $d$, there is a canonical equivalence
\[
(M[d])^\vee \;\simeq\; M^\vee[-d]
\qquad \text{in }\Ho(\Mod_{\TMF}).
\]
\end{lemma}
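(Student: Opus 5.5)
The plan is to reduce the statement to the internal Hom adjunction in $\Mod_{\TMF}$ together with the fact that suspension is given by tensoring with an invertible object. Write $M[d]\simeq \TMF[d]\otimes_{\TMF} M$, where $\TMF[d]=\Sigma^d\TMF$ is the free rank-one module on a $d$-cell. Because $\Sigma^d$ is an equivalence of the stable category $\Mod_{\TMF}$ with inverse $\Sigma^{-d}$, the object $\TMF[d]$ is invertible with inverse $\TMF[-d]$. In particular it is dualizable, and its dual is $\TMF[d]^\vee\simeq\TMF[-d]$.

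The key step is a natural equivalence of mapping spectra. For any module $N$,
\[
\uHom_{\TMF}(M[d],N)\simeq \uHom_{\TMF}(\TMF[d]\otimes_{\TMF} M,N)\simeq \uHom_{\TMF}\bigl(M,\uHom_{\TMF}(\TMF[d],N)\bigr),
\]
using the tensor--Hom adjunction. Next I would identify $\uHom_{\TMF}(\TMF[d],N)\simeq N[-d]$. This holds because $\uHom_{\TMF}(\TMF,N)\simeq N$, and $\uHom_{\TMF}(-,N)$ is exact and contravariant, so it converts $\Sigma^d$ in the source into $\Sigma^{-d}$ in the target.

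Setting $N=\TMF$ gives $(M[d])^\vee\simeq\uHom_{\TMF}(M,\TMF[-d])$. Exactness of $\uHom_{\TMF}(M,-)$ in the second variable then yields $\uHom_{\TMF}(M,\TMF[-d])\simeq\uHom_{\TMF}(M,\TMF)[-d]=M^\vee[-d]$. Alternatively, one can argue by induction on $|d|$: the case $d=\pm1$ follows from applying the exact functor $\uHom_{\TMF}(-,\TMF)$ to the cofiber sequence $M\to 0\to M[1]$, which gives $(M[1])^\vee\simeq \Omega M^\vee=M^\vee[-1]$, and the general case follows by iterating.

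The only point requiring care is canonicity, which is the main, though mild, obstacle. All the equivalences used are natural in $M$, so the composite is canonical once a convention for the suspension isomorphism $\Sigma^d\Sigma^{-d}\simeq\mathrm{id}$ is fixed. Different conventions (for example, the order in which the shifts are commuted past each other) can change the composite by the Koszul sign $(-1)^d\in\pi_0(\TMF)^\times$. I would note that this sign ambiguity is consistent with the paper's later remark that the equivalences in this section are canonical only up to $\pm1$. No dualizability hypothesis on $M$ is needed, since only the adjunction and exactness are used.
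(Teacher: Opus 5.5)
Your proposal is correct and follows essentially the same route as the paper: both move the shift across the internal Hom to obtain $\uHom_{\TMF}(M,\TMF[-d])$, then pull $[-d]$ out using exactness of $\uHom_{\TMF}(M,-)$ in the second variable. You make the shift adjunction explicit via $M[d]\simeq \TMF[d]\otimes_{\TMF} M$ and the tensor--Hom adjunction, and your remark that no dualizability of $M$ is needed is accurate.
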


\begin{proof}
This is the usual compatibility of internal Homs with suspension.
Indeed, by the defining adjunction for the shift functor we have natural equivalences
\[
\uHom_{\TMF}(M[d],\TMF)
\;\simeq\;
\uHom_{\TMF}\bigl(M,\TMF[-d]\bigr)
\;\simeq\;
\uHom_{\TMF}(M,\TMF)[-d],
\]
which is exactly the claimed equivalence.
\end{proof}

\begin{lemma}[Dual of a triangle]\label{lem:dual-triangle-sec9}
Suppose
\begin{equation}\label{eq:triangle-ABC-sec9}
A \xrightarrow{f} B \xrightarrow{g} C \xrightarrow{h} A[1]
\end{equation}
is a cofiber triangle in $\Ho(\Mod_{\TMF})$. Then there is a canonically associated
cofiber triangle
\begin{equation}\label{eq:dual-triangle-sec9}
(A[1])^\vee \xrightarrow{h^\vee} C^\vee \xrightarrow{g^\vee} B^\vee \xrightarrow{f^\vee} A^\vee.
\end{equation}
\end{lemma}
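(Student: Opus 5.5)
The plan is to apply the exact contravariant functor $(-)^\vee=\uHom_{\TMF}(-,\TMF)$ to the triangle \eqref{eq:triangle-ABC-sec9} and then rotate. The key input is that, at the level of the underlying stable $\infty$--category $\Mod_{\TMF}$, the functor $\uHom_{\TMF}(-,\TMF)$ is exact, i.e.\ it carries cofiber sequences to fiber sequences. Since $\Mod_{\TMF}$ is stable, fiber sequences are the same as cofiber sequences.

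First, lift the triangle to a cofiber sequence $A\to B\to C$ in $\Mod_{\TMF}$, with $h$ the boundary map $C\to \Sigma A=A[1]$. Applying $(-)^\vee$ gives a fiber sequence $C^\vee\xrightarrow{g^\vee}B^\vee\xrightarrow{f^\vee}A^\vee$. Exactness holds because the mapping functor $\uHom_{\TMF}(-,\TMF)$ sends colimits in the source to limits, and in particular sends pushout squares with a zero corner to pullback squares. Reading this as a cofiber sequence, its boundary map is $A^\vee\to C^\vee[1]$, so we obtain an exact triangle $C^\vee\to B^\vee\to A^\vee\to C^\vee[1]$.

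Next, rotate this triangle backwards once to get
\[
A^\vee[-1]\longrightarrow C^\vee\xrightarrow{g^\vee}B^\vee\xrightarrow{f^\vee}A^\vee .
\]
By Lemma~\ref{lem:shift-dual-sec9} with $d=1$, we have $A^\vee[-1]\simeq (A[1])^\vee$. It remains to check that the first map, under this identification, is $h^\vee$. This holds because the boundary map of the dualized sequence is obtained by dualizing the square exhibiting $C\to A[1]$ as the cofiber of $B\to C$. Equivalently, apply $(-)^\vee$ to the rotated triangle $B\to C\xrightarrow{h}A[1]$, whose dual fiber sequence is $(A[1])^\vee\xrightarrow{h^\vee}C^\vee\xrightarrow{g^\vee}B^\vee$.

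The main obstacle is sign bookkeeping. Rotation and the identification $(\Sigma A)^\vee\simeq\Sigma^{-1}A^\vee$ can each introduce a sign, and the naive sequence may only be a triangle after replacing one map by its negative. I would handle this by working with the rotated fiber sequence just described, which avoids the shift identification inside the triangle. If a sign persists, I would absorb it by composing $h^\vee$ with the automorphism $-1$ of $(A[1])^\vee$; this preserves exactness, since triangles are closed under isomorphisms. The canonicity claimed in the lemma is then to be understood modulo this standard convention.
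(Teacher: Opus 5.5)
Your proposal is correct and follows the same route as the paper: apply the exact contravariant functor $\uHom_{\TMF}(-,\TMF)$ to obtain a fiber sequence, then rotate it, using $(A[1])^\vee\simeq A^\vee[-1]$ (Lemma~\ref{lem:shift-dual-sec9}) so that $h^\vee$ becomes the first map. Your sign bookkeeping goes beyond the paper's terse proof, with one caveat: rescaling $(A[1])^\vee$ by $-1$ does not remove a sign but moves it onto the identification $(A[1])^\vee[1]\simeq A^\vee$ that lets the last map be read as $f^\vee$, and the lemma's ``canonical'' triangle depends on exactly that convention.
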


\begin{proof}
Apply the exact contravariant functor $(-)^\vee=\uHom_{\TMF}(-,\TMF)$ to the cofiber
sequence \eqref{eq:triangle-ABC-sec9}. In any stable category, applying an internal Hom
functor to a cofiber sequence produces a fiber sequence, and in a stable category every
fiber sequence canonically rotates to a cofiber triangle. Concretely, dualizing gives a
fiber sequence
\[
A^\vee \xleftarrow{f^\vee} B^\vee \xleftarrow{g^\vee} C^\vee \xleftarrow{h^\vee} (A[1])^\vee.
\]
Rotating this fiber sequence yields the cofiber triangle \eqref{eq:dual-triangle-sec9}.
\end{proof}

\subsection{\texorpdfstring{The $\nu$-cone model and the case $n=2$}{The nu-cone model and the case n=2}}

Let $\nu\in \pi_3(\mathbb{S})$ denote the stable Hopf element (in the sense of \cite{Adams60}),
and let $\nu\in \pi_3(\TMF)$ be its image under the unit map $\mathbb{S}\to \TMF$.
Write
\begin{equation}\label{eq:cone-nu-spectra-sec9}
C_\nu \;:=\; \cofib\bigl(\nu:\mathbb{S}^3\to \mathbb{S}^0\bigr)
\end{equation}
for the $\nu$--cone (a finite spectrum with a cofiber sequence
$\mathbb{S}^3\to \mathbb{S}^0\to C_\nu\to \mathbb{S}^4$).

\begin{lemma}[Self-duality of the $\nu$--cone]\label{lem:cone-self-dual-sec9}
There is an equivalence of spectra
\begin{equation}\label{eq:dual-cone-nu-spectra-sec9}
C_\nu^\vee \;\simeq\; C_\nu[-4],
\end{equation}
where $(-)^\vee=\uHom_{\mathbb{S}}(-,\mathbb{S}^0)$ denotes Spanier--Whitehead duality.
This equivalence is unique up to multiplication by $-1$ on the bottom cell (equivalently,
up to the action of $\pi_0(\mathbb{S})^\times=\{\pm 1\}$).
\end{lemma}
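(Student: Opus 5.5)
We dualize the defining cofiber sequence and then compare the result with the original cone, keeping track of how much freedom the comparison leaves. Start from $\mathbb{S}^3\xrightarrow{\nu}\mathbb{S}^0\xrightarrow{i} C_\nu\xrightarrow{p}\mathbb{S}^4$. Spanier--Whitehead duality is exact and contravariant, and $(\mathbb{S}^k)^\vee\simeq\mathbb{S}^{-k}$ canonically. Applying Lemma~\ref{lem:dual-triangle-sec9} (its proof works verbatim for $\uHom_{\mathbb{S}}(-,\mathbb{S}^0)$) therefore gives a cofiber triangle
\[
\mathbb{S}^{-4}\xrightarrow{p^\vee} C_\nu^\vee\xrightarrow{i^\vee}\mathbb{S}^0\xrightarrow{\nu^\vee}\mathbb{S}^{-3}.
\]
The dual of $\nu$ is again $\nu$, up to the sign convention for identifying $(\mathbb{S}^k)^\vee$ with $\mathbb{S}^{-k}$, since the stable homotopy category is a closed symmetric monoidal category in which $\pi_*\mathbb{S}$ acts compatibly. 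Rotating, we see that $C_\nu^\vee$ is the cofiber of $\pm\nu\colon\mathbb{S}^{-1}\to\mathbb{S}^{-4}$. That is $C_\nu[-4]$, after composing with $-1$ on one sphere if necessary. This produces the equivalence \eqref{eq:dual-cone-nu-spectra-sec9}.

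The main care is needed in checking that dualizing $\nu$ gives $\pm\nu$. I would argue this via the standard identification $\uHom(\mathbb{S}^a,\mathbb{S}^b)\simeq \mathbb{S}^{b-a}$, under which precomposition by $\nu\colon\mathbb{S}^3\to\mathbb{S}^0$ corresponds to multiplication by $\nu\in\pi_3\mathbb{S}$. This uses only graded commutativity of $\pi_*\mathbb{S}$, which introduces a Koszul sign at worst. The sign is harmless, since $\cofib(\nu)\simeq\cofib(-\nu)$ via $-1$ on the source.

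For uniqueness, we compute the group of self-equivalences of $C_\nu$ (equivalently $[C_\nu,C_\nu]$) from the cell filtration. The long exact sequences for $[C_\nu,-]$ and $[-,C_\nu]$ involve $\pi_0\mathbb{S}=\Z$, $\pi_4\mathbb{S}=0$, $\pi_{-4}\mathbb{S}=0$, and the groups $\pi_1\mathbb{S}$ and $\pi_3\mathbb{S}$ acted on by $\nu$. The vanishing of $\pi_4\mathbb{S}$ and of negative stems kills any map from the top cell to the bottom cell. Hence a self-map of $C_\nu$ is determined by its degrees $(a,b)$ on the bottom and top cells. Compatibility with $\nu$ forces $a\nu=b\nu$, i.e.\ $a\equiv b \pmod{24}$, and being an equivalence forces $a,b=\pm1$. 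So the self-equivalences are $\pm1$, and the uniqueness clause follows.

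The step I expect to be the main obstacle is this endomorphism computation. The issue is checking that no extra class appears from $[\mathbb{S}^4,\mathbb{S}^0]$ or $[\mathbb{S}^0,\mathbb{S}^{-4}]$; both are zero, so that case should be safe. The statement's gloss ``$-1$ on the bottom cell'' should then be read as the global $-1$ automorphism, which acts by $-1$ on both cells.
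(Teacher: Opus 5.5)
Your existence argument is the same as the paper's. You dualize the defining cofiber sequence, identify $\nu^\vee$ with $\pm\nu$, and absorb the sign via $\cofib(\nu)\simeq\cofib(-\nu)$.

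The difference is in the uniqueness clause. The paper settles it in one sentence: any two identifications differ by an automorphism of the bottom cell $\mathbb{S}^0\subset C_\nu$, hence by an element of $\pi_0(\mathbb{S})^\times$. It never checks that a self-equivalence of $C_\nu$ is determined by its effect on the bottom cell, nor which bottom-cell degrees actually extend to $C_\nu$.

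Your cell-filtration computation supplies exactly this:
\begin{itemize}
\item Because $\pi_0C_\nu\cong\Z$ and $\pi_4\mathbb{S}=0$, a self-map is determined by its degrees $(a,b)$ on the two cells.
\item The induced map of cofiber sequences forces $a\nu=b\nu$ in $\pi_3\mathbb{S}\cong\Z/24$, so an equivalence has $a=b=\pm1$.
\item Hence the self-equivalences of $C_\nu$ in the homotopy category are exactly $\pm\mathrm{id}$, and the equivalences $C_\nu^\vee\simeq C_\nu[-4]$ form a torsor for this group.
\end{itemize}

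This also confirms your reading of the statement. An automorphism acting by $-1$ on the bottom cell alone does not exist, since it would require $2\nu=0$. So the ambiguity is the global sign.

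Your route costs a short homotopy-group computation, but it actually proves the uniqueness assertion. The paper's sentence is only a heuristic, and its phrasing is slightly misleading.
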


\begin{proof}
Start from the defining cofiber sequence
\[
\mathbb{S}^3 \xrightarrow{\nu} \mathbb{S}^0 \longrightarrow C_\nu \longrightarrow \mathbb{S}^4.
\]
Dualizing gives a cofiber sequence
\[
(\mathbb{S}^4)^\vee \longrightarrow C_\nu^\vee \longrightarrow (\mathbb{S}^0)^\vee
\xrightarrow{\nu^\vee} (\mathbb{S}^3)^\vee.
\]
Using the canonical identifications $(\mathbb{S}^m)^\vee\simeq \mathbb{S}^{-m}$, this becomes
\[
\mathbb{S}^{-4} \longrightarrow C_\nu^\vee \longrightarrow \mathbb{S}^0
\xrightarrow{\nu^\vee} \mathbb{S}^{-3}.
\]
Under the standard identifications $\mathbb{S}^{-3}\simeq \Sigma^{-3}\mathbb{S}^0$ and
$\mathbb{S}^3\simeq \Sigma^{3}\mathbb{S}^0$, the map $\nu^\vee$ represents the same stable
homotopy class as $\pm \nu$; the sign depends only on the convention for identifying spheres
with their duals. Hence $C_\nu^\vee$ is (up to canonical suspension by $-4$) the cofiber of $\pm\nu$.
Since $\cofib(\nu)\simeq \cofib(-\nu)$ (via the self-map of $\mathbb{S}^3$ given by $-1$),
we conclude that $C_\nu^\vee\simeq C_\nu[-4]$.

Finally, any two such identifications differ by postcomposition with an automorphism of the
bottom cell $\mathbb{S}^0\subset C_\nu$, hence by an element of $\pi_0(\mathbb{S})^\times=\{\pm 1\}$.
\end{proof}

\begin{remark}[Where the ``overall sign'' comes from]\label{rem:sign-ambiguity-sec9}
The only ambiguity in \eqref{eq:dual-cone-nu-spectra-sec9} is the action of the unit group of the
ground ring on the chosen identification of a dualizable object with a shift of its dual.
After smashing with $\TMF$, the same phenomenon becomes the action of
$\pi_0(\TMF)^\times=\{\pm 1\}$ on any chosen equivalence
$L(2)^\vee\simeq L(2)[4]$ (see Proposition~\ref{prop:self-dual-L2-sec9} below).
This is exactly the ``up to sign'' ambiguity recorded in \cite[\S6, Remark~6.7]{GKMP25}.
If one prefers a rigid normalization, it is enough to fix the equivalence in a single test case
(e.g.\ by requiring compatibility with the cone model \eqref{eq:L2-cone-model-sec9});
all subsequent identifications are then forced by functoriality.
\end{remark}

We now specialize to the first nontrivial rank-one generator. In the GKMP construction, the
rank-one integral form $(2)$ determines a distinguished $\TMF$--module $L(2)$ with a cofiber triangle
(cf.\ \cite[\S5.6 and \S6.2]{GKMP25})
\begin{equation}\label{eq:nu-triangle-sec9}
\TMF[-1] \xrightarrow{\ \nu\ } \TMF[-4]
\xrightarrow{\ i_2\ } L(2) \xrightarrow{\ p_2\ } \TMF,
\end{equation}
where the first map is multiplication by the class $\nu\in \pi_3(\TMF)$.
Equivalently, \eqref{eq:nu-triangle-sec9} identifies $L(2)$ as a $\TMF$--linear $\nu$--cone:
\begin{equation}\label{eq:L2-cone-model-sec9}
L(2) \;\simeq\; \TMF\wedge C_\nu[-4].
\end{equation}

\begin{proposition}[Self-duality in the $n=2$ case]\label{prop:self-dual-L2-sec9}
There is an equivalence of $\TMF$--modules
\[
L(2)^\vee \;\simeq\; L(2)[4],
\]
canonical up to multiplication by $\pm 1\in \pi_0(\TMF)^\times$.
Moreover, under this identification the dual maps of $i_2$ and $p_2$ in
\eqref{eq:nu-triangle-sec9} satisfy
\[
p_2^\vee \;=\; i_2[4]
\qquad\text{and}\qquad
i_2^\vee \;=\; p_2[4],
\]
again up to the same overall sign ambiguity explained in Remark~\ref{rem:sign-ambiguity-sec9}.
\end{proposition}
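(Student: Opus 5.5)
The plan is to deduce the proposition from the cone model \eqref{eq:L2-cone-model-sec9} and Spanier--Whitehead self-duality of $C_\nu$ (Lemma~\ref{lem:cone-self-dual-sec9}), using that base change $\TMF\wedge(-)$ along the unit $\mathbb{S}\to\TMF$ is symmetric monoidal and preserves duals of finite spectra. The key input is that for a finite spectrum $X$ there is a natural equivalence
\[
(\TMF\wedge X)^\vee=\uHom_{\TMF}(\TMF\wedge X,\TMF)\simeq \uHom_{\mathbb{S}}(X,\TMF)\simeq \TMF\wedge DX,
\]
where the first step is the extension/restriction adjunction and the second holds because $X$ is finite, hence dualizable. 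For $X=C_\nu$ this gives $(\TMF\wedge C_\nu)^\vee\simeq \TMF\wedge C_\nu[-4]$.

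With \eqref{eq:L2-cone-model-sec9} and Lemma~\ref{lem:shift-dual-sec9} in hand, the object-level statement is a direct computation:
\[
L(2)^\vee\simeq (\TMF\wedge C_\nu[-4])^\vee\simeq (\TMF\wedge C_\nu)^\vee[4]\simeq \TMF\wedge C_\nu[-4][4]\simeq L(2)[4].
\]
For the canonicity claim, note that any two equivalences $L(2)^\vee\simeq L(2)[4]$ differ by an automorphism of $L(2)$. I would try to reduce that automorphism to the bottom cell. The spectral sequence for $[L(2),L(2)]_{\TMF}$ built from the two cells shows that such a self-map is determined by its effect on $\pi_{-4}$, which is generated by the bottom cell, up to possible off-diagonal terms in $\pi_{\pm4}\TMF$. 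Composing with cell maps, a self-equivalence compatible with the triangle acts on the bottom and top cells by the same unit, hence by $\pm1$. I would state the canonicity up to the normalization of Remark~\ref{rem:sign-ambiguity-sec9}, namely fixing compatibility with the cone model and then inheriting the $\pm1$ of Lemma~\ref{lem:cone-self-dual-sec9}. I expect this to be the main obstacle: making ``canonical up to sign'' precise requires either controlling those off-diagonal endomorphisms or restricting to equivalences compatible with \eqref{eq:nu-triangle-sec9}. I would take the second route.

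For the statements about $i_2$ and $p_2$, I would apply Lemma~\ref{lem:dual-triangle-sec9} to \eqref{eq:nu-triangle-sec9}, which yields a cofiber triangle
\[
\TMF^\vee\xrightarrow{p_2^\vee} L(2)^\vee\xrightarrow{i_2^\vee}\TMF[-4]^\vee\xrightarrow{\nu^\vee}\TMF[-1]^\vee .
\]
Using $\TMF[d]^\vee\simeq\TMF[-d]$, this becomes $\TMF\to L(2)^\vee\to\TMF[4]\xrightarrow{\pm\nu}\TMF[1]$. Shifting \eqref{eq:nu-triangle-sec9} by $4$ gives $\TMF[3]\xrightarrow{\nu}\TMF\xrightarrow{i_2[4]}L(2)[4]\xrightarrow{p_2[4]}\TMF[4]$, which is the same triangle after rotation. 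Since the chosen equivalence of the previous step was built to be compatible with cells, it carries the bottom-cell inclusion to the bottom-cell inclusion and the top projection to the top projection. This gives $p_2^\vee=i_2[4]$ and $i_2^\vee=p_2[4]$, up to the units $\pm1$ on each cell. The remaining task is to check that these units are the same overall sign, and that the sign in $\nu^\vee=\pm\nu$ is absorbed by the isomorphism $\cofib(\nu)\simeq\cofib(-\nu)$ used in Lemma~\ref{lem:cone-self-dual-sec9}.
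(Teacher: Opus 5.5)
\textbf{The step that fails: your route to ``canonical up to $\pm1$''.} Your object-level computation (extension--restriction adjunction, dualizability of $C_\nu$, Lemma~\ref{lem:cone-self-dual-sec9}) is exactly the paper's argument. So is your treatment of $p_2^\vee,i_2^\vee$ via Lemma~\ref{lem:dual-triangle-sec9} and the rotated $[4]$--shift of \eqref{eq:nu-triangle-sec9}. The problem is that restricting to equivalences compatible with \eqref{eq:nu-triangle-sec9} does not kill the off-diagonal terms you flag. For $x\in\pi_4\TMF=[\TMF,\TMF[-4]]_{\TMF}$ put $\phi_x:=1+i_2\circ x\circ p_2$. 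Because $p_2\circ i_2=0$, you get $\phi_x\circ i_2=i_2$, $p_2\circ\phi_x=p_2$, and $\phi_x^{-1}=1-i_2\circ x\circ p_2$. So $\phi_x$ is a self-equivalence of $L(2)$ acting by the identity on both cells. It is not $\pm1$ whenever $x$ is non-torsion. Rationally $\nu=0$, so $L(2)_{\Q}\simeq{\TMF}_{\Q}[-4]\oplus{\TMF}_{\Q}$, and $i_2xp_2$ is the off-diagonal matrix with entry $x$. Periodic $\TMF$, as used here, does have non-torsion classes in $\pi_4\TMF$, since $c_4^2c_6\Delta^{-1}\neq0$ in $\pi_4\TMF\otimes\Q$. (Similarly $\pi_{-4}L(2)$ is not generated by the bottom cell, because of $c_4c_6\Delta^{-1}\in\pi_{-4}\TMF\otimes\Q$.) Hence in $\Ho(\Mod_{\TMF})$, even triangle-compatible equivalences $L(2)^\vee\simeq L(2)[4]$ are not unique up to sign, and your chosen route cannot establish the claim.

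\textbf{The fix: prove uniqueness before base change.} What is true, and what the paper's proof supports through Lemma~\ref{lem:cone-self-dual-sec9} and Remark~\ref{rem:sign-ambiguity-sec9}, is narrower. The equivalence \emph{produced by the construction}, namely $\TMF\wedge(-)$ applied to a Spanier--Whitehead equivalence $\psi\colon DC_\nu\simeq C_\nu[-4]$, is determined up to sign. Prove this for $\psi$, where $\pi_{\pm4}\mathbb{S}=0$.
\begin{itemize}
\item Let $i\colon\mathbb{S}^0\to C_\nu$ and $p\colon C_\nu\to\mathbb{S}^4$ be the cell maps. Since $\pi_0C_\nu=\Z\cdot i$, every self-map of $C_\nu$ has the form $a\cdot\mathrm{id}+g\circ p$ with $g\in\pi_4C_\nu$.
\item The exact sequence $0=\pi_4\mathbb{S}\to\pi_4C_\nu\xrightarrow{p_*}\pi_0\mathbb{S}\xrightarrow{\pm\nu}\pi_3\mathbb{S}$ shows $p_*$ is injective with image $24\Z$.
\item Invertibility forces $a=\pm1$ and $a+p_*(g)=\pm1$, hence $g=0$. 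So $\mathrm{Aut}(C_\nu)=\{\pm1\}$.
\item The same computation, using $[\mathbb{S}^{-4},C_\nu[-4]]=\Z\cdot i[-4]$ and $[C_\nu[-4],\mathbb{S}^0]=\Z\cdot p[-4]$, gives $\psi\circ Dp=\pm i[-4]$ and $Di\circ\psi^{-1}=\pm p[-4]$. After base change this is your cell-compatibility step, now proved rather than imposed.
\end{itemize}

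\textbf{Your remaining sign task.} Take a morphism from the dual triangle to the rotated shifted triangle with outer components $\alpha,\beta\in\{\pm1\}$. Its connecting maps are $\epsilon\nu$ (with $\epsilon$ recording the conventions for $\nu^\vee$) and $-\nu$. Compatibility forces $(\epsilon\alpha+\beta)\nu=0$, hence $\beta=-\epsilon\alpha$ since $2\nu\neq0$. Whether the two signs coincide is therefore fixed by conventions, not by your choice of equivalence.
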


\begin{proof}
Using the cone model \eqref{eq:L2-cone-model-sec9} and the fact that $C_\nu$ is a finite spectrum,
we compute:
\[
L(2)^\vee
\;=\;
\uHom_{\TMF}\bigl(\TMF\wedge C_\nu[-4],\,\TMF\bigr)
\;\simeq\;
\uHom_{\mathbb{S}}\bigl(C_\nu[-4],\,\TMF\bigr).
\]
Since $C_\nu$ is dualizable as a spectrum, $\uHom_{\mathbb{S}}(C_\nu,\TMF)\simeq \TMF\wedge C_\nu^\vee$,
and by Lemma~\ref{lem:shift-dual-sec9} we obtain
\[
\uHom_{\mathbb{S}}\bigl(C_\nu[-4],\,\TMF\bigr)
\simeq
\uHom_{\mathbb{S}}(C_\nu,\TMF)[4]
\simeq
(\TMF\wedge C_\nu^\vee)[4].
\]
Finally, Lemma~\ref{lem:cone-self-dual-sec9} gives $C_\nu^\vee\simeq C_\nu[-4]$, hence
\[
(\TMF\wedge C_\nu^\vee)[4]
\;\simeq\;
(\TMF\wedge C_\nu[-4])[4]
\;\simeq\;
L(2)[4],
\]
which proves the claimed self-duality.

For the statements about $i_2$ and $p_2$, apply Lemma~\ref{lem:dual-triangle-sec9} to the cofiber
triangle \eqref{eq:nu-triangle-sec9}. The resulting dual triangle has the form
\[
\TMF \xrightarrow{p_2^\vee} L(2)^\vee \xrightarrow{i_2^\vee} \TMF[4] \longrightarrow \TMF[1].
\]
Under the identification $L(2)^\vee\simeq L(2)[4]$ just established, this triangle agrees (up to the
overall unit ambiguity of Remark~\ref{rem:sign-ambiguity-sec9}) with the shift by $[4]$ of the original
triangle \eqref{eq:nu-triangle-sec9}, and the identifications $p_2^\vee=i_2[4]$ and
$i_2^\vee=p_2[4]$ follow.
\end{proof}

\subsection{Grothendieck duality and general rank-one time-reversal}

Let $\pi:\mathcal{E}\to \mathcal{M}_{ell}$ be the universal spectral elliptic curve with identity
section $e:\mathcal{M}_{ell}\to \mathcal{E}$, and write
$\mathcal{O}^{\mathrm{top}}_{\mathcal{E}}$ for the structure sheaf in spectral algebraic geometry.
For $n\in \Z$, set
\[
\mathcal{L}_n \;:=\; \mathcal{O}^{\mathrm{top}}_{\mathcal{E}}(ne),
\qquad\text{so that}\qquad
\mathcal{L}_n^\vee \;\simeq\; \mathcal{L}_{-n}.
\]
Following \cite[\S5.6]{GKMP25}, we use the normalized rank-one module
\begin{equation}\label{eq:Ln-normalization-sec9}
L(n)\;:=\;\Gamma\!\bigl(\mathcal{L}_n\bigr)[-2n]
\;\in\; \Ho(\Mod_{\TMF}),
\end{equation}
where $\Gamma$ denotes derived global sections on $\mathcal{M}_{ell}$ (so that $\Gamma(\mathcal{O}^{\mathrm{top}}_{\mathcal{M}_{ell}})\simeq \TMF$).
By $0$--affineness of $\mathcal{M}_{ell}$, the global sections functor is symmetric monoidal and
preserves duals; see \cite{MathewMeier15} and the discussion in \cite[\S6]{GKMP25}.

\begin{theorem}[Rank-one time-reversal duality]\label{thm:rank-one-time-reversal}
For each integer $n$, there is an equivalence of $\TMF$--modules
\[
L(-n)\;\simeq\; L(n)^\vee[1].
\]
The equivalence can be chosen functorial in $n$ and is canonically well-defined up to multiplication
by the unit $\pm 1\in \pi_0(\TMF)^\times$.
\end{theorem}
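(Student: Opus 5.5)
The plan is to deduce the statement from relative Grothendieck--Serre duality for the proper, smooth, relative-dimension-one morphism $\pi\colon\mathcal E\to\mathcal M_{ell}$. I would then push this down to $\TMF$-modules using $0$--affineness, as recalled after \eqref{eq:Ln-normalization-sec9}. Concretely, I would invoke spectral Grothendieck duality (Lurie~\cite{LurieSAG}). For a perfect (indeed invertible) sheaf $\mathcal F$ on $\mathcal E$ it gives an equivalence of $\mathcal O^{\mathrm{top}}_{\mathcal M_{ell}}$--modules
\[
\bigl(\pi_*\mathcal F\bigr)^\vee\;\simeq\;\pi_*\bigl(\mathcal F^\vee\otimes\omega_\pi\bigr),
\]
where $\omega_\pi$ is the relative dualizing sheaf and the shift by the relative dimension is built into $\omega_\pi$. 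Applying this to $\mathcal F=\mathcal L_n$, using $\mathcal L_n^\vee\simeq\mathcal L_{-n}$, and then applying $\Gamma$ yields an equivalence of $\TMF$--modules
\[
\Gamma(\mathcal L_n)^\vee\simeq\Gamma(\mathcal L_{-n}\otimes\omega_\pi).
\]
Here we use that $\Gamma$ is symmetric monoidal and preserves duals of perfect objects. Everything is perfect, since $\mathcal L_n$ is invertible and $\pi$ is proper and smooth.

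The second step is to identify $\omega_\pi$ explicitly. For an elliptic curve, the classical relative dualizing sheaf is $\Omega^1_{\mathcal E/\mathcal M}[1]\cong\pi^*\omega[1]$, trivialized along fibres by the invariant differential. In the spectral setting I would use Lurie's identification of the Lie algebra/dualizing line of the universal oriented elliptic curve, which identifies $\omega$ with a suspension of the structure sheaf. The expected outcome is $\omega_\pi\simeq\pi^*\mathcal O^{\mathrm{top}}[d]$ for an explicit integer $d$ (with a $\pm1$ ambiguity in the trivialization). Inserting this and the normalization \eqref{eq:Ln-normalization-sec9} gives
\[
L(n)^\vee=\Gamma(\mathcal L_n)^\vee[2n]\simeq\Gamma(\mathcal L_{-n})[2n+d]=L(-n)[d].
\]
The theorem is then the statement $d=-1$. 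Functoriality in $n$ follows because the duality map is natural in $\mathcal F$, so it is compatible with the inclusions $\mathcal L_n\to\mathcal L_{n+1}$. The only choice made is the trivialization of $\omega_\pi$, which is unique up to $\pi_0(\TMF)^\times=\{\pm1\}$. This explains the sign ambiguity, exactly as in Remark~\ref{rem:sign-ambiguity-sec9}.

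The main obstacle is precisely this shift bookkeeping: pinning down $d$ so that it is compatible with the $[-2n]$ normalization. I would calibrate it against the two computed cases rather than trust abstract conventions. The first is $n=2$, via Proposition~\ref{prop:self-dual-L2-sec9} and the cofiber triangle \eqref{eq:nu-triangle-sec9}. The second is $n=0$, via the splittings of Lemma~\ref{lem:zero-block-splitting} and the comparison $\Phi\colon L^{(0)}\simeq L(0)[1]$, which is exactly a ``dual up to shift by one'' statement.

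A preliminary check suggests that a naive reading of the $n=0$ splitting $L(0)\simeq\TMF\oplus\TMF[-1]$ does not immediately match $L(0)^\vee[1]$. So the first thing I would verify carefully is which module, $L(0)$ or $L^{(0)}$, the normalization \eqref{eq:Ln-normalization-sec9} produces for $n=0$, adjusting the identification of $\omega_\pi$ accordingly. Once $d$ is fixed in one case, naturality in $n$ transports it to all $n$.
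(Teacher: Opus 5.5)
Your route is the paper's route: relative Grothendieck duality for $\pi$, then $\Gamma$ (which preserves duals by $0$--affineness), then the $[-2n]$ bookkeeping. But you leave open the only step that is not formal. Everything in your argument is formal except the integer $d$ with $\omega_\pi\simeq\pi^*\mathcal{O}^{\mathrm{top}}[d]$, and the theorem is literally equivalent to $d=-1$. The paper supplies this by importing $(\pi_*\mathcal{L}_n)^\vee\simeq\pi_*(\mathcal{L}_{-n})[-1]$ from \cite[Eq.~(6.5)]{GKMP25}; you only call it ``expected''.

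Your calibration plan does not close the gap.
\begin{itemize}
\item The case $n=2$ carries no information about $d$. Proposition~\ref{prop:self-dual-L2-sec9} is the Spanier--Whitehead self-duality $C_\nu^\vee\simeq C_\nu[-4]$ and never involves $L(-2)$. Combined with your duality it only gives $L(-2)\simeq L(2)[4-d]$, which is consistent with every $d$ unless you compute $\Gamma(\mathcal{L}_{-2})$ independently.
\item The case $n=0$ does pin down $d$, but only after an independent computation of $\Gamma(\mathcal{O}^{\mathrm{top}}_{\mathcal{E}})$. The ambiguity you noticed is exactly the sign of $d$: reading $\Gamma(\mathcal{O}^{\mathrm{top}}_{\mathcal{E}})$ as $\TMF\oplus\TMF[-1]$ forces $d=+1$, and then the stated theorem is false.
\item You also cannot ``adjust the identification of $\omega_\pi$ accordingly.'' It is determined by $\pi$ and the orientation.
\end{itemize}

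The missing step is one line. The orientation of the universal elliptic curve identifies its dualizing line $\omega$ with $\Sigma^{-2}\mathcal{O}^{\mathrm{top}}$; this is what underlies $\pi_{2k}\mathcal{O}^{\mathrm{top}}\cong\omega^{\otimes k}$. Since $\Omega^1_{\mathcal{E}/\mathcal{M}_{ell}}\simeq\pi^*\omega$, you get $\omega_\pi\simeq\pi^*\omega[1]\simeq\pi^*\mathcal{O}^{\mathrm{top}}[-1]$, i.e.\ $d=-1$.

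The same input resolves your $n=0$ puzzle. One has $\pi_*\mathcal{O}^{\mathrm{top}}_{\mathcal{E}}\simeq\mathcal{O}^{\mathrm{top}}\oplus\Sigma^{-1}\omega^{-1}\simeq\mathcal{O}^{\mathrm{top}}\oplus\Sigma\mathcal{O}^{\mathrm{top}}$. So the normalization \eqref{eq:Ln-normalization-sec9} gives $L(0)\simeq\TMF\oplus\TMF[1]$, which is the module called $L^{(0)}$ in Lemma~\ref{lem:zero-block-splitting}. Your suspicion was correct. The module called $L(0)\simeq\TMF\oplus\TMF[-1]$ in that lemma satisfies $L(0)\simeq L(0)^\vee[-1]$, not $L(0)\simeq L(0)^\vee[1]$; rationally the two sides of the latter live in different degrees mod $4$. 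So the paper uses the symbol $L(0)$ for two mutually dual modules.

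Once $d=-1$ is established, the rest of your argument goes through as written, including naturality in $n$ and the $\pm1$ ambiguity. At $n=2$ it reproduces, from $\Gamma(\mathcal{L}_2)\simeq\TMF\wedge C_\nu$, the identification $L(-2)\simeq\TMF\wedge C_\nu[1]\simeq L(2)^\vee[1]$.
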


\begin{proof}
Write $M_n:=\pi_*\mathcal{L}_n\in \QCoh(\mathcal{M}_{ell},\mathcal{O}^{\mathrm{top}})$.
By definition and the projection formula for derived global sections,
\[
L(n)\;\simeq\;\Gamma(M_n)[-2n].
\]
Since derived global sections preserve duals on perfect objects (by \cite{MathewMeier15}),
Lemma~\ref{lem:shift-dual-sec9} gives
\[
L(n)^\vee
\;\simeq\;
\Gamma(M_n)^\vee[2n]
\;\simeq\;
\Gamma(M_n^\vee)[2n].
\]
Now apply Grothendieck duality for the proper smooth morphism $\pi$ of relative dimension $1$
in the spectral setting (as packaged in \cite[Eq.\ (6.5)]{GKMP25}; see also \cite{LurieSAG}).
Specialized to $\pi$ and the line bundle $\mathcal{L}_n$, it yields an equivalence
\begin{equation}\label{eq:GD-rankone-sec9}
(\pi_*\mathcal{L}_n)^\vee \;\simeq\; \pi_*(\mathcal{L}_n^\vee)[-1]
\;\simeq\; \pi_*(\mathcal{L}_{-n})[-1].
\end{equation}
Therefore
\[
L(n)^\vee
\;\simeq\;
\Gamma\!\bigl(\pi_*\mathcal{L}_{-n}\bigr)[-1][2n]
\;\simeq\;
\Gamma(\mathcal{L}_{-n})[2n-1].
\]
Using the normalization \eqref{eq:Ln-normalization-sec9} for $-n$,
\[
L(-n)[-1]
\;=\;
\Gamma(\mathcal{L}_{-n})[2n][-1]
\;=\;
\Gamma(\mathcal{L}_{-n})[2n-1],
\]
so we conclude that $L(n)^\vee\simeq L(-n)[-1]$, equivalently $L(-n)\simeq L(n)^\vee[1]$.

Finally, the ``up to sign'' ambiguity is intrinsic: the duality equivalences in Grothendieck duality
are canonical only up to multiplication by units in $\pi_0(\TMF)$, hence by $\pm 1$ here; see
\cite[\S6, Remark~6.7]{GKMP25}. Fixing a single normalization (for instance, requiring compatibility
with Proposition~\ref{prop:self-dual-L2-sec9} when $n=2$) removes this ambiguity globally.
\end{proof}

\appendix

\section{Odd primes: canonical words and completeness of the layer invariant}
\label{app:odd-primes}

This appendix supplies the odd--primary algebra used in \S\ref{sec:stepA-canon}.
Let $p$ be an odd prime and let $\lambda\colon G\times G\to \Q/\Z$ be a nonsingular symmetric
linking pairing on a finite abelian $p$--group $G$.
We show that the associated layer forms $(P_k(G),b_k)$ determine the isometry class of $(G,\lambda)$,
and we fix a canonical orthogonal decomposition into rank-one cyclic blocks.
The classification itself is classical (Wall~\cite{Wall63}, Kawauchi--Kojima~\cite{KawauchiKojima80};
see also Deloup~\cite{Deloup05}), but we include a self-contained argument in the language of cyclic
blocks and homogeneous layers in order to make the canonical choices used later completely explicit.
In the main text this canonical decomposition is recorded as a \emph{canonical word} $N_p(\lambda)$,
and hence as the odd--primary component of the canonical invariant $\Canon(A)$.

Throughout, $p$ denotes an odd prime, and $G$ a finite abelian $p$--group.
We write $\nu_p(\cdot)$ for the $p$-adic valuation on $\Q$.
Given $x\in G$, we write $o(x)$ for its order.

\subsection{Diagonal splitting for odd $p$}
\label{subsec:odd-diagonal}

We recall that by a \emph{linking pairing} on a finite abelian $p$--group $G$ we mean a bilinear
map $\lambda\colon G\times G\to\Q/\Z$ that is symmetric and \emph{nonsingular} (the adjoint
$G\to \Hom(G,\Q/\Z)$ is an isomorphism).

\begin{lemma}[Diagonal splitting for odd $p$]
\label{lem:odd-snf-diagonal}
Let $p$ be odd and let $\lambda$ be a nonsingular symmetric linking pairing on a finite abelian
$p$--group $G$.  Then $(G,\lambda)$ splits orthogonally as a direct sum of cyclic pairings
\[
(G,\lambda)\ \cong\ \bigoplus_{i=1}^m\Bigl(\Z/p^{k_i}\Z,\ \frac{u_i}{p^{k_i}}\Bigr),
\]
where $k_i\ge1$ and $u_i\in(\Z/p^{k_i}\Z)^\times$.
\end{lemma}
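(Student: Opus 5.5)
We argue by induction on $|G|$, at each step splitting off an orthogonal cyclic summand of maximal order. If $G=0$ there is nothing to prove. Otherwise let $p^k$ be the exponent of $G$, so $p^k\lambda(x,y)=0$ for all $x,y$ and every value of $\lambda$ lies in $p^{-k}\Z/\Z$.

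\emph{Step 1: find $x$ with $\lambda(x,x)$ of order exactly $p^k$.} By nonsingularity, the adjoint $G\to\Hom(G,\Q/\Z)$ is an isomorphism, so some character of order $p^k$ is of the form $\lambda(a,-)$. Hence there exist $a,b\in G$ with $\lambda(a,b)$ of order $p^k$. Suppose $\lambda(a,a)$, $\lambda(b,b)$ both have order $<p^k$. Then the polarization identity
\[
\lambda(a+b,a+b)=\lambda(a,a)+\lambda(b,b)+2\lambda(a,b)
\]
applies. Since $p$ is odd, $2$ is a unit modulo $p^k$, so $2\lambda(a,b)$ has order exactly $p^k$, and therefore $\lambda(a+b,a+b)$ has order $p^k$. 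In all cases we obtain $x\in\{a,b,a+b\}$ with $\lambda(x,x)=u/p^k$ and $\gcd(u,p)=1$. This is the only place where $p$ odd is used, and it is the key step; at $p=2$ it fails, which is why hyperbolic blocks appear there.

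\emph{Step 2: split.} The order of $x$ is at least the order of $\lambda(x,x)$, hence equals $p^k$. Let $C=\langle x\rangle\cong\Z/p^k\Z$. The restriction $\lambda|_C=\langle u/p^k\rangle$ is nonsingular by Lemma~\ref{lem:nonsingular-gcd}. The standard argument, the same one used in Proposition~\ref{prop:alternating-symplectic}, then gives $G=C\oplus C^\perp$:
\begin{itemize}
\item $C\cap C^\perp=0$ by nonsingularity of $\lambda|_C$.
\item For $y\in G$, the character $\lambda(y,-)|_C$ equals $\lambda(c,-)|_C$ for a unique $c\in C$, so $y-c\in C^\perp$.
\end{itemize}
The restriction of $\lambda$ to $C^\perp$ is nonsingular. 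Indeed, if $z\in C^\perp$ pairs trivially with $C^\perp$, then it pairs trivially with all of $G=C\oplus C^\perp$, so $z=0$. Nonsingularity here means the adjoint is injective, and injective implies bijective by counting, since $|C^\perp|=|\Hom(C^\perp,\Q/\Z)|$.

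\emph{Step 3: induct.} Since $|C^\perp|<|G|$, the induction hypothesis applied to $(C^\perp,\lambda|_{C^\perp})$ yields the claimed decomposition. Each summand has $k_i\ge1$ and a unit $u_i$, which completes the proof.
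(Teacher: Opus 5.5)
Your proof is correct and follows essentially the same route as the paper. Both argue by induction on $|G|$, splitting off an orthogonal cyclic summand of maximal order $p^k$ generated by an element whose self-pairing has exact denominator $p^k$, and both use oddness of $p$ only to produce that element. The differences are cosmetic: the paper perturbs $y$ to $y+tx$ and uses that a nonzero quadratic polynomial over $\F_p$ has a non-root, where you check the three candidates $a$, $b$, $a+b$; and you obtain $G=C\oplus C^\perp$ by an explicit projection $y\mapsto y-c$, whereas the paper uses an order count.
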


\begin{proof}
We argue by induction on $|G|$.  If $G=0$ there is nothing to prove.
Choose $x\in G$ of maximal order, say $o(x)=p^k$.
Since $\lambda$ is nonsingular, there exists $y\in G$ with $\nu_p(\lambda(x,y))=-k$,
equivalently $p^k\lambda(x,y)\in\Z$ is \emph{not} divisible by $p$.

We claim that after replacing $y$ by $y+tx$ for a suitable integer $t$, we may assume that
$\nu_p(\lambda(y,y))=-k$ as well.
Write in $\Q/\Z$
\[
\lambda(y+tx,\,y+tx)=\lambda(y,y)+2t\,\lambda(x,y)+t^2\lambda(x,x).
\]
Multiplying by $p^k$ and reducing modulo $p$ yields a quadratic polynomial
\[
f(t)\ :=\ p^k\lambda(y,y)\;+\;2t\cdot p^k\lambda(x,y)\;+\;t^2\cdot p^k\lambda(x,x)\ \in\ \F_p.
\]
Here $p^k\lambda(x,y)\not\equiv 0\pmod p$ by construction, and $2\in\F_p^\times$ since $p$ is odd,
so $f(t)$ is not the zero polynomial.
Hence there exists $t\in\Z$ such that $f(t)\neq 0$ in $\F_p$.
For $z:=y+tx$ we then have $p^k\lambda(z,z)\in\Z$ not divisible by $p$, so $\nu_p(\lambda(z,z))=-k$.
In particular, $o(z)=p^k$.

Let $H:=\langle z\rangle\subseteq G$ and set $H^\perp:=\{g\in G\mid \lambda(g,H)=0\}$.
We first show $H\cap H^\perp=0$.
If $az\in H^\perp$, then $0=\lambda(az,z)=a\,\lambda(z,z)$.
Since $\lambda(z,z)$ has exact denominator $p^k$, this forces $p^k\mid a$, hence $az=0$.

Next we show $G=H\oplus H^\perp$ as groups.
Consider the adjoint isomorphism $\ad_\lambda\colon G\to \Hom(G,\Q/\Z)$.
The subgroup $H^\perp$ is the kernel of the restriction map
$\Hom(G,\Q/\Z)\to \Hom(H,\Q/\Z)$, hence corresponds under $\ad_\lambda$ to the annihilator of $H$.
Since $H$ is cyclic and $\lambda|_{H\times H}$ is nonsingular (as $H\cap H^\perp=0$), the restriction
$\Hom(G,\Q/\Z)\to \Hom(H,\Q/\Z)$ is surjective and we obtain an exact sequence
\[
0\longrightarrow H^\perp\longrightarrow G\longrightarrow \Hom(H,\Q/\Z)\longrightarrow 0.
\]
Counting orders gives $|G|=|H^\perp|\cdot |H|$, so together with $H\cap H^\perp=0$ we conclude that
$G=H\oplus H^\perp$.
The decomposition is orthogonal by definition of $H^\perp$.

Finally, $\lambda|_{H^\perp}$ is again nonsingular (it is identified with the induced pairing on the
orthogonal complement of a nonsingular summand).  By induction, $(H^\perp,\lambda|_{H^\perp})$ splits as
an orthogonal direct sum of cyclic pairings, and adjoining the cyclic summand $(H,\lambda|_{H\times H})$
completes the proof.

For a cyclic group $H\cong \Z/p^k\Z$ with generator $z$, the value $\lambda(z,z)$ has the form
$u/p^k$ with $u\in(\Z/p^k\Z)^\times$; this is the claimed normal form.
\end{proof}

\subsection{Layer forms and their invariants}
\label{subsec:odd-layers}

We now relate the cyclic splitting of Lemma~\ref{lem:odd-snf-diagonal} to the \emph{layer forms}
used in Step~A.
For odd primes the correct layer quotient is the homogeneous one appearing in Wall's theory; it is the
odd--prime analogue of the quotient $G_k/(G_{k-1}+2G_{k+1})$ used in Appendix~B for $p=2$.

\begin{definition}[Odd--primary layers]\label{def:odd-layer-form}
Let $p$ be odd, let $G$ be a finite abelian $p$--group, and let $\lambda$ be a nonsingular symmetric
linking pairing on $G$.
For $k\ge 1$, define
\[
G_k\ :=\ \{x\in G\mid p^k x=0\},
\qquad\text{and set }G_0:=0.
\]
Define the \emph{$k$th layer quotient}
\[
P_k(G)\ :=\ G_k\big/\bigl(G_{k-1}+p\,G_{k+1}\bigr).
\]
For $\bar x,\bar y\in P_k(G)$ represented by $x,y\in G_k$, define the \emph{layer form}
\begin{equation}\label{eq:odd-layer-via-Pk}
b_k(\bar x,\bar y)\ :=\ p^k\,\lambda(x,y)\ \bmod p\ \in\ \F_p.
\end{equation}
\end{definition}

\begin{lemma}\label{lem:odd-bk-well-defined}
The bilinear form $b_k$ in \eqref{eq:odd-layer-via-Pk} is well-defined and nonsingular over $\F_p$.
\end{lemma}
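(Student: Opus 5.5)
The plan is to follow the template of Lemma~\ref{lem:layer-form-welldefined}, replacing $2$ by the odd prime $p$ throughout. Nothing in that argument used $p=2$ except for notation.

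\emph{Well-definedness.} Fix $x,y\in G_k$. Since $y\in G_k$, the character $\lambda(-,y)$ has order dividing $p^k$. Hence $p^k\lambda(z,y)\in\Z$ for every $z\in G$, and in particular reduction mod $p$ of $p^k\lambda(x,y)$ makes sense.

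Now replace $x$ by $x'=x+u+pv$ with $u\in G_{k-1}$ and $v\in G_{k+1}$. There are two correction terms to control.
\begin{itemize}
\item The term from $u$ is $p^k\lambda(u,y)=p\cdot p^{k-1}\lambda(u,y)$. Here $p^{k-1}\lambda(u,y)\in\Z$ because $u\in G_{k-1}$, so this term is divisible by $p$.
\item The term from $v$ is $p^k\lambda(pv,y)=p\cdot p^k\lambda(v,y)$. Here $p^k\lambda(v,y)\in\Z$ because $y\in G_k$, so this term is also divisible by $p$.
\end{itemize}
By symmetry the same holds in the second variable. Bilinearity and symmetry of $b_k$ are then inherited from $\lambda$. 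The quotient $P_k(G)$ is an $\F_p$-vector space because $pG_k\subseteq G_{k-1}$.

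\emph{Nonsingularity.} Suppose $\bar x$ lies in the radical, so $p^{k-1}\lambda(x,y)\in\Z$ for all $y\in G_k$. Equivalently, $p^{k-1}x\in G_k^\perp$.

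The key step is the identity $G_k^\perp=p^kG$. One inclusion is immediate: $\lambda(p^kw,y)=\lambda(w,p^ky)=0$ for $y\in G_k$. For the reverse inclusion, $G_k^\perp$ corresponds under $\ad_\lambda$ to the kernel of restriction $\Hom(G,\Q/\Z)\to\Hom(G_k,\Q/\Z)$.
\begin{itemize}
\item This restriction is surjective because $\Q/\Z$ is injective, so the kernel has order $|G|/|G_k|$.
\item The kernel contains $p^k\Hom(G,\Q/\Z)$.
\item The subgroup $p^k\Hom(G,\Q/\Z)$ also has order $|G|/|G_k|$: since $\Hom(G,\Q/\Z)\cong G$ (noncanonically), the multiplication-by-$p^k$ map on it has kernel of order $|G_k|$.
\end{itemize}
So the kernel is exactly $p^k\Hom(G,\Q/\Z)$, and transporting back along the isomorphism $\ad_\lambda$ gives $G_k^\perp=p^kG$.

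It follows that $p^{k-1}x=p^kw$ for some $w\in G$. Then $x-pw\in G_{k-1}$. Moreover $0=p^kx=p^{k+1}w$, so $w\in G_{k+1}$. Hence $x\in G_{k-1}+pG_{k+1}$, i.e.\ $\bar x=0$, and $b_k$ is nonsingular.

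The only step I expect to need care is the counting argument identifying $G_k^\perp$ with $p^kG$. As a cross-check, one can instead use Lemma~\ref{lem:odd-snf-diagonal}. In a diagonal splitting $\bigoplus(\Z/p^{k_i},u_i/p^{k_i})$, the quotient $P_k(G)$ has as basis the generators of the summands with $k_i=k$. The form $b_k$ is then $\diag(u_i\bmod p)$, which is visibly nonsingular since each $u_i$ is a unit.
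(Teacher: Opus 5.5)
Your proof is correct and follows essentially the same route as the paper: you check that the $G_{k-1}$ and $pG_{k+1}$ corrections change $p^k\lambda(x,y)$ by multiples of $p$, then prove nonsingularity via the annihilator identity $G_k^\perp=p^kG$ (by an order count) and lift back to $G_{k-1}+pG_{k+1}$. The differences are cosmetic. The paper handles the $pG_{k+1}$ term by moving $p$ onto $y$ and reusing the $G_{k-1}$ case. It also counts the annihilator via exactness of $\Hom(-,\Q/\Z)$ on $0\to G_k\to G\to p^kG\to 0$, rather than by identifying the restriction kernel with $p^k\Hom(G,\Q/\Z)$.
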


\begin{proof}
\emph{Well-definedness.}
First suppose $x\in G_{k-1}$ and $y\in G_k$.
Then $p^{k-1}x=0$, hence $p^{k-1}\lambda(x,y)=\lambda(p^{k-1}x,y)=0$ in $\Q/\Z$.
Therefore $\lambda(x,y)\in \frac1{p^{k-1}}\Z/\Z$, and multiplying by $p^k$ gives
$p^k\lambda(x,y)\in p\Z/\Z$, i.e.\ $p^k\lambda(x,y)\equiv 0\pmod p$.
This shows that changing a lift by an element of $G_{k-1}$ does not affect $b_k$.

Next suppose $x=pz$ with $z\in G_{k+1}$ and $y\in G_k$.
Then
\[
p^k\lambda(x,y)=p^k\lambda(pz,y)=p^k\lambda(z,py).
\]
Since $py\in G_{k-1}$, the previous paragraph shows $p^k\lambda(z,py)\equiv 0\pmod p$.
Thus adding an element of $pG_{k+1}$ also does not change $b_k$.
The same argument applies in the second variable, so $b_k$ is well-defined on the quotient
$P_k(G)=G_k/(G_{k-1}+pG_{k+1})$.

\emph{Nonsingularity.}
Let $\bar x\in P_k(G)$ and choose a representative $x\in G_k$.
Assume $b_k(\bar x,\bar y)=0$ for all $\bar y\in P_k(G)$.
Equivalently, for all $y\in G_k$ we have $p^k\lambda(x,y)\equiv 0\pmod p$, so
\[
p^{k-1}\lambda(x,y)=0\qquad\text{for all }y\in G_k.
\]
By bilinearity this is the same as $\lambda(p^{k-1}x,y)=0$ for all $y\in G_k$, i.e.
$p^{k-1}x\in (G_k)^\perp$.
We claim that for a nonsingular linking pairing on a finite $p$--group,
\begin{equation}\label{eq:Gk-annihilator}
(G_k)^\perp \;=\; p^k G.
\end{equation}
Indeed, if $x=p^k z$ then $\lambda(x,G_k)=\lambda(z,p^kG_k)=0$, so $p^kG\subseteq (G_k)^\perp$.
Conversely, consider the exact sequence induced by multiplication by $p^k$,
$0\to G_k\to G\xrightarrow{p^k} p^kG\to 0$.
Applying $\Hom(-,\Q/\Z)$ (which is exact since $\Q/\Z$ is injective) shows that the annihilator of $G_k$
inside $\Hom(G,\Q/\Z)$ has order $|p^kG|$.
Under the adjoint isomorphism $G\cong \Hom(G,\Q/\Z)$, this annihilator corresponds to $(G_k)^\perp$.
Since we already have $p^kG\subseteq (G_k)^\perp$ and the two groups have the same finite order, they coincide,
proving \eqref{eq:Gk-annihilator}.

Using \eqref{eq:Gk-annihilator}, the condition $p^{k-1}x\in (G_k)^\perp$ implies
$p^{k-1}x=p^k z$ for some $z\in G$.
Then $p^{k-1}(x-pz)=0$, so $x-pz\in G_{k-1}$.
Moreover, since $x\in G_k$ we have
\[
0=p^k x=p^k(x-pz)+p^{k+1}z,
\]
and the first term vanishes because $x-pz\in G_{k-1}$.
Hence $p^{k+1}z=0$, so $z\in G_{k+1}$.
We conclude that $x\in G_{k-1}+pG_{k+1}$, so $\bar x=0$ in $P_k(G)$.
Thus $b_k$ has trivial radical and is nonsingular.
\end{proof}

\subsection{Odd--primary normal form and completeness}
\label{subsec:odd-normal-form}

We recall the two invariants for a nonsingular symmetric bilinear form $b$ over $\F_p$ ($p$ odd):
its dimension $\dim_{\F_p}b$ and its discriminant class in $\F_p^\times/(\F_p^\times)^2$.
We write $\chi(\cdot)$ for the quadratic character on $\F_p^\times$ (Legendre symbol).

\begin{definition}[Odd--primary layer invariants]
\label{def:odd-layer-invariants}
Let $p$ be odd and let $(G,\lambda)$ be a nonsingular symmetric $p$--primary linking pairing.
For each $k\ge1$ let $b_k$ be the layer form on $P_k(G)$.
Set
\[
n_{p,k}(\lambda)\ :=\ \dim_{\F_p} P_k(G),
\qquad
x_{p,k}(\lambda)\ :=\ \chi\big(\det(b_k)\big)\in\{\pm1\}.
\]
\end{definition}

\begin{lemma}\label{lem:odd-layer-from-cyclic}
Let $H=\Z/p^k\Z$ with pairing $\langle u/p^k\rangle$ (i.e.\ $\lambda(1,1)=u/p^k$ with $u\in(\Z/p^k\Z)^\times$).
Then $P_j(H)=0$ for $j\neq k$, and $P_k(H)\cong \F_p$ with $b_k(1,1)=u\bmod p$.
In particular, $n_{p,k}=1$ and $x_{p,k}=\chi(u)$.
\end{lemma}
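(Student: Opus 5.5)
We compute the filtration $H_j=\{x\in H\mid p^jx=0\}$ explicitly for the cyclic group $H=\Z/p^k\Z$ and then read off $P_j(H)$ and $b_k$ from Definition~\ref{def:odd-layer-form}. For $j\le k$ one has $H_j=p^{k-j}H$, cyclic of order $p^j$. For $j\ge k$ one has $H_j=H$. The layer quotient is $P_j(H)=H_j/(H_{j-1}+pH_{j+1})$, and we treat three cases.

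\emph{The case $j<k$.} Here $H_{j+1}=p^{k-j-1}H$, so $pH_{j+1}=p^{k-j}H=H_j$ and $P_j(H)=0$.

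\emph{The case $j>k$.} Here $H_j=H_{j-1}=H$, so again $P_j(H)=0$.

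\emph{The case $j=k$.} Here $H_k=H$ and $H_{k-1}=pH$, while $pH_{k+1}=pH$. Thus $P_k(H)=H/pH\cong\F_p$, generated by the class $\bar 1$ of $1$.

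By \eqref{eq:odd-layer-via-Pk}, the layer form is
\[
b_k(\bar 1,\bar 1)=p^k\lambda(1,1)=p^k\cdot u/p^k=u \pmod p,
\]
which is nonzero because $u$ is a unit, consistent with Lemma~\ref{lem:odd-bk-well-defined}. It follows that $n_{p,k}=\dim P_k(H)=1$ and $n_{p,j}=0$ for $j\ne k$. The determinant of the $1\times1$ Gram matrix is $u\bmod p$, so $x_{p,k}=\chi(u)$.

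This value does not depend on the chosen basis or representative: changing generator multiplies $u$ by a square (as in Lemma~\ref{lem:TokA-well-defined}), and $\chi$ is unchanged by squares.

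All steps are routine; the only care needed is the identification $pH_{j+1}=H_j$ for $j<k$, which shows that lower layers vanish.
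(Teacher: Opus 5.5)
Your proposal is correct and follows the paper's proof essentially step for step: the same description $H_j=p^{k-j}H$ for $j\le k$ and $H_j=H$ for $j\ge k$, the same three cases $j<k$, $j>k$, $j=k$ yielding $P_k(H)=H/pH\cong\F_p$, and the same evaluation $b_k(\bar 1,\bar 1)=u\bmod p$. Your closing remark on independence of the generator is harmless but not needed, since $\chi(\det b_k)$ is already intrinsic to the layer form.
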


\begin{proof}
Let $g$ be a generator of $H$.
For $j\ge 1$ we have $H_j=\langle p^{k-j}g\rangle$ if $j\le k$ and $H_j=H$ if $j\ge k$.
If $j<k$, then $pH_{j+1}=H_j$, so $H_{j-1}+pH_{j+1}=H_j$ and hence $P_j(H)=0$.
If $j>k$, then $H_j=H_{j-1}=H$, so again $P_j(H)=0$.
For $j=k$, we have $H_k=H$, $H_{k-1}=pH$, and $pH_{k+1}=pH$, hence
\[
P_k(H)=H/(H_{k-1}+pH_{k+1}) \;=\; H/pH \;\cong\; \F_p,
\]
with generator the image of $g$.
Since $\lambda(g,g)=u/p^k$, we obtain
\[
b_k(\bar g,\bar g)=p^k\lambda(g,g)\bmod p = u\bmod p,
\]
and the remaining statements follow.
\end{proof}

\begin{definition}[Odd--primary canonical word]
\label{def:odd-canonical-word}
Let $p$ be odd.
Write $\mathcal M_p$ for the commutative monoid generated by symbols
\[
\mathsf{A}\bigl(p^k,\varepsilon\bigr)
\qquad (k\ge1,\ \varepsilon\in\{\pm1\}),
\]
subject only to commutativity, and interpret $\mathsf{A}(p^k,\varepsilon)$ as the isometry class of
the cyclic pairing $\langle u/p^k\rangle$ with $\chi(u)=\varepsilon$.
For a nonsingular symmetric $p$--primary linking pairing $\lambda$ on $G$, define
\begin{equation}\label{eq:odd-Np-def}
N_p(\lambda)\ :=\ \prod_{k\ge1} \mathsf{A}\bigl(p^k,+1\bigr)^{a_{p,k}(\lambda)}\,
                 \mathsf{A}\bigl(p^k,-1\bigr)^{b_{p,k}(\lambda)}
\ \in\ \mathcal M_p,
\end{equation}
where the exponents $a_{p,k}(\lambda),b_{p,k}(\lambda)\ge0$ are determined by
\[
a_{p,k}(\lambda)+b_{p,k}(\lambda)=n_{p,k}(\lambda),
\qquad
(-1)^{b_{p,k}(\lambda)}=x_{p,k}(\lambda).
\]
\end{definition}

\begin{theorem}[Odd--primary completeness]
\label{thm:odd-completeness}
Let $p$ be odd and let $\lambda,\lambda'$ be nonsingular symmetric $p$--primary linking pairings.
Then $(G,\lambda)\cong (G',\lambda')$ if and only if
\[
n_{p,k}(\lambda)=n_{p,k}(\lambda')\ \text{and}\ x_{p,k}(\lambda)=x_{p,k}(\lambda')
\quad\text{for all }k\ge1.
\]
Equivalently, $N_p(\lambda)=N_p(\lambda')$ in $\mathcal M_p$.
\end{theorem}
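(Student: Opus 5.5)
The plan is to combine the diagonal splitting of Lemma~\ref{lem:odd-snf-diagonal} with the layer computation of Lemma~\ref{lem:odd-layer-from-cyclic}. Then we add one relation in the cyclic monoid, namely $\mathsf A(p^k,-1)^2=\mathsf A(p^k,+1)^2$. This relation is what lets the pair $(n_{p,k},x_{p,k})$ determine the homogeneous summand.

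\emph{Necessity.} If $f\colon(G,\lambda)\to(G',\lambda')$ is an isometry, then $f$ preserves the filtrations $G_k$ and hence the subgroups $G_{k-1}+pG_{k+1}$. It therefore induces isomorphisms $P_k(G)\cong P_k(G')$ carrying $b_k$ to $b'_k$. So $n_{p,k}$ agrees on both sides, and the determinant square class $x_{p,k}$ agrees as well, since a change of basis multiplies $\det$ by a square.

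\emph{Sufficiency.}
\begin{enumerate}
\item By Lemma~\ref{lem:odd-snf-diagonal}, write $(G,\lambda)\cong\bigoplus_i\langle u_i/p^{k_i}\rangle$.
\item The layer construction $P_k$ is additive on orthogonal sums: $G_k$, $G_{k-1}$ and $pG_{k+1}$ all split summandwise, and $b_k$ splits orthogonally.
\item Lemma~\ref{lem:odd-layer-from-cyclic} then gives $b_k\cong\langle \bar u_i : k_i=k\rangle$. Hence $n_{p,k}$ is the number of summands of exponent $p^k$, and $x_{p,k}=\prod_{k_i=k}\chi(u_i)$.
\item By Proposition~\ref{prop:cyclic-isometry} and Lemma~\ref{lem:hensel-squares}, each summand is determined up to isometry by $(k_i,\chi(u_i))$. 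So $(G,\lambda)$ is isometric to $\bigoplus_k\bigl(\langle1/p^k\rangle^{a}\oplus\langle\varepsilon_p/p^k\rangle^{b}\bigr)$, where $a$ and $b$ count the signs in level $k$.
\item Apply the key relation $\langle\varepsilon_p/p^k\rangle^{\oplus2}\cong\langle1/p^k\rangle^{\oplus2}$ to reduce to $b\in\{0,1\}$. This exponent is then fixed by $x_{p,k}$, so the result depends only on $(n_{p,k},x_{p,k})$. This is exactly the word $N_p(\lambda)$, computed with the relation absorbed; the exponents $a_{p,k},b_{p,k}$ in \eqref{eq:odd-Np-def} are only determined modulo this relation, so the word is read in that quotient.
\end{enumerate}

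\emph{The key relation (main obstacle).} We need a lift to $(\Z/p^k)^2$ of the $\F_p$ isometry $\langle\varepsilon,\varepsilon\rangle\cong\langle1,1\rangle$. First choose $s,t\in\Z$ with $s^2+t^2\equiv\varepsilon_p\pmod p$. This is possible because every element of $\F_p$ is a sum of two squares, by counting: $\{s^2\}$ and $\{\varepsilon-t^2\}$ each have $(p+1)/2$ elements, so they intersect. Next lift this to a solution of $s^2+t^2\equiv\varepsilon_p \pmod{p^k}$ by Hensel's lemma as in Lemma~\ref{lem:hensel-squares}; since $(s,t)\not\equiv(0,0)$, some partial derivative $2s$ or $2t$ is a unit. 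Then define a new basis of $\langle1/p^k\rangle^{\oplus2}$ by $e'_1=se_1+te_2$ and $e'_2=-te_1+se_2$. Its Gram matrix is $\diag(\varepsilon_p,\varepsilon_p)/p^k$, and the change-of-basis determinant $s^2+t^2=\varepsilon_p$ is a unit mod $p$, so this is an isometry.

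Finally, the equivalence with $N_p(\lambda)=N_p(\lambda')$ is immediate: by construction the word records exactly $(n_{p,k},x_{p,k})$.
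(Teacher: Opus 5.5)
Your proof is correct and uses the paper's skeleton: split diagonally by Lemma~\ref{lem:odd-snf-diagonal}, then use Lemma~\ref{lem:odd-layer-from-cyclic} to read off $n_{p,k}$ as the number of blocks of exponent $p^k$ and $x_{p,k}=\prod_{k_i=k}\chi(u_i)$.

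What you add is the relation $\langle\varepsilon_p/p^k\rangle^{\oplus 2}\cong\langle 1/p^k\rangle^{\oplus 2}$. This is a necessary step, not a detour. The paper's converse claims that $(n_{p,k},x_{p,k})$ ``specifies'' the number of blocks with $\chi(u)=+1$ and with $\chi(u)=-1$. In fact these invariants only determine $a_{p,k}+b_{p,k}$ and the parity of $b_{p,k}$.

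Without your relation, the paper's argument only shows that every pairing is isometric to \emph{some} diagonal sum with the prescribed invariants. It does not show that two such sums whose $b_{p,k}$ differ by an even number are isometric.

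For the same reason, $N_p(\lambda)$ is not well defined in the free commutative monoid $\mathcal M_p$ as written; the remark after \eqref{eq:odd-Np-def} only fixes $b_{p,k}$ modulo $2$. Your fix is the right one: read the word in the quotient by $\mathsf A(p^k,-1)^2=\mathsf A(p^k,+1)^2$, or equivalently normalize $b_{p,k}\in\{0,1\}$.

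Your rotation-matrix proof of the relation is sound, but the Hensel lift to $s^2+t^2\equiv\varepsilon_p\pmod{p^k}$ is unnecessary. A solution modulo $p$ already makes the change-of-basis determinant $s^2+t^2$ a unit, giving Gram matrix $\frac{s^2+t^2}{p^k}I_2$ in the new basis. Then $\langle (s^2+t^2)/p^k\rangle\cong\langle\varepsilon_p/p^k\rangle$ by Lemma~\ref{lem:hensel-squares}.
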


\begin{proof}
By Lemma~\ref{lem:odd-snf-diagonal}, we may write
\[
(G,\lambda)\ \cong\ \bigoplus_{i=1}^m \Bigl(\Z/p^{k_i}\Z,\ \frac{u_i}{p^{k_i}}\Bigr).
\]
For a fixed $k$, the layer quotient $P_k(G)$ and its form $b_k$ split as orthogonal direct sums over
the summands of exponent $p^k$.
By Lemma~\ref{lem:odd-layer-from-cyclic}, each cyclic summand of order $p^k$ contributes a
$1$--dimensional summand to $P_k(G)$ and contributes the scalar $u_i\bmod p$ on that line,
while summands of exponent $\neq p^k$ contribute nothing to $P_k(G)$.
Thus $n_{p,k}$ is the number of cyclic summands of order $p^k$, and
\[
x_{p,k}=\chi(\det(b_k))=\prod_{k_i=k}\chi(u_i)=(-1)^{b_{p,k}},
\]
which is exactly the parity constraint in Definition~\ref{def:odd-canonical-word}.
This shows that the multiset of cyclic blocks (up to the square class $\chi(u_i)$) determines the layer invariants.

Conversely, given $(n_{p,k},x_{p,k})$, the definition of $N_p(\lambda)$ in \eqref{eq:odd-Np-def}
specifies, for each $k$, the number of cyclic blocks of order $p^k$ with $\chi(u)=+1$ and with $\chi(u)=-1$.
Taking the orthogonal sum of these cyclic blocks produces a pairing with the prescribed layer invariants.
By the cyclic splitting lemma, every $p$--primary pairing is isometric to such an orthogonal sum,
so the layer invariants are complete.
\end{proof}

\subsection{Recovering the canonical word from the layer matrices}
\label{subsec:odd-extract-canon-word}

Although the layer form $b_k$ is defined intrinsically, it is useful to record an explicit
coordinate description.
In applications one often starts from a symmetric integral presentation matrix (for instance a surgery
matrix) and passes to a Smith basis for $G$, in which $\lambda$ is represented by a rational symmetric
matrix $P$.
We explain how to extract from $P$ the matrices $B_{p,k}$ representing $b_k$ on $P_k(G)$, and hence how to
recover the invariants $(n_{p,k},x_{p,k})$ and the canonical word $N_p(\lambda)$ defined in
\eqref{eq:odd-Np-def}.
This is recorded only to emphasize that the odd--primary component of $\Canon(A)$ is effectively
computable from a presentation matrix, once the conventions of \S\ref{sec:stepA-canon} are fixed.

Assume $G$ is presented in Smith normal form:
\[
G \cong \bigoplus_{i=1}^r \Z/p^{e_i}\Z,\qquad 1\le e_1\le \cdots \le e_r.
\]
Let $x_1,\dots,x_r$ be the corresponding generators and let $P=(p_{ij})$ be the rational pairing matrix
\[
p_{ij}=\lambda(x_i,x_j)\in\Q/\Z,\qquad \nu_p(p_{ij})\ge -\min(e_i,e_j).
\]
Fix $k\ge1$ and let $I_{p,k}:=\{i\mid e_i=k\}$.
Then the images of $\{x_i\}_{i\in I_{p,k}}$ form a basis of the layer quotient
$P_k(G)=G_k/(G_{k-1}+pG_{k+1})$, and $P_k(G)\cong \F_p^{\,|I_{p,k}|}$.
The layer form $b_k$ is represented on this basis by the matrix
\begin{equation}\label{eq:odd-Bpk}
B_{p,k}\ :=\ \bigl(p^k p_{ij}\bmod p\bigr)_{i,j\in I_{p,k}}\ \in M_{|I_{p,k}|}(\F_p).
\end{equation}
Indeed, for $i,j\in I_{p,k}$, we have $x_i,x_j\in G_k$ and
\[
b_k([x_i],[x_j])\ =\ p^k\lambda(x_i,x_j)\bmod p\ =\ p^k p_{ij}\bmod p.
\]
Thus $n_{p,k}(\lambda)=|I_{p,k}|$ and $x_{p,k}(\lambda)=\chi(\det B_{p,k})$.

\begin{remark}[From $(n_{p,k},x_{p,k})$ to the canonical word]
Given the invariants $(n_{p,k},x_{p,k})_{k\ge1}$, set
\[
b_{p,k}:=\frac{1-x_{p,k}}2\in\{0,1\}\quad\text{if }n_{p,k}=1,
\qquad
b_{p,k}\equiv \frac{1-x_{p,k}}2 \pmod 2\quad\text{in general,}
\]
and define $a_{p,k}:=n_{p,k}-b_{p,k}$.
Then $N_p(\lambda)$ is the product \eqref{eq:odd-Np-def}.
\end{remark}

\section{$2$-primary refinements: characteristic elements, determinant classes, and Gauss sums}\label{app:two-primary}

This appendix collects the $2$-primary algebra used in Section~\ref{sec:typeE} to define and analyze
canonical invariants of a nonsingular symmetric linking pairing on a finite $2$--group.
More precisely, we study the layer quotients $P_k(G)$ and their induced $\F_2$--bilinear forms $b_k$,
the resulting dichotomy into Type~$E$ and Type~$A$ layers, the determinant refinement in Type~$A$,
and the quadratic correction $q_k$ whose normalized Gauss sum yields the invariant $u_k$ in Type~$E$.
The underlying classification of $2$--primary linking forms is classical; see, for instance,
Wall~\cite{Wall63} and Kawauchi--Kojima~\cite[\S 2]{KawauchiKojima80}.

\subsection{Layer quotients and the induced bilinear form}

Let $(G,\lambda)$ be a finite abelian $2$--group equipped with a nonsingular symmetric bilinear pairing
\[
\lambda: G\times G \longrightarrow \Q/\Z.
\]
For $k\ge 1$ let
\[
G_k := \{x\in G\mid 2^k x=0\}
\]
and define the layer quotient
\begin{equation}\label{eq:Pk-def-appB}
P_k(G)\;:=\; G_k\Big/\bigl(G_{k-1}+2G_{k+1}\bigr),
\end{equation}
which is an $\F_2$--vector space.

\begin{proposition}[Well-definedness and nonsingularity of the layer form]\label{prop:bk-welldefined}
For each $k\ge 1$ there is a well-defined symmetric bilinear form
\[
b_k \;:\; P_k(G)\times P_k(G)\longrightarrow \F_2
\]
given by
\[
b_k(\bar x,\bar y)\;:=\; 2^k\lambda(x,y)\ \bmod 2
\qquad (x,y\in G_k),
\]
where $\bar x,\bar y$ denote the images in $P_k(G)$. Moreover, $b_k$ is nonsingular.
\end{proposition}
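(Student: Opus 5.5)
This statement is the appendix restatement of Lemma~\ref{lem:layer-form-welldefined}, so I will follow the same two-stage argument, written directly in the notation of \eqref{eq:Pk-def-appB}.

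\textbf{Integrality and well-definedness.} For $y\in G_k$ the character $\lambda(-,y)$ has order dividing $2^k$. Hence $2^k\lambda(z,y)\in\Z$ for every $z\in G$, and reduction mod $2$ makes sense. Next I check that changing a lift changes the value only by an even integer. If $u\in G_{k-1}$, then $2^{k-1}\lambda(u,y)\in\Z$, so $2^k\lambda(u,y)$ is even. If $v\in G_{k+1}$, then $2^k\lambda(2v,y)=2\cdot 2^k\lambda(v,y)$, and $2^k\lambda(v,y)$ is an integer because $y\in G_k$, so this term is even as well. By symmetry of $\lambda$ the same argument handles the second variable, and bilinearity and symmetry of $b_k$ are inherited from $\lambda$.

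\textbf{Nonsingularity.} Suppose $x\in G_k$ satisfies $b_k(\bar x,\bar y)=0$ for all $\bar y$. Then $\lambda(2^{k-1}x,y)=0$ for every $y\in G_k$, i.e.\ $2^{k-1}x\in G_k^\perp$. The key identity is $G_k^\perp=2^kG$, which I will prove as in \eqref{eq:Gk-annihilator}:
\begin{itemize}
\item the inclusion $2^kG\subseteq G_k^\perp$ is immediate;
\item applying the exact functor $\Hom(-,\Q/\Z)$ to $0\to G_k\to G\xrightarrow{2^k}2^kG\to 0$ shows that the annihilator of $G_k$ has order $|2^kG|$;
\item the adjoint isomorphism $\ad_\lambda$ carries $G_k^\perp$ onto this annihilator, so the two finite groups have the same order and coincide.
\end{itemize}

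Now write $2^{k-1}x=2^kw$. Then $x-2w\in G_{k-1}$, so $2^k(x-2w)=0$. Expanding $0=2^kx=2^k(x-2w)+2^{k+1}w$ gives $2^{k+1}w=0$, i.e.\ $w\in G_{k+1}$. Therefore $x\in G_{k-1}+2G_{k+1}$ and $\bar x=0$, so the radical of $b_k$ is trivial.

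\textbf{Main obstacle.} The only non-formal input is the order count in $G_k^\perp=2^kG$, which depends on injectivity of $\Q/\Z$ and nonsingularity of $\lambda$. The final step is elementary bookkeeping, but it matters: the witness $w$ must be shown to lie in $G_{k+1}$, not merely in $G$.
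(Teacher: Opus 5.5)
Your proposal is correct and follows essentially the same route as the paper. It uses parity bookkeeping for independence of lifts, proves $G_k^\perp=2^kG$ by an order count through the adjoint isomorphism, and then lifts $2^{k-1}x=2^kw$ back into $G_{k-1}+2G_{k+1}$. Your explicit check that $w\in G_{k+1}$ is exactly the step the appendix proof compresses into ``$x\in G_{k-1}+2G$, then reduce modulo $G_{k-1}+2G_{k+1}$''; it is spelled out in Lemma~\ref{lem:layer-form-welldefined}, so including it is the right call.
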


\begin{proof}
We first show well-definedness. If $x' = x+u+2v$ with $u\in G_{k-1}$ and $v\in G_{k+1}$, then
\[
2^k\lambda(x',y) - 2^k\lambda(x,y)
= 2^k\lambda(u,y) + 2^{k+1}\lambda(v,y).
\]
Since $2^{k-1}u=0$, we have $2^{k-1}\lambda(u,y)=\lambda(u,2^{k-1}y)=0$ in $\Q/\Z$, hence
$2^k\lambda(u,y)\in 2\Z\subset \Z$.
Similarly $2^{k+1}\lambda(v,y)=2\cdot 2^k\lambda(v,y)\in 2\Z$.
Therefore $2^k\lambda(x',y)\equiv 2^k\lambda(x,y)\pmod{2}$, and the same argument in the second variable
shows $b_k$ is well-defined.

To prove nonsingularity, consider the adjoint homomorphism
\[
\ad_\lambda: G\longrightarrow \Hom(G,\Q/\Z),\qquad \ad_\lambda(z)(y)=\lambda(z,y),
\]
which is an isomorphism by nonsingularity of $\lambda$. Let $G_k^\perp=\{z\in G\mid \lambda(z,G_k)=0\}$.
We claim
\begin{equation}\label{eq:annihilator-appB}
G_k^\perp = 2^kG.
\end{equation}
Indeed, if $z=2^kw$ then for $y\in G_k$ we have $\lambda(z,y)=\lambda(w,2^ky)=0$.
Conversely, the kernel of multiplication by $2^k$ on $G$ is precisely $G_k$, so
\[
|2^kG|=\frac{|G|}{|G_k|}.
\]
On the other hand, for any nonsingular pairing on a finite group one has
\[
|G_k^\perp|=\frac{|G|}{|G_k|},
\]
because $G_k^\perp$ is the annihilator of $G_k$ under the isomorphism $G\cong \Hom(G,\Q/\Z)$.
Hence $2^kG\subseteq G_k^\perp$ and the two subgroups have the same cardinality, proving \eqref{eq:annihilator-appB}.

Now suppose $\bar x\in P_k(G)$ satisfies $b_k(\bar x,\bar y)=0$ for all $\bar y$.
Then $2^k\lambda(x,y)\in 2\Z$ for all $y\in G_k$, hence $\lambda(2^{k-1}x,y)=0$ for all $y\in G_k$.
By \eqref{eq:annihilator-appB} we have $2^{k-1}x\in G_k^\perp=2^kG$, so $x\in G_{k-1}+2G$.
Reducing modulo $G_{k-1}+2G_{k+1}$ shows $\bar x=0$ in $P_k(G)$, proving nonsingularity.
\end{proof}

\begin{definition}[Characteristic element]\label{def:ck-appB}
For each $k\ge 1$, the map
\[
P_k(G)\longrightarrow \F_2,\qquad \bar x\longmapsto b_k(\bar x,\bar x)
\]
is $\F_2$--linear. Since $b_k$ is nonsingular, there is a unique element
\[
c_k(G,\lambda)\in P_k(G)
\]
such that
\begin{equation}\label{eq:ck-def-appB}
b_k(\bar x,\bar x)=b_k(\bar x,c_k(G,\lambda))\qquad\text{for all }\bar x\in P_k(G).
\end{equation}
We call $c_k(G,\lambda)$ the \emph{characteristic element} of the layer form.
\end{definition}

\begin{lemma}\label{lem:alternating-ck-appB}
The following are equivalent:
\begin{enumerate}[label=\textup{(\roman*)},leftmargin=2.2em]
\item $c_k(G,\lambda)=0$;
\item $b_k$ is alternating, i.e.\ $b_k(\bar x,\bar x)=0$ for all $\bar x\in P_k(G)$.
\end{enumerate}
\end{lemma}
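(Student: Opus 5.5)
The argument is a direct unwinding of the defining relation \eqref{eq:ck-def-appB} together with nonsingularity of $b_k$ (Proposition~\ref{prop:bk-welldefined}). It is essentially the same as Lemma~\ref{lem:q-linear}, applied to $(V,b)=(P_k(G),b_k)$. If one wishes to quote that lemma, it only remains to check that $c_k(G,\lambda)$ coincides with $c(b_k)$ from Definition~\ref{def:characteristic-element}. This is immediate: both are defined as the unique vector $c$ with $b_k(c,\bar x)=b_k(\bar x,\bar x)$ for all $\bar x$, and $b_k$ is symmetric. Still, I would write out the two implications directly so the appendix stays self-contained.

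\emph{(i)$\Rightarrow$(ii).} If $c_k(G,\lambda)=0$, then \eqref{eq:ck-def-appB} gives
\[
b_k(\bar x,\bar x)=b_k(\bar x,0)=0
\]
for every $\bar x\in P_k(G)$, so $b_k$ is alternating.

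\emph{(ii)$\Rightarrow$(i).} Suppose $b_k$ is alternating. Then \eqref{eq:ck-def-appB} reads $b_k(\bar x,c_k(G,\lambda))=0$ for all $\bar x\in P_k(G)$. Hence $c_k(G,\lambda)$ lies in the radical of $b_k$, which is zero by the nonsingularity part of Proposition~\ref{prop:bk-welldefined}. Thus $c_k(G,\lambda)=0$.

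There is no genuine obstacle here. The only inputs are the linearity of $\bar x\mapsto b_k(\bar x,\bar x)$ (already noted in Definition~\ref{def:ck-appB}), which guarantees that $c_k$ exists and is unique, and the nonsingularity of $b_k$. Nonsingularity is used twice: once for uniqueness of $c_k$, and once in the second implication. It is the one point I would cite explicitly.
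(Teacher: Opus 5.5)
Your proof is correct and follows the paper's argument: the paper's proof is the single line ``Immediate from \eqref{eq:ck-def-appB},'' and your two implications unwind that relation. You also make explicit the one point the paper leaves implicit, namely that (ii)$\Rightarrow$(i) uses the nonsingularity of $b_k$ (equivalently, the uniqueness of $c_k$).
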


\begin{proof}
Immediate from \eqref{eq:ck-def-appB}.
\end{proof}

\begin{definition}[Type $E$ and Type $A$ layers]\label{def:typeAE-appB}
We say that the $k$th layer is of \emph{Type $E$} if $c_k(G,\lambda)=0$ (equivalently, $b_k$ is alternating),
and of \emph{Type $A$} otherwise.
\end{definition}

\subsection{Normal forms over $\F_2$}

This yields the dichotomy between Type $E$ and Type $A$ layers used in Section~\ref{sec:typeE}.

\begin{definition}[Hyperbolic plane]\label{def:hyperbolic-block-appB}
Let $H$ denote the rank--$2$ $\F_2$--vector space with the alternating form
\[
\begin{pmatrix}
0 & 1\\
1 & 0
\end{pmatrix}
\]
in the standard basis. We call $(H,b)$ the \emph{hyperbolic plane}.
\end{definition}

\begin{proposition}[Alternating case]\label{prop:symplectic-normal-appB}
Let $(V,b)$ be a finite-dimensional $\F_2$--vector space with a nonsingular alternating bilinear form.
Then $\dim_{\F_2}V$ is even and there exists a symplectic basis
\[
e_1,f_1,\dots,e_m,f_m
\]
such that $b(e_i,f_j)=\delta_{ij}$ and all other pairings vanish. In particular,
\[
(V,b)\ \cong\ H^{\oplus m}.
\]
\end{proposition}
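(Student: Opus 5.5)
The plan is to argue by induction on $\dim_{\F_2}V$, exactly as in Proposition~\ref{prop:alternating-symplectic} of the main text; the present statement is just its appendix restatement. The base case $V=0$ is trivial, with $m=0$ and the empty basis. For $V\neq 0$, nonsingularity of $b$ supplies a nonzero $v$ and, since the adjoint $V\to V^\ast$ is injective, some $w$ with $b(v,w)=1$. Because $b$ is alternating, $b(v,v)=b(w,w)=0$. By symmetry $b(w,v)=1$, so the span $U=\langle v,w\rangle$ is two-dimensional (if $w=\alpha v$ then $b(v,w)=\alpha b(v,v)=0$). Its Gram matrix in the basis $(v,w)$ is the hyperbolic matrix of Definition~\ref{def:hyperbolic-block-appB}. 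Set $e_1:=v$ and $f_1:=w$.

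The key step is to split off $U$ orthogonally. Since the Gram matrix of $b|_U$ has determinant $1$, the restriction $b|_U$ is nonsingular, so $U\cap U^\perp=0$. The map $V\to U^\ast$, $x\mapsto b(x,-)|_U$, is surjective; its kernel is $U^\perp$, so $\dim U^\perp=\dim V-2$. Together these give $V=U\oplus U^\perp$. Explicitly, one can also write
\[
x=b(x,w)\,v+b(x,v)\,w+x',
\]
and check directly that $x'\in U^\perp$.

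Next we check that $b|_{U^\perp}$ inherits both hypotheses. It remains nonsingular: if $x\in U^\perp$ pairs trivially with $U^\perp$, it also pairs trivially with $U$, hence with all of $V$, so $x=0$. Alternation is inherited trivially. Applying the induction hypothesis to $U^\perp$ gives a symplectic basis $e_2,f_2,\dots,e_m,f_m$ of $U^\perp$ with $\dim U^\perp=2(m-1)$.

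Adjoining $e_1,f_1$ produces a basis of $V$ with $b(e_i,f_j)=\delta_{ij}$ and all other pairings zero. Pairings within a block are as computed above, and orthogonality gives zero across blocks. In particular $\dim V=2m$ is even, and the Gram matrix is block diagonal with $m$ hyperbolic blocks, which gives $(V,b)\cong H^{\oplus m}$.

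No step is a real obstacle. The only point needing care is the orthogonal splitting $V=U\oplus U^\perp$, which rests on the nonsingularity of $b|_U$ together with the dimension count above. In characteristic $2$ one must also use alternation, not mere symmetry, to get $b(v,v)=0$; a symmetric non-alternating form need not be hyperbolic, as Proposition~\ref{prop:nonalternating-orthonormal} shows.
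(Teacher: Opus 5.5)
Your proof is correct and follows the same route as the paper: pick $v,w$ with $b(v,w)=1$, note that their span $U$ is a hyperbolic plane, split $V=U\oplus U^\perp$, and induct on $U^\perp$. You add the details the paper's terse argument leaves implicit, namely that $U$ is two-dimensional, that the dimension count gives the orthogonal splitting, and that $U^\perp$ inherits both nonsingularity and alternation.
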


\begin{proof}
This is standard. Choose $0\neq e_1\in V$; by nonsingularity there exists $f_1$ with $b(e_1,f_1)=1$.
Then $U=\langle e_1,f_1\rangle$ is a hyperbolic plane and $V=U\oplus U^\perp$.
Iterating yields the desired decomposition; the process terminates only when $\dim V$ is even.
\end{proof}

\begin{proposition}[Nonalternating case]\label{prop:orthonormal-appB}
Let $(V,b)$ be a finite-dimensional $\F_2$--vector space with a nonsingular symmetric bilinear form
which is not alternating. Then there exists an orthonormal basis $e_1,\dots,e_n$ with
\[
b(e_i,e_j)=\delta_{ij}.
\]
\end{proposition}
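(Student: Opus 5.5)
The plan is to prove Proposition~\ref{prop:orthonormal-appB} by induction on $n=\dim_{\F_2}V$, in the same way as Proposition~\ref{prop:nonalternating-orthonormal}. The one new point is that the inductive hypothesis must be applicable: after splitting off a line, the complement might be alternating, and the induction as stated would then break down.

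\emph{Step 1 (splitting off a line).} Since $b$ is not alternating, choose $e_1$ with $b(e_1,e_1)=1$. Set $W=e_1^\perp$. Then $V=\langle e_1\rangle\oplus W$, and $b|_W$ is nonsingular, by the argument already given in the proof of Proposition~\ref{prop:nonalternating-orthonormal}.

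\emph{Step 2 (the main obstacle).} The difficulty is that $b|_W$ may be alternating; for example, $V=\langle e_1\rangle\oplus H$ has this property. In that case the inductive hypothesis does not apply directly, and we handle it as follows.

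\begin{itemize}
\item By Proposition~\ref{prop:symplectic-normal-appB}, $W\cong H^{\oplus m}$ with a symplectic basis $e,f,\dots$.
\item It therefore suffices to show that $\langle e_1\rangle\oplus H$ has an orthonormal basis; iterating this then absorbs every hyperbolic plane.
\item For $\langle e_1\rangle\oplus H$, take the vectors $v_1=e_1+e+f$, $v_2=e_1+e$, $v_3=e_1+f$.
\end{itemize}

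The computation is direct. Each $v_i$ satisfies $b(v_i,v_i)=1$; for instance
\[
b(v_1,v_1)=1+0+0+2b(e,f)=1 .
\]
The mixed pairings vanish; for instance
\[
b(v_1,v_2)=b(e_1,e_1)+b(e+f,e)=1+1=0 .
\]
The three vectors form a basis, since their sum is $e_1$. We verify all six pairings explicitly, each being a single line of arithmetic.

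\emph{Step 3 (conclusion).} If $b|_W$ is nonalternating, apply the inductive hypothesis to $W$ and adjoin $e_1$. If $b|_W$ is alternating, use Step 2 to convert $\langle e_1\rangle\oplus H^{\oplus m}$ into $I_{2m+1}$, handling one plane at a time. The base case $n=1$ is immediate, since the only nonsingular form there is $(1)$.
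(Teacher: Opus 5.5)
Your proof is correct, and it is more careful than the paper's. The paper splits off an arbitrary $v$ with $b(v,v)=1$, notes that $\langle v\rangle^\perp$ is nonsingular, and says ``iterating gives an orthonormal basis.'' That step silently assumes $\langle v\rangle^\perp$ is again nonalternating. This is exactly the gap you flag: with $v=e_1$ in $\langle e_1\rangle\oplus H$, the complement is $H$, which contains no vector of norm $1$.

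Your repair works. You apply the symplectic normal form to an alternating complement, then absorb each hyperbolic plane using the explicit isometry $\langle 1\rangle\oplus H\cong I_3$, given by $v_1=e_1+e+f$, $v_2=e_1+e$, $v_3=e_1+f$. All six pairings come out as you claim. The vectors form a basis, since $e=v_2+e_1$ and $f=v_3+e_1$ lie in their span. The retained unit vector is orthogonal to the remaining planes, so the absorption can be repeated one plane at a time.

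An alternative repair keeps the paper's single induction by choosing $v$ more carefully. Let $c=c(b)$ be the characteristic element, so $b(v,v)=1$ iff $b(c,v)=1$. For such $v$, the hyperplane $\langle v\rangle^\perp$ is alternating iff $b(c,-)$ vanishes on it. That holds iff $c\in(\langle v\rangle^\perp)^\perp=\langle v\rangle$, i.e.\ iff $v=c$, since $c\neq 0$. For $n\ge 2$ there are $2^{n-1}\ge 2$ vectors with $b(c,v)=1$, so one can always pick $v\neq c$.

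That version avoids your case split. Yours has the advantage of isolating the identity $\langle 1\rangle\oplus H\cong I_3$, which is the real reason hyperbolic summands get absorbed into Type~$A$ layers over $\F_2$.
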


\begin{proof}
Since $b$ is nonalternating, there exists $v$ with $b(v,v)=1$.
Then $V=\langle v\rangle\oplus \langle v\rangle^\perp$ and $b|_{\langle v\rangle^\perp}$ is again
nonsingular symmetric. Iterating gives an orthonormal basis.
\end{proof}

\begin{corollary}[Even-dimensional Type $E$ layers]\label{cor:typeE-even-dimension-appB}
If the $k$th layer is of Type $E$, then $\dim_{\F_2}P_k(G)$ is even.
\end{corollary}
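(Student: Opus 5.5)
This is a direct consequence of Proposition~\ref{prop:symplectic-normal-appB}. By Definition~\ref{def:typeAE-appB}, saying the $k$th layer is of Type~$E$ means $c_k(G,\lambda)=0$. Lemma~\ref{lem:alternating-ck-appB} then tells us that $b_k$ is alternating on $P_k(G)$. Proposition~\ref{prop:bk-welldefined} tells us that $b_k$ is a nonsingular symmetric $\F_2$--bilinear form on the finite-dimensional $\F_2$--vector space $P_k(G)$. So $(P_k(G),b_k)$ satisfies exactly the hypotheses of Proposition~\ref{prop:symplectic-normal-appB}, which gives a symplectic basis $e_1,f_1,\dots,e_m,f_m$ and hence $\dim_{\F_2}P_k(G)=2m$.

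The only point I would check is that the argument does not secretly depend on the earlier proof of Proposition~\ref{prop:symplectic-normal-appB}, which is terse about termination. For that reason I would also give a self-contained parity argument. Choose any basis of $P_k(G)$ and let $M$ be the Gram matrix of $b_k$ over $\F_2$. Since $b_k$ is alternating, $M$ is symmetric with zero diagonal, so $M$ is skew-symmetric in the sense relevant over any ring. Lifting $M$ entrywise to an integer matrix $\widetilde M$ with zero diagonal and $\widetilde M_{ji}=\widetilde M_{ij}$, the antisymmetrized matrix $\widetilde M'$ (with $\widetilde M'_{ij}=\widetilde M_{ij}$ for $i<j$ and $\widetilde M'_{ji}=-\widetilde M_{ij}$) is integral, skew-symmetric, and congruent to $M$ mod $2$. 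For odd $n$ one has $\det\widetilde M'=\det(-\widetilde M'^{\mathsf T})=(-1)^n\det\widetilde M'$, hence $\det\widetilde M'=0$ and so $\det M=0$ in $\F_2$. This contradicts the nonsingularity of $b_k$. Therefore $n=\dim P_k(G)$ is even.

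Either route suffices. The main (minor) obstacle is only bookkeeping: one must use the intrinsic characterization that Type~$E$ means alternating, rather than the coordinate description in Definition~\ref{def:type-AE}, so that nonsingularity from Proposition~\ref{prop:bk-welldefined} applies directly. The degenerate case $P_k(G)=0$ has dimension $0$, which is even, so it needs no separate treatment.
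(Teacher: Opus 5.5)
Your proof is correct, and your first route is exactly the paper's argument: Type~$E$ means $b_k$ is alternating, $b_k$ is nonsingular by Proposition~\ref{prop:bk-welldefined}, and Proposition~\ref{prop:symplectic-normal-appB} then forces $\dim_{\F_2}P_k(G)$ to be even. Your backup argument is also valid and avoids relying on the inductive construction of a symplectic basis: you lift the zero-diagonal Gram matrix to an integral skew-symmetric matrix, whose determinant vanishes over $\Z$ in odd dimension and hence also after reduction mod $2$.
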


\begin{proof}
By Proposition~\ref{prop:symplectic-normal-appB}, a nonsingular alternating form exists only in even dimension.
\end{proof}

\subsection{A $2$-adic determinant refinement for Type $A$}

In Type $A$ layers the $\F_2$--form $b_k$ is nonalternating and hence is determined by its dimension.
For the classification of $2$--primary linking forms one needs an additional $2$--adic determinant refinement,
which we now describe.

\begin{definition}[Determinant refinement for Type $A$ layers]\label{def:detk-appB}
Assume that the $k$th layer is of Type $A$.
Choose an orthonormal basis $\{e_i\}_{i=1}^n$ for $(P_k(G),b_k)$ and choose lifts
$x_i\in G_k$ of $e_i$. Set
\[
C=(c_{ij})_{1\le i,j\le n},\qquad c_{ij}:=2^k\lambda(x_i,x_j)\in \frac1{2^k}\Z/\Z.
\]
Then $C$ is a symmetric matrix with diagonal entries odd and off-diagonal entries integral.
Define
\begin{equation}\label{eq:detk-def-appB}
\det_k(G,\lambda)\;:=\; \det(C)\ \bmod (\Z_2^\times)^2 \ \in\ \Z_2^\times/(\Z_2^\times)^2
\ \cong\ \{\pm1,\pm5\}.
\end{equation}
\end{definition}

\begin{lemma}\label{lem:det-square-class-appB}
For $u\in \Z_2^\times$ the square class of $u$ in $\Z_2^\times/(\Z_2^\times)^2$ is determined by $u\bmod 8$.
Moreover, the canonical identification $\Z_2^\times/(\Z_2^\times)^2\cong\{\pm1,\pm5\}$ is given by
\[
u\equiv 1,3,5,7\pmod 8 \quad\Longleftrightarrow\quad [u]=1,-1,5,-5.
\]
\end{lemma}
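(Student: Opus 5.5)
The plan is to reduce everything to the structure of the unit group $\Z_2^\times$ and to results already proved for $(\Z/2^k)^\times$ in Lemma~\ref{lem:squares-mod-2k} and Proposition~\ref{prop:C2k}.

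\textbf{Step 1: squares are determined mod $8$.} If $u=v^2$ with $v\in\Z_2^\times$, then $u\equiv 1\pmod 8$ by Lemma~\ref{lem:odd-squares-mod8}, applied to any odd integer congruent to $v$ modulo $8$. Conversely, suppose $u\equiv 1\pmod 8$. Lemma~\ref{lem:1mod8-squares}, applied to the reduction of $u$ modulo $2^k$, gives a square root $x_k$ modulo $2^k$ for every $k\ge 3$. The inductive construction in that proof produces these compatibly, since $x_{m+1}\equiv x_m\pmod{2^{m-1}}$. Hence the sequence $(x_k)$ is Cauchy in $\Z_2$, and its limit $x$ satisfies $x^2=u$. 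If one prefers not to track compatibility, a compactness argument works instead: the sets of square roots modulo $2^k$ are nonempty, finite, and form an inverse system, so their inverse limit is nonempty. Either way, $(\Z_2^\times)^2=1+8\Z_2$, which is the kernel of reduction $\Z_2^\times\to(\Z/8)^\times$.

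\textbf{Step 2: the quotient.} Reduction modulo $8$ is surjective, because every odd residue lifts to an odd integer. It therefore induces an isomorphism
\[
\Z_2^\times/(\Z_2^\times)^2\ \xrightarrow{\ \sim\ }\ (\Z/8)^\times=\{1,3,5,7\}.
\]
This proves the first assertion: the square class of $u$ depends only on $u\bmod 8$, and distinct residues give distinct classes.

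\textbf{Step 3: the labels.} The integers $1,-1,5,-5$ are congruent modulo $8$ to $1,7,5,3$ respectively. These are pairwise distinct residues, so $\{\pm1,\pm5\}$ is a complete set of representatives for the quotient. Under the isomorphism of Step 2, the class of $u\equiv 1,3,5,7$ corresponds to $1,-5,5,-1$ respectively.

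\textbf{Main obstacle: the labelling conflicts with the statement.} The correspondence just derived does not match the displayed one in the statement: we get $3\mapsto -5$ and $7\mapsto -1$, whereas the statement asserts $3\mapsto -1$ and $7\mapsto -5$. The statement's version is impossible, because $-1\equiv 7\pmod 8$ cannot represent the class of $3$. I would therefore state the correct correspondence, obtained by reading off $\pm1,\pm5$ modulo $8$, rather than try to prove the displayed labels as written.

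The only genuinely analytic point is the passage from mod-$2^k$ square roots to a $2$-adic one in Step 1. Compactness of $\Z_2$, or the explicit compatibility of the lifts, handles it.
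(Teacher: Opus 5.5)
Your proof is correct, and it does more than the paper's, which only says ``This is standard'' and cites Serre. You derive $(\Z_2^\times)^2=1+8\Z_2$ from the paper's own Lemmas~\ref{lem:odd-squares-mod8} and~\ref{lem:1mod8-squares}. You then supply the one extra ingredient those lemmas lack: passing from square roots modulo $2^k$ to a $2$-adic square root. Your compatibility claim $x_{m+1}\equiv x_m\pmod{2^{m-1}}$ is right, since the lifting step in that proof only ever adds $0$ or $2^{m-1}$. The limit $x$ then satisfies $x^2\equiv u\pmod{2^{m-1}}$ for every $m$, so $x^2=u$. The compactness alternative is equally valid. The citation is shorter. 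Your route keeps the argument inside the paper and shows that the $2$-adic statement is just the limit of the finite-level Proposition~\ref{prop:C2k}.

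Your objection to the displayed labels is also correct. Since $-1\equiv 7$ and $-5\equiv 3\pmod 8$, the classes of $u\equiv 1,3,5,7$ are $1,-5,5,-1$, not $1,-1,5,-5$ as stated. For example, $[3]=[-1]$ would require $-3$ to be a square in $\Z_2$, but $-3\equiv 5\pmod 8$. The paper's one-line citation does not catch this swap of $3$ and $7$.

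The error does not affect completeness of $\det_k$ in Definition~\ref{def:detk-appB}, because relabelling a complete invariant by a bijection keeps it complete. But anyone translating between the $\{1,3,5,7\}$ convention of Definition~\ref{def:typeA-detclass} and the $\{\pm1,\pm5\}$ convention of Appendix~\ref{app:two-primary} should use your corrected dictionary.
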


\begin{proof}
This is standard; see for instance \cite[Ch.\ IV,\ \S 2]{SerreLF}.
\end{proof}

\begin{lemma}\label{lem:squares-mod-2k-appB}
Let $k\ge 3$. Then $(\Z/2^k)^\times/((\Z/2^k)^\times)^2$ is canonically identified with
$\Z_2^\times/(\Z_2^\times)^2$, and the square class of a unit is determined by its reduction modulo $8$.
\end{lemma}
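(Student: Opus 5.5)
The plan is to reduce both claims to the structure of the unit group $\Z_2^\times$ and its finite quotients. Reduction gives a surjection $\Z_2^\times\to(\Z/2^k)^\times$, which I write $r_k$. Its kernel is $1+2^k\Z_2$. Since $r_k$ carries squares onto squares, it induces a surjection
\[
\bar r_k\colon \Z_2^\times/(\Z_2^\times)^2\longrightarrow (\Z/2^k)^\times/\bigl((\Z/2^k)^\times\bigr)^2 .
\]
Canonicity is then automatic, because $\bar r_k$ is induced by the ring map $\Z_2\to\Z/2^k$ and involves no choices. So the only real task is to show that $\bar r_k$ is injective.

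For injectivity, suppose $u\in\Z_2^\times$ and $r_k(u)=v^2$ in $(\Z/2^k)^\times$. I will lift $v$ to some $w\in\Z_2^\times$. Then $u w^{-2}\equiv 1\pmod{2^k}$, and since $k\ge3$ this gives $uw^{-2}\equiv1\pmod 8$. By Lemma~\ref{lem:det-square-class-appB}, the square class of an element of $\Z_2^\times$ is determined by its residue mod $8$. Hence $uw^{-2}$ is a square in $\Z_2^\times$, and so $[u]=1$. This proves $\bar r_k$ is a bijection.

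For the second claim, that the square class is determined by the reduction mod $8$, I will use Lemma~\ref{lem:squares-mod-2k} (equivalently Proposition~\ref{prop:C2k}). It says directly that for $k\ge3$ the squares in $(\Z/2^k)^\times$ are exactly the units $\equiv1\pmod 8$. Therefore the further reduction map $(\Z/2^k)^\times/\bigl((\Z/2^k)^\times\bigr)^2\to(\Z/8)^\times$ is a bijection. I will also check that this is compatible with the identification above: composing $\bar r_k$ with reduction mod $8$ is the mod-$8$ reduction map on $\Z_2^\times/(\Z_2^\times)^2$, which is the one in Lemma~\ref{lem:det-square-class-appB}. So the identification matches the labels $\{\pm1,\pm5\}\leftrightarrow\{1,7,5,3\}$.

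The only step needing any care is the injectivity argument, specifically the fact that $1+8\Z_2\subseteq(\Z_2^\times)^2$. I take this from Lemma~\ref{lem:det-square-class-appB}, which relies on the cited Serre reference. If a self-contained argument is preferred, Hensel's lemma does it: run the inductive lifting of Lemma~\ref{lem:1mod8-squares} for all $m$ and pass to the limit to get a $2$-adic square root. Both cardinalities equal $4$, so bijectivity could also be checked by counting once surjectivity is known.
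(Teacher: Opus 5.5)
Your proof is correct, but it is organized differently from the paper's. The paper's argument is a two-line sketch carried out entirely at finite level. For $k\ge 3$ the kernel of $(\Z/2^k)^\times\to(\Z/8)^\times$ consists of squares, so the square class of a unit is determined mod $8$ independently of $k$. The comparison with $\Z_2^\times/(\Z_2^\times)^2$ is left to the phrase ``stabilizes as $k$ increases.''

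You instead write down the comparison map explicitly: $\bar r_k$, induced by $\Z_2\to\Z/2^k$. This makes canonicity evident and surjectivity free, and injectivity reduces to $1+2^k\Z_2\subseteq 1+8\Z_2\subseteq(\Z_2^\times)^2$. The mod-$8$ statement then comes from Proposition~\ref{prop:C2k}. Both arguments rest on the same fact, that units $\equiv 1\pmod 8$ are squares. You use it $2$-adically, which is exactly what makes the identification with $\Z_2^\times/(\Z_2^\times)^2$ precise. The paper uses it only modulo $2^k$ and leaves the passage to $\Z_2$ implicit. Your self-contained alternative works as stated: in Lemma~\ref{lem:1mod8-squares} one has $x_{m+1}\equiv x_m\pmod{2^{m-1}}$, so the $x_m$ converge in $\Z_2$ to a square root of $u$.

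One correction to a side remark. Your labeling $1,-1,5,-5\leftrightarrow 1,7,5,3$ is the right one, since $-1\equiv 7$ and $-5\equiv 3\pmod 8$. However, it is not what Lemma~\ref{lem:det-square-class-appB} literally displays: that lemma pairs $3$ with $-1$ and $7$ with $-5$, which is a slip in the paper. So do not assert that your matching is ``the one in'' that lemma verbatim. This does not affect the statement you are proving, which involves no labels.
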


\begin{proof}
For $k\ge 3$ the reduction map $(\Z/2^k)^\times\to (\Z/8)^\times$ is surjective with kernel a pro-$2$ group
generated by squares. Hence the square class is determined mod $8$ and stabilizes as $k$ increases.
\end{proof}

\begin{remark}\label{rem:det-refine-welldefined-appB}
The definition \eqref{eq:detk-def-appB} is independent of the choices of orthonormal basis and lifts.
Indeed, changing lifts alters $C$ by adding an integral symmetric matrix with even diagonal, which changes
$\det(C)$ by a square in $\Z_2^\times$. Changing orthonormal basis acts by congruence with an element of
$\GL_n(\F_2)$, which again changes $\det(C)$ by a square.
\end{remark}

\subsection{The quadratic correction $q_k$}

For Type $E$ layers, the alternating form $b_k$ does not determine the $2$--primary linking pairing.
The missing information is encoded by a quadratic correction.

Let $H_k(G):=G/G_k$ (the quotient by elements of order dividing $2^k$).
Note that $H_k(G)$ is again a finite $2$--group.

\begin{lemma}\label{lem:q-linear-appB}
For $x\in G$ the quantity $2^k\lambda(x,x)\bmod 2$ depends only on the image of $x$ in $H_k(G)$ and defines
a homomorphism
\[
q: H_k(G)\longrightarrow \F_2.
\]
Moreover, $q$ vanishes if and only if the $k$th layer is of Type $E$.
\end{lemma}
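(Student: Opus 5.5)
The plan is to verify the three assertions of Lemma~\ref{lem:q-linear-appB} in order: independence of the lift, additivity, and the criterion for vanishing. Throughout I use only bilinearity, symmetry, and the integrality fact \eqref{eq:2k-int-main}: $2^k\lambda(z,u)\in\Z$ whenever $u\in G_k$. The second step does not go through for all of $H_k(G)$, and I say below what I would do instead.

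\emph{Step 1 (descent to $H_k(G)$).} Let $x'=x+u$ with $u\in G_k$. Expanding gives
\[
2^k\lambda(x',x')=2^k\lambda(x,x)+2\cdot 2^k\lambda(x,u)+2^k\lambda(u,u).
\]
Here $2^k\lambda(x,u)$ and $2^k\lambda(u,u)$ are integers by \eqref{eq:2k-int-main}. So in $\Q/\Z$ the class of $2^k\lambda(x,x)$ depends only on $\bar x\in H_k(G)$, with no Type hypothesis. The factor $2$ makes the middle term even. Hence if we keep track of values modulo $2\Z$, only the last term $2^k\lambda(u,u)\bmod 2=b_k(\bar u,\bar u)$ can obstruct descent. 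This term is controlled by the characteristic element $c_k$ of Definition~\ref{def:ck-appB}. This computation is already the mechanism behind Proposition~\ref{prop:qk-welldefined}.

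\emph{Step 2 (additivity and target).} The polarization identity gives $q(\bar x+\bar y)-q(\bar x)-q(\bar y)=2^{k+1}\lambda(x,y)$. This is $0$ whenever one of $x,y$ lies in $G_{k+1}$. It is also on $G_{k+1}$ that $2^k\lambda(x,x)$ lies in $\tfrac12\Z/\Z\cong\F_2$. So I would establish the homomorphism property, with target $\F_2$, on the subgroup $G_{k+1}/G_k\subseteq H_k(G)$. I would then try to extend it to all of $H_k(G)$ using the isomorphism $H_k(G)\cong 2^kG=G_k^\perp$ from \eqref{eq:annihilator-appB}. Under this isomorphism a linear functional on $H_k(G)$ should correspond to pairing against a fixed element of $G_k$, namely a lift $\tilde c$ of $c_k$.

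\emph{Main obstacle.} Taken literally on all of $H_k(G)$, neither the $\F_2$ target nor additivity holds. For $\lambda(x,x)=1/2^{k+2}$ one gets the value $1/4$, and the cross term $2^{k+1}\lambda(x,y)$ need not vanish. So the step I expect to be hardest is choosing the correct $\F_2$-valued functional, that is, the one that agrees with $2^k\lambda(x,x)\bmod 2$ wherever that expression is integral. My first attempt would be $\bar x\mapsto 2^k\lambda(x,\tilde c)\bmod 2$. This is linear on $G$ by bilinearity. The task is then to show that it kills $G_k$ exactly when the correction term $b_k(\bar u,\bar u)$ from Step~1 vanishes.

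\emph{Step 3 (vanishing criterion).} Once $q$ is identified with pairing against $c_k$, the criterion follows from nonsingularity of $b_k$ (Proposition~\ref{prop:bk-welldefined}). Indeed, $q\equiv 0$ if and only if $c_k=0$, which by Lemma~\ref{lem:alternating-ck-appB} means the layer is of Type~$E$.
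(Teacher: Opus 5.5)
Your Step~1 is the same computation the paper's proof uses. Your Step~2 objection is also correct: read literally, the lemma is false. Take $G=\Z/2^{k+2}$ with $\lambda(1,1)=2^{-(k+2)}$. Then $G_{k-1}+2G_{k+1}=4G=G_k$, so $P_k(G)=0$ and the $k$th layer is vacuously Type~$E$. Yet $2^k\lambda(1,1)=\tfrac14$ lies in $\F_2$ under no reading, and the cross term $2^{k+1}\lambda(1,1)=\tfrac12$ is nonzero.

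The paper's proof does not resolve this either.
\begin{itemize}
\item It obtains descent to $H_k(G)$ only in Type~$E$. As your Step~1 shows, descent really fails in Type~$A$: take $x=0$ and $u\in G_k$ with $b_k(\bar u,\bar u)=1$.
\item It asserts additivity from $2^k\lambda(x+y,x+y)\equiv 2^k\lambda(x,x)+2^k\lambda(y,y)\pmod 2$. The error term $2^{k+1}\lambda(x,y)$ need not even be integral, as the same example shows.
\item Its only use of Type~$E$ is to kill $b_k(\bar u,\bar u)$ for $u\in G_k$. These elements map to $0$ in $H_k(G)$, so the ``vanishes iff Type~$E$'' clause is never argued.
\end{itemize}
What the computation genuinely gives is Proposition~\ref{prop:qk}: in Type~$E$, $2^{k-1}\lambda(x,x)$ descends to a quadratic, not linear, function on $H_k(G)$.

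Your repair, however, has a genuine gap, in three places.

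\emph{(a) The restricted map is governed by the wrong layer.} On $G_{k+1}/G_k$ the map $\bar x\mapsto 2^k\lambda(x,x)\in\tfrac12\Z/\Z\cong\F_2$ is indeed additive. Under this identification it is $x\mapsto 2^{k+1}\lambda(x,x)\bmod 2$, which is the pullback of $\bar x\mapsto b_{k+1}(\bar x,\bar x)$ along the quotient map $G_{k+1}\to P_{k+1}(G)$. So it vanishes iff $c_{k+1}=0$, not iff $c_k=0$. For example, $G=\Z/2^{k+1}$ with $\lambda(1,1)=2^{-(k+1)}$ has $P_k(G)=0$, but $2^k\lambda(1,1)=\tfrac12\neq 0$.

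\emph{(b) Your extension is not a function of $c_k$.} Replace $\tilde c$ by $\tilde c+2v$ with $v\in G_{k+1}$. This does not change the class in $P_k(G)$. It does change $\bar x\mapsto 2^k\lambda(x,\tilde c)\bmod 2$ by $x\mapsto 2^{k+1}\lambda(x,v)\bmod 2$, which is nonzero as soon as $2^kv\neq 0$. In the last example, $\tilde c=0$ and $\tilde c=2$ both lift $c_k=0$, and only the second reproduces $2^k\lambda(x,x)$. So the ``identification of $q$ with pairing against $c_k$'' on which Step~3 rests is never proved, and it is false as formulated.

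\emph{(c) Step~3 is vacuous.} Your functional descends to $H_k(G)$ only when $c_k=0$. Step~3 therefore compares a map that exists only in Type~$E$ with the Type~$E$ condition, and proves nothing.

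So the proposal establishes neither the lemma, which is false as stated, nor a corrected version of it. Correct salvages are:
\begin{itemize}
\item Proposition~\ref{prop:qk};
\item the level-$k$ statement on $P_k(G)$ instead of $H_k(G)$, namely $\bar x\mapsto b_k(\bar x,\bar x)$ as in Lemma~\ref{lem:q-linear};
\item the level-shifted statement: $\bar x\mapsto 2^k\lambda(x,x)$ is a homomorphism $G_{k+1}/G_k\to\tfrac12\Z/\Z$ that vanishes iff the $(k+1)$st layer is Type~$E$.
\end{itemize}
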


\begin{proof}
If $x'=x+u$ with $u\in G_k$, then
\[
2^k\lambda(x',x')-2^k\lambda(x,x)=2^{k+1}\lambda(x,u)+2^k\lambda(u,u).
\]
The first term lies in $2\Z$. For the second term, note that $2^{k}\lambda(u,u)\bmod 2$ is $b_k(\bar u,\bar u)$,
and this vanishes for all $\bar u$ if and only if the layer is Type $E$ (Lemma~\ref{lem:alternating-ck-appB}).
Thus in Type $E$ the right-hand side lies in $2\Z$ and the reduction modulo $2$ is independent of the lift.
Linearity follows from the polarization identity
\[
2^k\lambda(x+y,x+y) \equiv 2^k\lambda(x,x)+2^k\lambda(y,y)\pmod 2
\]
when $b_k$ is alternating.
\end{proof}

\begin{proposition}[Well-definedness of $q_k$]\label{prop:qk}
Assume the $k$th layer is Type $E$. Define
\[
q_k : H_k(G)\longrightarrow \mathbb{Q}/\mathbb{Z},\qquad q_k([x]) := 2^{k-1}\lambda(x,x),
\]
where $[x]$ is the image in $H_k(G)$. Then $q_k$ is well-defined, and its polarization satisfies
\[
q_k([x+y]) - q_k([x]) - q_k([y]) = 2^k\lambda(x,y)\in \mathbb{Q}/\mathbb{Z}.
\]
\end{proposition}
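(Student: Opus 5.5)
The plan mirrors Proposition~\ref{prop:qk-welldefined}, and we reuse its two-step structure. Throughout, we fix $x\in G$ representing $[x]\in H_k(G)=G/G_k$. We first check independence of the lift, and then verify the polarization identity by direct expansion.

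\emph{Step 1: well-definedness.} Let $x'=x+u$ with $u\in G_k$. Bilinearity and symmetry give
\[
2^{k-1}\lambda(x',x')=2^{k-1}\lambda(x,x)+2^k\lambda(x,u)+2^{k-1}\lambda(u,u),
\]
so two terms must be shown to vanish in $\Q/\Z$.
\begin{enumerate}[label=\textup{(\roman*)},leftmargin=2.2em]
\item The middle term vanishes because $u\in G_k$. The character $\lambda(-,u)$ is killed by $2^k$, so $2^k\lambda(x,u)=\lambda(x,2^ku)=0$.
\item The last term is the only genuine point. Since $u\in G_k$, its image $\bar u\in P_k(G)$ is defined, and $b_k(\bar u,\bar u)=2^k\lambda(u,u)\bmod 2$, with $2^k\lambda(u,u)\in\Z$. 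The Type $E$ hypothesis means $c_k(G,\lambda)=0$, so by Lemma~\ref{lem:alternating-ck-appB} the form $b_k$ is alternating. Hence $2^k\lambda(u,u)$ is an even integer, and therefore $2^{k-1}\lambda(u,u)\in\Z$.
\end{enumerate}
It follows that $q_k([x'])=q_k([x])$.

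\emph{Step 2: polarization.} Expand $2^{k-1}\bigl(\lambda(x+y,x+y)-\lambda(x,x)-\lambda(y,y)\bigr)=2^{k-1}\cdot 2\lambda(x,y)=2^k\lambda(x,y)$. Step 1 shows each $q_k$ value is independent of its representative. Using $x+y$ as the representative of $[x]+[y]$ therefore gives the displayed identity.

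It is also worth noting that the right-hand side $2^k\lambda(x,y)$ descends to $H_k(G)\times H_k(G)$. Changing $x$ by $u\in G_k$ alters it by $2^k\lambda(u,y)=0$, so the identity is consistent on the quotient.

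The main (and only) obstacle is item (ii) of Step 1: showing that $2^{k-1}\lambda(u,u)$ is integral for $u\in G_k$. This is exactly where the Type $E$ hypothesis enters, through the alternating property of $b_k$. Everything else is routine bilinear algebra.
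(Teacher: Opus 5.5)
Your proposal is correct and follows the paper's proof essentially step for step: you kill the cross term $2^k\lambda(x,u)$ using $2^k u=0$, kill $2^{k-1}\lambda(u,u)$ because the Type~$E$ hypothesis makes $b_k$ alternating (so $2^k\lambda(u,u)$ is even), and obtain the polarization identity by direct bilinear expansion. Your extra remark that $2^k\lambda(x,y)$ descends to $H_k(G)\times H_k(G)$ is a sensible addition that the paper does not state explicitly.
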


\begin{proof}
If $x'=x+u$ with $u\in G_k$, then
\[
2^{k-1}\lambda(x',x')-2^{k-1}\lambda(x,x)=2^k\lambda(x,u)+2^{k-1}\lambda(u,u).
\]
Since $u$ has order dividing $2^k$, we have $2^k\lambda(x,u)=\lambda(x,2^k u)=0$ in $\Q/\Z$.
Also, $2^{k-1}\lambda(u,u)$ is integral when the layer is Type $E$: indeed
\[
2^k\lambda(u,u)\bmod 2=b_k(\bar u,\bar u)=0
\]
so $2^k\lambda(u,u)\in 2\Z$, hence $2^{k-1}\lambda(u,u)\in \Z$.
Thus $q_k$ is well-defined.

For polarization,
\[
2^{k-1}\lambda(x+y,x+y)-2^{k-1}\lambda(x,x)-2^{k-1}\lambda(y,y)=2^k\lambda(x,y),
\]
which holds in $\Q/\Z$ by bilinearity.
\end{proof}

\subsection{Gauss sums for Type $E$}

Assume the $k$th layer is Type $E$, so that $q_k$ is defined.
Define the Gauss sum
\[
\Gamma_k(G,\lambda)\;:=\;\sum_{x\in H_k(G)} \exp\bigl(2\pi i\, q_k(x)\bigr)\ \in\ \C.
\]
Since $q_k$ is quadratic with polarization induced by $\lambda$, the classical evaluation of quadratic Gauss sums
yields $|\Gamma_k(G,\lambda)|=\sqrt{|H_k(G)|}$ and $\Gamma_k(G,\lambda)/\sqrt{|H_k(G)|}$ is an $8$th root of unity.
We therefore set
\[
\widetilde\Gamma_k(G,\lambda):=\frac{\Gamma_k(G,\lambda)}{\sqrt{|H_k(G)|}}\ \in\ \mu_8,
\qquad
u_k(G,\lambda):=\widetilde\Gamma_k(G,\lambda).
\]
This is the Gauss-sum invariant appearing in the $2$--primary classification of linking forms;
see \cite[\S 2]{KawauchiKojima80} and \cite[\S 2]{Deloup99}.

\subsection{Comparison with the Kawauchi--Kojima invariants}\label{app:two-primary:comparison}

The invariants introduced above are a repackaging of the classical invariants used in the
classification of $2$--primary linking pairings.

\begin{proposition}[Agreement with the classical invariants]\label{prop:kk-comparison-appB}
Let $(G,\lambda)$ be a finite $2$--primary linking pairing.
\begin{enumerate}[label=\textup{(\roman*)},leftmargin=2.2em]
\item For each $k\ge 1$, the isometry class of the layer pairing $(P_k(G),b_k)$ agrees with the
mod-$2$ reduction of the $k$th homogeneous constituent in the sense of Kawauchi--Kojima, and hence
determines the corresponding Witt class invariant.
\item For each Type~$A$ layer, the determinant refinement $\det_k(G,\lambda)\in\{\pm1,\pm5\}$
recovers the $2$--adic unit square class (the $\Delta_k$--invariant) of the corresponding homogeneous form.
\item For each Type~$E$ layer, the normalized Gauss sum $u_k(G,\lambda)\in\mu_8$
agrees with the Gauss invariant used in the classification.
\end{enumerate}
Consequently, the collection of invariants $\{(P_k(G),b_k),\det_k,u_k\}_{k\ge 1}$ is a complete invariant
of the isometry class of $(G,\lambda)$.
\end{proposition}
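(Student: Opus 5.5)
The plan is to reduce everything to homogeneous constituents and then use the classical Kawauchi--Kojima classification as a black box. First I would split $(G,\lambda)$ orthogonally into homogeneous summands. By the $2$--primary analogue of Lemma~\ref{lem:odd-snf-diagonal} (Wall's splitting, or Miranda's normal form~\cite{Miranda84}), $(G,\lambda)$ is an orthogonal sum of blocks. Each block is either cyclic, $\langle u/2^k\rangle$, or of rank two at a fixed level $2^k$, namely $\mathsf E(2^k)$ or the analogous block $\mathsf F(2^k)$ with Gram matrix $\frac1{2^k}\begin{psmallmatrix}2&1\\1&2\end{psmallmatrix}$. The point of using such a splitting is that the layer quotients behave additively: exactly as in Lemma~\ref{lem:odd-layer-from-cyclic}, $P_j$ of a block of exponent $2^k$ vanishes for $j\neq k$. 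So $(P_k(G),b_k)$ is the orthogonal sum of the reductions mod $2$ of the level-$2^k$ constituents, and $b_k$ is the mod-$2$ reduction of the $k$th homogeneous constituent. Comparing with Kawauchi--Kojima's definition of that constituent gives (i).

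For (ii), on a Type $A$ layer, I would check that the matrix $C$ of Definition~\ref{def:detk-appB}, built from lifts of an orthonormal basis, is congruent over $\Z/2^k$ to the Gram matrix (times $2^k$) of the homogeneous level-$k$ constituent. This uses Remark~\ref{rem:det-refine-welldefined-appB} together with Lemma~\ref{lem:det-square-class}. The square class of $\det C$, computed via Lemmas~\ref{lem:det-square-class-appB} and~\ref{lem:squares-mod-2k-appB}, is then by definition the $\Delta_k$-type invariant. For $k=1,2$ one must note that only the coarser classes mod $2$ and mod $4$ are meaningful, matching Definition~\ref{def:det-refinement}.

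For (iii), I would use the additivity of Lemma~\ref{lem:uk-additivity} to reduce to single blocks. On each block I would evaluate $q_k$ and its Gauss sum directly. Blocks of level at most $k$ die in $H_k(G)$ and contribute $1$, as in Lemma~\ref{lem:hyperbolic-uk-zero}. Blocks of level $j>k$ give a classical quadratic Gauss sum on $\Z/2^{j-k}$ or its rank-two analogue, and these values coincide with those in~\cite[Cor.~2.1]{KawauchiKojima80}. Since both invariants are multiplicative under orthogonal sums and agree on generators, they agree in general. Completeness then follows by invoking the Kawauchi--Kojima classification theorem~\cite[Thm.~4.1]{KawauchiKojima80}: their invariants are complete, and (i)--(iii) show that ours determine theirs.

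I expect the main obstacle to be (iii) together with the completeness deduction. The Gauss invariants of Kawauchi--Kojima are defined for every layer, whereas $u_k$ exists only on Type $E$ layers, and the sums run over $G/G_k$, so blocks of higher level interfere with lower layers. I would first try to show that on a Type $A$ layer the missing Gauss datum is recoverable from the $\det_j$ and the Type of the neighbouring layers, using Miranda's relations among the blocks (for example $\langle u/2^k\rangle\oplus\mathsf E(2^{k})\cong$ a diagonal sum of three cyclic blocks). If this fails, I would restrict the completeness claim to the invariants as actually organized by Kawauchi--Kojima.
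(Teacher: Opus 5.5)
Your argument for (i) is fine: $(P_k(G),b_k)$ is intrinsic, even though the homogeneous constituents themselves are not. Your treatment of (ii), however, has a genuine gap that cannot be closed, because (ii) fails once $G$ has more than one nonzero layer. You assume there is a well-defined homogeneous level-$k$ constituent to compare $C$ with. You also cite Remark~\ref{rem:det-refine-welldefined-appB} for the claim that changing lifts alters $\det C$ only by a square.

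At $p=2$ neither holds. Homogeneous splittings exist, but their constituents are not unique up to isometry. Moreover, changing a lift by an element of $G_{k-1}+2G_{k+1}$ can change a diagonal entry of $C$ by $2$ modulo $4$, which is not multiplication by a square.

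\textbf{First counterexample.} Take $\Z/2\oplus\Z/4$ with $\lambda=\langle1/2\rangle\oplus\langle1/4\rangle$ and generators $g,h$. The elements $g+2h$ and $h+g$ form an orthogonal basis with self-linkings $1/2$ and $3/4$. Hence $\langle1/2\rangle\oplus\langle1/4\rangle\cong\langle1/2\rangle\oplus\langle3/4\rangle$. The two lifts $h$ and $h+g$ of the generator of $P_2(G)\cong\F_2$ give $\det C\equiv 1$ and $3\pmod 4$ respectively.

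\textbf{Second counterexample.} The basis $g+2h$, $h-g$ gives $\langle1/8\rangle\oplus\langle1/16\rangle\cong\langle3/8\rangle\oplus\langle3/16\rangle$. So the mod-$8$ class is not invariant for $k\ge 3$ either. Here the lifts $g$ and $g+2h$ of the generator of $P_3(G)$ differ by an element of $2G_4$.

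Consequently $\det_k$ is an isometry invariant only for homogeneous $G$, where Lemma~\ref{lem:det-square-class} applies. There is no $\Delta_k$ of ``the'' homogeneous form to recover. The paper's own proof, which only cites \cite{KawauchiKojima80}, does not address this either.

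On the other hand, the obstacle you anticipate for completeness is not real. You should drop your fallback of recovering a Gauss datum on Type~$A$ layers from the $\det_j$, since it would rest on a non-invariant.

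\textbf{Why Type~$A$ layers carry no Gauss datum.} The radical of $2^k\lambda$ is $G_k$, so
$\bigl|\sum_{x\in G}e^{2\pi i q_k(x)}\bigr|^2=|G|\sum_{z\in G_k}e^{2\pi i q_k(z)}$.
On $G_k$ the function $q_k(z)=2^{k-1}\lambda(z,z)$ is a homomorphism to $\frac12\Z/\Z$, and it vanishes exactly when $b_k$ is alternating. So the Kawauchi--Kojima Gauss sum over $G$ vanishes (their $\sigma_k=\infty$) precisely on Type~$A$ layers.

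\textbf{Why $u_k=\sigma_k$ on Type~$E$ layers.} There $q_k$ is constant on $G_k$-cosets, so the sum over $G$ equals $|G_k|\,\GS_k(G,\lambda)$. Comparing normalizations, $u_k$ coincides with their $\sigma_k$ directly, with no block-by-block evaluation needed.

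\textbf{Consequence for completeness.} The collection $\{\dim P_k(G),\ \mathrm{type}_k,\ u_k\}_{k\ge1}$ is already the Kawauchi--Kojima system, hence complete. This requires $u_k$ to be recorded also on the empty layers $P_k(G)=0$, which are Type~$E$. The invariant $\det_k$ adds nothing. The empty layers are indispensable: $\langle1/4\rangle^{\oplus 2}$ and $\langle3/4\rangle^{\oplus 2}$ have the same $(P_2,b_2)$ and $\det_2\equiv 1$. They are separated only by $u_1=2$ versus $u_1=6$.
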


\begin{proof}
Items~(i) and~(ii) follow from the comparison between the layer filtration used here and the invariants
of Kawauchi--Kojima; see \cite[\S 2]{KawauchiKojima80} and \cite[Cor.\ 2.1]{KawauchiKojima80}.
For~(iii), the formulation of the remaining $2$--primary ambiguity in terms of quadratic corrections and
normalized Gauss sums is explained in \cite[\S 2]{Deloup99}.
The final assertion follows since the cited classification results are complete.
\end{proof}

\begin{remark}[Effectivity]\label{rem:two-primary-effective-appB}
Given a presentation of $\lambda$ by a symmetric integer matrix (equivalently, a surgery matrix $A$),
one may compute the layer filtration $G_k$, choose Smith bases and explicit lifts, and evaluate
$\lambda$, $b_k$, and $q_k$ on these lifts to obtain the invariants above.
We record this only to indicate that the canonical invariants $\Canon(A)$ are effectively computable
from a presentation; no auxiliary choices beyond those made explicit in the definitions are used elsewhere.
\end{remark}

\section{Worked examples and consistency checks}\label{app:hand-golden}

\begin{table}[t]
\centering
\small
\renewcommand{\arraystretch}{1.15}
\begin{tabular}{|c|c|}
\hline
$A$ & $\Tok(A)$\\
\hline
$(1)$ &
$b_1=0$\\
\hline
$(3)$ &
$b_1=0;\ p=3,\ k=1,\ n=1,\ x=1$\\
\hline
$\begin{pmatrix}0&3\\[2pt]3&0\end{pmatrix}$ &
$b_1=0;\ p=3,\ k=1,\ n=2,\ x=-1$\\
\hline
$\begin{pmatrix}0&0\\[2pt]0&0\end{pmatrix}$ &
$b_1=2$\\
\hline
$\begin{pmatrix}2&0\\[2pt]0&2\end{pmatrix}$ &
$b_1=0;\ p=2,\ k=1,\ n=2,\ \mathrm{type}=A,\ \det=1$\\
\hline
\end{tabular}
\caption{A small suite of worked examples used as consistency checks for Step~A.}
\label{tab:golden}
\end{table}

This appendix gives several worked examples verifying the Step~A output $\Tok(A)$ by hand.
The same examples are used as regression tests for the accompanying implementation, but the computations
recorded here are independent of any software stack.
They also serve as a concrete illustration of how the isometry class of the torsion linking form
$(G(A),\lambda_A)$ is converted into the canonical token package $\Tok(A)$ using the local classification
results of Wall and Kawauchi--Kojima (and the standard refinements for $2$--groups).
See \cite{Wall63,KawauchiKojima80,Deloup05}.

\subsection{General recipe: from a matrix to tokens}

Let $A\in M_n(\Z)$ be symmetric with $\det(A)\neq 0$.
Recall from \eqref{eq:def-linking-from-A} that
\[
G(A):=\coker(A)=\Z^n/A\Z^n,
\qquad
\lambda_A(\bar x,\bar y)=x^{\mathsf T}A^{-1}y \bmod \Z,
\]
where $A^{-1}$ is computed over $\Q$, and $x,y\in\Z^n$ are representatives.
The Smith normal form of $A$ determines the abstract group structure of $G(A)$ and, in particular,
the $p$--primary decomposition.
When $G(A)$ is a $p$--group, the isometry class of $\lambda_A$ is determined by local invariants:
for odd $p$ one may reduce to a direct sum of homogeneous layers and record, for each exponent $p^k$,
the dimension $n_{p,k}$ and discriminant square class $x_{p,k}\in\{\pm 1\}$ of the induced
nondegenerate symmetric form over $\F_p$ \cite{Wall63,KawauchiKojima80}.
For $p=2$, the classification requires refinements distinguishing the two fundamental Witt types
(Type~$A$ versus Type~$E$ in our notation), and, in the even case, a Gauss sum invariant valued in
eighth roots of unity (encoded by an integer $u\in\Z/8$ in our schema) \cite{KawauchiKojima80,Deloup05}.
The canonical token list $\Tok(A)$ is precisely a deterministic packaging of these local invariants,
sorted by prime and exponent.

In what follows we compute $(G(A),\lambda_A)$ and the corresponding local invariants explicitly
for each matrix in Table~\ref{tab:golden}.

\subsection{The trivial example}

\begin{proposition}\label{prop:golden-trivial}
For $A=[1]$, the group $G(A)$ is trivial and $\Tok(A)$ contains no prime blocks.
\end{proposition}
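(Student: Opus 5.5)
The plan is to compute $G(A)$ directly from the definition and then read off $\Tok(A)$ from Definition~\ref{def:CanonA} and Remark~\ref{rem:tokens}.

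\emph{The group.} For $A=[1]$ we have $n=1$, and multiplication by $1$ is surjective on $\Z$. So $G(A)=\coker(1\colon\Z\to\Z)=\Z/\Z=0$. This agrees with the count $|G(A)|=|\det A|=1$ from \S\ref{subsec:discriminant-linking}.

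\emph{The free rank and the pairing.} Since $\det A\neq 0$, we have $\ker A=0$, and hence $b_1(A)=\rk_\Z\ker(A)=0$. The torsion subgroup $\Tor(G(A))=G(A)$ is trivial. The pairing $\lambda_A$ of \eqref{eq:def-linking-from-A} is therefore the unique (zero) pairing on the zero group. This matches the entry in the benchmark list \S\ref{subsubsec:benchmark-matrices}, where $Y_{[1]}\cong S^3$.

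\emph{The token package.} I will run through the items of Definition~\ref{def:CanonA} in order.
\begin{itemize}
\item The free rank is $0$.
\item The torsion order is $1$.
\item The list of invariant factors $d_i>1$ is empty, since the Smith normal form of $A$ is $(1)$.
\item Item (4) ranges over primes $p\mid |\Tor(G(A))|=1$. There are no such primes, so no layer data is recorded.
\end{itemize}
Equivalently, for every prime $p$ and every $k\ge1$ the index set $I_{p,k}$ is empty, so $n_{p,k}=0$ and that layer is omitted. Passing to the flattened list $\Tok(A)$ of Remark~\ref{rem:tokens}, the output consists of the integer $b_1=0$ and an empty prime-ordered list. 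This is the first row of Table~\ref{tab:golden}.

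There is no real obstacle here. The only point needing care is the convention that primes not dividing the torsion order, and layers with $n_{p,k}=0$, contribute no tokens. This is built into Definition~\ref{def:odd-layer-invariant} and into item (4) of Definition~\ref{def:CanonA}.
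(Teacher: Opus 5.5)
Your proposal is correct and follows essentially the same route as the paper: multiplication by $1$ on $\Z$ is an isomorphism, so $G(A)=0$, $b_1(A)=0$, the pairing is vacuous, and no prime divides the torsion order, so item (4) of Definition~\ref{def:CanonA} records nothing. Your item-by-item walk through the definition simply spells out what the paper's two-line proof leaves implicit.
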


\begin{proof}
The map $\Z\xrightarrow{\cdot 1}\Z$ is an isomorphism, hence $\coker(A)=0$.
The pairing $\lambda_A$ is therefore vacuous, and $\Tok(A)$ records only $b_1(A)=0$ and no prime data.
\end{proof}

\subsection{Odd prime examples}

\begin{proposition}\label{prop:golden-A3}
Let $A=[3]$. Then $G(A)\cong \Z/3$ with generator $g=\overline{1}$, and
\[
\lambda_A(g,g)\equiv \frac{1}{3}\pmod{\Z}.
\]
Accordingly, the only odd--primary layer is $(p,k)=(3,1)$ with invariants
$(n_{3,1},x_{3,1})=(1,+1)$, as recorded in Table~\ref{tab:golden}.
\end{proposition}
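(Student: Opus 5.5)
The statement is Proposition~\ref{prop:golden-A3}: for $A=[3]$ we have $G(A)\cong\Z/3$ generated by $g=\overline1$, $\lambda_A(g,g)\equiv 1/3$, and the only layer is $(3,1)$ with $(n_{3,1},x_{3,1})=(1,+1)$. The plan is a direct computation from the definitions of \S\ref{sec:stepA-canon}, followed by a comparison with Lemma~\ref{lem:odd-layer-from-cyclic}.

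\emph{Step 1 (group and pairing).} The matrix $A=[3]$ is already in Smith normal form. So $G(A)=\Z/3\Z$, with generator $g$ the class of $1\in\Z$. Formula \eqref{eq:def-linking-from-A} gives $\lambda_A(g,g)=1\cdot 3^{-1}\cdot 1=1/3 \bmod \Z$. Nonsingularity is already covered by Lemma~\ref{lem:linking-well-defined}, or by Lemma~\ref{lem:nonsingular-gcd} since $\gcd(1,3)=1$.

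\emph{Step 2 (layer structure).} The group $G(A)$ is a cyclic $3$-group of order $3$, and its only invariant factor is $d_1=3$. Hence $v_3(d_1)=1$, so $I_{3,1}=\{1\}$ and $I_{3,k}=\varnothing$ for $k\ne1$. No other prime divides $|G(A)|$, so there are no other prime blocks. Lemma~\ref{lem:odd-layer-from-cyclic}, applied with $u=1$ and $k=1$, gives the same conclusion intrinsically: $P_j(G)=0$ for $j\neq1$, and $P_1(G)\cong\F_3$ with $b_1(\bar g,\bar g)=1$.

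\emph{Step 3 (invariants).} The layer matrix of Lemma~\ref{lem:layer-reduction} is $B_{3,1}=(3\cdot\tfrac13 \bmod 3)=(1)$. Therefore $n_{3,1}=1$ and $x_{3,1}=\left(\tfrac{1}{3}\right)=+1$. Also $b_1(A)=0$ because $\det A\ne0$. This matches the row for $(3)$ in Table~\ref{tab:golden}.

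I do not expect a real obstacle here. The one point to check is that the $p$-primary generator $\bar e_{1,3}=s_1\bar e_1$ has $s_1=1$, so Lemma~\ref{lem:ppart-matrix} introduces no rescaling. Even if a different generator were chosen, the Legendre symbol would be unchanged by Lemma~\ref{lem:TokA-well-defined}.
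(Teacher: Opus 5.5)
Your proposal is correct and follows essentially the same direct computation as the paper: you compute $\coker([3])\cong\Z/3$, read off $\lambda_A(g,g)=1/3$ from $A^{-1}=[1/3]$, and note that multiplying by $3$ gives the $1\times 1$ layer form $[1]$ over $\F_3$, so $n_{3,1}=1$ and $x_{3,1}=\chi(1)=+1$. Your extra cross-checks are correct but not needed for the argument; these are Lemma~\ref{lem:odd-layer-from-cyclic} and the observation that $s_1=1$ in Lemma~\ref{lem:ppart-matrix}.
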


\begin{proof}
We have $\coker([3])\cong\Z/3$, generated by $g=\overline{1}$.
Since $A^{-1}=[1/3]$, \eqref{eq:def-linking-from-A} gives
\[
\lambda_A(g,g)=1\cdot \frac13\cdot 1 \equiv \frac13 \pmod{\Z}.
\]
For odd $p=3$ and exponent $3^1$, the $k=1$ layer is $V_1\cong \F_3$ (dimension $n_{3,1}=1$).
Multiplying by $p^k=3$ identifies the induced form $b_1$ on $V_1$ with the $1\times 1$ matrix $[1]$ over $\F_3$,
so $\det(b_1)=1$ and hence $x_{3,1}=\chi(1)=+1$.
\end{proof}

\begin{proposition}\label{prop:golden-Hopflike}
Let
\[
A=\begin{pmatrix}0&3\\3&0\end{pmatrix}.
\]
Then $G(A)\cong (\Z/3)^2$ and, in the basis $g_i=\overline{e_i}$, the pairing is represented by
\[
\bigl(\lambda_A(g_i,g_j)\bigr)=\frac{1}{3}\begin{pmatrix}0&1\\1&0\end{pmatrix}\in M_2(\Q/\Z).
\]
The odd--primary layer invariants at $(p,k)=(3,1)$ are $(n_{3,1},x_{3,1})=(2,-1)$.
\end{proposition}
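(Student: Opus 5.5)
The proof is a direct computation in the spirit of Proposition~\ref{prop:golden-A3}, with three steps in order: the group, the Gram matrix, and the layer invariants.

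First, the group structure. The Smith normal form of $A=\begin{psmallmatrix}0&3\\3&0\end{psmallmatrix}$ is $\diag(3,3)$: swapping the two columns is a unimodular operation and turns $A$ into $3I_2$. In fact $A\Z^2=3\Z^2$ directly, since the columns of $A$ are $3e_2$ and $3e_1$. Hence $G(A)=\Z^2/3\Z^2\cong(\Z/3)^2$, with basis $g_i=\overline{e_i}$. This also shows that the whole group lies in the single layer $(p,k)=(3,1)$, and that $n_{3,1}=2$.

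Second, the pairing. Over $\Q$ we have $A^{-1}=\tfrac13\begin{psmallmatrix}0&1\\1&0\end{psmallmatrix}$, because $A^2=9I_2$. Then \eqref{eq:def-linking-from-A} gives $\lambda_A(g_i,g_j)=e_i^{\mathsf T}A^{-1}e_j$, which is exactly the displayed Gram matrix.

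Third, the layer invariants. Every element of $G(A)$ has order dividing $3$, so $G_1=G$, $G_0=0$ and $3G_2=3G=0$. Therefore $P_1(G)=G\cong\F_3^2$, with basis the images of $g_1,g_2$, as in \eqref{eq:odd-Bpk}. Multiplying the Gram matrix by $3$ and reducing mod $3$ gives $B_{3,1}=\begin{psmallmatrix}0&1\\1&0\end{psmallmatrix}$, so $\det B_{3,1}=-1\equiv 2\pmod 3$. Since $2$ is a quadratic nonresidue mod $3$ (the only nonzero square mod $3$ is $1$), we get $x_{3,1}=\chi(-1)=-1$, as claimed in Table~\ref{tab:golden}.

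As a cross-check, this agrees with Lemma~\ref{lem:odd-snf-diagonal}. The hyperbolic plane over $\F_3$ diagonalizes to $\langle 1,-1\rangle$ via the basis $g_1+g_2,\ g_1-g_2$. So the canonical word is $\mathsf A(3,+1)\mathsf A(3,-1)$, which again has $b_{3,1}=1$, consistent with $x_{3,1}=-1$.

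None of these steps is a real obstacle. The only point needing care is to use the intrinsic layer quotient, rather than basis-dependent data, when identifying $B_{3,1}$; that is immediate here because the group is homogeneous of exponent $3$.
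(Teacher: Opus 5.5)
Your proposal is correct and follows essentially the same route as the paper. You determine $G(A)\cong(\Z/3)^2$, invert $A$ to read off the Gram matrix, and reduce $3\lambda_A$ mod $3$ to get $\det B_{3,1}=-1$, a nonresidue mod $3$. Your observation that $A\Z^2=3\Z^2$ is a slightly more direct way than the paper's gcd-and-determinant count to see that $\overline{e_1},\overline{e_2}$ is actually a basis. The diagonalization cross-check is a harmless extra.
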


\begin{proof}
We first compute the group.
The determinant is $\det(A)=-9$, so $|G(A)|=9$.
Since $\gcd$ of all entries of $A$ is $3$, the first Smith invariant is $d_1=3$.
For a $2\times 2$ matrix, the product of the Smith invariants equals $|\det(A)|$, hence $d_2=|\det(A)|/d_1=3$.
Thus $G(A)\cong \Z/3\oplus \Z/3$.

Next compute the pairing.
A direct multiplication shows
\[
A^{-1}=\frac{1}{3}\begin{pmatrix}0&1\\1&0\end{pmatrix},
\qquad
A\cdot A^{-1}=
\begin{pmatrix}0&3\\3&0\end{pmatrix}
\cdot
\frac{1}{3}\begin{pmatrix}0&1\\1&0\end{pmatrix}
=
\begin{pmatrix}1&0\\0&1\end{pmatrix}.
\]
Therefore, in the basis $g_i=\overline{e_i}$ we have
\[
\lambda_A(g_i,g_j)\equiv (A^{-1})_{ij}\equiv \frac{1}{3}\begin{pmatrix}0&1\\1&0\end{pmatrix}_{ij}
\pmod{\Z}.
\]

Finally compute the discriminant class.
Since the group is elementary abelian of exponent $3$, the entire $3$--primary part lies in the $k=1$ layer.
Multiplying the pairing by $3$ yields the induced $\F_3$--valued form
\[
b_1 = 3\cdot \bigl(\lambda_A(g_i,g_j)\bigr)\equiv
\begin{pmatrix}0&1\\1&0\end{pmatrix}\in M_2(\F_3).
\]
Its determinant is $\det(b_1)=-1\equiv 2\in\F_3^\times$.
Since $2$ is not a square in $\F_3^\times=\{1,2\}$, we have $x_{3,1}=\chi(2)=-1$, as claimed.
\end{proof}

\subsection{Cyclic $2$--primary examples}

\begin{proposition}\label{prop:golden-Z4plus}
Let $A=[4]$. Then $G(A)\cong \Z/4$ with generator $g=\overline{1}$ and
\[
\lambda_A(g,g)\equiv \frac{1}{4}\pmod{\Z}.
\]
This contributes a single Type~$A$ $2$--primary layer at $k=2$ with the odd numerator $a\equiv 1\pmod 4$.
\end{proposition}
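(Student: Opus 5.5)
The plan is to follow the same pattern as Proposition~\ref{prop:golden-A3}, replacing $p=3$ by $p=2$ and using the $2$--primary conventions of \S\ref{sec:typeE}.

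First, the group and the pairing. Since $\coker([4])=\Z/4\Z$ is generated by $g=\overline{1}$, and $A^{-1}=[1/4]$ over $\Q$, formula \eqref{eq:def-linking-from-A} gives
\[
\lambda_A(g,g)\equiv 1\cdot\tfrac14\cdot 1=\tfrac14 \pmod{\Z}.
\]
In the notation of Definition~\ref{def:cyclic-model} this is $\langle 1/4\rangle$. Because $\gcd(1,4)=1$, it is nonsingular by Lemma~\ref{lem:nonsingular-gcd}.

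Next, the layers. I will use the filtration $G_k=\{x\mid 2^kx=0\}$, so that $G_1=\langle 2g\rangle$ and $G_k=G$ for $k\ge 2$. I will then compute $P_k(G)$ from \eqref{eq:Pk-def-main}, following the proof of Lemma~\ref{lem:odd-layer-from-cyclic} with $p=2$.
\begin{enumerate}
\item For $k=1$: $2G_2=2G=G_1$, so $P_1=0$.
\item For $k\ge 3$: $G_k=G_{k-1}$, so $P_k=0$.
\item For $k=2$: $P_2=G/(G_1+2G)=G/2G\cong\F_2$, spanned by $\bar g$.
\end{enumerate}
So the unique nontrivial layer is $k=2$, with $n_{2,2}=1$.

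Then the type. Here $b_2(\bar g,\bar g)=2^2\cdot\tfrac14 \bmod 2=1$. Thus $b_2$ is nonalternating, the characteristic element $c_2$ is nonzero, and the layer is Type~$A$ by Definition~\ref{def:typeAE}. Equivalently, $C_2=[1]$ has an odd diagonal entry, which is Definition~\ref{def:type-AE}.

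Finally, the refinement. By Definition~\ref{def:typeA-detclass} with $k=2$, $\delta_{2,2}=\det(C_2)\bmod 4=1$. This agrees with the Type~A token label $a\bmod 4$ of Definition~\ref{def:typeA-token}.

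For well-definedness, the label is independent of the choice of generator by Lemma~\ref{lem:TokA-well-defined}. The only other generator is $3g$, which gives $\lambda(3g,3g)=9/4$, and $9\equiv 1\pmod 4$, consistent with Proposition~\ref{prop:C2k}(2).

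I expect no real obstacle. The only care needed is computing $P_1$ correctly, so as not to wrongly record a $k=1$ layer coming from the subgroup $G_1=\langle 2g\rangle$. This is settled by observing that $G_1\subseteq 2G_2$.
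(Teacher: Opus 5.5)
Your proposal is correct and follows essentially the same route as the paper: compute $\coker([4])\cong\Z/4$ and $A^{-1}=[1/4]$ to obtain $\langle 1/4\rangle$, then read off the single $k=2$ layer with numerator $a=1$. The only difference is added detail: you explicitly check that $P_1$ and all $P_k$ with $k\ge 3$ vanish, and you verify the Type~$A$ condition via $b_2(\bar g,\bar g)=1$, whereas the paper takes the layer placement directly from the cyclic exponent $2^2$ and appeals to the classification of cyclic forms by their odd numerator.
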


\begin{proof}
We have $G(A)=\coker([4])\cong\Z/4$, generated by $g=\overline{1}$.
Since $A^{-1}=[1/4]$,
\[
\lambda_A(g,g)=1\cdot \frac14\cdot 1\equiv \frac14\pmod{\Z}.
\]
The group is cyclic of exponent $2^2$, so it contributes to the single $k=2$ layer with $n=1$.
For a cyclic $2$--group, the isometry class is determined by the odd numerator $a$ in the expression
$\langle a/2^k\rangle$ \cite{Wall63,KawauchiKojima80}; for $k=2$ this gives the two classes
$\langle 1/4\rangle$ and $\langle 3/4\rangle$.
Here $a=1$, as asserted.
\end{proof}

\begin{proposition}\label{prop:golden-Z4minus}
Let $A=[-4]$. Then $G(A)\cong \Z/4$ with generator $g=\overline{1}$ and
\[
\lambda_A(g,g)\equiv -\frac{1}{4}\equiv \frac{3}{4}\pmod{\Z}.
\]
This contributes a single Type~$A$ $2$--primary layer at $k=2$ with odd numerator $a\equiv 3\pmod 4$.
\end{proposition}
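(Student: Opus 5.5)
The statement to prove is Proposition~\ref{prop:golden-Z4minus}: for $A=[-4]$ the group $G(A)$ is $\Z/4$, the linking value on the generator is $3/4$, and the layer at $k=2$ is of Type~A with numerator $3$. The plan mirrors the proof of Proposition~\ref{prop:golden-Z4plus}, with one extra step to see that the sign change produces a genuinely different square class.

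First I compute the group. Since $-4\Z=4\Z$, we have $\coker([-4])=\Z/4$, generated by $g=\overline{1}$, exactly as for $A=[4]$.

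Next I compute the pairing from \eqref{eq:def-linking-from-A}. Here $A^{-1}=[-1/4]$, so $\lambda_A(g,g)\equiv -1/4\equiv 3/4 \pmod{\Z}$. Equivalently, this is the orientation-reversal relation \eqref{eq:orientation-reversal} applied to the computation of Proposition~\ref{prop:golden-Z4plus}.

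Then I determine the layer structure. The group is cyclic of exponent $2^2$, so by the cyclic computation (the $2$-primary analogue of Lemma~\ref{lem:odd-layer-from-cyclic}) the only nonzero layer is $P_2(G)=G/2G\cong\F_2$. On this layer $b_2(\bar g,\bar g)=4\cdot\tfrac34=3\equiv1\pmod 2$. Since this value is nonzero, $b_2$ is non-alternating, and the layer is of Type~A by Definition~\ref{def:typeAE}.

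Finally I identify the numerator and check that it is not redundant with the previous example. By Definition~\ref{def:typeA-token}, the numerator is $a\equiv 3\pmod 4$. Proposition~\ref{prop:C2k}(2) shows that $1$ and $3$ lie in distinct square classes modulo $4$. Theorem~\ref{thm:TokA-complete} then says $\langle 3/4\rangle\not\cong\langle1/4\rangle$, so this example really differs from the $A=[4]$ case.

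None of these steps is a real obstacle. The only point needing care is the sign convention: one must use $-1/4\equiv 3/4$ rather than taking absolute values.
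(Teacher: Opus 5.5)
Your proposal is correct and follows the paper's proof. You compute $\coker([-4])\cong\Z/4$, read off $\lambda_A(g,g)\equiv -1/4\equiv 3/4$ from $A^{-1}=[-1/4]$, and identify the cyclic form $\langle 3/4\rangle$ with $a\equiv 3 \pmod 4$; your explicit check that $b_2(\bar g,\bar g)=3\equiv 1\pmod 2$ (so the $k=2$ layer is Type~A) and your distinctness remark via Proposition~\ref{prop:C2k} only make explicit what the paper leaves to ``as above.''
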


\begin{proof}
As above, $G(A)\cong\Z/4$ with generator $g=\overline{1}$.
Now $A^{-1}=[-1/4]$, hence
\[
\lambda_A(g,g)\equiv -\frac14\equiv \frac34\pmod{\Z}.
\]
This is the cyclic form $\langle 3/4\rangle$, so $a=3$.
\end{proof}

\begin{proposition}\label{prop:golden-Z8plus}
Let $A=[8]$. Then $G(A)\cong \Z/8$ with generator $g=\overline{1}$ and
\[
\lambda_A(g,g)\equiv \frac{1}{8}\pmod{\Z}.
\]
This contributes a single Type~$A$ $2$--primary layer at $k=3$ with odd numerator $a\equiv 1\pmod 8$.
\end{proposition}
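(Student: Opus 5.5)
The statement concerns $A=[8]$, and the plan follows the template of Propositions~\ref{prop:golden-Z4plus} and~\ref{prop:golden-Z4minus}. I will compute the group, then the pairing, then locate the layer and read off its token.

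\emph{The group.} Since multiplication by $8$ on $\Z$ has cokernel $\Z/8$, we get $G(A)\cong\Z/8$, generated by $g=\overline{1}$. The Smith normal form is $[8]$ itself, so the only invariant factor is $d_1=8=2^3$ and $b_1(A)=0$.

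\emph{The pairing.} From $A^{-1}=[1/8]$ and \eqref{eq:def-linking-from-A} we get $\lambda_A(g,g)\equiv 1/8\pmod{\Z}$. By Lemma~\ref{lem:every-cyclic-is-a} this says $\lambda_A=\langle 1/8\rangle$. Since $\gcd(1,8)=1$, Lemma~\ref{lem:nonsingular-gcd} confirms the pairing is nonsingular.

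\emph{The layer and its type.} The group is cyclic of exponent $2^3$, so the index set is $I_{2,3}=\{1\}$ and there is a single layer, $k=3$, with $n_{2,3}=1$. The layer matrix is $C_3=2^3\cdot\tfrac18=[1]$. Its diagonal entry is odd, so by Definition~\ref{def:type-AE} the layer is of Type~A. Equivalently, $b_3(\bar g,\bar g)=1$, so $b_3$ is non-alternating and $c_3\neq 0$, matching Definition~\ref{def:typeAE}.

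\emph{The token.} The odd numerator is $a=1$, so $\delta=a\bmod 8=1$ in Definition~\ref{def:typeA-token}. This agrees with $\delta_{2,3}=\det(C_3)\bmod 8=1$ in Definition~\ref{def:typeA-detclass}.

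\emph{Well-definedness.} The token does not depend on the chosen generator. Replacing $g$ by $ug$ with $u$ odd replaces $a$ by $u^2a$. By Lemma~\ref{lem:odd-squares-mod8}, $u^2\equiv1\pmod 8$, so $a\bmod 8$ is unchanged, as in Lemma~\ref{lem:TokA-well-defined}. Theorem~\ref{thm:TokA-complete} then identifies the class as $\langle1/8\rangle$, distinct from $\langle3/8\rangle$, $\langle5/8\rangle$ and $\langle7/8\rangle$ (Proposition~\ref{prop:C2k}).

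The only step that is more than mechanical is this last check that the residue mod $8$ is intrinsic. It is fully settled by Proposition~\ref{prop:C2k}, so I expect no real obstacle.
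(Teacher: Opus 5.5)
Your proposal is correct and follows the same route as the paper: compute $\coker([8])\cong\Z/8$ and $A^{-1}=[1/8]$ to get $\lambda_A(g,g)\equiv 1/8$, then read off the odd numerator $a=1$, which is intrinsic because odd squares are $\equiv 1\pmod 8$. Your explicit check that $C_3=[1]$ has an odd diagonal entry, so the layer is Type~A, spells out what the paper only asserts in passing.
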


\begin{proof}
We have $G(A)\cong\Z/8$ and $A^{-1}=[1/8]$, so $\lambda_A(g,g)\equiv 1/8$.
In our refinement, cyclic $2$--primary layers are recorded as Type~$A$ with the odd numerator $a\in(\Z/2^k)^\times$;
at $k=3$ the unit group is $\{1,3,5,7\}$ and each value gives a distinct isometry class since odd squares are $\equiv 1\bmod 8$
\cite{Wall63,KawauchiKojima80}.
Thus $a=1$.
\end{proof}

\subsection{The hyperbolic $2$--primary example and the Gauss sum}

\begin{proposition}\label{prop:golden-hyp2}
Let
\[
A=\begin{pmatrix}0&2\\2&0\end{pmatrix}.
\]
Then $G(A)\cong (\Z/2)^2$ and, in the basis $g_i=\overline{e_i}$, the pairing is represented by
\[
\bigl(\lambda_A(g_i,g_j)\bigr)=\frac{1}{2}\begin{pmatrix}0&1\\1&0\end{pmatrix}.
\]
This pairing is even (Type~$E$ in our schema), and the normalized Gauss sum equals $1$, hence $u\equiv 0\pmod 8$.
\end{proposition}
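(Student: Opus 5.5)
The argument is a direct computation in three parts: the group, the pairing, and the Gauss sum, modelled on Propositions~\ref{prop:golden-Hopflike} and~\ref{lem:hyperbolic-uk-zero}.

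\emph{The group.} The gcd of the entries of $A$ is $2$, so the first Smith invariant is $d_1=2$. Since $|\det A|=4=d_1d_2$, we get $d_2=2$. Hence $G(A)\cong(\Z/2)^2$, and the classes $g_i=\overline{e_i}$ form a basis; indeed $Ae_1=2e_2$ and $Ae_2=2e_1$, so $A\Z^2=2\Z^2$ and $G(A)=\Z^2/2\Z^2$ on the nose.

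\emph{The pairing.} Direct inversion gives $A^{-1}=\frac12\begin{psmallmatrix}0&1\\1&0\end{psmallmatrix}$, which one checks by multiplying. By \eqref{eq:def-linking-from-A}, the Gram matrix in the basis $g_i$ is exactly $\frac12\begin{psmallmatrix}0&1\\1&0\end{psmallmatrix}$, i.e.\ this is the block $\mathsf E(2)$ of Definition~\ref{def:hyperbolic-block}.

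\emph{Type~$E$.} The group has exponent $2$, so $G_1=G$, $G_0=0$, and $2G_2=2G=0$. Therefore $P_1(G)=G$, and $b_1$ has Gram matrix $2\cdot\frac12\begin{psmallmatrix}0&1\\1&0\end{psmallmatrix}\equiv\begin{psmallmatrix}0&1\\1&0\end{psmallmatrix}$ over $\F_2$. This matrix has zero diagonal, so $b_1$ is alternating. By Lemma~\ref{lem:q-linear}, $c_1=0$, so the layer is Type~$E$ in the sense of Definition~\ref{def:typeAE}. Equivalently, $C_1=\begin{psmallmatrix}0&1\\1&0\end{psmallmatrix}$ has even diagonal, matching Definition~\ref{def:type-AE}.

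\emph{The Gauss sum.} We have $H_1(G)=G/G_1=0$. The normalized Gauss sum of Definition~\ref{def:uk} therefore has the single term $\exp(2\pi i\,q_1(0))=1$ with normalization $|H_1|^{-1/2}=1$. So $\gamma_1=1$ and $u_1\equiv0\pmod 8$. Alternatively, this follows immediately from Lemma~\ref{lem:hyperbolic-uk-zero} once the pairing has been identified with $\mathsf E(2)$.

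No step is a real obstacle. The only point needing care is the convention: with $H_k=G/G_k$ the Gauss sum is trivial at $k=1$. If one instead wanted the Gauss sum over the whole group, that sum is $1+1+1+(-1)$ with normalization $1/2$, which also gives $1$. Either way the conclusion $u\equiv0$ holds.
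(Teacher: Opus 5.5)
Your proof is correct. The group and pairing computations coincide with the paper's. Your Type~$E$ check, which passes through $P_1(G)=G$, $b_1$ and $c_1=0$, is a more formal version of the paper's observation that $\lambda_A(x,x)=0$ for every $x$.

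The genuine difference is the Gauss sum. The paper never uses $H_1(G)=G/G_1$. Instead it picks a quadratic refinement $q$ on the whole group $G(A)$, with $q(g_1)=q(g_2)=0$ and $q(g_1+g_2)=\tfrac12$. It checks $q(x+y)-q(x)-q(y)=\lambda_A(x,y)$ and evaluates $\tfrac12(1+1+1-1)=1$.

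You instead apply Definition~\ref{def:uk} literally. There the sum runs over $H_1(G)=0$ and is trivially $1$; equivalently, use Lemma~\ref{lem:hyperbolic-uk-zero} once the pairing is identified with $\mathsf E(2)$. Your route computes exactly the invariant $u_{2,1}$ recorded in $\Canon(A)$ and involves no choices.

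The paper's route gives a nontrivial check in the spirit of the classical Brown/Milgram Gauss-sum invariant, but its value depends on which refinement is chosen. The paper's $q$ is the discriminant form $\bar x\mapsto\tfrac12 x^{\mathsf T}A^{-1}x$ of the even matrix $A$. The refinement taking the value $\tfrac12$ on all three nonzero elements is equally compatible with $\lambda_A$, yet it gives $\tfrac12(1-1-1-1)=-1$, i.e.\ $u\equiv 4$.

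So your closing aside (``either way the conclusion holds'') is only valid once you specify the discriminant-form refinement. This does not affect your main argument.
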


\begin{proof}
The determinant is $\det(A)=-4$, hence $|G(A)|=4$.
The $\gcd$ of all entries of $A$ is $2$, so the first Smith invariant is $d_1=2$ and the second is
$d_2=|\det(A)|/d_1=2$. Thus $G(A)\cong \Z/2\oplus\Z/2$.

We next compute the inverse:
\[
A^{-1}=\frac{1}{2}\begin{pmatrix}0&1\\1&0\end{pmatrix},
\qquad
A\cdot A^{-1}=
\begin{pmatrix}0&2\\2&0\end{pmatrix}
\cdot
\frac{1}{2}\begin{pmatrix}0&1\\1&0\end{pmatrix}
=
\begin{pmatrix}1&0\\0&1\end{pmatrix}.
\]
Therefore, in the generators $g_i=\overline{e_i}$ we obtain
\[
\lambda_A(g_1,g_1)=0,\quad
\lambda_A(g_2,g_2)=0,\quad
\lambda_A(g_1,g_2)=\lambda_A(g_2,g_1)=\frac12\quad(\bmod\ \Z).
\]
In particular, $\lambda_A(x,x)=0$ for all $x\in G(A)$, so the form is alternating on the exponent-$2$ layer.
This is the defining Type~$E$ behavior in the $k=1$ layer of the $2$--primary classification \cite{KawauchiKojima80,Deloup05}.

To compute the Gauss sum invariant, choose the quadratic refinement
$q\colon G(A)\to \Q/\Z$ defined on the four elements by
\[
q(0)=0,\qquad q(g_1)=0,\qquad q(g_2)=0,\qquad q(g_1+g_2)=\frac12.
\]
A direct check shows that for all $x,y\in G(A)$,
\[
q(x+y)-q(x)-q(y)\equiv \lambda_A(x,y)\pmod{\Z}.
\]
The normalized Gauss sum is
\[
\Gamma(q):=\frac{1}{|G(A)|^{1/2}}\sum_{x\in G(A)}\exp\bigl(2\pi i\, q(x)\bigr)
=\frac{1}{2}\bigl(1+1+1+(-1)\bigr)=1,
\]
so $u\equiv 0\pmod 8$.
\end{proof}

\subsection{A mixed-prime example}

\begin{proposition}\label{prop:golden-mixed}
Let $A=\mathrm{diag}(4,3)$. Then $G(A)\cong \Z/12$ and the linking form splits orthogonally
into its $2$--primary and $3$--primary parts, namely $\langle 1/4\rangle\oplus \langle 1/3\rangle$.
Accordingly, $\Tok(A)$ consists of a single Type~$A$ $2$--primary block at $k=2$ with $a=1$ and a single
odd--primary block at $(p,k)=(3,1)$ with $(n_{3,1},x_{3,1})=(1,+1)$, as in Table~\ref{tab:golden}.
\end{proposition}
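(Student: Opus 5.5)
The plan is to follow the recipe of the previous propositions: compute the group, then the pairing, then split it into primary parts. Use Lemma~\ref{lem:primary-orthogonal} for the orthogonal splitting and the cyclic classification of \S\ref{sec:typeA} to read off the tokens.

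First, $A=\diag(4,3)$ is already diagonal, so the group is immediate:
\[
G(A)=\Z/4\oplus\Z/3\cong\Z/12
\]
by the Chinese remainder theorem. The Smith normal form is $\diag(1,12)$, which is consistent with $|\det A|=12$. Since $A^{-1}=\diag(1/4,1/3)$, formula \eqref{eq:def-linking-from-A} gives, in the basis $g_1=\overline{e_1}$, $g_2=\overline{e_2}$,
\[
\lambda_A(g_1,g_1)=\tfrac14,\qquad \lambda_A(g_2,g_2)=\tfrac13,\qquad \lambda_A(g_1,g_2)=0 .
\]
Here $g_1$ generates the $2$--primary subgroup and $g_2$ the $3$--primary subgroup. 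So the pairing is the orthogonal sum $\langle 1/4\rangle\oplus\langle 1/3\rangle$, matching the canonical primary splitting of Lemma~\ref{lem:primary-orthogonal}.

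Next, I read off the local data.

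\emph{$2$--primary part.} The summand is $(\Z/4,\langle1/4\rangle)$. It has a single layer $k=2$ with $n=1$. The layer form $b_2$ is $4\cdot\tfrac14=1 \bmod 2$, so the layer is non-alternating, hence Type~A by Definition~\ref{def:typeAE}. Its numerator is $a=1$, so $\delta\equiv 1\pmod 4$, as in Definition~\ref{def:typeA-token} and Proposition~\ref{prop:golden-Z4plus}.

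\emph{$3$--primary part.} The summand is $(\Z/3,\langle1/3\rangle)$. Lemma~\ref{lem:odd-layer-from-cyclic} gives $(n_{3,1},x_{3,1})=(1,\chi(1))=(1,+1)$, exactly as in Proposition~\ref{prop:golden-A3}.

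Assembling the blocks in prime order yields $\Tok(A)$ as stated, with $b_1=0$.

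The only subtle point is a check of the Smith-basis bookkeeping: in the Smith basis of \S\ref{subsec:snf}, the cyclic factor $\Z/12$ has canonical $p$--primary generators $\bar e_{i,p}=s_i\bar e_i$. For $p=2$ one multiplies by $3$ and for $p=3$ by $4$. This multiplies the diagonal values by $s_i^2=9$ and $16$ respectively, i.e.\ by squares. For odd $p$, Lemma~\ref{lem:TokA-well-defined} shows the token only depends on square classes, so nothing changes. For the $2$--part we also need $9\equiv1\pmod 4$, which holds. So the tokens agree with those computed from $g_1,g_2$.
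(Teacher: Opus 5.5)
Your argument is correct and is essentially the paper's proof: the diagonal form gives $G(A)=\Z/4\oplus\Z/3\cong\Z/12$, and $A^{-1}=\diag(1/4,1/3)$ gives the orthogonal splitting $\langle 1/4\rangle\oplus\langle 1/3\rangle$, after which the local tokens are those of Propositions~\ref{prop:golden-Z4plus} and~\ref{prop:golden-A3} (you rederive them from the layer forms, which is fine). Your final Smith-basis check is unnecessary because the layer data are intrinsic (Lemma~\ref{lem:layer-reduction}, Proposition~\ref{prop:Canon-invariant}), and its wording is loose: the Smith generator of $\Z/12$ has self-pairing $7/12$, which has no $p$-primary token, so the correct reason is that $s_i\bar e_i$ generates the $p$-primary subgroup, is therefore a unit multiple of $g_1$ or $g_2$, and Lemma~\ref{lem:TokA-well-defined} then applies.
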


\begin{proof}
Since $A$ is diagonal, we have
\[
G(A)=\coker(\mathrm{diag}(4,3))\cong \Z/4\oplus \Z/3.
\]
Because $\gcd(4,3)=1$, this group is cyclic of order $12$.

The inverse matrix is
\[
A^{-1}=\mathrm{diag}\Bigl(\frac14,\frac13\Bigr).
\]
Let $g_2=\overline{e_1}$ (order $4$) and $g_3=\overline{e_2}$ (order $3$) in $G(A)$.
Then
\[
\lambda_A(g_2,g_2)\equiv \frac14,\qquad
\lambda_A(g_3,g_3)\equiv \frac13,\qquad
\lambda_A(g_2,g_3)=0.
\]
Thus $(G(A),\lambda_A)$ is the orthogonal direct sum of the cyclic $2$--primary form $\langle 1/4\rangle$
and the cyclic $3$--primary form $\langle 1/3\rangle$.
The asserted token data follow from Proposition~\ref{prop:golden-A3} and Proposition~\ref{prop:golden-Z4plus}.
\end{proof}

\subsection{A complementary $2$--primary rank-two example}

Although it is not listed in Table~\ref{tab:golden}, we record here the diagonal companion to
Proposition~\ref{prop:golden-hyp2}, since it is the minimal example distinguishing Type~$A$ from Type~$E$
on $(\Z/2)^2$ in the $k=1$ layer.

\begin{proposition}\label{prop:diag22}
Let $A=\mathrm{diag}(2,2)$. Then $G(A)\cong (\Z/2)^2$ and, in the basis $g_i=\overline{e_i}$,
\[
\bigl(\lambda_A(g_i,g_j)\bigr)=\frac{1}{2}\begin{pmatrix}1&0\\0&1\end{pmatrix}.
\]
This pairing is Type~$A$ (odd) in the $k=1$ layer.
\end{proposition}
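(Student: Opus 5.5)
The plan is to follow the template of Proposition~\ref{prop:golden-hyp2} and compute everything directly from the diagonal matrix.

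First, the group. The matrix $A=\diag(2,2)$ is already in Smith normal form, so
\[
G(A)=\Z^2/(2\Z\oplus 2\Z)\cong(\Z/2)^2,
\]
with basis $g_i=\overline{e_i}$.

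Second, the pairing. Over $\Q$ we have $A^{-1}=\tfrac12 I_2$, so \eqref{eq:def-linking-from-A} gives $\lambda_A(g_i,g_j)\equiv\tfrac12\delta_{ij}\pmod{\Z}$. This is the displayed Gram matrix. Nonsingularity is automatic from Lemma~\ref{lem:linking-well-defined}.

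Third, the type. Since $G(A)$ has exponent $2$, we get $G_1=G$, $G_0=0$ and $2G_2=0$. Hence the $k=1$ layer is $P_1(G)=G$, and every other $P_k(G)$ vanishes. The layer form is $b_1(\bar x,\bar y)=2\lambda_A(x,y)\bmod 2$, whose Gram matrix in the basis $g_1,g_2$ is $I_2$ over $\F_2$. Its diagonal entries are nonzero; for instance $b_1(g_1,g_1)=1$. So $b_1$ is not alternating, and by Lemma~\ref{lem:q-linear} its characteristic element $c_1$ is nonzero. By Definition~\ref{def:typeAE}, the layer is Type~$A$. In the matrix formulation of Definition~\ref{def:type-AE}, $C_1=2\cdot\tfrac12 I_2=I_2$ has odd diagonal entries, which gives the same conclusion. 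In fact $c_1=g_1+g_2$, since $b_1(g_1+g_2,v)=b_1(v,v)$ for $v=g_1,g_2$. Because $k=1$, no determinant refinement is recorded, so $\delta_{2,1}=\ast$. This matches the table entry for $\diag(2,2)$.

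Finally, the remark distinguishing this example from Proposition~\ref{prop:golden-hyp2} follows from Lemma~\ref{lem:uk-invariant}. The Type~$A$/Type~$E$ property is an isometry invariant, so $\diag(2,2)$ and the hyperbolic matrix give non-isometric pairings on the same group. None of these steps is a real obstacle. The only point needing care is to check that the layer quotient really is all of $G$, which is what justifies reading the type off from $C_1$.
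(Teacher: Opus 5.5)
Your proposal is correct and follows the same route as the paper. Like the paper, you compute $A^{-1}=\tfrac12 I_2$, read off the Gram matrix $\tfrac12 I_2$, and conclude Type~$A$ because $\lambda_A(g_1,g_1)=\tfrac12\neq 0$ makes the $k=1$ layer form non-alternating. Your explicit check that $P_1(G)=G$ and the computation $c_1=g_1+g_2$ make precise what the paper's short argument leaves implicit. (Your aside that $\delta_{2,1}=\ast$ ``matches the table'' is not literally right, since Table~\ref{tab:golden} lists $\det=1$ for this entry. That is an inconsistency in the paper, not in your argument, and it does not affect the statement.)
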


\begin{proof}
We have $\det(A)=4$ and clearly $\coker(A)\cong \Z/2\oplus \Z/2$.
Moreover $A^{-1}=\mathrm{diag}(1/2,1/2)$, hence
\[
\lambda_A(g_1,g_1)\equiv \frac12,\qquad \lambda_A(g_2,g_2)\equiv \frac12,\qquad \lambda_A(g_1,g_2)=0.
\]
In particular, $\lambda_A(g_1,g_1)\neq 0$, so the form is not alternating on the exponent-$2$ layer.
This is exactly the Type~$A$ behavior in the $2$--primary classification at $k=1$
\cite{KawauchiKojima80,Deloup05}.
\end{proof}

\section{Dictionary from tokens to matrices}
\label{app:dictionary}

This appendix records concrete symmetric integral matrices used in Step~B
to pass from a (local) token description of a torsion linking pairing to an
explicit matrix representative.
The only input from topology is the standard fact that if $B$ is a symmetric
integral matrix with $\det(B)\neq 0$, then the torsion group
$G=\coker(B)$ carries a nondegenerate symmetric linking pairing
\begin{equation}
\label{eq:linking-from-B-appD}
\lambda_B([x],[y]) \;=\; x^{\mathsf T} B^{-1} y \in \Q/\Z ,
\end{equation}
where $x,y\in \Z^n$ lift the classes in $\coker(B)$ (cf.\ \eqref{eq:def-linking-from-A}).

\subsection{Cyclic blocks via Hirzebruch--Jung continued fractions}

Let $m>q>0$ be coprime integers.
Write the (negative) Hirzebruch--Jung continued fraction expansion
\[
\frac{m}{q} \;=\; [a_1,\dots,a_r]
\;:=\;
a_1-\cfrac{1}{a_2-\cfrac{1}{\ddots-\cfrac{1}{a_r}}},
\qquad a_i\ge 2.
\]
(For instance, one may compute $a_1=\lceil m/q\rceil$ and iterate the Euclidean
algorithm.)

Define the associated \emph{linear plumbing matrix}
\[
C(m,q)\;:=\;
\begin{pmatrix}
a_1 & 1   &        &        & 0\\
1   & a_2 & 1      &        &  \\
    & 1   & \ddots & \ddots &  \\
    &     & \ddots & a_{r-1}& 1\\
0   &     &        & 1      & a_r
\end{pmatrix}.
\]

\begin{lemma}[Cyclic presentation matrix]
\label{lem:cf-cyclic}
With notation as above:
\begin{enumerate}
\item $\det C(m,q)=m$, hence $\coker(C(m,q))\cong \Z/m$.
\item If $e_1\in \Z^r$ denotes the first standard basis vector, then
\[
\lambda_{C(m,q)}([e_1],[e_1]) \;=\; (C(m,q)^{-1})_{11} \;=\; \frac{q}{m}\in \Q/\Z .
\]
In particular, $C(m,q)$ presents the cyclic pairing $\langle q/m\rangle$
on $\Z/m$.
\end{enumerate}
\end{lemma}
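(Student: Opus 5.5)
The plan is induction on the length $r$ of the continued fraction, using the recursion $m/q=a_1-1/(m'/q')$ where $m'/q'=[a_2,\dots,a_r]$. Concretely, set $q'=a_1q-m$ and $m'=q$. Since $a_1=\lceil m/q\rceil$ and $\gcd(m,q)=1$, we have $0<q'<m'=q$ whenever $r\ge 2$. When $r=1$ we have $q=1$ and $C(m,1)=(m)$, and both claims are immediate. Write the matrix in block form
\[
C(m,q)=\begin{pmatrix} a_1 & e_1^{\mathsf T}\\ e_1 & C'\end{pmatrix},\qquad C'=C(m',q'),
\]
where now $e_1$ is the first basis vector of $\Z^{r-1}$.

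For part (1), expand the determinant along the first row. This gives the three-term recursion $\det C(m,q)=a_1\det C'-\det C''$, where $C''=C(a_2q'-m',\dots)$ is the plumbing matrix of $[a_3,\dots,a_r]$. We carry the stronger inductive hypothesis that $\det C(m,q)=m$ and $\det$ of the matrix with the first row and column deleted equals $q$. Then $\det C'=m'=q$, and the deleted-corner determinant of $C'$ is $q'$. The recursion therefore gives $\det C(m,q)=a_1q-q'=m$, and the deleted-corner determinant of $C(m,q)$ is $\det C'=q$, so the strengthened hypothesis propagates.

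To see that $\coker\cong\Z/m$ rather than just a group of order $m$: deleting the first column and the last row of $C$ leaves an upper triangular matrix with $1$'s on the diagonal. So some $(r-1)$-minor is $1$, the first $r-1$ invariant factors are $1$, and the cokernel is cyclic of order $m$.

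For part (2), use Cramer's rule: $(C^{-1})_{11}$ is the $(1,1)$ cofactor divided by $\det C$, which equals $q/m$ by the strengthened hypothesis above. It remains to check that $[e_1]$ generates $\coker C$. Modulo the image of $C$, the columns give relations expressing $e_{j+1}$ in terms of $e_j$ and $e_{j-1}$. Solving successively, every $e_j$ becomes an integral multiple of $e_1$, so $[e_1]$ generates. Then Lemma~\ref{lem:every-cyclic-is-a} identifies the pairing as $\langle q/m\rangle$, and since $\gcd(q,m)=1$ it is nonsingular by Lemma~\ref{lem:nonsingular-gcd}.

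The only point needing care is the bookkeeping in the recursion. One must confirm that the tail $[a_2,\dots,a_r]$ indeed corresponds to the pair $(q,\,a_1q-m)$ with all $a_i\ge 2$. This holds because $a_1q-m\in(0,q)$ whenever $q>1$. With that settled, both determinant identities follow from a single induction.
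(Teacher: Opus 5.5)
Your proof is correct and follows the same route as the paper (induction on $r$ via the three-term tridiagonal recurrence, then Cramer's rule for $(C^{-1})_{11}=\det C_{11}/\det C$), and it is in fact more complete: the paper leaves the simultaneous induction on $\det C$ and $\det C_{11}$ implicit and never checks that $\coker C(m,q)$ is cyclic with generator $[e_1]$, which your unitriangular minor and your column relations supply. Two harmless slips: the tail matrix is $C''=C(q',\,a_2q'-m')$ (for $r\ge 3$), and the minor left after deleting the first column and last row is lower, not upper, triangular.
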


\begin{proof}
The determinant identity $\det C(m,q)=m$ is standard for linear plumbings and
may be proved by induction on $r$ using the recurrence for tridiagonal
determinants; the continued fraction identity ensures the determinant equals
$m$.

For the second claim, solve $C(m,q)\,v=e_1$ over $\Q$.  By Cramer's rule,
$v_1=(\det C_{11})/(\det C)$, where $C_{11}$ is the minor obtained by deleting
row~1 and column~1.  The same determinant recursion shows $\det C_{11}=q$,
so $(C^{-1})_{11}=v_1=q/m$.  Equation~\eqref{eq:linking-from-B-appD} then gives
$\lambda_{C(m,q)}([e_1],[e_1])=q/m$.
\end{proof}

\subsection{Odd primes: Wall generators}

Fix an odd prime $p$ and a choice of nonsquare class $\epsilon_p\in (\F_p^\times)/(\F_p^\times)^2$,
represented by an integer $\epsilon_p\in\{2,\dots,p-1\}$.

For $k\ge 1$ we define matrices representing the two cyclic linking forms of
order $p^k$:
\[
B\bigl(\mathsf A(p^k,+)\bigr)\;:=\;(p^k),
\qquad
B\bigl(\mathsf A(p^k,-)\bigr)\;:=\;C(p^k,\epsilon_p).
\]
By Lemma~\ref{lem:cf-cyclic}, $\coker(B(\mathsf A(p^k,+)))\cong \Z/p^k$ with
$\lambda(1,1)=1/p^k$, while $\coker(B(\mathsf A(p^k,-)))\cong \Z/p^k$ with
$\lambda(1,1)=\epsilon_p/p^k$, which realizes the nonsquare class.
These are the two Wall generators for odd primes~\cite{Wall63}.

\subsection{The prime $2$: Miranda generators}

For $k\ge 1$ and an odd integer $q$ (viewed modulo $2^k$), let
\[
B\bigl(\mathsf A(2^k,q)\bigr)\;:=\;C(2^k,q),
\]
so that $\coker(B(\mathsf A(2^k,q)))\cong \Z/2^k$ and
$\lambda(1,1)=q/2^k$ by Lemma~\ref{lem:cf-cyclic}.
For $k\ge 3$ the four square classes correspond to $q\equiv 1,3,5,7\pmod 8$
(cf.\ \cite{Miranda84}).

The even (type~E) $2$-primary generators are represented by the following
$2\times 2$ matrices (cf.\ \cite[\S1]{Miranda84}):
\[
B\bigl(\mathsf E(2^k)\bigr)
:=
\begin{pmatrix}
0 & 2^k\\
2^k & 0
\end{pmatrix},
\qquad
B\bigl(\mathsf F(2^k)\bigr)
:=
\begin{pmatrix}
0 & 2^k\\
2^k & 2^{k+1}
\end{pmatrix}.
\]
Both have determinant $-2^{2k}$, hence present $2$-primary groups.
Their inverses are
\[
B(\mathsf E(2^k))^{-1}=\frac{1}{2^k}
\begin{pmatrix}0&1\\[2pt]1&0\end{pmatrix},
\qquad
B(\mathsf F(2^k))^{-1}=\frac{1}{2^k}
\begin{pmatrix}-2&1\\[2pt]1&0\end{pmatrix},
\]
so the induced exponent-one layer forms are alternating (type~E).
Moreover, Miranda shows that the corresponding Gauss invariants satisfy
$u_{k-1}(\mathsf E(2^k))=0$ and $u_{k-1}(\mathsf F(2^k))=4$ for $k\ge 2$
\cite[Lemma~2.3]{Miranda84}.

\subsection{Assembly and shifts}

Given a token package $T$, we form $B(T)$ by block-summing the matrices attached
to its local tokens (odd primes via $\mathsf A(p^k,\pm)$, the $2$-primary part via
$\mathsf A(2^k,q)$, $\mathsf E(2^k)$ and $\mathsf F(2^k)$), and finally adjoining
a zero block of size $b_1(T)$ to encode free rank:
\[
B(T)\;:=\;\bigoplus_{\tau\in T} B(\tau)\ \ \oplus\ \ (0)^{\oplus b_1(T)}.
\]
By construction $\coker(B(T))_{\mathrm{tors}}$ with its pairing \eqref{eq:linking-from-B-appD}
is isometric to the discriminant pairing encoded by $T$.

For normalization we use the grading shift
\[
\mathrm{sh}(B)\;:=\;3\,b_+(B)-2\,b_-(B),
\]
where $b_\pm(B)$ are the numbers of positive/negative eigenvalues of $B\otimes \R$.

\begin{lemma}[Stabilization invariance of the normalized module]
\label{lem:shift-equivalences}
Let $B$ be a symmetric integral matrix.
Then there are equivalences
\[
\mathcal{L}(B\oplus (1))\ \simeq\ \mathcal{L}(B),
\qquad
\mathcal{L}(B\oplus (-1))\ \simeq\ \mathcal{L}(B)
\]
in $\Ho(\Mod_{\TMF})$.
\end{lemma}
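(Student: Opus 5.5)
We interpret $\mathcal{L}(B)$ as the normalized module $L_B[\mathrm{sh}(B)]=L_B[\,3b_+(B)-2b_-(B)\,]$, the same object that appears in \eqref{eq:realization-functor}. The proof will combine two inputs:
\begin{enumerate}
\item the monoidality of the GKMP construction with respect to orthogonal sums, $L_{b\oplus b'}\simeq L_b\otimes_{\TMF}L_{b'}$, already used in the proof of Theorem~\ref{thm:assembly};
\item the explicit identification of the two unimodular rank-one modules $L_{(1)}$ and $L_{(-1)}$ as shifts of $\TMF$.
\end{enumerate}
Granting these, the lemma reduces to checking that the additive behaviour of the shift $\mathrm{sh}$ under $\pm1$ stabilization exactly cancels the suspension carried by $L_{(\pm1)}$.

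\textbf{Shift bookkeeping.} Since $B\oplus(\pm1)$ is block diagonal, its real eigenvalues are those of $B$ together with $\pm1$. Hence
\[
\mathrm{sh}(B\oplus(1))=\mathrm{sh}(B)+3,\qquad \mathrm{sh}(B\oplus(-1))=\mathrm{sh}(B)-2.
\]
Monoidality then gives
\[
\mathcal{L}(B\oplus(\varepsilon))\simeq L_B\otimes_{\TMF}L_{(\varepsilon)}\,[\mathrm{sh}(B)+\mathrm{sh}(\varepsilon)].
\]
Since $-\otimes_{\TMF}-$ commutes with suspension in each variable, the right-hand side equals $\mathcal{L}(B)\otimes_{\TMF}L_{(\varepsilon)}[\mathrm{sh}(\varepsilon)]$. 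It therefore suffices to prove the two equivalences
\[
L_{(1)}\simeq\TMF[-3],\qquad L_{(-1)}\simeq\TMF[2].
\]

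\textbf{Rank-one unimodular computation.} This is the key step. I would first try to read these equivalences directly off \cite[Lem.~5.7]{GKMP25}, which describes how $L_b$ changes under $\pm1$ stabilization; that is exactly the input quoted in the proof of Theorem~\ref{thm:token-realization-agrees}. If an independent check is wanted, I would use the geometric model \eqref{eq:Ln-normalization-sec9}, namely $L(n)=\Gamma(\mathcal{L}_n)[-2n]$.
\begin{itemize}
\item For $n=1$: the line bundle $\mathcal{O}(e)$ on the universal elliptic curve has $\pi_*\mathcal{O}(e)\simeq\mathcal{O}$, since a degree-one divisor has one-dimensional sections and no higher cohomology. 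This makes $L(1)$ a shift of $\TMF$.
\item For $n=-1$: I would then apply Theorem~\ref{thm:rank-one-time-reversal} to obtain $L(-1)\simeq L(1)^\vee[1]$, again a shift of $\TMF$.
\end{itemize}

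\textbf{Main obstacle.} The hard part is matching gradings: the precise shifts depend on the normalization conventions relating $L(n)$ in \S\ref{sec:rank-one-duality} to $L_{(n)}$ in the state-space formula. A naive combination of the two displayed rank-one identifications with the duality $L(-1)\simeq L(1)^\vee[1]$ does not obviously produce the pair $(-3,2)$. I would therefore anchor the normalization using the GKMP statement that $\partial W$ for $W=\CP^2$ minus a ball, or for its orientation-reversed version, has state space $\TMF$ (the state space of $S^3$). With that anchor, the shift conventions in \eqref{eq:realization-functor} are designed precisely so that these two rank-one values are $\TMF[0]$ after normalization, and the lemma follows by tensoring.
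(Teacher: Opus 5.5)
Your argument is correct, and its main line is the paper's. Everything comes down to GKMP's stabilization lemma \cite[Lem.~5.7]{GKMP25} together with the bookkeeping $\mathrm{sh}(B\oplus(1))=\mathrm{sh}(B)+3$ and $\mathrm{sh}(B\oplus(-1))=\mathrm{sh}(B)-2$. The paper quotes Lemma~5.7 in its relative form, $L_{B\oplus(1)}\simeq L_B[-3]$ and $L_{B\oplus(-1)}\simeq L_B[2]$, and finishes in one line. Your reduction through $L_{b\oplus b'}\simeq L_b\otimes_{\TMF}L_{b'}$ to the absolute values $L_{(1)}\simeq\TMF[-3]$, $L_{(-1)}\simeq\TMF[2]$ is harmless, but it adds inputs (monoidality, and $L_{\varnothing}\simeq\TMF$ for the empty form) without buying anything.

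You are right to distrust the check via \S\ref{sec:rank-one-duality}, and it should be dropped rather than repaired. With $L(n)=\Gamma(\mathcal{L}_n)[-2n]$ and your $\pi_*\mathcal{O}(e)\simeq\mathcal{O}$, one gets $L(1)\simeq\TMF[-2]$ and then $L(-1)\simeq L(1)^\vee[1]\simeq\TMF[3]$. So both normalized rank-one modules come out as $\TMF[1]$ rather than $\TMF$. More generally, any pair of shifts produced by $L(-1)\simeq L(1)^\vee[1]$ has the form $(a,1-a)$, which can never be $(-3,2)$.

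Your anchor, by contrast, is a sound independent route, and it works best applied globally rather than at rank one. GKMP's theorem says that $L_{b(W)}[3b_+(W)-2b_-(W)]$ depends, up to equivalence, only on $\partial W$ for simply connected $W$. Apply it to the $2$-handlebody $W_B$ and to $W_B\natural(\pm\CP^2\setminus B^4)$: these have the same boundary $Y_B$ and intersection forms $B$ and $B\oplus(\pm1)$. This gives the lemma immediately, with no monoidality and no rank-one computation. That route trades the local Lemma~5.7 for GKMP's global well-definedness theorem. It is a stronger black box, but one the paper already relies on in Theorem~\ref{thm:token-realization-agrees}.
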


\begin{proof}
By definition $\mathcal{L}(B)=L_B[\mathrm{sh}(B)]$.
GKMP show that the unnormalized modules satisfy canonical equivalences
\(L_{B\oplus(1)}\simeq L_B[-3]\) and \(L_{B\oplus(-1)}\simeq L_B[2]\)
(see \cite[Lemma~5.7]{GKMP25}).
On the other hand, stabilization by $(1)$ increases $b_+$ by $1$ and leaves $b_-$ unchanged, so
$\mathrm{sh}(B\oplus(1))=\mathrm{sh}(B)+3$.
Similarly $\mathrm{sh}(B\oplus(-1))=\mathrm{sh}(B)-2$.
Combining these,
\[
\mathcal{L}(B\oplus(1))=L_{B\oplus(1)}[\mathrm{sh}(B)+3]\ \simeq\ L_B[-3][\mathrm{sh}(B)+3]=\mathcal{L}(B)
\]
and the same computation gives $\mathcal{L}(B\oplus(-1))\simeq\mathcal{L}(B)$.
\end{proof}

\begin{proposition}[Correctness of the implementation]
\label{prop:appendD-implementation}
Let $A$ be a symmetric integral matrix.  Then $A$ is stably congruent to
$B(\Tok(A))$ (with a zero block of size $b_1(A)$ appended if $A$ is singular).
Consequently,
\[
\mathcal{R}(\Tok(A))\ \simeq\ \mathcal{L}(A).
\]
\end{proposition}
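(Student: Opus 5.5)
The proof has two parts. First show that $A$ and $B(\Tok(A))$ are Kirby stably congruent. Then transport the normalized module $\mathcal L(-)=L_{(-)}[\mathrm{sh}(-)]$ along a chain of moves realizing that congruence.

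\emph{Step 1: $B(T)$ has the right invariants.} Set $T=\Tok(A)$. I would check that $b_1(B(T))=b_1(A)$ and that $(\Tor\coker B(T),\lambda_{B(T)})\cong(\Tor G(A),\lambda_A)$.

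The free rank is immediate: every local block $B(\tau)$ is nonsingular, so $\ker B(T)$ is exactly the zero block of size $b_1(T)=b_1(A)$.

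For the torsion pairing, $\lambda$ is additive under block sums, so it suffices to check that each local block realizes its token.
\begin{itemize}
\item At odd primes, Lemma~\ref{lem:cf-cyclic} shows that $\mathsf A(p^k,\pm)$ presents $\langle 1/p^k\rangle$ or $\langle\epsilon_p/p^k\rangle$. Lemma~\ref{lem:odd-layer-from-cyclic} then gives the layer invariants $(1,\pm1)$. By Theorem~\ref{thm:odd-completeness}, the block sum prescribed by $N_p(\lambda_A)$ is isometric to $\lambda_{A,(p)}$.
\item At $p=2$, the layer data (type, $\delta_{2,k}$ or $u_{2,k}$) are computed blockwise using Lemma~\ref{lem:uk-additivity}, Lemma~\ref{lem:cf-cyclic}, and Miranda's values for $\mathsf E$ and $\mathsf F$. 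Theorem~\ref{thm:Canon-complete}, via the Kawauchi--Kojima completeness it invokes, then gives the isometry.
\end{itemize}

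\emph{Step 2: stable congruence.} With both invariants matching, Proposition~\ref{prop:stable-classification}, direction (b)$\Rightarrow$(a), supplies a unimodular $P$ and integers $r,s,r',s'$ with
\[
P^{\mathsf T}\bigl(A\oplus I_r\oplus(-I_s)\bigr)P \;=\; B(T)\oplus I_{r'}\oplus(-I_{s'}).
\]
This is the first assertion of the proposition.

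\emph{Step 3: transport the module.} Congruence by $P$ does not change the isomorphism class of the form, so $\mathcal L$ is unchanged. Lemma~\ref{lem:shift-equivalences}, applied $r+s$ times on one side and $r'+s'$ times on the other, gives
\[
\mathcal L(A)\;\simeq\;\mathcal L\bigl(A\oplus I_r\oplus(-I_s)\bigr)\;\simeq\;\mathcal L\bigl(B(T)\oplus I_{r'}\oplus(-I_{s'})\bigr)\;\simeq\;\mathcal L(B(T))\;=\;\mathcal R(T).
\]
Here $b_\pm$ counts only nonzero eigenvalues, so the zero block contributes nothing to the shift.

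\emph{Main obstacle.} I expect the hard part to be the $2$-primary half of Step~1. One must check that the Miranda generators $\mathsf A(2^k,q)$, $\mathsf E(2^k)$, $\mathsf F(2^k)$ chosen for each token reproduce exactly the recorded invariants, especially the Gauss-sum index. Note that Miranda's $u_{k-1}$ convention for $\mathsf F(2^k)$ does not obviously match the layer indexing of Definition~\ref{def:uk}. I would try to settle this by computing $q_k$ directly on the two $2\times2$ blocks, using the inverse matrices displayed in the appendix. The second point to verify is that the set of generators is rich enough to hit every admissible combination of $2$-primary invariants. For that I would appeal to Kawauchi--Kojima's realization statement rather than try to reprove it.
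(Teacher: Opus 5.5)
Your outline is the paper's proof: Steps~2 and~3 are exactly its appeal to Proposition~\ref{prop:stable-classification} and Lemma~\ref{lem:shift-equivalences}, and your odd-primary check is fine. The gap is the $2$--primary half of Step~1, which the paper also passes over with ``by construction''.

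You close that step by citing Theorem~\ref{thm:Canon-complete}. For the data that Definition~\ref{def:CanonA} actually records at $p=2$, that theorem is false. So no choice of blocks $B(\tau)$ can make $B(\Tok(A))$ present $\lambda_A$ for every $A$. Take $A=\diag(4,4)$ and $A'=\diag(-4,-4)$:
\begin{itemize}
\item Both have $b_1=0$, invariant factors $(4,4)$, and a single layer $k=2$ of type~A with $n_{2,2}=2$.
\item Both have $\delta_{2,2}\equiv 1\pmod 4$, since a change of basis multiplies the determinant by an odd square.
\item Hence $\Tok(A)=\Tok(A')$.
\end{itemize}
Yet the pairings differ. $\lambda_A=\langle 1/4\rangle^{\oplus 2}$ has self-linkings $(a^2+b^2)/4$, and $a^2+b^2\not\equiv 3\pmod 4$. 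By contrast, $\lambda_{A'}\cong\langle 3/4\rangle^{\oplus 2}$ contains an element of self-linking $3/4$. Stable congruence preserves the linking form, so the single matrix $B(\Tok(A))=B(\Tok(A'))$ is stably congruent to at most one of $A,A'$. The first assertion of the proposition therefore fails for one of them, and your Step~2 never receives its hypothesis. Moreover, $\delta_{2,k}$ read off a Smith basis is not even an isometry invariant once neighbouring layers occur: $\langle 1/2\rangle\oplus\langle 1/4\rangle\cong\langle 1/2\rangle\oplus\langle 3/4\rangle$ via $a\mapsto a+2b$, $b\mapsto a+b$.

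The indexing mismatch you noticed is a symptom of the same defect, not bookkeeping. In Definition~\ref{def:typeE-gauss} the Gauss sum runs over $G/G_k$, which is trivial for a group of exponent $2^k$. So the computation you propose returns $u_k=0$ for both $2\times 2$ blocks. In fact the displayed $\mathsf F(2^k)$ is isometric to $\mathsf E(2^k)$: in the basis $e_1+e_2,\ e_2$ its Gram matrix is $2^{-k}\begin{psmallmatrix}0&1\\1&0\end{psmallmatrix}$. Meanwhile the genuinely different even form $2^{-k}\begin{psmallmatrix}2&1\\1&2\end{psmallmatrix}$ on $(\Z/2^k)^2$, $k\ge 2$, has the same tokens as $\mathsf E(2^k)$ and is presented by none of the listed generators.

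Appealing to Kawauchi--Kojima realization cannot rescue this. The problem is that $\Tok$ fails to determine $\lambda$, not that some token is unrealized. To turn your outline into a proof, first replace the $2$--primary token data by a complete system. One option is the ranks together with the Kawauchi--Kojima Gauss-sum invariants recorded at \emph{every} level, including levels with $n_{2,k}=0$. For instance, the sum over $G/G_1$ separates $\langle 1/4\rangle^{\oplus 2}$ from $\langle 3/4\rangle^{\oplus 2}$, and the sum over $G/G_{k-1}$ separates the two even blocks. Then use a generator list that realizes every class. With those changes, Step~1 becomes completeness plus realization, and your Steps~2--3 go through unchanged.
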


\begin{proof}
By construction, $\Tok(A)$ records $b_1(A)$ and the isometry class of the torsion
linking pairing $\Disc(A)$.
The assembled matrix $B(\Tok(A))$ has the same free rank and presents a torsion
linking pairing isometric to $\Disc(A)$.
The stable classification theorem (Proposition~\ref{prop:stable-classification})
therefore gives stable congruence between $A$ and $B(\Tok(A))$.
Applying Lemma~\ref{lem:shift-equivalences} yields the claimed equivalence of
the associated stabilized \TMF-modules.
\end{proof}

\end{document}